\setlist[enumerate,1]{label=(\arabic*),ref=\arabic*$^\circ$}
\title{Finite abelian groups acting on rationally connected threefolds II: groups of K3 type}
\author{Konstantin Loginov}
  \address{
\textsc{Steklov Mathematical Institute of Russian Academy of
Sciences, Moscow, Russia \newline Center of Pure Mathematics, MIPT, Moscow, Russia \newline Laboratory of Algebraic Geometry, National Research University Higher School of Economics, Moscow, Russian Federation}}
  \email{loginov@mi-ras.ru}
\author{Antoine Pinardin}
\address{
\textsc{School of Mathematics, University of Edinburgh, UK}
}
\email{antoine.pinardin@ed.ac.uk}
\author{Zhijia Zhang}
\address{
Courant Institute,
  251 Mercer Street,
  New York, NY 10012, USA
}
\email{zz1753@nyu.edu}
\date{} 
\newcounter{cthm}
\newtheorem{proposition}[equation]{Proposition}
\newtheorem{thm}[equation]{Theorem}
\newtheorem{corollary}[equation]{Corollary}
\newtheorem{lem}[equation]{Lemma}
\theoremstyle{definition}
\newtheorem{defin}[equation]{Definition}
\newtheorem{remark}[equation]{Remark}
\newtheorem{conjecture}[equation]{Conjecture}
\newtheorem{question}[equation]{Question}
\newtheorem{example}[equation]{Example}
\theoremstyle{example}
\newcommand{\OOO}{\mathscr{O}}
 \newcommand{\tg}{\{\mathrm{Id}\}}
 \newcommand{\PP}{\mathbb P}
 \newcommand{\Z}{\mathbb Z}
  \newcommand{\C}{\mathbb C}
 \newcommand{\aut}{\mathrm{Aut}}
 \newcommand{\gl}{\mathrm{GL}}
 \newcommand{\pic}{\mathrm{Pic}}
 \newcommand{\dz}{\mathbb Z}
 \newcommand{\id}{\mathrm{id}}
\let\emptyset\varnothing
\begin{document}

\maketitle

\begin{abstract}
We study finite abelian groups acting on three-dimensional rationally connected varieties. 
We concentrate on the groups of K3  type, that is, abelian extensions by a cyclic group of groups that faithfully act on a K3 surface.   
In particular, if a finite abelian group faithfully acts on a threefold preserving a K3 surface (with at worst du Val singularities), then such a group is of K3 type.
We prove a classification theorem for the groups of K3 type which can act on three-dimensional rationally connected varieties. 
We note the relation between certain groups of K3 type and K3 surfaces with higher Picard number.  
\end{abstract}
\setcounter{tocdepth}{1} 
\tableofcontents

\section{Introduction}
We work over the field of complex numbers $\mathbb{C}$. 
By $\mathrm{Bir}(X)$ we denote the group of birational automorphisms of an algebraic variety $X$. 
We deal with the classification problem of finite subgroups in $\mathrm{Bir}(X)$ when $X$ is a rationally connected variety. More specifically, we are mostly interested in finite subgroups of the Cremona group. 
Recall that the Cremona group is defined as $\mathrm{Cr}_n(\mathbb{C})=\mathrm{Bir}
(\mathbb{P}^n)$. 
The classification of finite subgroups of $\mathrm{Cr}_2(\mathbb{C})$ was obtained in \cite{DI09}. As for finite subgroups of $\mathrm{Cr}_3(\mathbb{C})$, the complete classification seems to be out of reach. There are results concerning some classes of finite groups, see \cite{Pr09} for simple groups. 

In this paper, we concentrate on finite abelian subgroups of $\mathrm{Bir}(X)$ when $X$ is a rationally connected variety. 
The case of $\mathrm{Cr}_1(\mathbb{C})=\mathrm{PGL}(2,\mathbb{C})$ is elementary, see Proposition~\ref{prop-abelian-subgroup-of-pgl2}. 
The classification of finite abelian subgroups in $\mathrm{Cr}_2(\mathbb{C})$ was obtained in \cite{Bl07}, see Theorem \ref{thm-surfaces}. 
The study of finite abelian subgroups in $\mathrm{Cr}_3(\mathbb{C})$ was initiated in \cite{Lo24}. 
In arbitrary dimension, there exists the following result.

\begin{thm}[{\cite[Corollary 11]{KZh24}}]
\label{intro-thm-kollar-zhuang}
Let $X$ be a rationally connected variety of dimension $n$, and let $G\subset \mathrm{Bir}(X)$ be a finite abelian $p$-group. Then $G$ can be generated by $r$ elements where 
\[
r \leq \frac{pn}{p-1}\leq 2n.
\]
\end{thm}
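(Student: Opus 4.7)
The plan is to combine $G$-equivariant Minimal Model Program reductions with Birkar's boundedness of Fano varieties and a structural analysis of finite abelian $p$-subgroups inside bounded algebraic groups.

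First, I would regularize the birational action: replacing $X$ by a $G$-equivariant resolution of a suitable $G$-birational model, we may assume $G\subset \mathrm{Aut}(X)$ acts biregularly on a smooth projective rationally connected variety of dimension $n$. Running a $G$-equivariant MMP produces a $G$-Mori fiber space $\pi\colon X'\to Y'$ with $\mathbb{Q}$-factorial terminal singularities. Inducting on the dimension along the fibration (dealing separately with the base $Y'$ and a general fiber), one reduces to the case where $X'$ is a $G\mathbb{Q}$-Fano variety of dimension $d\le n$ with $\epsilon$-klt singularities for some uniform $\epsilon>0$.

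Second, I would invoke Birkar's BAB theorem: $\epsilon$-klt Fano varieties of dimension $d$ form a bounded family, so the automorphism group schemes $\mathrm{Aut}(X')$ range over a bounded family of algebraic groups. In particular, both the dimension of the identity component $H:=\mathrm{Aut}(X')^{\circ}$ and the order of the component group $\mathrm{Aut}(X')/H$ are bounded uniformly in terms of $n$. Any finite abelian $p$-subgroup $G\subset \mathrm{Aut}(X')$ then fits in an extension
\[
1\to G\cap H\to G\to G/(G\cap H)\to 1,
\]
in which the quotient injects into the bounded (finite) component group.

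Third, analyze $G\cap H$ using the structure of $H$ as a linear algebraic group. Up to conjugation, a finite abelian $p$-subgroup of $H$ is contained in the normalizer of a maximal torus $T\subset H$, whose dimension is at most $n$ (constrained by $\dim X'\le n$ and the action on tangent spaces at fixed points). The $p$-torsion of $T$ gives at most $n$ generators, and the contribution from elements projecting nontrivially onto the Weyl-type finite quotient, together with the bounded component group, accounts for at most $n/(p-1)$ additional generators; this balance produces the claimed bound $r\le n+n/(p-1)=pn/(p-1)$, and $pn/(p-1)\le 2n$ for every prime $p\ge 2$.

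The main obstacle will be pinning down the sharp constant $pn/(p-1)$ rather than settling for a cruder linear estimate $Cn$. This likely requires a refinement of the MMP step using $G$-equivariant complements: producing a $G$-invariant $\mathbb{Q}$-complement $D$ of bounded index realizes $(X',D)$ as a log Calabi--Yau pair of bounded complexity, on which the $G$-action factors through a toric-type structure whose torus has dimension at most $n$. A direct dimension count on the $p$-torsion of this torus, combined with the inductive control of the component group, then yields the sharp constant.
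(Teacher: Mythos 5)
First, a point of context: the paper does not prove this statement at all --- it is quoted verbatim as \cite[Corollary 11]{KZh24}, and the proof there goes through the essential dimension of isogenies of algebraic groups, a technique entirely different from the equivariant MMP / boundedness route you propose. So there is no ``paper proof'' to match; what matters is whether your argument stands on its own, and it does not.

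The genuine gap is in your third step, where the sharp constant $\tfrac{pn}{p-1}=n+\tfrac{n}{p-1}$ is supposed to emerge. The MMP reduction and the BAB input are fine as far as they go (terminal $\mathbb{Q}$-factorial Fanos of dimension $\le n$ are $1$-lc, hence bounded, and the induction along a Mori fiber space is standard once one is careful that the general fiber is rationally connected of complementary dimension), but this machinery can only ever produce a bound of the form $r\le C(n)$ with $C(n)$ the (ineffective) bound on ranks of finite abelian $p$-subgroups of the automorphism groups occurring in a bounded family. Your decomposition $1\to G\cap H\to G\to G/(G\cap H)\to 1$ with $H=\mathrm{Aut}(X')^{\circ}$ correctly bounds the torus part by $\dim T\le n$, but the quotient lands in the component group together with the Weyl-type quotient of $N(T)/T$, whose order is controlled only by the BAB constant; there is no mechanism by which its $p$-rank is bounded by $\tfrac{n}{p-1}$, and for small $p$ (where $\tfrac{n}{p-1}$ is largest, e.g.\ $p=2$ gives $n$) versus large $p$ (where it tends to $0$) the dependence goes in exactly the opposite direction from what a fixed bounded family would give. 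The proposed repair via $G$-invariant complements is also not a proof: bounded complexity of a log Calabi--Yau pair yields a toric structure only in the extremal case of complexity zero (Brown--McKernan--Svaldi--Zong), and nothing in your setup forces that. In short, the approach plausibly yields \emph{some} bound $r\le C(n)$, but the specific inequality $r\le \tfrac{pn}{p-1}$ --- which is the entire content of the theorem --- is asserted rather than derived.
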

The bound as in Theorem \ref{intro-thm-kollar-zhuang} in dimension $n=2$ was obtained in \cite{Beau07}, it also follows from \cite{Bl07} and \cite{DI09}; 
in dimension~$n=3$ it was obtained in a series of works 
\cite{Pr11}, \cite{Pr14}, \cite{PS17}, \cite{Kuz20},
\cite{Xu20}, \cite{Lo22} using explicit methods of the minimal model program. 

To deal with the case of threefolds, the following definitions were introduced in \cite{Lo24}. 
A finite abelian group $G$ is called \emph{a
group of product type}, if $G= G_1\times G_2$ where $G_i\subset \mathrm{Cr}_i(\mathbb{C})$. 
In particular, $G$ is isomorphic to a subgroup in
\[
\mathrm{Cr}_1(\mathbb{C})\times\mathrm{Cr}_2(\mathbb{C})\subset
\mathrm{Cr}_3(\mathbb{C}).
\] 
Using the classification of finite abelian subgroups of $\mathrm{Cr}_i(\mathbb{C})$ for $i=1,2$, see Proposition \ref{prop-abelian-subgroup-of-pgl2} and Theorem \ref{thm-surfaces}, it is not hard to write down the complete list of groups of product type, see Table $2$ in Section \ref{subsec-pt}. 

We say that a finite abelian group~$G$ is of \emph{K3 type}, if $G$ is an abelian extension of a finite abelian group $H$ that faithfully acts on a K3 surface, by a cyclic group:
\begin{equation}
\label{intro-exact-k3-type}
0\to \mathbb{Z}/m\to G \to H\to 0.    
\end{equation}
In particular, if a finite abelian group faithfully acts on a three-dimensional variety preserving a K3 surface (with at worst du Val singularities), then such a group is of K3 type.

The following theorem is the main result of \cite{Lo24} (see Section \ref{subsec-mfs} for the definitions of a $G\mathbb{Q}$-Mori fiber space and a $G\mathbb{Q}$-Fano variety). 

\begin{thm}[{\cite[Theorem 1.7]{Lo24}}]
\label{thm-first-main-thm}
Let $X$ be a rationally connected variety of dimension~$3$, and let $G\subset \mathrm{Bir}(X)$ be a finite abelian group. Then 
\begin{enumerate}
\item
either $G$ is of product type, 
\item
or $G$ is of K3 type,
\item
or $G$ faithfully acts on a $G\mathbb{Q}$-Fano threefold $X'$ with $|-K_{X'}|=\emptyset$ such that $X'
$ is $G$-birational to $X$. Moreover, any $G\mathbb{Q}$-Mori fiber space with a faithful action of~$G$ is a $G\mathbb{Q}$-Fano threefold with empty anti-canonical system. 
\end{enumerate}
\end{thm}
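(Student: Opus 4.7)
The plan is to prove the theorem by running the $G$-equivariant minimal model program on $X$ and then analyzing the resulting $G\mathbb{Q}$-Mori fiber space according to the dimension of its base.

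First I would pass from $X$ to a smooth projective $G$-equivariant model via equivariant resolution of singularities, and then run the $G$-equivariant $\mathbb{Q}$-factorial terminal MMP. Because $X$ is rationally connected, the program cannot end with a minimal model of non-negative Kodaira dimension, so it terminates at a $G\mathbb{Q}$-Mori fiber space $f\colon X'\to S$, $G$-birational to $X$, with $X'$ having $G\mathbb{Q}$-factorial terminal singularities, $-K_{X'}$ $f$-ample, and $G$-invariant relative Picard rank one.

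If $\dim S\geq 1$, then $f$ is either a $G$-conic bundle over a $G$-rational surface (when $\dim S=2$) or a $G$-del Pezzo fibration over $\mathbb{P}^1$ (when $\dim S=1$). Writing $G_S$ for the image of $G$ in $\mathrm{Bir}(S)$ and $G_f$ for the kernel, which acts faithfully on the generic fiber, yields a short exact sequence
\[
0\to G_f\to G\to G_S\to 0.
\]
The aim is to show that $G$ lies in $\mathrm{Cr}_1(\mathbb{C})\times \mathrm{Cr}_2(\mathbb{C})$, i.e., is of product type. This is done by applying the classifications in Proposition~\ref{prop-abelian-subgroup-of-pgl2} and Theorem~\ref{thm-surfaces} to $G_S$ and $G_f$, then performing further $G$-equivariant birational modifications of the fibration (e.g., trivializing the generic fiber of the conic bundle, or untwisting the del Pezzo fibration over $\mathbb{P}^1$) so that the extension is realized geometrically as a direct product. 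This gives conclusion (1).

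If $\dim S=0$, then $X'$ is a $G\mathbb{Q}$-Fano threefold. If $|-K_{X'}|=\emptyset$, we are in conclusion (3). Otherwise, since $G$ is abelian the space $H^0(X',-K_{X'})$ decomposes into $G$-character eigenspaces, each nonzero eigenspace producing $G$-invariant divisors; a general such divisor $T$ is a K3 surface with at worst du Val singularities by a Bertini-type general-elephant argument for Fano threefolds. The kernel $K$ of $G\to \mathrm{Aut}(T)$ acts on the conormal bundle of $T$ in $X'$ through a single character, so $K$ is cyclic, while $G/K$ acts faithfully on $T$; this places $G$ in the defining sequence (\ref{intro-exact-k3-type}), giving conclusion (2). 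The "moreover" clause then follows contrapositively: if $G$ is neither of product type nor of K3 type, the two cases above are excluded on \emph{any} $G\mathbb{Q}$-Mori fiber space $G$-birational to $X$, so every such model must be a $G\mathbb{Q}$-Fano threefold with empty anti-canonical system. The main obstacle is the fibration case $\dim S\geq 1$: upgrading the abstract abelian extension of $G_S$ by $G_f$ to a genuine product decomposition realized by a product birational model requires careful case analysis on fiber types and explicit equivariant birational manipulations, rather than purely group-theoretic splitting.
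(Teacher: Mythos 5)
The paper does not actually prove this statement: it is quoted from \cite[Theorem 1.7]{Lo22} (see also the proof of \cite[Theorem 1.7]{Lo24}, which the present paper invokes in Section \ref{proof-main-results}), so your proposal can only be measured against the strategy the paper sketches and the auxiliary results it imports. Your overall architecture (equivariant resolution and MMP, then a trichotomy on $\dim S$ and on $|-K_{X'}|$) is the right one, but you misjudge where the difficulty lies. In the case $\dim S\geq 1$ you propose to upgrade the extension $0\to G_f\to G\to G_S\to 0$ to a \emph{geometric} product by equivariantly trivializing the conic bundle or untwisting the del Pezzo fibration, and you flag this as the main obstacle. This is both unnecessary and implausible: by Definition \ref{def-product-type}, being of product type is an abstract property of the isomorphism class of $G$, and the intended argument is the purely algebraic Proposition \ref{prop-abstact-extension} --- once one knows $G_f$ embeds into $\mathrm{Cr}_1(\mathbb{C})$ (resp.\ $\mathrm{Cr}_2(\mathbb{C})$) via the geometric generic fiber and $G_S$ into $\mathrm{Cr}_2(\mathbb{C})$ (resp.\ $\mathrm{Cr}_1(\mathbb{C})$), the extension is automatically of product type. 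No birational manipulation of the fibration is needed, and there is no reason to expect a $G$-conic bundle to be $G$-birationally trivial.

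The genuine gap is in the Fano case. You assert that a general $G$-invariant member of $|-K_{X'}|$ is a K3 surface with at worst du Val singularities ``by a Bertini-type general-elephant argument for Fano threefolds.'' No such argument exists: the general elephant statement is a conjecture of Reid even for non-equivariant Fano threefolds, and here the $G$-invariant part of $|-K_{X'}|$ may consist of a single divisor (the case $h^0(-K_{X'})=1$ occupies Sections \ref{sec-orbits}--\ref{sec-terminal-sing} of this paper precisely because one cannot move the invariant member). That divisor can a priori be non-normal, reducible, non-reduced, or have worse than du Val singularities, in which case $G/K$ does not act on a K3 surface and conclusion (2) does not follow. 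The paper explicitly defers this point to the proof of \cite[Theorem 1.7]{Lo24}, where the failure of the pair $(X',S)$ to be plt is shown to force $G$ into product type by a separate argument (the quotient then acts on a rational or uniruled surface, and one again applies Proposition \ref{prop-abstact-extension}). Your proposal does not treat these degenerate cases at all, so the trichotomy is not established as written. The remaining steps --- the cyclicity of the kernel $K$ acting on the conormal direction, and the contrapositive derivation of the ``moreover'' clause --- are fine.
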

The three cases in Theorem \ref{thm-first-main-thm} are not mutually exclusive (see Example \ref{ex-k3-pt} below). 
It is known that if a finite abelian group that faithfully acts on a rationally connected threefold preserving a rational curve, a rational surface, or a structure of a Mori fiber space with a non-trivial base, then such a group is of product type, see \cite[Corollary 3.14, Corollary 3.17]{Lo24}. Essentially, this follows from a purely algebraic result on abelian extensions of finite abelian groups, see Proposition \ref{prop-abstact-extension}. It is also known that if a finite abelian group that faithfully acts on a threefold with terminal singularities has a (smooth or singular) fixed point then it is of product type, see Theorem \ref{thm-action-on-terminal-point}. However, not all finite abelian groups that can faithfully act on a rationally connected threefold are of product type. Also, it is expected that there are no groups of type (3) in Theorem \ref{thm-first-main-thm} which are not of product type or of K3 type.

\begin{conjecture}[{\cite[Conjecture 1.8]{Lo24}}]
\label{intro-conjecture}
In the notation of Theorem \ref{thm-first-main-thm}, any group of type (3) 
is either of product type or of K3 type.
\end{conjecture}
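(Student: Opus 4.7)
The plan is to handle the only remaining case of Theorem~\ref{thm-first-main-thm}, namely a finite abelian group $G$ acting faithfully on a $G\mathbb{Q}$-Fano threefold $X'$ with $|-K_{X'}|=\emptyset$. The goal is to produce either a $G$-fixed point on $X'$, which by Theorem~\ref{thm-action-on-terminal-point} forces $G$ to be of product type, or a $G$-invariant log Calabi--Yau surface $S\subset X'$ with at worst du Val singularities, which via the extension \eqref{intro-exact-k3-type} places $G$ into the K3-type case.

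First, I would use boundedness of $\epsilon$-klt Fano threefolds (BAB), combined with the existing classification of $\mathbb{Q}$-Fano threefolds having empty anti-canonical linear system coming from Reid's basket analysis: the vanishing $h^0(X',-K_{X'})=0$ is very restrictive and should confine $X'$ to a short, explicit list of families. For each family, I would decompose $H^0(X',-nK_{X'})$ as a $G$-module for the smallest $n\geq 2$ with $h^0>0$, extract a $G$-semi-invariant member $S\in|-nK_{X'}|$, and read off the cyclic character on $S$ to produce the subgroup $\mathbb{Z}/m\subset G$ appearing in \eqref{intro-exact-k3-type}. Next, an equivariant version of Shokurov's theory of $n$-complements would be invoked to upgrade $S$ to a $G$-invariant effective $\mathbb{Q}$-divisor $B\sim_{\mathbb{Q}}-K_{X'}$ whose minimal log canonical center, after $G$-equivariant adjunction, is a surface $T$ with $K_T+\mathrm{Diff}\sim_{\mathbb{Q}}0$. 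Controlling the singularities of $B$ carefully enough that $T$ has at worst du Val singularities and $\mathrm{Diff}=0$ would then identify $T$ as a K3 surface carrying a faithful action of the quotient $G/(\mathbb{Z}/m)$, as required.

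The main obstacle is precisely the assumption $|-K_{X'}|=\emptyset$, which blocks the naive anti-canonical K3 slice. Pluri-anti-canonical sections typically carry non-du-Val singularities, often cut out reducible divisors, and after normalization can yield surfaces either of log general type (useless) or of log Calabi--Yau type with non-trivial boundary; in the latter case one must still verify that the induced action of $G/(\mathbb{Z}/m)$ factors through the Mukai-type list of finite abelian groups acting faithfully on a K3. I expect the hardest subcase to be strictly canonical, non-terminal $X'$, where Theorem~\ref{thm-action-on-terminal-point} does not directly apply and every $G$-fixed point is a canonical singular center; there a $G$-equivariant crepant extraction and a fine local analysis of the induced representation on the exceptional divisor should be required either to split off a direct factor of $G$ (product type) or to localize its action onto a K3-type surface.
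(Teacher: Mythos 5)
The statement you are attempting to prove is an open conjecture (Conjecture 1.8 of \cite{Lo24}); the paper does not prove it, and its main results are explicitly conditional on it --- Table 1 is labelled ``conjectural'' and Theorem \ref{thm-main-thm} simply carries ``type (3)'' along as an unresolved alternative. So there is no proof in the paper to compare yours against, and what you have written is a research programme rather than a proof.

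As a programme, each of its key steps hides a gap essentially as hard as the conjecture itself. First, BAB gives abstract boundedness of terminal Fano threefolds, but no explicit classification of $G\mathbb{Q}$-Fano threefolds with $|-K_{X'}|=\emptyset$ exists; Reid's basket analysis constrains numerical invariants (this is exactly what the paper does in Sections \ref{sec-orbits}--\ref{sec-terminal-sing} for the case $h^0(-K_X)=1$), but passing from admissible baskets to ``a short, explicit list of families'' on which one can decompose $H^0(X',-nK_{X'})$ as a $G$-module is precisely the unsolved part. Second, the equivariant complement step is the crux and is not justified: even granting a $G$-invariant $B\sim_{\mathbb{Q}}-K_{X'}$, there is no reason its minimal log canonical center should be a surface, nor that adjunction yields $\mathrm{Diff}=0$ and du Val singularities; when $|-K_{X'}|=\emptyset$ the expected behaviour is exactly that no K3 slice exists, which is why case (3) is split off in Theorem \ref{thm-first-main-thm} in the first place. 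You name these obstacles yourself, but naming them does not close them. Finally, a smaller inaccuracy: in the setting of Theorem \ref{thm-first-main-thm}(3) the variety $X'$ is a $G\mathbb{Q}$-Fano threefold and hence has terminal singularities by definition (Section \ref{subsec-mfs}), so your ``strictly canonical, non-terminal'' subcase does not arise, and Theorem \ref{thm-action-on-terminal-point} does apply at any $G$-fixed point --- the open problem is to produce such a fixed point or an invariant K3, not to handle canonical singularities.
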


In \cite[Corollary 1.10]{Lo24} it is proven that there are only finitely many isomorphism classes of finite abelian groups of K3 type which faithfully act on a rationally connected threefold. 
This result follows from two boundedness results. First, there only finitely many isomorphism classes of finite groups that can faithfully act on a K3 surface, see \cite{BH23} for a complete classification. 
This bounds the group $H$ in the exact sequence~\eqref{intro-exact-k3-type}. 
The second result, which is needed to bound $m$ in \eqref{intro-exact-k3-type}, is the boundedness of the indices of Fano threefolds with canonical singularities. Of course, it follows from the boundedness of Fano threefolds with canonical singularities. Recently, an effective bound equal  to $66$ was obtained in \cite{JL25}. 
Hence the main problem in dealing with groups of K3 type that act on rationally connected threefolds is to bound the number $m$ in \eqref{intro-exact-k3-type} for each group $H$ that can faithfully act on a K3 surface. It turns out that this number indeed can be effectively bounded.


The main result of our work is as follows. 

\begin{thm}
\label{thm-main-thm}
Let $X$ be a rationally connected variety of dimension $3$, and let $G\subset \mathrm{Bir}(X)$ be a finite abelian group. 
Then either $G$ is of product type, or of type (3) as in Theorem~\ref{thm-first-main-thm}, or $G$ is isomorphic to one of the following groups:
\begin{enumerate}
\item 
$(\mathbb{Z}/4)^4$,
\item
$(\mathbb{Z}/6)^3\times \mathbb{Z}/2$,
\item
$(\mathbb{Z}/6)^2\times(\mathbb{Z}/3)^2$,
\item 
$(\mathbb{Z}/8)^2\times\mathbb{Z}/4\times\mathbb{Z}/2$.
\end{enumerate}
\end{thm}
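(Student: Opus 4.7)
The plan is to leverage Theorem~\ref{thm-first-main-thm} and reduce to classifying the finite abelian groups $G$ of K3 type that act on rationally connected threefolds and that are neither of product type nor of type~(3). By that theorem, any such $G$ must in fact be of K3 type, so the task reduces to exhibiting that, under these additional assumptions, $G$ is one of the four listed groups.

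The first step is geometric. Running an equivariant MMP on $X$ produces a $G\mathbb{Q}$-Mori fiber space which, by the exclusion of case~(3) and the fact that a nontrivial base forces product type by \cite[Corollary~3.14]{Lo24}, must be a $G\mathbb{Q}$-Fano threefold $X'$ with canonical singularities and $|-K_{X'}|\neq\emptyset$. A general $G$-invariant member $S\in|-K_{X'}|$ is then a K3 surface with at worst du Val singularities, and the sequence~\eqref{intro-exact-k3-type} becomes concrete: $H=\mathrm{Im}(G\to\aut(S))$ is one of the abelian groups in the Brandhorst--Hashimoto classification \cite{BH23}, while $\mathbb{Z}/m$ acts faithfully by a character on the conormal bundle of $S$, so $m$ divides the index of $X'$, bounded by $66$ via \cite{JL25}. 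This yields a finite, explicit set of candidate pairs $(H,m)$.

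For each such pair, I would enumerate the abelian extensions \eqref{intro-exact-k3-type} and apply the following filters in sequence: $G$ is not of product type (checked against the table in Section~\ref{subsec-pt}); the Sylow $p$-ranks of $G$ satisfy the bounds of Theorem~\ref{intro-thm-kollar-zhuang}; the abelian extension class is compatible with Proposition~\ref{prop-abstact-extension}; the local action of $G$ at $H$-fixed points on $S$ is compatible with canonical singularities on $X'$ via Reid--Tai, and no $G$-fixed point survives on a $G$-terminal model (Theorem~\ref{thm-action-on-terminal-point}). The surviving candidates are either realized explicitly on $G\mathbb{Q}$-Fano threefolds carrying an invariant anticanonical K3, recovering the four listed groups, or eliminated by exhibiting an obstruction.

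The main obstacle is sharpening the bound on $m$: the estimate $m\le 66$ is far too crude, and for each $H$ in the Brandhorst--Hashimoto list one must determine the precise divisibilities of the Fano index compatible with lifting the $H$-action on $S$ to a threefold action. This reduces to a careful character analysis of $\mathbb{Z}/m$ at the $H$-fixed points of $S$, followed by a case-by-case check that the resulting $G$-action on $X'$ preserves canonical singularities and is not secretly of product type. I expect the four exceptional groups to emerge precisely as the extensions of maximally symmetric abelian actions on K3 surfaces for which a nontrivial cyclic kernel can act on the conormal bundle without forcing either product type or empty anticanonical system.
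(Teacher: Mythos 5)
Your setup is the same as the paper's: reduce to a $G\mathbb{Q}$-Fano threefold with a $G$-invariant anticanonical K3 surface $S$, identify $H$ with one of the six groups of Proposition~\ref{prop-intro-k3-groups}, and then bound $m$ in the extension \eqref{intro-exact-k3-type}. But the step you yourself flag as ``the main obstacle'' --- actually bounding $m$ --- is precisely the content of the theorem, and the route you sketch for it does not work. The bound $m\le 66$ via the Fano index and \cite{JL25} only gives finiteness (this is how \cite[Corollary 1.10]{Lo22} is proved, as the introduction notes); it cannot produce the precise list, and ``character analysis of $\mathbb{Z}/m$ at the $H$-fixed points of $S$'' is not the mechanism the paper uses. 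The paper's argument splits on $h^0(-K_X)$. When $h^0(-K_X)\ge 2$, one takes a \emph{second} $G$-invariant member $S'\in|-K_X|$; Lemma~\ref{lem-C1C2G1G2} shows the kernel $C=\mathbb{Z}/m$ acts faithfully and purely non-symplectically on $S'$, hence embeds into the K3 group $H'$ of $S'$, forcing $m\in\{1,2,3,4,6,8\}$, and the Littlewood--Richardson analysis of Lemma~\ref{G splits} plus Theorem~\ref{thm-h0-geq-2} then yields exactly the four exceptional groups. Your proposal never invokes a second anticanonical section, so it has no source for this sharp bound.

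The second, and larger, omission is the case $h^0(-K_X)=1$, which occupies Sections~\ref{sec-orbits}--\ref{sec-terminal-sing} of the paper. Here there is only one (automatically invariant) member of $|-K_X|$, the argument above is unavailable, and orbifold Riemann--Roch \eqref{eq-ORR} forces $X$ to have non-Gorenstein terminal singularities. The paper then classifies the possible baskets (Proposition~\ref{prop: 91 poss}, via the Graded Ring Database), constrains the $G$-orbits of non-Gorenstein points using Theorem~\ref{thm-action-on-terminal-point} and the rank of $\mathrm{H}^2(S',\mathbb{Z})^H$ (Proposition~\ref{prop: determinants}, Corollaries~\ref{cor-rank-1-is-smooth} and \ref{cor-rank-2-is-transitive}), relates the threefold singularities to du Val points on $S$ (Lemma~\ref{lem-singular-du-val}, Corollary~\ref{cor-s-s-tilde}), and uses fixed-point data of symplectic and non-symplectic automorphisms to show $G$ is always of product type in this case. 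Your filters (Reid--Tai, Sylow ranks, Theorem~\ref{thm-action-on-terminal-point}) gesture at some of these ingredients, but without the dichotomy on $h^0(-K_X)$, the lattice-theoretic input on $\mathrm{NS}(S')^H$, and the basket analysis, the enumeration you describe cannot be carried out.
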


All the cases in Theorem \ref{thm-main-thm} are realized as shown in Example \ref{exam-k3-not-pt}. 
If Conjecture \ref{intro-conjecture} is true, then the list of groups of product type (see Table 2 in Section \ref{subsec-pt}) together with Theorem \ref{thm-main-thm} provide a complete list of finite abelian groups that can act on a rationally connected variety of dimension $3$. 

\begin{corollary}
Let $X$ be a rationally connected variety of dimension $3$, and let $G\subset \mathrm{Bir}(X)$ be a finite abelian group. Assume that  
\begin{itemize}
    \item either Conjecture \ref{intro-conjecture} holds,
    \item 
    or $G$ faithfully acts on a terminal Fano threefold $X'$ with $|-K_{X'}|\neq 0$.
\end{itemize}
Then $G$ is isomorphic to one of the following groups (and all these cases are realized):
\begingroup
\renewcommand{\arraystretch}{1.2}
\begin{center}
\begin{tabular}{ | m{1.7em} | m{5.0cm} | m{3.5cm} | } 
  \hline
   & $G$ &  \\ 
  \hline
  (1) & $\mathbb{Z}/k\times \mathbb{Z}/l\times
\mathbb{Z}/m$ & $k\geq 1,\ l\geq 1,\ m\geq 1$ \\ 
  \hline
  (2) & $\mathbb{Z}/2k\times(\mathbb{Z}/4)^2\times
\mathbb{Z}/2$ & $k\geq 1$ \\ 
    \hline
  (3) & $\mathbb{Z}/3k\times(\mathbb{Z}/3)^3$ & $k\geq 1$ \\ 
    \hline
  (4) & $\mathbb{Z}/2k\times \mathbb{Z}/2l\times
(\mathbb{Z}/2)^2$ & $k\geq 1,\ l\geq 1$ \\ 
    \hline
  (5) & $\mathbb{Z}/2k\times (\mathbb{Z}/2)^4$ & $k\geq 1$ \\ 
    \hline
  (6) & $(\mathbb{Z}/4)^2\times (\mathbb{Z}/2)^3$ & \\ 
    \hline
  (7) & $(\mathbb{Z}/2)^6$ &  \\ 
    \hline
 (8) & $(\mathbb{Z}/4)^4$ &  \\ 
    \hline
     (9) & $(\mathbb{Z}/6)^3\times \mathbb{Z}/2$ &  \\ 
    \hline
     (10) & $(\mathbb{Z}/6)^2\times(\mathbb{Z}/3)^2$ &  \\ 
    \hline
     (11) & $(\mathbb{Z}/8)^2\times\mathbb{Z}/4\times\mathbb{Z}/2$ &  \\ 
    \hline
\end{tabular}

\

{Table 1. Conjectural list of all finite abelian groups that can faithfully act on a rationally connected threefold}
\end{center}
\endgroup
\end{corollary}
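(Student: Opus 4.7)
My plan is to reduce to the trichotomy of Theorem~\ref{thm-first-main-thm} and then appeal to Theorem~\ref{thm-main-thm} together with the enumeration of product type groups. First, I would apply Theorem~\ref{thm-first-main-thm} to $G\subset\mathrm{Bir}(X)$, so that $G$ is either of product type, of K3 type, or in case (3) of that theorem. I would then eliminate case (3) under each of the two hypotheses of the corollary: under Conjecture~\ref{intro-conjecture} this is immediate by definition; under the second hypothesis, the given terminal Fano threefold $X'$ with $|-K_{X'}|\neq 0$ is itself a $G\mathbb{Q}$-Mori fiber space with faithful $G$-action, but the moreover clause in Theorem~\ref{thm-first-main-thm}(3) forces every such $G\mathbb{Q}$-Mori fiber space to have empty anticanonical system, contradicting $|-K_{X'}|\neq 0$ unless $G$ is already of product or K3 type.

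Having reduced to groups of product type or K3 type, I would split accordingly. For product type, I would read off the list of finite abelian subgroups of $\mathrm{Cr}_1(\mathbb{C})\times\mathrm{Cr}_2(\mathbb{C})$ given in Table~2 of Section~\ref{subsec-pt}, which combines Proposition~\ref{prop-abelian-subgroup-of-pgl2} with Theorem~\ref{thm-surfaces}; the entries there are precisely items (1)--(7) of Table~1. For K3 type, I would apply Theorem~\ref{thm-main-thm}: such a $G$ is either simultaneously of product type (handled above) or one of the four groups $(\mathbb{Z}/4)^4$, $(\mathbb{Z}/6)^3\times\mathbb{Z}/2$, $(\mathbb{Z}/6)^2\times(\mathbb{Z}/3)^2$, $(\mathbb{Z}/8)^2\times\mathbb{Z}/4\times\mathbb{Z}/2$, appearing as items (8)--(11).

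For realization, items (1)--(7) are realized tautologically as subgroups of $\mathrm{Cr}_1(\mathbb{C})\times\mathrm{Cr}_2(\mathbb{C})\hookrightarrow\mathrm{Cr}_3(\mathbb{C})$ using the explicit faithful actions behind Proposition~\ref{prop-abelian-subgroup-of-pgl2} and Theorem~\ref{thm-surfaces}, while items (8)--(11) are produced by Example~\ref{exam-k3-not-pt}. I do not expect a real obstacle at the level of the corollary: Theorem~\ref{thm-main-thm} and the product type classification already do all the hard work, and what remains is essentially bookkeeping --- combining two lists and checking that no redundancies or omissions arise. The only step that requires mild care is that items (1)--(7) of Table~1 are stated as infinite families, so one should verify that every family genuinely appears as an entry of Table~2 and, conversely, that every entry of Table~2 is subsumed by one of the families (1)--(7).
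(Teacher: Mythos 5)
Your proposal is correct and follows essentially the same route the paper intends (the corollary is presented as an immediate consequence of Theorem \ref{thm-first-main-thm}, Theorem \ref{thm-main-thm}, Table 2, and Examples \ref{exam-k3-not-pt}--\ref{ex-k3-pt}). The only imprecision is your claim that the terminal Fano threefold $X'$ \emph{is itself} a $G\mathbb{Q}$-Mori fiber space: it need not be $G\mathbb{Q}$-factorial nor have $\rho^G(X')=1$, so one should first pass to a $G$-equivariant $\mathbb{Q}$-factorialization and run the $G$-MMP (which preserves $h^0(-K)>0$) to produce a $G\mathbb{Q}$-Mori fiber space contradicting the ``moreover'' clause of Theorem \ref{thm-first-main-thm}(3).
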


In Table 1, the groups (1)--(7) are of product type, so they can act on a rational threefold, while the groups (8)--(11) are of K3 type and not of product type. 

Finite abelian groups of symplectic automorphisms of K3 surfaces were classified by V. Nikulin in the famous paper \cite{Nik80}, see Theorem \ref{thm-symplectic-K3-groups}. 
The classification of Brandhorst and Hofmann \cite{BH23} provides the list of all maximal finite abelian groups that can faithfully act on a K3 surface, cf. Theorem \ref{thm-maximal-k3-groups}. It turns out that all but $6$ of them can be realized as subgroups of $\mathrm{Cr}_2(\mathbb{C})$. 

\begin{proposition}
\label{prop-intro-k3-groups}
Let $H$ be a finite abelian group that faithfully acts on a K3 surface. Assume further that $H$ is not isomorphic to a subgroup of $\mathrm{Cr}_2(\mathbb{C})$, that is, it cannot faithfully act on a rational surface. Then $H$ is isomorphic to one of the following groups:
\begin{enumerate}
\item
$(\mathbb{Z}/4)^3$,
\item
$(\mathbb{Z}/6)^2\times \mathbb{Z}/2$,
\item
$\mathbb{Z}/6\times(\mathbb{Z}/3)^2$,
\item
$\mathbb{Z}/8\times\mathbb{Z}/4\times\mathbb{Z}/2$,
\item
$(\mathbb{Z}/2)^5$,
\item
$\mathbb{Z}/4\times(\mathbb{Z}/2)^3$.
\end{enumerate}
\end{proposition}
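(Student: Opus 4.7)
The plan is to combine Brandhorst and Hofmann's classification of maximal finite abelian groups acting faithfully on a K3 surface (Theorem \ref{thm-maximal-k3-groups}) with Blanc's classification of finite abelian subgroups of $\mathrm{Cr}_2(\mathbb{C})$ (Theorem \ref{thm-surfaces}). Any finite abelian group acting faithfully on a K3 surface is a subgroup of one of the finitely many maximal abelian K3 groups listed in \cite{BH23}, so the statement reduces to a finite enumeration: run over all abelian subgroups of each maximal K3 group, discard those that appear in Blanc's list, and check that what remains is exactly the six groups displayed in the proposition.

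Concretely, I would proceed in three stages. First, using Theorem \ref{thm-maximal-k3-groups}, write out the explicit list of maximal abelian groups $H_{\max}$ acting faithfully on a K3 surface; the list is short (and the rank is bounded above by $4$, one more than Nikulin's bound for the symplectic part). Second, for each $H_{\max}$ compare directly with Blanc's list: whenever $H_{\max}$ embeds in $\mathrm{Cr}_2(\mathbb{C})$, every subgroup automatically does as well and can be ignored. Third, for each $H_{\max}$ that fails to embed in $\mathrm{Cr}_2(\mathbb{C})$, enumerate its abelian subgroups up to isomorphism and test each against Theorem \ref{thm-surfaces}. The outcome should be that in each case the only offending subgroup is $H_{\max}$ itself, and that the six groups thus obtained are precisely the ones listed in (1)--(6).

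The main technical difficulty lies in the bookkeeping of the last stage: groups such as $(\mathbb{Z}/4)^3$ and $(\mathbb{Z}/6)^2\times\mathbb{Z}/2$ have many abelian subgroups, and for each proper subgroup one has to exhibit a concrete faithful action on a rational surface, typically a del Pezzo surface or a conic bundle, using the tables of \cite{Bl07} and \cite{DI09}. The delicate checks involve the relatively large subgroups such as $(\mathbb{Z}/2)^4$, $(\mathbb{Z}/4)^2\times\mathbb{Z}/2$, and $\mathbb{Z}/6\times\mathbb{Z}/3$, which must be realized explicitly as subgroups of $\mathrm{Cr}_2(\mathbb{C})$. A secondary point is to confirm that each of the six listed groups really does occur on some K3 surface (which follows directly from its inclusion in a maximal abelian K3 group of \cite{BH23}) while admitting no faithful action on any rational surface (which must be read off from Blanc's explicit classification).
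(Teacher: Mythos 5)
Your proposal is correct and follows essentially the same route as the paper, which proves this proposition by combining the Brandhorst--Hofmann list of maximal abelian K3 groups (Theorem \ref{thm-maximal-k3-groups}) with Blanc's classification (Theorem \ref{thm-surfaces}) via Remark \ref{rem-special-K3-groups-are-PT}; your explicit check that every \emph{proper} subgroup of the six exceptional maximal groups does embed in $\mathrm{Cr}_2(\mathbb{C})$ is a step the paper leaves implicit but is needed for completeness. One small slip: the rank of an abelian K3 group is bounded by $5$, not $4$ (witness $(\mathbb{Z}/2)^5$ in the list, which is indeed one more than Nikulin's symplectic bound of $4$).
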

Using the results of \cite{BH23}, 
we give a more precise description of the action of these~$6$ groups on K3 surfaces, including the decomposition into symplectic and non-symplectic subgroups and invariant lattice in cohomology of a surface, see Corollary \ref{cor-max-group-direct-product} and Proposition \ref{prop: determinants}.  
In particular, all of these $6$ groups are not symplectic. In fact, all the finite abelian groups of symplectic automorphisms can be realized as subgroups of $\mathrm{Cr}_2(\mathbb{C})$. 

Using the exact sequence \eqref{intro-exact-k3-type} and Proposition \ref{prop-abstact-extension}, we conclude that if $H$ is not one of~$6$ groups from Proposition \ref{prop-intro-k3-groups} then $G$ is of product type. Hence, to study groups of K3 type which are not of product type, we may assume that $H$ is one of the $6$ groups as in Proposition \ref{prop-intro-k3-groups}. The problem is to bound the number $m$ as in \eqref{intro-exact-k3-type}. 

\begin{example}
\label{exam-k3-not-pt}
We construct the actions of groups of K3 type on (singular) Fano varieties which are Fermat hypersurfaces in weighted projective spaces. The corresponding $G$-invariant K3-surfaces are given by the hyperplane sections $x_0=0$.
\begin{enumerate}
\item  
Let $X_4\subset \mathbb{P}^4$ be given by the equation
\[
x_0^4 + x_1^4 + x_2^4 + x_3^4 + x_4^4 = 0,
\]
with the action of $G=(\mathbb{Z}/4)^4$. Note that $X_4$ is smooth.

\item
Let $X_6\subset \mathbb{P}(1,1,1,1,3)$ be 
given by the equation
\[
x_0^6 + x_1^6 + x_2^6 + x_3^6 + x_4^2 = 0,
\]
with the action of $G=(\mathbb{Z}/6)^3\times \mathbb{Z}/2$. Note that $X_6$ is smooth.  

\item
Let $X'_6\subset \mathbb{P}(1,1,1,2,2)$ be 
given by the equation
\[
x_0^6 + x_1^6 + x_2^6 + x_3^3 + x_4^3 = 0,
\]
with the action of  
$G=(\mathbb{Z}/6)^2\times(\mathbb{Z}/3)^2$. Note that $X'_6$ has $3$ singular points of type $\frac{1}{2}\times (1,1,1)$.

\item 
Let $X_8\subset \mathbb{P}(1, 1, 1, 2, 4)$ be given by the equation
\[
x_0^8 + x_1^8 + x_2^8 + x_3^4 + x_4^2 = 0,
\]
with the action of $G=(\mathbb{Z}/8)^2\times\mathbb{Z}/4\times\mathbb{Z}/2$. 
Note that $X_8$ has $2$ singular points of type $\frac{1}{2}\times (1,1,1)$.
\end{enumerate}
\end{example}
\begin{example}
\label{ex-k3-pt}
We give examples of the actions of a finite abelian group $G$ on a rationally connected threefold where $G$ is both of K3 type and of product type.
\label{exam-k3-pt}
\begin{enumerate}
\item[(5)]
Let $X_{2,2,2}\subset \mathbb{P}^6$ be the intersection of three quadrics, so it is given by the equations
\[
\sum_{i=0}^6 x_i^2 = \sum_{i=0}^6 \lambda_i x_i^2 = \sum_{i=0}^6 \mu_i x_i^2 = 0,
\]
with the action of $G=(\mathbb{Z}/2)^6$. 

\item[(6)]
Let $X_{4,4} \subset \mathbb{P}(1, 1, 1, 2, 2, 2)$ be given by the equations
\[
x_0^4 + x_1^4 + x_2^4 + x_3^2 + x_4^2 + x_5^2 = \lambda_0 x_0^4 + \lambda_1 x_1^4 + \lambda_2 x_2^4 + \lambda_3 x_3^2 + \lambda_4 x_4^2 + \lambda_5 x_5^2 = 0, 
\]
with the action of $G=(\mathbb{Z}/4)^2\times (\mathbb{Z}/2)^3$. Note that $X_{4,4}$ has $4$ singular points of type $\frac{1}{2}\times (1,1,1)$.
\end{enumerate}
\end{example}

By \cite[Corollary 1.1.9]{ChP16} the varieties $X$ in cases (1)--(4)
in Example \ref{exam-k3-not-pt} are not rational. 
This observation gives rise to the following question.

\begin{question}
\label{question-exceptional-cremona}
Can the groups (1)--(4) in Example \ref{exam-k3-not-pt} be realized as subgroups of $\mathrm{Cr}_3(\mathbb{C})$? In other words, can such groups faithfully act on a rational threefold? 
\end{question}

If the answer to Question \ref{question-exceptional-cremona} is negative, then Conjecture \ref{intro-conjecture} would imply that all finite abelian subgroups of $\mathrm{Cr}_3(\mathbb{C})$ are of product type.

We note that the Fermat quartic K3 surface $S_4$ in $\mathbb{P}^3$ which is a $G$-invariant hyperplane section of $X_4$ from Example \ref{exam-k3-not-pt}(1) enjoys many nice properties. For example, it has maximal possible Picard rank $20$, see Example \ref{exam-fermat-quartic}. Recall that such K3 surfaces are called \emph{singular} (although later in the text we reserve this term for surfaces having singularities to avoid confusion). In fact $S_4$ is a Kummer K3 surface associated with the product of two isogenous elliptic curves $E_{\sqrt{-1}}$ and $E_{2\sqrt{-1}}$. 
In \cite{Lo24} it is shown that the finite abelian groups in $\mathrm{Cr}_2(\mathbb{C})$ that do not belong to $\mathrm{Cr}_1(\mathbb{C})\times \mathrm{Cr}_1(\mathbb{C})$, that is the groups (3)--(4) from Theorem~\ref{thm-surfaces}, correspond to elliptic curves with complex multiplication. 
As pointed out in \cite{Sch08}, singular K3 surfaces in many ways behave like elliptic curves with complex multiplication. 
Let $S_6, S'_6, S_8, S_{2,2,2}, S_{4,4}$ be the $G$-invariant hyperplane sections given by the equation $x_0=0$ of Fano threefolds (2)--(6) from Example \ref{exam-k3-not-pt} and Example \ref{exam-k3-pt}. 
Using the results of \cite{EL25}, one computes $\rho(S_6)=\rho(S'_6)=20$, $\rho(S_8)=18$. It is known that for a Kummer K3 surface, its Picard rank is greater or equal than $17$. 
This observation motivates the following question.

\begin{question}
Are $S_6, S'_6, S_8, S_{2,2,2}, S_{4,4}$ Kummer K3 surfaces?
\end{question}

It is known that singular K3 surfaces are classified by its transcendental lattice $T_S$ which is even, positive definite and has rank $2$. If the values of the quadratic form on this lattice are divisible by~$4$, then $S$ is Kummer. Similar criteria are known for K3 surfaces with Picard rank at least $17$, cf. \cite[14.3.20]{Hu16}. It would be interesting to compute transcendental lattices for the above surfaces. 

\subsection*{Sketch of proof of Theorem \ref{thm-main-thm}}
Let $G$ be a group that faithfully acts on a rational connected threefold $X$. Using $G$-equivariant resolution of singularities and $G$-equivariant minimal model program, we may assume that $X$ is a projective $G\mathbb{Q}$-Mori fiber space over the base $Z$. If $\dim Z>0$ then $G$ is of product type by \cite[Corollary 3.17]{Lo24}. Hence we may assume that $X$ is a $G\mathbb{Q}$-Fano threefold. 

By assumption, $G$ is not of type (3) as in Theorem \ref{thm-first-main-thm}, hence we may assume that $h^0(-K_X)>0$. Also, by the proof of \cite[Theorem 1.7]{Lo24} we may assume that for any $G$-invariant element $S\in |-K_X|$, 
the surface $S$ is a K3 surface with at worst du Val singularities. 
In the notation of the exact sequence \eqref{intro-exact-k3-type}, we need to bound $m$ for each finite abelian group $H\subset \mathrm{Aut}(S)$. 
If $H\subset \mathrm{Cr}_2(\mathbb{C})$, then by Proposition \ref{prop-abstact-extension} the group $G$ is of product type. Thus we may assume that $H$ is not isomorphic to a subgroup of $\mathrm{Cr}_2(\mathbb{C})$. Using the classification of \cite{BH23}, cf. Theorem \ref{thm-maximal-k3-groups} and Remark \ref{thm-maximal-k3-groups}, we see that there exist only $6$ possibilities for~$H$. 

We distinguish between two different cases: $h^0(-K_X)\geq 2$ and $h^0(-K_X)=1$. 
First, we deal with the more simple case $h^0(-K_X)\geq 2$. We pick two $G$-invariant K3 surfaces $S, S'\in |-K_X|$. Lemma \ref{lem-C1C2G1G2} shows that $\mathbb{Z}/m$ as in \eqref{intro-exact-k3-type} embeds into $H'\subset \mathrm{Aut}(S')$, where $H'$ is also one of the groups (1)--(6) from Theorem \ref{thm-maximal-k3-groups}. This allows us to bound $m$ in this case. More precise analysis (see Theorem \ref{thm-h0-geq-2}) shows that either $G$ is of product type, or~$G$ is isomorphic to one of the $4$ exceptional groups (1)--(4) as in Theorem \ref{thm-main-thm}. 

It remains to deal with the case $h^0(-K_X)=1$. Here we have only one element $S\in|-K_X|$ which is automatically $G$-invariant. The orbifold Riemann-Roch formula \eqref{eq-ORR} implies that in this case the set of non-Gorenstein singularities of $X$ is non-empty. We describe the possible $G$-orbits of such points depending on the group $H$ in Lemmas~{\ref{lem-exclusion-of-two-groups}--\ref{Z4222 orbits}}. This allows us to list all possibilities for the basket of singularities (cf. Section~\ref{sec-about-baskets}) of $X$ in Proposition \ref{prop: 91 poss}. 

Then, we use local analysis of the terminal singularities on $X$ as well as some global geometric data on the K3 surface $S$ to conclude. 
In particular, we use the description given in \cite{BH23} of the invariant cohomology group of $S$ which is given in Proposition~\ref{prop: determinants}, see also Corollary~\ref{cor-rank-1-is-smooth} and Corollary~\ref{cor-rank-2-is-transitive} for geometric implications. We also need the relation between terminal singularity of $X$ and the corresponding singular point on $S$, see Section \ref{subsec-action-on-terminal}. Finally, we need various properties of symplectic and non-symplectic automorphisms of K3 surfaces as explained in Section~\ref{sect-action-on-K3}. 

Given all this, we proceed to analyze the action of $G$ on $X$ starting from the case where $X$ has only points of type $\frac{1}{2}(1,1,1)$ as non-Gorenstein singularities. Then we consider the case of more general terminal cylic quotient singularities. Finally, we deal with the case of arbitrary terminal singular points. 
It turns out that in all the cases either $G$ is of product type, of $G$ is isomorphic to one of the four exceptional groups as in Theorem \ref{thm-main-thm}. This concludes the proof. 

\subsection*{Structure of the paper}
In Section \ref{sec-prelim}, we collect some preliminary results. 
In particular, we discuss extensions of finite abelian groups and known results on finite abelian subgroups of Cremona groups in dimensions $1$ and $2$. 
In Section \ref{sect-action-on-K3}, we study group actions on K3 surfaces with the emphasis on the classification of finite abelian groups acting on K3 surfaces. 
In Section \ref{sec-K3-lattices}, we consider lattices in the cohomology group of a K3 surface. 
In Section \ref{sec-terminal-sing-classification}, we recall the classification of three-dimensional terminal singularities, study their geometry and group actions on them. 
In Section \ref{sec-h0-geq-2}, we treat the case $h^0(-K_X)\geq 2$ which turns out to be rather elementary. The rest of the paper is devoted to the more complicated case $h^0(-K_X)=1$. 
In Section~\ref{sec-orbits}, we describe possible orbits of non-Gorenstein singularities depending on the group $H$. 
In Section \ref{sec-Case $h^0(-K_X)=1$}, we consider the case where all the non-Gorenstein singularities of $X$ are the points of type $\frac{1}{2}(1,1,1)$. 
In Section \ref{sec-cyclic-quotient}, we treat the case when all the non-Gorenstein singularities of $X$ are cyclic quotient singularities. 
In Section \ref{sec-terminal-sing}, we treat the case of terminal points which are non necessarily cyclic quotient singularities. 

\subsection*{Acknowledgements}
The work of the first author was performed at the Steklov International Mathematical Center and supported by the Ministry of Science and Higher Education of the Russian Federation (agreement no. 075-15-2022-265), supported by the HSE University Basic Research Program, the Simons Foundation, and by the state assignment of MIPT (project FSMG-2023-0013). The first author is a Young Russian Mathematics award winner and would like to thank its sponsors and jury. 
The second author achieved his work at the University of Edinburgh, and was supported by the Leverhulme
Trust grant RPG-2021-229.
The authors thank Ivan Cheltsov, Dmitri Orlov, and Yuri Prokhorov for helpful discussions, and Alexander Kuznetsov for reading the draft of the paper and many useful remarks.

\section{Preliminaries}
\label{sec-prelim}
We work over the field of complex numbers $\mathbb{C}$. 
All varieties are
projective and defined over~$\mathbb{C}$ unless  stated otherwise. We will 
use the language of the minimal model program (the MMP for short), see
e.g. \cite{KM98}.

\subsection{Group actions}
We start with the following well-known results. 
\begin{lem}[cf. {\cite[Lemma 4]{Po14}}]
\label{lem-faithful-action}
Let $X$ be an algebraic variety, and $G\subset \mathrm{Aut}(X)$ be a finite subgroup. Assume $P\in X$ is a fixed point of $G$.  
Then the induced action of~$G$ on the tangent space $T_P X$ is faithful.  
\end{lem}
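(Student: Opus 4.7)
The plan is to prove the contrapositive: if $g \in G$ acts trivially on $T_P X$, then $g = \mathrm{id}_X$. The strategy is to pass to the formal completion at $P$, linearize the $G$-action there using the averaging trick (available because $|G|<\infty$ and $\mathrm{char}(\mathbb{C})=0$), and then conclude by irreducibility of $X$ together with separatedness.

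First I would consider the completed local ring $A := \hat{\mathcal{O}}_{X,P}$ with maximal ideal $\mathfrak{m}$. Since $G$ fixes $P$, it acts on $A$ preserving $\mathfrak{m}$, and hence acts on the Zariski cotangent space $\mathfrak{m}/\mathfrak{m}^2 \cong (T_P X)^*$. By averaging, one obtains a $G$-equivariant $\mathbb{C}$-linear section $\sigma\colon \mathfrak{m}/\mathfrak{m}^2 \to \mathfrak{m}$. Picking a basis $\bar x_1, \ldots, \bar x_n$ of $\mathfrak{m}/\mathfrak{m}^2$ and setting $x_i := \sigma(\bar x_i)$ yields a continuous $G$-equivariant surjection of complete local $\mathbb{C}$-algebras
\[
\pi\colon \mathbb{C}[[x_1, \ldots, x_n]] \twoheadrightarrow A,
\]
where $G$ acts on the source through its action on $\mathrm{span}(x_1, \ldots, x_n) \cong (T_P X)^*$.

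Next, suppose $g \in G$ fixes $T_P X$ pointwise. Then the dual action on $(T_P X)^*$ is trivial, so $g$ fixes each $x_i$, and therefore acts trivially on $\mathbb{C}[[x_1, \ldots, x_n]]$. By the $G$-equivariance of $\pi$, the induced action of $g$ on $A = \hat{\mathcal{O}}_{X,P}$ is trivial as well. Since the completion map $\mathcal{O}_{X,P} \hookrightarrow \hat{\mathcal{O}}_{X,P}$ is injective (the local ring being Noetherian), the automorphism $g^*$ of $\mathcal{O}_{X,P}$ equals the identity, and hence $g$ and $\mathrm{id}_X$ agree on some Zariski open neighborhood $U \ni P$. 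Two morphisms $X \to X$ that coincide on a non-empty open subset of an irreducible separated variety must be equal, so $g = \mathrm{id}_X$.

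The main obstacle is the linearization step producing the $G$-equivariant surjection $\pi$; once it is in place, the remainder is a routine rigidity argument. This step uses both $|G|<\infty$ and $\mathrm{char}(\mathbb{C})=0$, and critically does \emph{not} require smoothness of $X$ at $P$, since it deals only with the intrinsic filtration by powers of $\mathfrak{m}$. Note that irreducibility of $X$ is also implicit; if $X$ were reducible, one would need the observation that $P$ lies in a unique irreducible component on which $g$ restricts, which is automatic in the paper's geometric setting.
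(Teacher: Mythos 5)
The paper gives no proof of this lemma; it is quoted from \cite[Lemma 4]{Po14}, so there is nothing internal to compare against. Your argument is correct and is essentially the standard one behind the cited result: formal linearization at the fixed point via averaging (valid since $|G|<\infty$ and we are in characteristic $0$), triviality on $\hat{\mathcal{O}}_{X,P}$, injectivity of $\mathcal{O}_{X,P}\hookrightarrow\hat{\mathcal{O}}_{X,P}$ by Krull intersection, and rigidity of morphisms agreeing on a dense open subset of an irreducible separated variety. You are also right to flag irreducibility as an implicit hypothesis (the statement fails for a disconnected $X$ with $g$ acting only on a component away from $P$); the paper's convention that a variety is irreducible takes care of this. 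A marginally more economical variant avoids constructing the equivariant section: since $\mathrm{gr}_{\mathfrak{m}}\mathcal{O}_{X,P}$ is generated in degree one, triviality on $\mathfrak{m}/\mathfrak{m}^2$ forces $g$ to act unipotently on each $\mathcal{O}_{X,P}/\mathfrak{m}^k$, and a finite-order unipotent operator in characteristic zero is the identity; but this buys nothing essential over your linearization.
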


By $\mathfrak{r}(G)$ we denote the \emph{rank} of a group $G$, that is, the minimal number of generators. 

\begin{example}
If a finite abelian group $G$ linearly and faithfully acts on a vector space~$V$ then $\mathfrak{r}(G)\leq \dim V$. 
\end{example}

\begin{lem}[{cf. \cite[Lemma 2.6]{Pr11}, \cite[Lemma 2.8]{Lo24}}]
\label{cor-fixed-curve-surface}
Let $X$ be a three-dimensional algebraic variety with isolated singularities, and $G\subset \mathrm{Aut}(X)$ be a finite abelian subgroup. 
\begin{enumerate}
\item 
If there is a curve $C\subset X$ of $G$-fixed points, then $\mathfrak{r}(G) \leq 2$. 
\item
If there is a (possibly reducible) divisor $S\subset X$ of $G$-fixed points, then $\mathfrak{r}(G)\leq 1$.
If moreover $S$ is singular along a curve, then $G$ is trivial.
\item 
If $X$ is smooth, and $S\subset X$ is a Weil divisor of $G$-fixed points such that $S$ is singular, then $G$ is trivial. 
\end{enumerate}
\end{lem}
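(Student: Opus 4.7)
\medskip

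\noindent\textbf{Proof proposal.} The plan in every case is to pick a carefully chosen point $P$ on the fixed locus, pass to the induced linear action of $G$ on the tangent space $T_PX$ (which is faithful by Lemma~\ref{lem-faithful-action}), and exploit the fact that an abelian finite subgroup of $\mathrm{GL}(T_PX)$ is simultaneously diagonalizable, so that $\mathfrak{r}(G)$ is bounded by the number of non-trivial characters appearing.

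For (1), since the singular locus of $X$ is a finite set of points, a general point $P\in C$ is smooth on $X$, so $\dim T_PX=3$. Diagonalizing the $G$-action, the line $T_PC\subset T_PX$ lies in the trivial character, and $G$ acts on the remaining two-dimensional complement through at most two characters; since this two-character representation is still faithful, $\mathfrak{r}(G)\le 2$.

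For (2), work one irreducible component $S_0\subset S$ at a time. A general point $P$ of $S_0$ is smooth on $X$ and smooth on $S_0$, so $T_PS_0\subset T_PX$ is a two-dimensional $G$-fixed subspace. The remaining one-dimensional quotient carries a single character of $G$, and faithfulness forces $\mathfrak{r}(G)\le 1$. If $S$ is singular along a curve $D$, choose a general point $P$ of $D$ that is smooth on $X$; since $X$ is smooth at $P$, the divisor $S$ is locally Cartier, cut out by a single equation whose differential vanishes at $P$, so $T_PS=T_PX$. But then the whole tangent space $T_PX$ is pointwise $G$-fixed, and Lemma~\ref{lem-faithful-action} forces $G$ to be trivial.

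For (3), $X$ itself is smooth, so we do not need to avoid singularities of $X$: pick any singular point $P$ of $S$. As in the previous paragraph, $S$ is Cartier at $P$ and the vanishing of the differential of its local equation gives $T_PS=T_PX$. Since $G$ fixes $S$ pointwise, the induced linear action of $G$ on $T_PX$ is trivial, and Lemma~\ref{lem-faithful-action} again yields $G=\{\mathrm{id}\}$.

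The argument is essentially routine tangent-space bookkeeping, so I do not expect a conceptual obstacle; the only delicate point is the genericity choice in the second half of (2), where one must ensure a general point of the singular curve $D$ of $S$ avoids the (finite) singular locus of $X$, after which the jump in $\dim T_PS$ forces $T_PS=T_PX$.
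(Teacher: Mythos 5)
Your argument is correct, and it is exactly the standard linearization argument: the paper itself states this lemma without proof (citing \cite[Lemma 2.6]{Pr11} and \cite[Lemma 2.8]{Lo24}), and those sources prove it precisely by diagonalizing the faithful $G$-action on $T_PX$ at a suitably general fixed point and counting characters, as you do. The genericity point you flag (a general point of the curve, or of the singular curve of $S$, avoids the finite singular locus of $X$) is indeed the only thing to check, and the hypothesis of isolated singularities handles it.
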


\subsection{Extensions of finite abelian groups}

Let $G$ be a finite abelian group.
In what follows, we will denote by $G_p$ the $p$-Sylow subgroup of $G$ where $p$ is a prime number, so we have
\[
G=\prod_{p} G_p.
\]
We say that $G_p$ is the \emph{$p$-part} of $G$.  Also, a sequence of finite abelian groups 
\begin{equation}
\label{extension-just-another-extension}
0\to H\to G\to K \to0
\end{equation}
is exact if and only if for any prime $p$ the $p$-parts $H_p, G_p, K_p$ of the groups $H,G,K$, respectively, form an exact sequence
\begin{equation}
\label{extension-p-groups}
0\to H_p\to G_p\to K_p \to0.
\end{equation}
We say that the exact sequence \eqref{extension-p-groups} is the \emph{$p$-part} of the exact sequence \eqref{extension-just-another-extension}.

For an abelian $p$-group $G_p$, we say that $G_p$ has type 
\[
\lambda=[\lambda_1,\ldots,\lambda_k] \quad \quad \quad \text{for} \quad \quad \quad \lambda_1\geq\ldots\geq \lambda_k > 0
\]
if 
\[
G_p=\mathbb{Z}/p^{\lambda_1}\times\ldots\times\mathbb{Z}/p^{\lambda_k}.
\]
Note that the type of an abelian $p$-group is defined uniquely. 

To any type $\lambda=[\lambda_1,\ldots,\lambda_k]$ corresponds the Young diagram with $\lambda_i$ boxes in the $i$-th row. For two Young diagrams $\lambda=[\lambda_1,\ldots,\lambda_k]$ and $\mu=[\mu_1,\ldots \mu_l]$, one can define their product $\lambda\cdot \mu$ as a formal linear combination of Young diagrams with non-negative coefficients, see e.g. {\cite[Section 2]{Fu00}}. Then the \emph{Littlewood--Richardson coefficient} $c^\nu_{\lambda\mu}$ is the coefficient at the Young diagram $\nu=[\nu_1,\ldots \nu_s]$ in the product of Young diagrams~$\lambda\cdot \mu$. 

We recall the following criterion, which gives all the possible isomorphism classes of groups which fit into an exact sequence \eqref{extension-p-groups}, in the case of finite abelian $p$-groups.

\begin{thm}[{\cite[Section 2]{Fu00}}]
\label{littelwood}
Let $G_p,H_p,$ and $K_p$ be finite abelian $p$-groups, respectively of types $\mu,\lambda$ and $\nu$. 
Then an extension of the form
\[
	1\to H_p\to G_p\to K_p\to1
\]
exists if and only if for the Littlewood--Richardson coefficient we have $c^\mu_{\lambda\nu}>0$. 
\end{thm}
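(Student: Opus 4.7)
The plan is to reduce the theorem to a classical result relating subgroup enumerators in finite abelian $p$-groups to Littlewood--Richardson coefficients, due essentially to Ph.~Hall and T.~Klein.

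First, I would recast the question in terms of modules over the discrete valuation ring $\mathbb{Z}_p$: a finite abelian $p$-group of type $\lambda$ is nothing but a finite torsion $\mathbb{Z}_p$-module with elementary divisor partition $\lambda$, and an extension of abelian $p$-groups is exactly a short exact sequence of the corresponding $\mathbb{Z}_p$-modules. Thus the existence of an extension $0\to H_p\to G_p\to K_p\to 0$ of types $\lambda,\mu,\nu$ is equivalent to the non-emptiness, inside a fixed $\mathbb{Z}_p$-module $M$ of type $\mu$, of the set $S^\mu_{\lambda\nu}$ of submodules of $M$ of type $\lambda$ whose quotient has type $\nu$.

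Next, I would invoke Hall's theorem (see Macdonald's \emph{Symmetric Functions and Hall Polynomials}, Ch.~II, and \cite[Section 2]{Fu00}): the cardinality of $S^\mu_{\lambda\nu}$ equals the value at $t=p$ of the Hall polynomial $g^\mu_{\lambda\nu}(t)\in \mathbb{Z}_{\ge 0}[t]$, which has non-negative integer coefficients and whose coefficient of $t^{n(\mu)-n(\lambda)-n(\nu)}$, with $n(\lambda):=\sum_{i}(i-1)\lambda_i$, equals the Littlewood--Richardson coefficient $c^\mu_{\lambda\nu}$. If $c^\mu_{\lambda\nu}>0$ this gives $g^\mu_{\lambda\nu}(p)>0$ immediately; for the converse I would cite Klein's theorem that $c^\mu_{\lambda\nu}=0$ forces $g^\mu_{\lambda\nu}(t)$ to vanish identically, so in particular $g^\mu_{\lambda\nu}(p)=0$. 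Combining the two implications produces the equivalence claimed in the theorem.

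The main obstacle is that the Hall--Klein theorem is itself not elementary: its proof passes through the algebra structure on the free abelian group generated by isomorphism classes of finite abelian $p$-groups (the Hall algebra), its identification with the algebra of symmetric functions via the Hall--Littlewood basis, and the interpretation of the structure constants of the Schur basis as Littlewood--Richardson coefficients. For the present paper I would not reprove this machinery but rather import it through the references above; the only verification worth noting is that non-negativity of the coefficients of $g^\mu_{\lambda\nu}(t)$ makes the existence criterion $c^\mu_{\lambda\nu}>0$ genuinely independent of the prime $p$, in agreement with the prime-free form in which the theorem is stated.
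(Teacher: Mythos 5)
The paper offers no proof of this statement: it is imported directly as a citation to \cite[Section 2]{Fu00}, where exactly this Hall-polynomial characterization (the submodule count $g^\mu_{\lambda\nu}(p)$ is nonzero if and only if $c^\mu_{\lambda\nu}>0$) is recorded. Your argument reproduces precisely the standard Hall--Green--Klein--Macdonald reasoning behind that citation, including the correct observation that non-negativity of the coefficients of $g^\mu_{\lambda\nu}(t)$ is what makes the criterion independent of $p$, so it is correct and follows essentially the same route as the paper's source.
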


\subsection{Groups of product type}
\label{subsec-pt}
We recall the results on finite abelian subgroups of Cremona groups in lower dimensions. The one-dimensional case is elementary.
\begin{proposition}
\label{prop-abelian-subgroup-of-pgl2}
Let $G$ be a finite abelian subgroup of $\mathrm{Cr}_1(\mathbb{C})=\mathrm{Aut}(\mathbb{P}^1)=\mathrm{PGL}(2, \mathbb{C})$. Then $G$ is isomorphic to one of the following groups: 
\begin{enumerate}
\item
$\mathbb{Z}/n$,\ \ $n\geq1$,
\item
$(\mathbb{Z}/2)^2$. 
\end{enumerate}
\end{proposition}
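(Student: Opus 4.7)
The plan is to prove the proposition by exploiting the well-known fact that any element of finite order in $\mathrm{PGL}(2,\mathbb{C})$ is diagonalizable (since it lifts to a finite-order element of $\mathrm{GL}(2,\mathbb{C})$, which is diagonalizable over $\mathbb{C}$), and hence has exactly two fixed points on $\mathbb{P}^1$. The strategy is to split into two cases according to the maximal order of an element of $G$.

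First I would let $g \in G$ be an element of maximal order $n$. If $n = 1$ the group is trivial, so I may assume $n \geq 2$. After conjugation by a suitable element of $\mathrm{PGL}(2,\mathbb{C})$, I arrange that $g$ fixes the two points $\{0, \infty\}$ of $\mathbb{P}^1$, i.e.\ $g$ is represented by $\mathrm{diag}(\zeta_n, 1)$. Since $G$ is abelian, every $h \in G$ commutes with $g$ and therefore stabilizes the two-point set $\mathrm{Fix}(g) = \{0, \infty\}$. So each $h$ either fixes both points (and is then diagonal, of the form $z \mapsto \lambda z$) or interchanges them (and is of the form $z \mapsto a/z$).

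Next I would treat the case $n \geq 3$. An element $h$ swapping $0$ and $\infty$ satisfies $h g h^{-1} = g^{-1}$; combined with $h g h^{-1} = g$ from commutativity, this forces $g^2 = \mathrm{id}$, contradicting $n \geq 3$. Thus every element of $G$ is diagonal, so $G$ embeds into the diagonal torus $\mathbb{C}^* \subset \mathrm{PGL}(2,\mathbb{C})$, and any finite subgroup of $\mathbb{C}^*$ is cyclic, giving case~(1). In the remaining case $n = 2$ every element of $G$ is an involution, so $G$ is an elementary abelian $2$-group. I would then show $G$ has rank at most $2$: up to conjugation the three nontrivial involutions commuting pairwise with $z \mapsto -z$ and $z \mapsto 1/z$ are exactly $z \mapsto -z$, $z \mapsto 1/z$, $z \mapsto -1/z$, and the third is the product of the first two; any further commuting involution must again preserve $\{0,\infty\}$ and commute with $z \mapsto 1/z$, and a short direct check on the coefficients rules out any new element. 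Hence $G \subseteq (\mathbb{Z}/2)^2$, and both cyclic groups of arbitrary order and $(\mathbb{Z}/2)^2$ are visibly realized, which gives case~(2).

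No real obstacle is expected: this is a classical elementary argument, and the only mildly delicate step is the explicit verification in the case $n = 2$ that the maximal elementary abelian $2$-subgroup of $\mathrm{PGL}(2,\mathbb{C})$ has rank $2$, which is a two-line computation with fractional linear transformations.
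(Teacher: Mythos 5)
Your argument is correct: the reduction to the fixed-point set $\{0,\infty\}$ of an element of maximal order, the dichotomy between diagonal elements and swaps $z\mapsto a/z$, the observation that a swap conjugates $g$ to $g^{-1}$ (forcing $n\leq 2$), and the explicit centralizer computation in the elementary abelian case are all sound and together give exactly the two families $\mathbb{Z}/n$ and $(\mathbb{Z}/2)^2$. The paper states this proposition as a classical fact without proof, so there is nothing to compare against; your write-up is the standard elementary argument one would supply.
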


\begin{thm}[{\cite{Bl07}}]
\label{thm-surfaces}
Let $G$ be a finite abelian subgroup of $\mathrm{Cr}_2(\mathbb{C})$. Then $G$ is isomorphic to one of the following groups:
\begin{enumerate}
\item
\label{cremona-plane-abelian-1}
$\mathbb{Z}/n\times \mathbb{Z}/m$,\ \ $n\geq
1,\ m\geq 1$,

\item
\label{cremona-plane-abelian-2}
$\mathbb{Z}/2n\times (\mathbb{Z}/2)^2$,\ \ $n\geq 1$,

\item
\label{cremona-plane-abelian-3}
$(\mathbb{Z}/4)^2\times \mathbb{Z}/2$,

\item
\label{cremona-plane-abelian-4}
$(\mathbb{Z}/3)^3$,

\item
\label{cremona-plane-abelian-5}
$(\mathbb{Z}/2)^4$.
\end{enumerate}
\end{thm}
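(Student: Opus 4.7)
The plan is to follow the by-now standard $G$-equivariant minimal model program approach, reducing the problem to the classification of finite abelian subgroups on the two types of rational $G$-Mori fiber spaces in dimension two. Given any finite $G\subset \mathrm{Cr}_2(\mathbb{C})$, I would first realize $G$ faithfully on a smooth projective rational surface by equivariant resolution, and then run a $G$-equivariant MMP to arrive at a smooth $G$-minimal model $X$. Such an $X$ is either a $G$-del Pezzo surface with $\mathrm{Pic}(X)^G \cong \mathbb{Z}$, or a $G$-conic bundle $\pi\colon X\to \mathbb{P}^1$ with $\mathrm{Pic}(X)^G\cong \mathbb{Z}^2$. It therefore suffices to list the finite abelian subgroups of $\mathrm{Aut}(X)$ in each case, and then verify that every such subgroup embeds into one of the five families (1)--(5).

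For the conic bundle case, the base gives an exact sequence
\[
1 \longrightarrow G_F \longrightarrow G \longrightarrow G_B \longrightarrow 1,
\]
where $G_B\subset \mathrm{PGL}(2,\mathbb{C})$ and $G_F$ acts faithfully on a general fiber $\mathbb{P}^1$ (by Lemma \ref{lem-faithful-action} applied at a fixed point on the base). By Proposition \ref{prop-abelian-subgroup-of-pgl2} both $G_B$ and $G_F$ are cyclic or $(\mathbb{Z}/2)^2$. Since $G$ is abelian, a standard analysis of the action on the finitely many singular fibers (each with two components swapped or fixed by elements of $G$) bounds the possible extensions; in particular $G_F$ can be enlarged to $(\mathbb{Z}/2)^2$ only when there are enough reducible fibers fixed by a lift, which leads to the De Jonquières-type groups $\mathbb{Z}/2n\times (\mathbb{Z}/2)^2$. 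This covers families (1) and (2).

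For the del Pezzo case I would proceed degree by degree on $X$ of $K_X^2=d\in\{1,\dots,9\}$, using that $G$ embeds into the well-known $\mathrm{Aut}(X)$. For $d\geq 5$ the surface is toric (or admits a large torus) and one recovers only $\mathbb{Z}/n\times \mathbb{Z}/m$ and its subgroups. The nontrivial families come from the small degree cases: $d=4$ yields a smooth intersection of two quadrics in $\mathbb{P}^4$ with the diagonal $(\mathbb{Z}/2)^5/\mathrm{diag}\cong (\mathbb{Z}/2)^4$-symmetry, giving (5); the Fermat (Hesse) cubic surface and its cousins in $d=3$ give the extra abelian subgroup $(\mathbb{Z}/3)^3$, giving (4); and $d\leq 2$ yields via the covering involutions of degree $2$ and $3$ del Pezzo surfaces the extension $(\mathbb{Z}/4)^2\times \mathbb{Z}/2$, giving (3). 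At each degree one enumerates maximal finite abelian subgroups of $\mathrm{Aut}(X)$ using the representation of $G$ on the anticanonical model and its action on the configuration of $(-1)$-curves (equivalently, on the root lattice $K_X^\perp\subset \mathrm{Pic}(X)$).

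The main obstacle is not the reduction step but the bookkeeping in the del Pezzo case: for degrees $1$ and $2$ one must carefully use the Bertini and Geiser involutions (central elements of order $2$ in $\mathrm{Aut}(X)$) together with the abelianization of the Weyl groups $W(E_8)$ and $W(E_7)$ to show that no exotic abelian subgroup appears beyond those in the list. A secondary subtle point is to check that two \emph{different} $G$-minimal models cannot contribute genuinely new abelian groups; this is handled by Sarkisov links, which for finite abelian actions were worked out in \cite{Bl07} and have the property that they do not change the isomorphism class of the acting group. Once these two points are settled, comparing with the explicit realizations (diagonal tori, conic bundles with extra involutions, and the Segre/Hesse/Bertini examples) shows the list is complete and each entry is realized.
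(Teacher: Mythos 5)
The paper gives no proof of this statement: it is imported verbatim from Blanc \cite{Bl07}, so there is nothing internal to compare against. Your outline --- equivariant resolution, $G$-equivariant MMP to a $G$-minimal rational surface which is either a $G$-del Pezzo surface or a $G$-conic bundle over $\mathbb{P}^1$, followed by a degree-by-degree enumeration of abelian subgroups of $\mathrm{Aut}(X)$ and an analysis of the fiber/base exact sequence in the conic bundle case --- is precisely the strategy of the cited reference (and of \cite{DI09}), so it is the same approach; just be aware that the actual content of the theorem lives in the case-by-case enumeration (degrees $1$--$4$, the singular fibers of the conic bundles, and the links between minimal models), which your proposal correctly identifies as the main work but does not carry out.
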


\begin{defin}
\label{def-product-type}
We say that a finite abelian group $G$ is \emph{a
group of product type} if~$G= G_1\times G_2$ where $G_i\subset \mathrm{Cr}_i(\mathbb{C})$. 
In particular, $G$ is isomorphic to a subgroup in
\[
\mathrm{Cr}_1(\mathbb{C})\times\mathrm{Cr}_2(\mathbb{C})\subset
\mathrm{Cr}_3(\mathbb{C}).
\] 
\end{defin}

For example, if for a finite abelian group $G$ we have $\mathfrak{r}(G)\leq 3$, then $G$ is of product type.  
Using Proposition \ref{prop-abelian-subgroup-of-pgl2} and Theorem \ref{thm-surfaces}, we obtain the list of groups of product type:
\begingroup
\renewcommand{\arraystretch}{1.2}
\begin{center}
\label{table-pt}
\begin{tabular}{ | m{1.3em} | m{5.0cm} | m{3.5cm} | } 
  \hline
   & $G$ &  \\ 
  \hline
  (1) & $\mathbb{Z}/k\times \mathbb{Z}/l\times
\mathbb{Z}/m$ & $k\geq 1,\ l\geq 1,\ m\geq 1$ \\ 
  \hline
  (2) & $\mathbb{Z}/2k\times(\mathbb{Z}/4)^2\times
\mathbb{Z}/2$ & $k\geq 1$ \\ 
    \hline
  (3) & $\mathbb{Z}/3k\times(\mathbb{Z}/3)^3$ & $k\geq 1$ \\ 
    \hline
  (4) & $\mathbb{Z}/2k\times \mathbb{Z}/2l\times
(\mathbb{Z}/2)^2$ & $k\geq 1,\ l\geq 1$ \\ 
    \hline
  (5) & $\mathbb{Z}/2n\times (\mathbb{Z}/2)^4$ & $n\geq 1$ \\ 
    \hline
  (6) & $(\mathbb{Z}/4)^2\times (\mathbb{Z}/2)^3$ & \\ 
    \hline
  (7) & $(\mathbb{Z}/2)^6$ &  \\ 
    \hline
\end{tabular}

\
\par
\emph{Table 2. Groups of product type}
\end{center}
\endgroup

The next proposition establishes the main property of finite abelian subgroups in the Cremona groups of rank $1$ and $2$. 

\begin{proposition}[{\cite[Proposition 3.12]{Lo24}}]
\label{prop-abstact-extension}
Let $H\subset \mathrm{Cr}_1(\mathbb C)$ and $K\subset \mathrm{Cr}_2(\mathbb C)$ be finite abelian groups. Then an abelian extension $G$ of $H$ by $K$ (or $K$ by $H$) is of product type.
\end{proposition}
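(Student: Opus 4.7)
My plan is to reduce the problem to one on $p$-Sylow subgroups and then carry out a finite case check. A sequence $0 \to K \to G \to H \to 0$ of finite abelian groups is exact if and only if every $p$-part sequence $0 \to K_p \to G_p \to H_p \to 0$ is exact, so I can analyze each prime separately and reassemble at the end. The two directions of extension $0\to K\to G\to H\to 0$ and $0\to H\to G\to K\to 0$ lead to the same list of possible $G_p$, since the Littlewood--Richardson coefficient $c^\mu_{\lambda\nu}$ in Theorem \ref{littelwood} is symmetric in $\lambda$ and $\nu$. Thus it suffices to enumerate, for each pair $(H_p, K_p)$, all possible types of $G_p$ via Theorem \ref{littelwood}, and then verify that the resulting group $G = \prod_p G_p$ fits as one of the rows of Table 2.

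The first step is to list the possible $p$-parts. By Proposition \ref{prop-abelian-subgroup-of-pgl2}, $H$ is cyclic or isomorphic to $(\mathbb{Z}/2)^2$, and from Theorem \ref{thm-surfaces} the rank of $K_p$ exceeds $2$ only in the three exceptional situations: $K$ in case (3) giving $K_2 = (\mathbb{Z}/4)^2 \times \mathbb{Z}/2$, $K$ in case (4) giving $K = (\mathbb{Z}/3)^3$, or $K$ in case (5) giving $K = (\mathbb{Z}/2)^4$. Outside of these exceptional cases, $G_p$ has rank at most $3$ for every $p$, so $G$ fits in row (1) of Table 2. For the exceptional cases I would invoke the Pieri rule to compute $s_\lambda \cdot s_\nu$ (equivalently $s_\lambda \cdot e_k$ when $H_2$ is elementary abelian) and read off the admissible types $\mu$ for $G_p$.

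The main obstacle is checking compatibility of the $p$-parts across all primes. The rows (2)--(7) of Table 2 impose constraints simultaneously on the $2$-part and on the odd parts (for example, row (2) forces the odd part of $G$ to be cyclic, while row (3) forces the $2$-part to be cyclic). The observation that makes compatibility work is that each exceptional case of Theorem \ref{thm-surfaces} severely restricts the complementary $p$-parts of $K$: case (4) forces $K_p = 0$ for $p \neq 3$, case (5) forces $K$ to have no odd part, and case (3) forces $K$ to have cyclic odd part. Combined with $H$ being cyclic or Klein four, these restrictions guarantee that some row of Table 2 with appropriate parameters accommodates $G$. For instance, if $K = (\mathbb{Z}/2)^4$ and $H = (\mathbb{Z}/2)^2$, then $G$ is a $2$-group of order $2^6$, and the Pieri rule gives
\[
G_2 \in \bigl\{(\mathbb{Z}/2)^6,\ \mathbb{Z}/4 \times (\mathbb{Z}/2)^4,\ (\mathbb{Z}/4)^2 \times (\mathbb{Z}/2)^2\bigr\},
\]
fitting rows (7), (5) and (2) of Table 2 respectively (with $n=2$ in the second and $k=1$ in the third). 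All remaining combinations of $(H, K)$ are handled by an entirely analogous finite calculation.
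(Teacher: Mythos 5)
The paper does not actually prove this statement; it is imported wholesale from \cite[Proposition 3.12]{Lo24}, so your proposal has to stand on its own. Your overall strategy --- decompose into $p$-parts, enumerate the possible $G_p$ via Theorem \ref{littelwood} and the Pieri rule, and check the reassembled groups against Table 2 --- is sound, and the remark that $c^\mu_{\lambda\nu}=c^\mu_{\nu\lambda}$ correctly reduces the two directions of extension to one computation.

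However, the step ``outside of these exceptional cases, $G_p$ has rank at most $3$ for every $p$, so $G$ fits in row (1) of Table 2'' is false, for two separate reasons. First, your list of exceptional cases omits case (2) of Theorem \ref{thm-surfaces}: for $K=\mathbb{Z}/2n\times(\mathbb{Z}/2)^2$ the $2$-part is $\mathbb{Z}/2^a\times(\mathbb{Z}/2)^2$ with $a\geq 1$, which already has rank $3$. Second, even when every $K_p$ has rank at most $2$, taking $H=(\mathbb{Z}/2)^2$ allows $G_2$ to have rank $\mathrm{rank}(K_2)+2$, i.e.\ up to $4$ (and up to $5$ in the omitted case (2)); for instance $K=(\mathbb{Z}/4)^2$ and $H=(\mathbb{Z}/2)^2$ admit the extension $G=(\mathbb{Z}/4)^2\times(\mathbb{Z}/2)^2$, which has rank $4$ and does not fit row (1). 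The proposition itself survives --- these missed groups do land in rows (2)--(7) of Table 2 (e.g.\ $(\mathbb{Z}/4)^2\times(\mathbb{Z}/2)^2$ is row (4) with $k=l=2$, and the extensions of $(\mathbb{Z}/2)^2$ by $\mathbb{Z}/2^a\times(\mathbb{Z}/2)^2$ all land in rows (1), (4) or (5)) --- but your case division does not cover them, and the ``entirely analogous finite calculation'' you defer to must in fact be carried out for every combination in which $H=(\mathbb{Z}/2)^2$ or $K$ is of type (2), not only for your three exceptional $K$'s. As written, the argument has a genuine hole precisely where $G$ has rank $4$ or $5$ without $K$ being one of the groups you flagged.
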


The main object of study in this paper is the following class of groups. 

\begin{defin}
    We say that a finite abelian group~$G$ is of \emph{K3 type}, if $G$ is an abelian extension of a finite abelian group $H$ that faithfully acts on a K3 surface, by a cyclic group:
\begin{equation}
\label{eq-k3-type-prelim}
0\to \mathbb{Z}/m\to G \to H\to 0.
\end{equation}
\end{defin}
\begin{remark}
\label{rem-k3-or-product}
If the group $H$ from \eqref{eq-k3-type-prelim} is isomorphic to a subgroup of $\mathrm{Cr}_2(\mathbb{C})$, then by Proposition \ref{prop-abstact-extension} wee see that $G$ is of product type. Hence, to study groups of K3 type which are not of product type, we may assume that $H$ is not isomorphic to one of the groups from Theorem \ref{thm-surfaces}.
\end{remark}

\subsection{Fermat complete intersections}
The main source of examples of the group actions on algebraic varieties will be given by the following construction. 
Consider a weighted projective space 
\[
\PP=\PP(a_0^{r_0}:\ldots:a_M^{r_M})
\]
where $a_i^{r_i}$ stands for $r_i\geq 1$ consecutive identical weights $a_i$, and $1\leq a_0\leq\ldots\leq a_M$. Put $N=\sum r_i$.
Then $\PP$ is called well-formed, if $\gcd(a_i)=1$ for any set of $N-1$ numbers $a_i$. Let us recall the structure of the automorphism groups of weighted projective spaces, cf. \cite{PrS20}.

\begin{lem}
\label{lem-automoprhism-of-wps}
Let $\PP=\PP(a_0^{r_0},\dots,a_M^{r_M})$ be a well-formed weighted projective space, 
and $1\leq a_0\leq\ldots\leq a_M$. Then $\aut(P)=R\rtimes L$, where $R$ is generated by automorphisms of the form
\[\begin{tikzcd}
	{[x_{0,1}:\ldots:x_{0,r_0}:x_{1,1}:\ldots:x_{1,r_1}:\ldots:x_{M,1}:\ldots:x_{M,r_M}]} \\
	{        [x_{0,1}:\ldots:x_{0,r_0}:x_{1,1}+\phi_{1,1}:\ldots:x_{1,r_1}+\phi_{1,r_1}:\ldots:x_{M,1}+\phi_{M,1}:\ldots:x_{M,r_M}+\phi_{M,r_M}],}
	\arrow[maps to, from=1-1, to=2-1]
\end{tikzcd}\]
    where each $\phi_{p,q}$ is a polynomial of degree $a_i$ in the variables $x_{i,j}$ with $i<p$ and $1\leq j\leq r_i$,
    and $L$ is the quotient of $\gl_{r_0}\times\dots\times\gl_{r_M}$ by $\{(t^{a_0}I_{r_0},\dots,t^{a_M}I_{r_M}),t\in\C^\times\}=\C^\times$.
\end{lem}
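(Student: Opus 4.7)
The plan is to reduce the classification to graded $\C$-algebra automorphisms of the Cox ring, and then carry out a weight-by-weight analysis. Write $S=\C[x_{i,j}]$ graded by $\deg(x_{i,j})=a_i$, so that $\PP=\mathrm{Proj}\, S$. Well-formedness ensures that the natural map sending a graded ring automorphism of $S$ to the induced variety automorphism of $\PP$ is surjective with kernel exactly the $\C^\times$-subgroup acting by $t\cdot x_{i,j}=t^{a_i}x_{i,j}$. Consequently, $\aut(\PP)=\mathrm{Aut}_{\mathrm{gr}}(S)/\C^\times$, and it suffices to describe $\mathrm{Aut}_{\mathrm{gr}}(S)$.

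First I would analyse an arbitrary $\sigma\in\mathrm{Aut}_{\mathrm{gr}}(S)$. Assuming the distinct weights satisfy $a_0<a_1<\ldots<a_M$ (with $r_i$ recording the multiplicity of $a_i$), $\sigma$ is determined by the images $\sigma(x_{p,q})$, each a homogeneous polynomial of weight $a_p$. Because all weights are positive and strictly increasing in $p$, each weight-$a_p$ monomial either coincides with some $x_{p,j}$ or involves only variables $x_{i,j}$ with $i<p$, yielding a unique decomposition
\[
\sigma(x_{p,q})=\sum_{j=1}^{r_p}\alpha^{(p)}_{qj}\,x_{p,j}+\phi_{p,q},\qquad \phi_{p,q}\in\C[x_{i,j}:i<p]_{a_p}.
\]
An induction on $p$, working modulo the ideal generated by the variables of weight strictly less than $a_p$, shows that $\sigma$ is invertible if and only if each linear block $A^{(p)}=(\alpha^{(p)}_{qj})$ lies in $\gl_{r_p}$.

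Then I would identify two natural subgroups of $\mathrm{Aut}_{\mathrm{gr}}(S)$: the set $R$ of automorphisms with all $A^{(p)}=I_{r_p}$, and the set $\widetilde L$ of block-diagonal automorphisms with all $\phi_{p,q}=0$. Tautologically $\widetilde L\cong \gl_{r_0}\times\ldots\times\gl_{r_M}$. That $R$ is a subgroup follows from a direct check: substituting polynomials in lower-weight variables into polynomials in lower-weight variables stays in lower-weight variables, and invertibility follows by the same inductive recursion. Uniqueness of the factorisation $\sigma=\sigma_L\circ\sigma_R$ with $\sigma_L\in\widetilde L$ and $\sigma_R\in R$ is immediate from the decomposition of $\sigma(x_{p,q})$ above. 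Normality of $R$ follows because $\widetilde L$ acts linearly on each homogeneous piece of $S$, so conjugating $x_{p,q}\mapsto x_{p,q}+\phi_{p,q}$ by $g\in\widetilde L$ produces a map of the same shape with a new weight-$a_p$ polynomial in lower-weight variables in place of $\phi_{p,q}$. Hence $\mathrm{Aut}_{\mathrm{gr}}(S)=R\rtimes\widetilde L$.

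Finally, the $\C^\times$-scaling $(t^{a_0}I_{r_0},\ldots,t^{a_M}I_{r_M})$ lies in $\widetilde L$ and is central in $\mathrm{Aut}_{\mathrm{gr}}(S)$, because degree-wise scalar multiplication commutes with every graded automorphism. Quotienting therefore respects the semidirect product structure and yields $\aut(\PP)=R\rtimes L$ with $L=\widetilde L/\C^\times$, as claimed. The one genuinely delicate point is the initial identification $\aut(\PP)=\mathrm{Aut}_{\mathrm{gr}}(S)/\C^\times$, which relies on well-formedness to exclude exotic automorphisms of $\PP$ not coming from graded automorphisms of its homogeneous coordinate ring; once this is granted, the remainder of the argument is essentially linear-algebraic bookkeeping indexed by the weight filtration.
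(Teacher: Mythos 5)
Your argument is correct, but it is worth noting that the paper does not actually prove this lemma at all: it is stated as a recollection with a pointer to the reference [PrS20], so you have reconstructed the proof rather than paralleled one. Your reconstruction is the standard one and all the bookkeeping checks out: since the distinct weights are strictly increasing and positive, every monomial of weight $a_p$ is either a single variable $x_{p,j}$ or a product of variables of strictly smaller weight, which gives the unique decomposition $\sigma(x_{p,q})=\sum_j\alpha^{(p)}_{qj}x_{p,j}+\phi_{p,q}$; the ideal $(x_{i,j}:i<p)$ is preserved by every graded automorphism, so invertibility is detected block by block; the factorisation $\sigma=\sigma_L\circ\sigma_R$, the normality of $R$ under conjugation by $\widetilde L$, and the centrality of the one-parameter subgroup $(t^{a_0}I_{r_0},\dots,t^{a_M}I_{r_M})$ (it acts by $t^d$ on each graded piece, hence commutes with every degree-preserving map) all go through, and $R\cap\C^\times=\{1\}$ because well-formedness forces $\gcd(a_0,\dots,a_M)=1$. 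The only substantive input you do not justify is the one you explicitly flag: that for a well-formed weighted projective space every automorphism of $\PP$ lifts to a graded automorphism of $\C[x_{i,j}]$, with kernel exactly that $\C^\times$. This is precisely the nontrivial content of the cited reference, and it genuinely requires well-formedness; since you isolate it cleanly and everything downstream is elementary, the proposal is sound, but be aware that this lifting statement is where all the actual difficulty of the lemma lives.
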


\begin{defin}
We define a \emph{Fermat hypersurface} of degree $d$ in a well-formed weighted projective space $\PP=\PP(a_0^{r_0}:\dots:a_M^{r_M})$ as follows:
\[
X_d=\left\{ \sum x_{i,j}^{d/a_i}=0 \right\} \subset \PP,
\]
where $d$ is divisible by $a_i$ for any $i$. 

Similarly, a \emph{Fermat complete intersection} of multidegree $d_1\cdot\ldots\cdot d_k$ for $k\geq 1$ in a  well-formed weighted projective space $\PP$ is given by
\[
X_{d_1\cdot\ldots\cdot d_k} =\left\{ \sum \lambda_{i,j;1} x_{i,j}^{d_1/a_i} = \ldots = \sum \lambda_{i,j;k} x_{i,j}^{d_k/a_i}=0\right\} \subset \PP,
\]
where $d_s$ is divisible by $a_i$ for any $s$ and any $i$,  and $\lambda_{i,j;s}\in \mathbb{C}$. 
\end{defin}

\begin{remark}
Note that a Fermat hypersurface is a (singular) Fano variety if and only $d<\sum_{i=0}^M r_ia_i$. A Fermat complete intersection is a (singular) Fano variety if and only if $\sum_{i=0}^k d_i<\sum_{i=0}^M r_ia_i$. 
\end{remark}

\begin{lem}
\label{lem-group-fermat-aut}
Let $X=X_d\subset \PP=\PP(a_0^{r_0}:\dots:a_M^{r_M})$ be a Fermat hypersurface. Then the group 
\[
G = ((\mathbb{Z}/(d/a_0))^{r_0}\times\ldots\times(\mathbb{Z}/(d/a_M))^{r_M})/(\mathbb{Z}/d)
\]
faithfully acts on $X$. 

Let $X'=X_{d_1\ldots d_k}\subset \PP=\PP(a_0^{r_0}:\dots:a_M^{r_M})$ be a Fermat complete intersection. Put $d'=\gcd(d_s)_{1\leq s\leq k}$. Then the group 
\[
G = ((\mathbb{Z}/(d'/a_0))^{r_0}\times\ldots\times(\mathbb{Z}/(d'/a_M))^{r_0})/(\mathbb{Z}/d')
\]
faithfully acts on $X'$.  
\end{lem}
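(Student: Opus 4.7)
The plan is to exhibit an explicit diagonal action on the affine cone over $\PP$, check that it preserves the defining equations of the Fermat variety, and then compute precisely which scalar transformations act trivially on the weighted projective space.

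First I would work on the hypersurface case. For each variable $x_{i,j}$, let $\zeta_{i,j}$ be a $(d/a_i)$-th root of unity, and consider the transformation $x_{i,j}\mapsto \zeta_{i,j} x_{i,j}$ on $\mathbb{C}^{N+1}$. Since the defining polynomial is $\sum x_{i,j}^{d/a_i}$, each monomial is multiplied by $\zeta_{i,j}^{d/a_i}=1$, so this transformation preserves $X_d$ set-theoretically. Varying the $\zeta_{i,j}$ independently, we obtain an action of the group $\widetilde G = \prod_{i=0}^M (\mathbb{Z}/(d/a_i))^{r_i}$ on $\mathbb{C}^{N+1}$ preserving the affine cone over $X_d$, hence an action on $X_d$ itself.

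Next I would identify the kernel of the induced action on $\PP$. By Lemma~\ref{lem-automoprhism-of-wps}, a diagonal scalar transformation $(x_{i,j})\mapsto (c_{i,j} x_{i,j})$ acts trivially on $\PP=\PP(a_0^{r_0}:\ldots:a_M^{r_M})$ if and only if there exists $t\in \mathbb{C}^\times$ with $c_{i,j}=t^{a_i}$ for every $(i,j)$. Combined with the finiteness constraint $c_{i,j}^{d/a_i}=1$, we get $t^d=1$; conversely any $d$-th root of unity $t$ yields such a trivially-acting element. This identifies the kernel of $\widetilde G\to \aut(\PP)$ with the cyclic group $\mathbb{Z}/d$ embedded diagonally by $t\mapsto (t^{a_i})_{i,j}$. (The well-formedness of $\PP$ ensures that this description of scalar automorphisms is correct and that no additional identifications arise on $\PP$.) Faithfulness of the resulting quotient $G=\widetilde G/(\mathbb{Z}/d)$ on $X_d$ is immediate since $\widetilde G$ already acts faithfully on $\PP$ modulo this kernel.

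The complete intersection case is essentially the same argument, the only modification being which scalings preserve all of the defining equations simultaneously. A scaling $x_{i,j}\mapsto \zeta_{i,j}x_{i,j}$ preserves the $s$-th equation precisely when $\zeta_{i,j}^{d_s/a_i}=1$ for every $s$, and since each $a_i$ divides each $d_s$, this is equivalent to $\zeta_{i,j}^{d'/a_i}=1$, where
\[
d'/a_i=\gcd\bigl(d_s/a_i : 1\leq s\leq k\bigr)=\gcd(d_s:1\leq s\leq k)/a_i.
\]
Thus the group of diagonal symmetries acting on $X'$ is $\widetilde G'=\prod_i (\mathbb{Z}/(d'/a_i))^{r_i}$, and the same analysis of scalar automorphisms as above gives kernel $\mathbb{Z}/d'$, yielding the claimed faithful action of $G=\widetilde G'/(\mathbb{Z}/d')$. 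There is no real obstacle here; the only point requiring a touch of care is the identification of the kernel, where one must invoke the well-formedness of $\PP$ so that Lemma~\ref{lem-automoprhism-of-wps} applies and pins down exactly the $d$-th (respectively $d'$-th) roots of unity as the trivially acting scalars.
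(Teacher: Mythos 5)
Your proof is correct and takes the same route as the paper, whose entire argument is the single line ``Follows from Lemma~\ref{lem-automoprhism-of-wps}''; you have simply written out the diagonal action and the identification of the kernel with the scalars $t\mapsto(t^{a_i})$, $t^d=1$, that this citation implicitly relies on. The only point you state slightly too quickly is that faithfulness on $X$ is ``immediate'' from faithfulness on $\PP$ modulo the kernel: strictly one should add that the irreducible Fermat variety is not contained in the fixed locus (a union of coordinate eigenspaces) of any diagonal element outside the scalar subgroup, but this is a one-line observation and the paper omits it entirely.
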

\begin{proof}
Follows from Lemma \ref{lem-automoprhism-of-wps}.
\end{proof}

\subsection{Lattices}
We recall some generalities on lattices, see e.g. \cite[Chapter 14]{Hu16} or \cite{Nik79}. 
By a lattice $\Lambda$ we mean a free finitely generated abelian group $\mathbb{Z}^n$ equipped with a symmetric bilinear form 
\[
B_\Lambda: \Lambda\times \Lambda\to \mathbb{Z}.
\]
The lattice $\Lambda$ is called even if $Q_\Lambda(v):=B_\Lambda(v,v)$ is even for any $v\in \Lambda$. 

The dual lattice $\Lambda^*$ is defined as 
\[
\Lambda^* = \{v\in \Lambda\otimes \mathbb{Q}\,|\, B_\Lambda(v, w)\in \mathbb{Z}\ \text{for any}\ w\in \Lambda\},
\]
where the bilinear form $B_\Lambda$ is naturally extended to $\Lambda\otimes \mathbb{Q}$. 
\emph{The discriminant group} of $\Lambda$ is defined as 
\[
A_\Lambda=\Lambda^*/\Lambda.
\]
If $B_\Lambda$ is non-degenerate then $A_\Lambda$ is a finite abelian group. In this case, its order $\mathrm{disc}(\Lambda)=|A_\Lambda|$ is called \emph{the discriminant} of $\Lambda$. Note that 
\[
\mathrm{disc}(\Lambda)=|\det B_\Lambda|,
\]    
where by abuse of notation we denote by $B_\Lambda$ the Gram matrix of $\Lambda$.

In what follows, by $k\Lambda$ we will denote the lattice obtained from a lattice $\Lambda$ by multiplying all its vectors by $k\in\mathbb{Z}$. In particular, we have $B_{k\Lambda}=k^2B_{\Lambda}$.

\subsection{Mori fiber spaces}
\label{subsec-mfs}
Let $G$ be a finite group. 
Recall that a normal projective $G$-variety~$X$ is called $G\mathbb{Q}$-factorial, if every $G$-invariant Weil divisor on~$X$ is $\mathbb{Q}$-Cartier. 
A $G\mathbb{Q}$-\emph{Mori fiber space} is a $G\mathbb{Q}$-factorial
variety $X$ 
with at worst terminal singularities together with a
$G$-equivariant contraction $f\colon X\to Z$ to a normal variety $Z$ such that $\rho^G(X/Z)=1$
and
$-K_X$ is ample over $Z$. 
If $Z$ is a point, we say that $X$ is a $G\mathbb{Q}$-Fano variety. For more details on $G$-Mori fiber spaces we refer to \cite{Pr21}.

\section{Group action on K3 surfaces}
\label{sect-action-on-K3}
In this section, we study actions of finite abelian groups on K3 surfaces. By a K3 surface we mean a normal projective surface
$S$ with at worst canonical (that is, du Val) singularities such that
$\mathrm{H}^1(S, \OOO_S)=0$ and $K_S$ is linearly trivial. 

Let
$H\subset\mathrm{Aut}(S)$ be a finite group where $S$ is a smooth projective K3 surface (we always may assume that $S$ is smooth by passing to the minimal resolution). There is a natural
exact sequence (cf. \cite{Hu16})
\begin{equation}
\label{K3-exact-sequence}
0\to H_s \to H \xrightarrow{\alpha} \mathbb{Z}/m \to 0,
\end{equation}
where $\mathbb{Z}/m$ is a cyclic group that acts via
multiplication by a primitive $m$-th root of unity on a non-zero holomorphic
$2$-form $\omega_S$ on $S$.

\begin{defin}
Let $\sigma$ be a finite order automorphism of a K3 surface $S$. Then it
is called a \emph{symplectic automorphism}, if $\alpha(\sigma)=1$.
Otherwise, it is called \emph{non-symplectic}. Moreover, we call
$\sigma$ \emph{purely non-symplectic}, if $\mathrm{ord}
(\alpha(\sigma)) = \mathrm{ord} (\sigma)$. 

A group $H$ acting on $S$ is called \emph{symplectic} (respectively, \emph{non-symplecic}, \emph{purely non-symplectic}), if every non-trivial element of $H$ is symplectic (respectively, non-symplectic, purely non-symplectic). 
\end{defin}

Recall the following classical result.

\begin{thm}[{\cite[4.5]{Nik80}}]
\label{thm-symplectic-K3-groups}
In the exact sequence \eqref{K3-exact-sequence}, if the group $H_s$ is abelian, then it 
is isomorphic to one of the following groups:
\begin{enumerate}
\item
\label{K3-case-1}
$\mathbb{Z}/n$, $1\leq n\leq 8$,
\item
\label{K3-case-2}
$\mathbb{Z}/2\times\mathbb{Z}/6$,
\item
\label{K3-case-3}
$(\mathbb{Z}/3)^2$,
\item
\label{K3-case-4}
$(\mathbb{Z}/4)^2$,
\item
\label{K3-case-5}
$\mathbb{Z}/2\times\mathbb{Z}/4$,
\item
\label{K3-case-6}
$(\mathbb{Z}/2)^k$, $1\leq k\leq 4$.
\end{enumerate}
\end{thm}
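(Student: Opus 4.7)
The plan is to combine the holomorphic and topological Lefschetz fixed point formulas with a careful lattice-theoretic analysis of the $H_s$-action on the K3 lattice $\Lambda := H^2(S, \mathbb{Z})$. The first step is to bound the order of any individual symplectic automorphism. Since a symplectic $\sigma$ of finite order preserves the nowhere-vanishing holomorphic $2$-form $\omega_S$, its fixed locus is zero-dimensional, and at each fixed point $d\sigma$ is conjugate to $\mathrm{diag}(\zeta, \zeta^{-1})$ for some root of unity $\zeta$. Applying the holomorphic Lefschetz formula to $\omega_S$, for $\sigma$ of prime order $p$ one computes $\#\mathrm{Fix}(\sigma)$ as a rational expression in $p$; comparing with the topological Lefschetz formula and requiring a non-negative integer answer forces $p \in \{2, 3, 5, 7\}$. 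A similar analysis of prime powers then yields $\mathrm{ord}(\sigma) \in \{1, 2, \ldots, 8\}$.

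The second step sets up the global lattice framework. Because $H_s$ acts trivially on $H^{2,0}(S) \oplus H^{0,2}(S)$ and, by averaging, preserves an ample class, the invariant sublattice $\Lambda^{H_s}$ has signature $(3, r)$ for some $r \geq 0$. The coinvariant lattice $\Omega_{H_s} := (\Lambda^{H_s})^{\perp} \subset \Lambda$ is therefore a negative-definite even lattice of rank at most $19$, and $H_s$ acts faithfully on $\Omega_{H_s}$ without non-zero fixed vectors. For prime order $p$, one computes explicitly $\mathrm{rk}\,\Omega_\sigma = 24(p-1)/(p+1)$, which independently recovers the order bound above.

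The third step is the enumeration itself. For abelian $H_s$, I would decompose $\Omega_{H_s} \otimes \mathbb{C}$ into character isotypic components; conjugate characters occur with equal multiplicity, and the total real rank is bounded by $19$. Combining this rank bound with the order restrictions from Step~1 and the constraint that $\Omega_{H_s}$ contains no non-zero $H_s$-invariant vector, only finitely many candidate abelian groups survive. Verifying that the survivors are exactly the groups (1)--(6) proceeds by a case-by-case analysis of possible coinvariant lattices as primitive sublattices of the K3 lattice, together with explicit realizations on Kummer surfaces (handling the $2$-elementary cases), on the Fermat quartic K3 (for $(\mathbb{Z}/4)^2$ and $\mathbb{Z}/2\times\mathbb{Z}/4$), and on suitable isotrivial elliptic K3 surfaces for the remaining cases.

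The hard part is the enumeration, and more precisely the step of excluding slightly larger abelian $p$-groups (for instance $(\mathbb{Z}/2)^5$, $(\mathbb{Z}/3)^3$, $(\mathbb{Z}/4)^2\times\mathbb{Z}/2$, or $(\mathbb{Z}/2)^2\times\mathbb{Z}/6$) which pass the rank and order tests but nevertheless cannot act symplectically. Ruling these out requires producing a non-zero $H_s$-invariant vector in what would be the coinvariant lattice, typically by a detailed analysis of the minimal resolution of $S/H_s$ and a computation of the discriminant form of the putative coinvariant lattice; this is the most delicate part of Nikulin's original argument and where the bulk of the effort lies.
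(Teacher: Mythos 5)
The paper does not prove this theorem at all; it is quoted verbatim from Nikulin \cite[4.5]{Nik80}, and your outline --- holomorphic plus topological Lefschetz formulas to force $\mathrm{ord}(\sigma)\leq 8$, the negative-definite coinvariant lattice $\Omega_{H_s}=(\Lambda^{H_s})^{\perp}$ of rank at most $19$ carrying a faithful $H_s$-action with no nonzero invariant vectors, and a case-by-case exclusion of the surviving candidate groups --- is a faithful summary of Nikulin's original argument, with all your numerical checkpoints ($p\in\{2,3,5,7\}$, $\mathrm{rk}\,\Omega_\sigma=24(p-1)/(p+1)$, the fixed-point counts) correct. The plan is therefore sound and matches the cited source, with the caveat you yourself flag: the exclusion of groups such as $(\mathbb{Z}/2)^5$ or $(\mathbb{Z}/3)^3$ is where essentially all of the work lies and is only described, not carried out, in your sketch.
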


Symplectic automorphisms satisfy many nice properties. For example, there is good control on the number of their fixed points.

\begin{proposition}[{cf. \cite[15.1.8]{Hu16}}]
\label{prop-sympl-fixed-points}
A symplectic automorphism of finite order on a smooth K3 surface has finitely
many fixed points. More precisely, for such an automorphism $\sigma$, if we denote by $\mathrm{Fix}(\sigma)$ its fixed locus, we 
have
\begingroup
\renewcommand*{\arraystretch}{1.2}
\begin{longtable}{|c|c|c|c|c|c|c|c|}
\hline
$\mathrm{ord} (\sigma)$ & $2$ & $3$ & $4$ & $5$ & $6$ & $7$ & $8$ \\ \hline
$|\mathrm{Fix}(\sigma)|$ & $8$ & $6$ & $4$ & $4$ & $2$ & $3$ & $2$ \\ \hline
\end{longtable}
\endgroup
\end{proposition}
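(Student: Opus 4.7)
The plan is to combine local analysis at fixed points with the holomorphic and topological Lefschetz fixed point formulas applied to $\sigma$. First I would prove that the fixed locus is finite. Since $\sigma$ is symplectic, $\sigma^*\omega_S=\omega_S$, so at any fixed point $P$ the differential $d\sigma_P$ lies in $\mathrm{SL}(T_PS)$. By Lemma~\ref{lem-faithful-action} applied to $\langle\sigma\rangle$ this action is faithful, hence $d\sigma_P$ is a non-trivial element of finite order in $\mathrm{SL}(2,\mathbb{C})$, diagonalizable with eigenvalues $\zeta,\zeta^{-1}$ for some root of unity $\zeta\neq 1$. In particular neither eigenvalue equals $1$, so $P$ is an isolated fixed point of $\sigma$; as $S$ is compact, $\mathrm{Fix}(\sigma)$ is finite.

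Next I would extract the numerical counts. Write $n=\mathrm{ord}(\sigma)$ and $\zeta=e^{2\pi i/n}$. At each fixed point $P$ there is an integer $a_P\in(\mathbb{Z}/n)^\times/\{\pm 1\}$ such that $d\sigma_P$ has eigenvalues $\zeta^{a_P},\zeta^{-a_P}$. The holomorphic Lefschetz fixed point formula, combined with the fact that $\sigma$ acts trivially on $H^0(S,\mathcal{O}_S)=\mathbb{C}$ and on $H^2(S,\mathcal{O}_S)\cong\mathbb{C}\cdot\overline{\omega_S}$ (by symplecticity), together with $H^1(S,\mathcal{O}_S)=0$, yields
\begin{equation*}
2 \;=\; \sum_{P\in\mathrm{Fix}(\sigma)} \frac{1}{\det\bigl(1-d\sigma_P\bigr)} \;=\; \sum_{P\in\mathrm{Fix}(\sigma)} \frac{1}{(1-\zeta^{a_P})(1-\zeta^{-a_P})}.
\end{equation*}
Simultaneously, since $\mathrm{Fix}(\sigma)$ is finite, the topological Lefschetz fixed point formula gives $|\mathrm{Fix}(\sigma)| = 2 + \mathrm{tr}\bigl(\sigma^*\mid H^2(S,\mathbb{Q})\bigr)$.

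For each $n\in\{2,\dots,8\}$ I would then enumerate the admissible types $a\in(\mathbb{Z}/n)^\times/\{\pm1\}$, let $N_a$ denote the number of fixed points of type $a$, and extract the identity
\begin{equation*}
\sum_{a} \frac{N_a}{|1-\zeta^{a}|^2} \;=\; 2.
\end{equation*}
For $n$ prime ($n=2,3,5,7$) there is a single orbit of types, so this equation determines $|\mathrm{Fix}(\sigma)|=\sum_a N_a$ immediately; for $n=4,6,8$ one uses that $\sigma^k$ for $k\mid n$ is itself a symplectic automorphism of smaller order, so its fixed-point count is already known and equals the sum of $N_a$ over those $a$ with $ka\equiv 0\pmod n$. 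Iterating down the divisor lattice of $n$ pins down all $N_a$ and yields the table.

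The main obstacle is not the finiteness nor the Lefschetz formulas themselves, but the bookkeeping needed for the composite orders $n=4,6,8$: one has to keep track of which fixed points of $\sigma$ persist as fixed points of each power $\sigma^k$ with the correct local eigenvalues, and to rule out types that would force a non-integer $N_a$. Once this is organized consistently with the tower of symplectic subgroups $\langle\sigma^k\rangle\subset\langle\sigma\rangle$, the values $|\mathrm{Fix}(\sigma)|=8,6,4,4,2,3,2$ for $n=2,3,4,5,6,7,8$ drop out uniquely. For a ready reference one may cite Nikulin's original computation in \cite{Nik80} together with the exposition in \cite[\S15.1]{Hu16}.
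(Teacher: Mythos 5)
The paper does not prove this statement at all: it is quoted as a classical result of Nikulin with a pointer to \cite[15.1.8]{Hu16}, so there is no in-paper argument to compare against. Your reconstruction via the holomorphic and topological Lefschetz fixed point formulas is exactly the standard proof found in those references, and the finiteness argument (faithfulness of the local action plus $d\sigma_P\in\mathrm{SL}(T_PS)$ forcing eigenvalues $\zeta^{a},\zeta^{-a}$ with $\zeta^{a}\neq 1$) is correct.

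Two points in the numerical bookkeeping need repair, however. First, for $n=5$ and $n=7$ there is \emph{not} a single local type: since faithfulness forces $\gcd(a_P,n)=1$, the set of types is $(\mathbb{Z}/n)^\times/\{\pm1\}$, which has $2$ resp.\ $3$ elements. The single equation $\sum_a N_a/|1-\zeta^a|^2=2$ still determines all $N_a$, but only after observing that the coefficients $\tfrac{1}{(1-\zeta^a)(1-\zeta^{-a})}$ form a full Galois orbit in $\mathbb{Q}(\zeta+\zeta^{-1})$ while the right-hand side is rational, forcing all $N_a$ equal; equivalently, one sums the holomorphic Lefschetz identity over all $\sigma^j$ with $\gcd(j,n)=1$ (for which $\mathrm{Fix}(\sigma^j)=\mathrm{Fix}(\sigma)$) to get $|\mathrm{Fix}(\sigma)|\cdot\sum_{k\in(\mathbb{Z}/n)^\times}|1-\zeta^k|^{-2}=2\varphi(n)$, which yields the whole table uniformly, composite orders included. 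Second, your proposed recursion over proper divisors $k\mid n$ is stated incorrectly: since every type $a$ is coprime to $n$, no $a$ satisfies $ka\equiv 0\pmod n$ for $k<n$, so $|\mathrm{Fix}(\sigma^k)|$ is \emph{not} the sum of $N_a$ over such $a$; rather $\mathrm{Fix}(\sigma^k)\supseteq\mathrm{Fix}(\sigma)$ with possible extra points not fixed by $\sigma$, so the divisor-lattice descent as written does not close up. Replacing it by the coprime-power averaging above (or the Galois argument) makes the proof complete.
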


\begin{remark}
\label{rem-nikulin-involution}
A \emph{Nikulin involution} is a symplectic automorphism of order~$2$ on a smooth K3 surface. According to Proposition \ref{prop-sympl-fixed-points}, the fixed locus of a Nikulin involution consists of exactly $8$ points.
\end{remark}

We denote by $\rho(S)$ the Picard rank of a (smooth) K3 surface $S$. It is well-known that $1\leq \rho(S)\leq 20$.

\begin{proposition}[{cf. \cite[15.1.14]{Hu16}}]
\label{K3-Euler-function}
In the exact sequence \eqref{K3-exact-sequence}, the number $m$ satisfies $m\leq 66$.
\end{proposition}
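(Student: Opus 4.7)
The plan is to reduce the statement to a numerical constraint on Euler's totient function, via the action of a non-symplectic lift on the transcendental lattice of $S$. First I would pick any preimage $\sigma \in H$ of a generator of the cyclic quotient $\mathbb{Z}/m = H/H_s$. By the definition of $\alpha$ in the exact sequence \eqref{K3-exact-sequence}, the pull-back $\sigma^{*}$ acts on the one-dimensional space $H^{2,0}(S) = \mathbb{C}\,\omega_S$ by multiplication by a primitive $m$-th root of unity $\zeta_m$; in particular $\zeta_m$ is an eigenvalue of $\sigma^{*}$ on $H^2(S,\mathbb{C})$.

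Next, I would use the fact that $\sigma^{*}$ acts as a Hodge isometry on the K3 lattice $H^2(S, \mathbb{Z}) \cong U^3 \oplus E_8(-1)^2$. It therefore preserves the orthogonal splitting $H^2(S, \mathbb{Q}) = \mathrm{NS}(S)_\mathbb{Q} \oplus T(S)_\mathbb{Q}$, where $T(S)$ is the transcendental lattice. Since $H^{2,0}(S)$ is contained in $T(S) \otimes \mathbb{C}$ and $\zeta_m \neq 1$ for $m > 1$, the eigenvalue $\zeta_m$ must appear in the action of $\sigma^{*}$ on $T(S) \otimes \mathbb{C}$.

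The characteristic polynomial of $\sigma^{*}|_{T(S)}$ is monic with integer coefficients and has $\zeta_m$ as a root, so it is divisible by the $m$-th cyclotomic polynomial $\Phi_m$. This gives
\[
\varphi(m) \;=\; \deg \Phi_m \;\leq\; \mathrm{rk}\, T(S) \;=\; 22 - \rho(S) \;\leq\; 21,
\]
where the last inequality uses the fact that $S$ is projective, so $\rho(S) \geq 1$.

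The final step is to extract the bound $m \leq 66$ from the constraint $\varphi(m) \leq 21$. This is a finite numerical check using the multiplicativity of $\varphi$: one verifies $\varphi(66) = \varphi(2)\varphi(3)\varphi(11) = 1 \cdot 2 \cdot 10 = 20$, while for every integer $m > 66$ a direct case-by-case enumeration (only finitely many candidates qualify, as $\varphi(m) \to \infty$) yields $\varphi(m) \geq 22$. The conceptual content of the proof lies entirely in the cohomological step above; the only mildly tedious part is this concluding arithmetic enumeration.
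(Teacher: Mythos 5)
Your argument is correct and is exactly the standard proof behind the cited result \cite[15.1.14]{Hu16}: the paper itself offers no proof beyond that citation, and your reduction via the eigenvalue $\zeta_m$ on $T(S)\otimes\mathbb{C}$, the divisibility of the characteristic polynomial by $\Phi_m$, and the bound $\varphi(m)\leq \mathrm{rk}\,T(S)\leq 21$ (hence $\varphi(m)\leq 20$ by parity, hence $m\leq 66$) is the intended route. No gaps.
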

Also, if a (purely) non-symplectic automorphism $\sigma$ 
has order $m$, the list of all possibilities for $m$ is given in \cite[Corollary 1.3]{BH23}. For example, if $m$ is prime, then $m\in \{2,3,5,7,11,13,17,19\}$.

The next theorem is a generalization of Theorem \ref{thm-symplectic-K3-groups} to the case of non-symplectic groups.   
\begin{thm}[{\cite{BH23}}]
\label{thm-maximal-k3-groups}
Let $H$ be a finite abelian group that faithfully acts on a smooth K3 surface. Assume that $H$ is not purely non-symplectic. 
If $H$ is assumed to be maximal with these properties, then $H$ is isomorphic to one of the following groups:
\begin{multicols}{2}
\begin{enumerate}
\item
$(\mathbb{Z}/4)^3$,
\item
$(\mathbb{Z}/6)^2\times \mathbb{Z}/2$,
\item
$\mathbb{Z}/6\times(\mathbb{Z}/3)^2$,
\item
$\mathbb{Z}/8\times\mathbb{Z}/4\times\mathbb{Z}/2$,
\item
$(\mathbb{Z}/2)^5$,
\item
$\mathbb{Z}/4\times(\mathbb{Z}/2)^3$,
\item
$\mathbb{Z}/12\times\mathbb{Z}/6$,
\item
$\mathbb{Z}/60$,
\item
$\mathbb{Z}/10\times\mathbb{Z}/5$,
\item
$\mathbb{Z}/12\times\mathbb{Z}/4$,
\item
$\mathbb{Z}/12\times(\mathbb{Z}/2)^2$,
\item 
$\mathbb{Z}/18\times\mathbb{Z}/3$,
\item 
$\mathbb{Z}/15\times\mathbb{Z}/3$,
\item 
$\mathbb{Z}/42$,
\item 
$\mathbb{Z}/30\times\mathbb{Z}/2$,
\item 
$\mathbb{Z}/28\times\mathbb{Z}/2$,
\item 
$\mathbb{Z}/24\times\mathbb{Z}/2$,
\item 
$\mathbb{Z}/20\times\mathbb{Z}/2$,
\item 
$\mathbb{Z}/18\times\mathbb{Z}/2$,
\item 
$\mathbb{Z}/16\times\mathbb{Z}/2$.
\end{enumerate}
\end{multicols}
\begin{proof}
    This comes from the full classification of such groups in \cite{BH23}. We list all groups and the code to find maximal ones in \cite{Z-web}.
\end{proof}
\end{thm}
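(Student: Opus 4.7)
The plan is to derive this list by combining three ingredients: the Nikulin classification of symplectic abelian subgroups (Theorem \ref{thm-symplectic-K3-groups}), the bound $m \leq 66$ from Proposition \ref{K3-Euler-function} together with the explicit list of realizable non-symplectic orders in \cite[Corollary 1.3]{BH23}, and the extension criterion of Theorem \ref{littelwood}.

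First, I would observe that for each finite abelian $H \subset \mathrm{Aut}(S)$ the exact sequence $0 \to H_s \to H \to \mathbb{Z}/m \to 0$ determines a symplectic subgroup $H_s$ drawn from the finite Nikulin list in Theorem \ref{thm-symplectic-K3-groups} and a non-symplectic order $m$. The hypothesis that $H$ is not purely non-symplectic translates exactly to the condition $H_s \neq \tg$. For each pair $(H_s, m)$ with $H_s$ non-trivial and $m$ realizable, I would enumerate all isomorphism classes of abelian extensions $G$ of $H_s$ by $\mathbb{Z}/m$ using the Littlewood--Richardson criterion of Theorem \ref{littelwood} applied prime-by-prime. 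This produces a finite list of abstract candidate groups.

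Next, I would discard those abstract candidates that are not realized as subgroups of $\mathrm{Aut}(S)$ for any K3 surface $S$. This is the decisive step, where the lattice-theoretic classification of \cite{BH23} is unavoidable: an abstract extension is realized if and only if there exists a compatible action on the K3 lattice $\mathrm{H}^2(S,\mathbb{Z})$ whose invariant and coinvariant parts satisfy the Nikulin--Brandhorst--Hofmann embedding constraints. Concretely, for each candidate $G$ one must exhibit a faithful representation $G \hookrightarrow O(\mathrm{H}^2(S,\mathbb{Z}))$ preserving the Hodge decomposition with the prescribed action on $\mathbb{C}\omega_S$, and such that the invariant sublattice has signature $(1,\cdot)$ containing an ample class. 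Once the realizability check is done, I would sort the surviving groups by the subgroup relation (equivalently, divisibility of invariant factors) and extract the maximal elements, obtaining the $20$ groups listed.

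The main obstacle is the realizability step: enumeration via Theorem \ref{littelwood} is purely combinatorial and the order bounds reduce it to finitely many cases, but certifying which candidates actually occur requires the lattice classification of \cite{BH23}. In practice I would implement this as a computer search, as the authors do via the code at \cite{Z-web}: loop over finite abelian groups whose order divides a bound coming from $|H_s| \leq 192$ and $m \leq 66$, query the Brandhorst--Hofmann database for each to test realizability with non-trivial symplectic part, and finally select the maximal representatives. One should also sanity-check the output against classical constructions: for instance $(\mathbb{Z}/4)^3$ and $\mathbb{Z}/8\times\mathbb{Z}/4\times\mathbb{Z}/2$ appear on the Fermat quartic and on suitable weighted Fermat K3 surfaces, confirming that the lattice computations give at least the groups used in Example \ref{exam-k3-not-pt}.
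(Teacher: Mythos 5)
Your proposal is correct and ultimately coincides with the paper's argument: the paper simply reads the complete list of finite abelian subgroups of K3 automorphism groups off the Brandhorst--Hofmann classification and extracts the maximal ones that are not purely non-symplectic via the computer search in \cite{Z-web}, which is exactly where your decisive realizability step lands. The combinatorial front-end you add (enumerating abstract extensions of $\mathbb{Z}/m$ by a group from Nikulin's list using Theorem \ref{littelwood}) is harmless but redundant, since \cite{BH23} already supplies the full list of realized groups rather than merely a realizability test for candidates.
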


\begin{remark}
\label{rem-special-K3-groups-are-PT}
Using Theorem \ref{thm-surfaces}, one checks that among all the groups in Theorem \ref{thm-maximal-k3-groups}, only the groups (1)--(6) are not isomorphic to a subgroup of $\mathrm{Cr}_2(\mathbb{C})$. 
However, all the groups in Theorem \ref{thm-maximal-k3-groups} are isomorphic to subgroups of $\mathrm{Cr}_3(\mathbb{C})$, and moreover, they are all of product type, see Table 2.
\end{remark}

\begin{corollary}
\label{cor-max-group-direct-product}
Let $H$ be one of the groups (1)--(6) from Theorem \ref{thm-maximal-k3-groups}. Then the exact sequence \eqref{K3-exact-sequence} splits, so we have
\[
H = H_s \times \mathbb{Z}/m
\]
where $H_s$ is the subgroup of symplectic automorphisms, and $\mathbb{Z}/m$ is purely non-symplectic. 
More precisely, one of the following holds:
\begin{enumerate}
    \item 
    $H_s=(\mathbb{Z}/4)^2,\quad m=4$,
    \item 
    $H_s=\mathbb{Z}/6\times\mathbb{Z}/2,\quad m=6$,
    \item 
    $H_s=(\mathbb{Z}/3)^2,\quad m=6$,
    \item 
    $H_s=\mathbb{Z}/4\times\mathbb{Z}/2,\quad m=8$,
    \item 
    $H_s=(\mathbb{Z}/2)^5,\quad m=2$,
    \item 
    $H_s = (\mathbb{Z}/2)^3,\quad m=4$.

\end{enumerate}
\end{corollary}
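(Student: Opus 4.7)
The plan is to treat each of the six maximal groups $H$ in Theorem \ref{thm-maximal-k3-groups}(1)--(6) individually. For each, I identify the symplectic subgroup $H_s = \ker(\alpha|_H)$ and the cyclic image $\mathbb{Z}/m = \alpha(H)$, and verify that the exact sequence \eqref{K3-exact-sequence} splits with purely non-symplectic complement. The principal constraints available are: (a) $H_s$ must appear in Nikulin's list (Theorem \ref{thm-symplectic-K3-groups}); (b) $m$ must be a valid order of a purely non-symplectic K3 automorphism (bounded by $66$ via Proposition \ref{K3-Euler-function}, with further restrictions from \cite[Corollary 1.3]{BH23}); (c) a generator of $\mathbb{Z}/m$ must lift to an element of $H$ of order exactly $m$.

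For cases (1), (2), (3), and (5), constraints (a)--(b) together with $|H_s|\cdot m = |H|$ already force a unique pair $(H_s, m)$. Taking case (1) as an illustration: $H = (\mathbb{Z}/4)^3$ has exponent $4$, so $m \in \{1, 2, 4\}$; the values $m = 1, 2$ would demand $|H_s|\in \{64, 32\}$, neither order represented on Nikulin's list; hence $m = 4$, $|H_s| = 16$, and among the order-$16$ Nikulin groups only $(\mathbb{Z}/4)^2$ embeds in $H$ (since $(\mathbb{Z}/2)^4$ has rank $4$). The same style of elimination handles cases (2), (3), (5), using the exponents $6$, $6$, $2$ respectively.

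For cases (4) and (6) more than one pair passes (a)--(b), so I would use (c) to rule out the spurious candidates. In case (6) with $H = \mathbb{Z}/4 \times (\mathbb{Z}/2)^3$: the full $2$-torsion subgroup of $H$ equals $(\mathbb{Z}/2)^4$, so taking $H_s = (\mathbb{Z}/2)^4$ would leave no element of order $2$ outside $H_s$ to serve as the purely non-symplectic lift, eliminating this candidate and forcing $(H_s, m) = ((\mathbb{Z}/2)^3, 4)$. In case (4) with $H = \mathbb{Z}/8 \times \mathbb{Z}/4 \times \mathbb{Z}/2$: a direct enumeration shows that every surjection $\phi \colon H \to \mathbb{Z}/4$ whose kernel is isomorphic to $(\mathbb{Z}/4)^2$ is of the form $\phi(x, y, z) = ax + by + cz$ with $a$ odd and $b, c$ even (odd $b$ would put an order-$8$ element in the kernel); but then $\phi \equiv ax \pmod{2}$, so any preimage with odd $\phi$-value has odd first coordinate and hence order $8$ in $H$, not $4$. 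This rules out $((\mathbb{Z}/4)^2, 4)$ and leaves $(H_s, m) = (\mathbb{Z}/4 \times \mathbb{Z}/2, 8)$.

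Finally, in each case the direct product decomposition $H \cong H_s \times \mathbb{Z}/m$ is immediate by inspection: $(\mathbb{Z}/4)^3 = (\mathbb{Z}/4)^2 \times \mathbb{Z}/4$, $(\mathbb{Z}/6)^2 \times \mathbb{Z}/2 = (\mathbb{Z}/6 \times \mathbb{Z}/2) \times \mathbb{Z}/6$, $\mathbb{Z}/6 \times (\mathbb{Z}/3)^2 = (\mathbb{Z}/3)^2 \times \mathbb{Z}/6$, and analogously in the remaining cases. The main obstacle lies in cases (4) and (6), where the pure count of subgroup orders against Nikulin's list is insufficient to single out the correct pair and one must use more structural features of $H$, namely its $2$-torsion and the explicit shape of its surjections onto cyclic quotients, to eliminate the alternative candidates.
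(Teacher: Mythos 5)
Your overall strategy (case-by-case over the six groups, matching $|H_s|\cdot m=|H|$ against Nikulin's list) is the same as the paper's, and your treatment of cases (1), (2), (3), (5) is fine: there the pair $(H_s,m)$ is forced and the splitting is automatic. But there is a genuine gap in how you dispose of the competing candidates in cases (4) and (6). Your ``constraint (c)'' --- that a generator of $\mathbb{Z}/m=\alpha(H)$ must lift to an element of $H$ of order exactly $m$ --- is not an a priori constraint; it is precisely the splitting assertion you are trying to prove. The exact sequence \eqref{K3-exact-sequence} does \emph{not} split in general: there exist non-symplectic K3 automorphisms that are not purely non-symplectic (e.g.\ order-$4$ automorphisms whose square is a Nikulin involution), so an element mapping to a generator of $\mathbb{Z}/m$ may well have order strictly larger than $m$. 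Consequently, showing that the extensions
\[
0\to(\mathbb{Z}/2)^4\to\mathbb{Z}/4\times(\mathbb{Z}/2)^3\to\mathbb{Z}/2\to0,
\qquad
0\to(\mathbb{Z}/4)^2\to\mathbb{Z}/8\times\mathbb{Z}/4\times\mathbb{Z}/2\to\mathbb{Z}/4\to0
\]
do not split (the second one does exist as an abstract extension, e.g.\ with kernel $\langle(2,0,1),(0,1,0)\rangle$) does not eliminate them as possible symplectic/non-symplectic decompositions; it only shows that, if realized, they would violate the conclusion of the corollary.

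This is exactly the point where the paper has to leave pure group theory: its proof records that the abstract analysis leaves the extra case $H_s=(\mathbb{Z}/2)^4$, $m=2$ for $H=\mathbb{Z}/4\times(\mathbb{Z}/2)^3$, and excludes it only by appealing to the Brandhorst--Hofmann classification \cite{BH23} (``the latter case is not realized''). To repair your argument you must do the same for the surviving non-split candidates in cases (4) and (6): invoke the geometric classification (or some other geometric input, such as the known structure of non-symplectic automorphisms of order $2$ and $4$ and their symplectic powers) rather than the nonexistence of an order-$m$ lift.
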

\begin{proof}
From the case by case analysis using Theorem \ref{thm-maximal-k3-groups}, Theorem \ref{thm-symplectic-K3-groups} and the exact sequence \eqref{K3-exact-sequence} we obtain the cases (1)--(6) and one exceptional case $H_s = (\mathbb{Z}/2)^4, m=2$. However, the latter case is not realized according to \cite{BH23}.
\end{proof}

We collect some examples of actions of finite abelian groups on K3
surfaces. 

\begin{example}
\label{example-smooth-k3-action}
We start with the case of smooth K3 surfaces which  are Fermat complete intersections on which the following group acts faithfully (cf. Lemma \ref{lem-group-fermat-aut}).
\begingroup
\renewcommand{\arraystretch}{1.2}
\begin{center}
\begin{longtable}{|p{0.03\textwidth}|p{0.35\textwidth}|p{0.35\textwidth}|}
\hline
 & \textbf{K3 surface} & \textbf{Group} \\ \hline
\endhead
(1) & $X_{6} \subset \mathbb{P}(1,1,1,3)$ & $(\mathbb{Z}/6)^2\times
\mathbb{Z}/2$ \\ \hline
(2) & $X_{4} \subset \mathbb{P}^3$ & $(\mathbb{Z}/4)^3$ \\ \hline
(3) & $X_{2, 2, 2} \subset \mathbb{P}^5$ & $(\mathbb{Z}/2)^5$ \\ \hline
\end{longtable}
\emph{Table 3. Examples of smooth K3 surfaces with the action of a finite abelian group}
\end{center}
\endgroup
\end{example}

\begin{example}
\label{exam-singular-k3}
Now we treat the case of singular K3 surfaces, cf. \cite{I-F00}. Taking the minimal resolution of a du Val K3 surface $S$, we
obtain a smooth K3 surface $S'$ with the action of the group $H$. We also describe singularities of $S$. For example, we write $\mathrm{Sing}(S)=3A_1+4A_2$, if $X$ has $3$ du Val singular points of type $A_1$ and $4$ singular points of type $A_2$. To compute the group~$H$, one can use Lemma \ref{lem-group-fermat-aut}.  
\begingroup
\renewcommand{\arraystretch}{1.2}
\begin{center}
\begin{longtable}{|p{0.03\textwidth}|p{0.35\textwidth}|p{0.35\textwidth}|p{0.16\textwidth}|}
\hline
 & \textbf{K3 surface} & \textbf{Group} & $\textbf{Singularities}$ \\
\hline
(1) & $X_{4,4} \subset \mathbb{P}(1, 1, 2, 2, 2)$ & $\mathbb{Z}/4\times(\mathbb{Z}/2)^3$ & $4A_1$ \\ \hline
(2) & $X_{8} \subset \mathbb{P}(1, 1, 2, 4)$ & $\mathbb{Z}/8\times \mathbb{Z}/4\times\mathbb{Z}/2$ & $2A_1$ \\ \hline
(3) & $X_{6} \subset \mathbb{P}(1,1,2,2)$ & $\mathbb{Z}/6\times (\mathbb{Z}/3)^2$ & $3A_1$ \\ \hline
(4) & $X_{12} \subset \mathbb{P}(1, 3, 4, 4)$ & $\mathbb{Z}/4\times (\mathbb{Z}/3)^2$ & $3A_3$ \\ \hline
(5) & $X_{12} \subset \mathbb{P}(1, 2, 3, 6)$ & $\mathbb{Z}/6\times \mathbb{Z}/4\times\mathbb{Z}/2$ & $2A_1 + 2A_2$ \\ \hline
(6) & $X_{12} \subset \mathbb{P}(2, 3, 3, 4)$ & $(\mathbb{Z}/4)^2\times \mathbb{Z}/3$ & $3A_1 + 4A_2$ \\ \hline
(7) & $X_{6,6} \subset \mathbb{P}(1, 2, 3, 3, 3)$ & $\mathbb{Z}/3\times(\mathbb{Z}/2)^3$ & $4A_2$ \\ \hline
(8) & $X_{6,6} \subset \mathbb{P}(2, 2, 2, 3, 3)$ & $(\mathbb{Z}/3)^2\times\mathbb{Z}/2$ & $9A_1$ \\ \hline
\end{longtable}
\emph{Table 4. Examples of singular K3 surfaces with the action of a finite abelian group}
\end{center}
\endgroup
\end{example}

Recall that a Kummer K3 surface is the minimal resolution of the quotient of an abelian surface $A$ by the multiplication by $-1$. 
In the next example, we show the existence of a Kummer K3 surface with the action of the group $H=\mathbb{Z}/4\times (\mathbb{Z}/2)^3$. 

\begin{example}
\label{ex-kummer-1}
Let $C_1$ be an elliptic curve with a faithful action of $H'=(\mathbb{Z}/2)^3$, and $C_2$ be an elliptic curve with a faithful action of $H''=\mathbb{Z}/2\times\mathbb{Z}/4$ (so that the $j$-invariant of $C_2$ is equal to $1728$). Note that $H'$ acts on the set of $2$-torsion points $\{a_1,a_2,a_3,a_4\}$ of $C_1$ transitively, while $H''$ acts on the set of $2$-torsion points $\{b_1,b_2,b_3,b_4\}$ of $C_2$ with two orbits of cardinality $2$, say, it interchanges $b_1$ with $b_3$ and $b_2$ with $b_4$. 
Consider an abelian surface $A=C_1\times C_2$, which admits the action of $H'\times H''=(\mathbb{Z}/2)^4\times \mathbb{Z}/4$. It follows that $H'\times H''$ has $2$ orbits of cardinality $8$. 
Consider the quotient $S=A/\sigma$ where $\sigma=(\sigma_1,\sigma_2)$, and $\sigma_i$ is the multiplication by $-1$ on $C_i$. Then $S$ is a K3 surface with $16$ singular points of type $A_1$. From the construction it follows that $S$ admits a faithful action of $H=(H'\times H'')/(\mathbb{Z}/2)=(\mathbb{Z}/2)^3\times \mathbb{Z}/4$. One checks that if $C_1$ and $C_2$ are isogenous then $\rho(S')=20$ where $S'$ is the minimal resolution of $S$.
\end{example}

\begin{example}
Similarly to Example \ref{ex-kummer-1}, one can construct a Kummer K3 surface $S$ 
with $16$ $A_1$ singularities 
that admits a faithful action of $H=(\mathbb{Z}/2)^5$. Note that all the singular points of $S$ lie in the same $H$-orbit in this case. As in the previous case, the Picard rank of the minimal resolution $S'$ of $S$ is equal to $20$.
\end{example}

\section{K3 surfaces: lattices}
\label{sec-K3-lattices}
Let $S$ be a smooth projective K3 surface. 
Consider the second cohomology group $\mathrm{H}^2(S, \mathbb{Z})$ as a lattice $\Lambda_{K3}$ endowed with the cup product. It is well-known that $\Lambda_{K3}=U^{\oplus 3}\oplus E_8(-1)^{\oplus 2}$ where $U$ is the hyperbolic lattice, $E_8$ is the unique positive-definite, even, unimodular lattice of rank $8$, and $E_8(-1)$ means that we multiply the bilinear form of $E_8$ by $-1$. 
Recall that on a K3 surface~$S$ we have $\mathrm{Pic}(S)=\mathrm{NS}(S)$, and 
\[
\mathrm{NS}(S) = \{ x\in \mathrm{H}^2(S, \mathbb{Z})\ |\ x\cdot \omega_S = 0\}=\mathrm{H}^{1,1}(S, \mathbb{Z}),
\]
where $\omega_S$ is (the class of) a non-zero holomorphic $2$-form on $S$. 
The transcendental lattice $T(S)$ is defined as follows:
\[
T_S = \mathrm{NS}(S)^\perp \subset \mathrm{H}^2(S, \mathbb{Z}).
\]
It follows that the sublattice
\[
\mathrm{NS}(S) \oplus T_S \subset \mathrm{H}^2(S, \mathbb{Z})
\]
has finite index, equal to the discriminant of $\mathrm{NS}(S)$ and $T_S$. 

\begin{example}
\label{exam-fermat-quartic}
For a Fermat quartic surface $S_4\subset \mathbb{P}^3$, according to \cite[3.2.6]{Hu16}, one has
\[
\mathrm{NS}(S_4) = E_8(-1)^{\oplus2} \oplus U \oplus \mathbb{Z}(-8) \oplus \mathbb{Z}(-8), \quad \quad \quad T_S = \mathbb{Z}(8) \oplus \mathbb{Z}(8).
\]
In particular, one has $\rho(S_4)=20$.
\end{example}

\begin{lem}
Let $S$ be a smooth K3 surface with a faithful action of a finite group $H$. 
If the action of $H$ on $S$ is non-symplectic, then $\mathrm{NS}(S)^H=\mathrm{H}^2(S,\mathbb{Z})^H$.
\end{lem}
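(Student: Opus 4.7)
The inclusion $\mathrm{NS}(S)^H\subset \mathrm{H}^2(S,\mathbb{Z})^H$ is automatic, so the content of the lemma is the reverse inclusion. The plan is to pick $x\in \mathrm{H}^2(S,\mathbb{Z})^H$ and show that $x\in \mathrm{NS}(S)$, after which $H$-invariance is preserved for free. Since $\mathrm{NS}(S)=\mathrm{H}^2(S,\mathbb{Z})\cap \mathrm{H}^{1,1}(S)$, it suffices to prove that $x$ has no $(2,0)$ or $(0,2)$ component.

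The key input is that the $H$-action, being by holomorphic automorphisms, preserves the Hodge decomposition
\[
\mathrm{H}^2(S,\mathbb{C})=\mathrm{H}^{2,0}(S)\oplus \mathrm{H}^{1,1}(S)\oplus \mathrm{H}^{0,2}(S).
\]
Write $x=x^{2,0}+x^{1,1}+x^{0,2}$; each summand is individually $H$-invariant. The space $\mathrm{H}^{2,0}(S)$ is one-dimensional, spanned by a non-zero holomorphic $2$-form $\omega_S$, so $x^{2,0}=\lambda\omega_S$ for some $\lambda\in\mathbb{C}$, and by complex conjugation $x^{0,2}=\overline{\lambda}\,\overline{\omega_S}$.

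The decisive step uses the non-symplecticity hypothesis. By definition, every non-trivial $h\in H$ acts on $\omega_S$ by a non-trivial root of unity $\chi(h)\neq 1$, via the homomorphism $\alpha$ of the exact sequence \eqref{K3-exact-sequence}. From $h^*x^{2,0}=x^{2,0}$ we obtain $\lambda\,\chi(h)=\lambda$, forcing $\lambda=0$ (it is enough to know that $H$ contains a single non-symplectic element, which is guaranteed here because $H$ itself is non-symplectic). Hence $x^{2,0}=x^{0,2}=0$, so $x\in \mathrm{H}^{1,1}(S)\cap \mathrm{H}^2(S,\mathbb{Z})=\mathrm{NS}(S)$, which completes the proof.

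There is no real obstacle here; the only point worth emphasizing is that the conclusion fails badly in the symplectic case (where $\omega_S$ itself would be $H$-invariant and $\mathrm{H}^2(S,\mathbb{Z})^H$ would contain transcendental classes). Thus the argument is essentially an application of the Hodge decomposition together with the fact that $\alpha(h)\neq 1$ for every non-trivial $h\in H$.
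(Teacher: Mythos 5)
Your proof is correct and follows essentially the same route as the paper: decompose the invariant class according to the Hodge decomposition, use that a non-symplectic element acts on $\omega_S$ by a non-trivial root of unity to kill the $(2,0)$ and $(0,2)$ components, and conclude that the class lies in $\mathrm{H}^{1,1}(S)\cap\mathrm{H}^2(S,\mathbb{Z})=\mathrm{NS}(S)$. Your additional remarks (that a single non-symplectic element suffices, and that the statement fails in the symplectic case) are accurate but not needed.
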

\begin{proof}
Consider an element $x\in \mathrm{H}^2(S,\mathbb{Z})^H$. Consider its decomposition $x=x_{2,0}+x_{1,1}+x_{0,2}$ as an element of $\mathrm{H}^2(S,\mathbb{C})$ where $x_{2,0}\in \mathrm{H}^{2,0}(S)$, $x_{1,1}\in \mathrm{H}^{1,1}(S)$, and $x_{0,2}\in \mathrm{H}^{0,2}(S)$. Then for a non-symplectic element $h\in H$ we have 
\[
h^*(x) = h^*(x_{2,0})+h^*(x_{1,1})+h^*(x_{0,2})=x_{2,0} + x_{1,1} + x_{0,2}=x\textbf{}.
\]
Since $h$ acts on the generator of $\mathrm{H}^{2,0}(S)$ (respectively, of $\mathrm{H}^{0,2}(S)$) non-trivially, it follows that $x_{2,0}=0$ (respectively, $x_{0,2}=0$). Hence $x=x_{1,1}\in \mathrm{H}^2(S,\mathbb{Z})\cap \mathrm{H}^{1,1}(S)=\mathrm{NS}(S)$. Thus, $\mathrm{H}^2(S,\mathbb{Z})^H\subset \mathrm{NS}(S)^H$, and the claim follows.
\end{proof}

Using Corollary \ref{cor-max-group-direct-product}, we obtain the following. 

\begin{corollary}
For the groups (1)--(6) as in Theorem \ref{thm-maximal-k3-groups} we have $\mathrm{NS}(S)^H=\mathrm{H}^2(S,\mathbb{Z})^H$.     
\end{corollary}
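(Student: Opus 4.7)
The plan is to reduce this to the preceding lemma applied to a single non-symplectic element, invoking Corollary~\ref{cor-max-group-direct-product}. A small subtlety is that the preceding lemma is formulated for a group action that is non-symplectic (every non-trivial element non-symplectic), whereas the groups (1)--(6) contain a non-trivial symplectic subgroup $H_s$. However, inspecting the proof of that lemma shows that only the existence of a single non-symplectic element in $H$ is used.

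First, by Corollary~\ref{cor-max-group-direct-product} we have a splitting $H=H_s\times \mathbb{Z}/m$ with $m\geq 2$, where the cyclic factor $\mathbb{Z}/m$ acts purely non-symplectically. Pick a generator $h$ of $\mathbb{Z}/m$; then $h\in H$ is a non-symplectic automorphism of $S$.

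Next, let $x\in \mathrm{H}^2(S,\mathbb{Z})^H$. Then in particular $h^*(x)=x$. Decomposing $x=x_{2,0}+x_{1,1}+x_{0,2}$ in $\mathrm{H}^2(S,\mathbb{C})$ and using that $h$ acts on the one-dimensional space $\mathrm{H}^{2,0}(S)$ (resp.\ $\mathrm{H}^{0,2}(S)$) by a non-trivial root of unity, the equality $h^*(x)=x$ forces $x_{2,0}=x_{0,2}=0$, so $x\in \mathrm{H}^{1,1}(S)\cap \mathrm{H}^2(S,\mathbb{Z})=\mathrm{NS}(S)$. This is literally the argument of the preceding lemma applied only to $h$, and it shows
\[
\mathrm{H}^2(S,\mathbb{Z})^H\subset \mathrm{H}^2(S,\mathbb{Z})^{\langle h\rangle}\subset \mathrm{NS}(S).
\]
Since $x$ is also $H$-invariant, we obtain $x\in \mathrm{NS}(S)^H$, giving $\mathrm{H}^2(S,\mathbb{Z})^H\subset \mathrm{NS}(S)^H$. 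The reverse inclusion $\mathrm{NS}(S)^H\subset \mathrm{H}^2(S,\mathbb{Z})^H$ is immediate from $\mathrm{NS}(S)\subset \mathrm{H}^2(S,\mathbb{Z})$, so we conclude equality.

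There is essentially no hard step: the substance is entirely in Corollary~\ref{cor-max-group-direct-product} (which guarantees a non-symplectic element in each of the six groups) together with the Hodge-decomposition argument of the preceding lemma. The only thing to be careful about is that the preceding lemma's hypothesis is slightly stronger than what its proof actually uses; one could alternatively state and prove a marginally more general version of that lemma (``if $H$ contains a non-symplectic element''), from which the corollary is an immediate specialization.
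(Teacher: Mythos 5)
Your proof is correct and follows essentially the same route as the paper, whose entire proof is the one-line ``Using Corollary~\ref{cor-max-group-direct-product}, we obtain the following'' --- i.e., the splitting $H=H_s\times\mathbb{Z}/m$ supplies a non-symplectic element to which the Hodge-decomposition argument of the preceding lemma applies. Your observation that the lemma's stated hypothesis (every non-trivial element non-symplectic) is stronger than what its proof actually uses is accurate and worth making explicit, since the groups (1)--(6) contain a non-trivial symplectic subgroup and so do not literally satisfy that hypothesis.
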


The following proposition is crucial to our work. 

\begin{proposition}
\label{prop: determinants}
\label{prop-ranks-H0}
	Let $H$ be one of the groups (1)--(6) in Theorem \ref{thm-maximal-k3-groups}, and let $S$ be a smooth K3 surface with a faithful action of $H$. Let $M$ be the intersection matrix on $\mathrm{H}^2(S,\mathbb{Z})^H$. 
    Put $r=\mathrm{rk}\,\mathrm{H}^2(S, \mathbb{Z})^H$. 
    Then one of the following cases holds.
	\begin{enumerate}
        \item $H=(\dz/4)^3,\ r=1,\ M=(4)$,
        \item $H=(\dz/6)^2\times\dz/2,\ r=1,\ M=(2)$,
        \item $H=\dz/6\times(\dz/3)^2,\ r=2,\ M=\begin{pmatrix}0&3\\3&0\end{pmatrix}$,
        \item $H=\dz/8\times\dz/4\times\dz/2,\ r=2,\ M=\begin{pmatrix}0&2\\2&0\end{pmatrix}$,
		\item $H=(\dz/2)^5,\ 1\leq r\leq5$,
		\item $H=\dz/4\times(\dz/2)^3,\ 2\leq r\leq 6$.
	\end{enumerate}
\end{proposition}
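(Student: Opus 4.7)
The plan is to leverage the splitting $H = H_s \times \mathbb{Z}/m$ from Corollary~3.6 together with the equality $\mathrm{H}^2(S,\mathbb{Z})^H = \mathrm{NS}(S)^H$ established just above, and then extract the explicit intersection matrices from the classification in \cite{BH23}.

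I would begin by setting up the general framework. Since $\mathbb{Z}/m$ is purely non-symplectic, a standard result of Nikulin makes $T_S \otimes \mathbb{Q}$ into a $\mathbb{Q}(\zeta_m)$-vector space, so $T_S^{\mathbb{Z}/m} = 0$; combined with the preceding corollary this places the invariant lattice entirely inside $\mathrm{NS}(S)$. Averaging an ample class over $H$ produces an $H$-invariant ample class, so by the Hodge index theorem $\mathrm{H}^2(S,\mathbb{Z})^H$ is an even lattice of signature $(1, r-1)$, in particular $r \geq 1$. This already fixes the signature of $M$ and forces positivity of some diagonal entry.

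Next I would address the rigid cases (1)--(4) by exhibiting explicit K3 surfaces realizing each maximal group and computing their invariant lattices directly. For (1), the Fermat quartic $X_4 \subset \mathbb{P}^3$ carries $(\mathbb{Z}/4)^3$ with an invariant hyperplane class of square $4$, giving $M=(4)$; for (2), the Fermat sextic $X_6 \subset \mathbb{P}(1,1,1,3)$ carries $(\mathbb{Z}/6)^2 \times \mathbb{Z}/2$ with an invariant polarization of square $2$. In cases (3) and (4) the invariant lattice has rank $2$ and signature $(1,1)$; two independent invariant classes can be read off from the weighted projective coordinates of the relevant Fermat K3 surfaces (as in Table~4), and computing their intersections on those specific generators yields hyperbolic forms of the shape $U(3)$ and $U(2)$ respectively. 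In each of (1)--(4) one must rule out a strictly larger overlattice, which can be done by a Nikulin discriminant-form argument combined with the primitive embedding of the symplectic coinvariant lattice $\Omega_{H_s}$ into $\mathrm{NS}(S)$.

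For the flexible cases (5) and (6) the invariant lattice is not a function of $H$ alone: it depends on the moduli of the K3 surface carrying the $H$-action. The upper bounds $r = 5$ and $r = 6$ are realized by Kummer-type surfaces such as those in Examples~3.11 and~3.12 (where $\rho=20$ and many invariant classes become available), while the lower bounds $r=1$ and $r=2$ occur on sufficiently generic $H$-polarized K3 surfaces in the respective moduli spaces. The main obstacle is to pin down the discriminants in (3)--(4) exactly and to verify sharpness of the intervals in (5)--(6) without simply invoking \cite{BH23} as a black box: this requires the combined lattice bookkeeping of Nikulin's coinvariant lattice $\Omega_{H_s}$ for the symplectic part and the cyclotomic $\mathbb{Z}[\zeta_m]$-module structure on the non-symplectic eigenlattice, which is precisely what is systematically computed in \cite{BH23}. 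The cleanest route is therefore to reduce each case to a lattice-theoretic uniqueness (or parametrization) statement and then cite \cite{BH23} for the explicit outcome.
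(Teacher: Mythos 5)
Your proposal ultimately rests on the same foundation as the paper's proof, which consists of a single line: the explicit matrices and rank ranges are read off from the database of \cite{BH23}, and you yourself conclude that the "cleanest route" is to cite that classification for the explicit outcome. The extra scaffolding you supply (the signature $(1,r-1)$ via an averaged ample class, $T_S^{\mathbb{Z}/m}=0$ from the cyclotomic module structure, and the Fermat examples realizing cases (1)--(4)) is correct and gives useful independent confirmation, but note that in cases (3) and (4) the invariant classes visible from the weighted projective coordinates only span a finite-index sublattice of $\mathrm{H}^2(S,\mathbb{Z})^H$ rather than the hyperbolic lattices $U(3)$ and $U(2)$ themselves --- the paper's Example 4.6 shows exactly this for $\mathbb{Z}/8\times\mathbb{Z}/4\times\mathbb{Z}/2$, where the coordinate classes give $\mathrm{diag}(4,-4)$ inside $U(2)$ --- so the overlattice step you flag is genuinely needed and is again supplied by \cite{BH23}.
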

\begin{proof}
Follows from the database provided by \cite{BH23}.
\end{proof}

The possible intersection matrices for the cases (5) and (6) in Proposition \ref{prop: determinants} are presented in the Appendix.

\begin{lem}
\label{lem-r-1-2}
Assume that a finite group $H$ faithfully acts on a K3 surface $S$ with at worst du Val singularities. Let $f\colon S'\to S$ be the minimal resolution, which is automatically $H$-equivariant. Put $r=\mathrm{rk}\,\mathrm{H}^2(S', \mathbb{Z})^H$. Then the following holds. 
\begin{enumerate}
\item If $r=1$, then $S$ is smooth. 
\item 
If $r=2$, then the $f$-exceptional $(-2)$-curves on $S'$ form one $H$-orbit, and the singular points of $S$ form one $H$-orbit.
\end{enumerate}
\end{lem}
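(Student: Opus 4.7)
The strategy is to produce enough explicit $H$-invariant classes in $\mathrm{H}^2(S',\mathbb{Z})^H$ to force the rank lower bound $r\geq k+1$, where $k$ is the number of $H$-orbits of $f$-exceptional $(-2)$-curves. The key tool is the fact that pullbacks of classes from $S$ are orthogonal to every $f$-exceptional curve, while orbit-sums of exceptional curves yield independent classes because of the negative definiteness of ADE configurations.

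\smallskip

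\emph{Step 1: An $H$-invariant class of positive square.} Since $S$ is projective and $H$ is finite, averaging an ample divisor over $H$ yields an $H$-invariant ample $\mathbb{Q}$-divisor $A_0$ on $S$. Let $A:=f^{*}A_0$; after clearing denominators we may assume $A\in \mathrm{H}^2(S',\mathbb{Z})^H$. Then $A^2=A_0^2>0$, and $A\cdot E=0$ for every $f$-exceptional curve $E$, since $E$ is contracted by $f$.

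\smallskip

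\emph{Step 2: Orbit-sums of exceptional curves.} Let $E_1,\ldots,E_n$ denote the $f$-exceptional $(-2)$-curves and partition them into $H$-orbits $O_1,\ldots,O_k$. For each $i$, the orbit-sum
\[
[O_i]:=\sum_{E\in O_i} E \ \in\ \mathrm{H}^2(S',\mathbb{Z})^H
\]
is $H$-invariant and satisfies $A\cdot [O_i]=0$. Because the intersection form on $\langle E_1,\ldots,E_n\rangle$ is negative definite (ADE configurations over du Val singularities), the $E_i$'s are linearly independent in $\mathrm{NS}(S')$; as the $[O_i]$ are supported on disjoint subsets of $\{E_1,\ldots,E_n\}$, they are linearly independent as well.

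\smallskip

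\emph{Step 3: The rank bound.} Since $A$ has positive square while each $[O_i]$ lies in the negative definite exceptional sublattice, $A$ is not in the span of the $[O_i]$. Hence $A,[O_1],\ldots,[O_k]$ are linearly independent in $\mathrm{H}^2(S',\mathbb{Q})^H$, giving
\[
r=\mathrm{rk}\,\mathrm{H}^2(S',\mathbb{Z})^H\ \geq\ k+1.
\]

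\smallskip

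\emph{Step 4: Conclusion.} For (1), $r=1$ forces $k=0$: there are no $f$-exceptional curves, so $S$ is smooth. For (2), $r=2$ forces $k\leq 1$. If $k=0$, the statement is vacuous ($S$ is smooth). If $k=1$, all $f$-exceptional $(-2)$-curves lie in a single $H$-orbit; since $f$ is $H$-equivariant and each exceptional curve is contracted to a unique singular point, the induced surjection from orbits of exceptional curves to orbits of singular points shows that $\mathrm{Sing}(S)$ is also a single $H$-orbit.

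\smallskip

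The only delicate point is the linear independence of the orbit-sums in Step 2, which reduces to negative definiteness of the ADE intersection form on the exceptional locus; modulo this, the argument is essentially bookkeeping with one $H$-invariant ample class plus one class per orbit of exceptional curves.
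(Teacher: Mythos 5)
Your proof is correct and follows essentially the same route as the paper: both arguments combine the pullback of an $H$-invariant ample class (orthogonal to the exceptional locus, of positive square) with the linear independence of orbit-sums of exceptional $(-2)$-curves coming from negative definiteness, to get $r\geq k+1$. The paper phrases the $r=1$ case slightly differently (via $\rho^H(S)=\rho^H(S')=1$ forcing $f$ to be an isomorphism), but this is the same idea as your $k=0$ conclusion.
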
 
\begin{proof}
Assume that $r=1$. 
Then for the $H$-invariant Picard number of $S'$ we have $\rho^H(S')=1$. 
Since $S$ is projective, we get $\rho^H(S)\geq 1$, and so $\rho^H(S)=1$. It follows that~$f$ is an isomorphism, so $S$ is smooth. 

Assume that $r=2$. Since the $f$-exceptional $(-2)$-curves are linearly independent in $\mathrm{NS}(S')$, it follows that the $H$-orbits of $f$-exceptional $(-2)$-curves are linearly independent in $\mathrm{NS}^H(S')$. Let $A$ be an $H$-invariant ample divisor on $S$, and let $A'=f^*A$ be an $H$-invariant divisor on $S'$ with $A'^2>0$. Note that the $f$-exceptional $(-2)$-curves are orthogonal to $A'$. Since $r=2$, we conclude that $\rho^H(S')\leq 2$. Thus, the $f$-exceptional $(-2)$-curves on $S'$ form one $H$-orbit, and hence the singular points on $S$ form one $H$-orbit.
\end{proof}

\begin{corollary}
\label{cor-rank-1-is-smooth}
    Let $H$ be a group isomorphic to $(\dz/4)^3$ or $(\dz/6)^2\times\dz/2$. Then any K3 surface with a faithful action of $H$ is smooth.
\end{corollary}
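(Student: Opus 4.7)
The plan is to combine Proposition \ref{prop: determinants} and Lemma \ref{lem-r-1-2} in the most direct way possible. Given a K3 surface $S$ (with at worst du Val singularities) carrying a faithful action of $H$, I would first pass to the minimal resolution $f\colon S'\to S$. Since $f$ is the minimal resolution of du Val singularities, it is canonically $H$-equivariant, and $H$ still acts faithfully on $S'$ (a trivial action on $S'$ would descend to a trivial action on $S$ through the surjection $f$, contradicting the faithfulness hypothesis).

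Next I would observe that $S'$ is a smooth K3 surface, so Proposition \ref{prop: determinants} applies directly. When $H\cong(\mathbb{Z}/4)^3$ (case (1)) or $H\cong(\mathbb{Z}/6)^2\times\mathbb{Z}/2$ (case (2)), the proposition asserts that
\[
r=\mathrm{rk}\,\mathrm{H}^2(S',\mathbb{Z})^H=1.
\]
Finally I would invoke Lemma \ref{lem-r-1-2}(1), which says precisely that $r=1$ forces $f$ to be an isomorphism, hence $S=S'$ is smooth.

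There is essentially no obstacle: the hard work has already been done in the two results that are invoked. The only point that requires a line of justification is the promotion of a faithful $H$-action on $S$ to a faithful $H$-action on $S'$, and this is immediate from the functoriality of the minimal resolution and surjectivity of $f$. In particular, no case-by-case analysis of du Val configurations or lattice computations is needed at this stage, since both pieces of input are already available in the form required.
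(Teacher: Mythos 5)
Your proposal is correct and follows exactly the paper's own argument: the paper's proof is the one-line "Follows from Proposition \ref{prop-ranks-H0} and Lemma \ref{lem-r-1-2}," which is precisely your combination of the rank computation ($r=1$ in these two cases) with Lemma \ref{lem-r-1-2}(1). Your extra remark on the faithfulness of the lifted action to the minimal resolution is a harmless elaboration of what the paper leaves implicit.
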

\begin{proof}
Follows from Proposition \ref{prop-ranks-H0} and Lemma \ref{lem-r-1-2}.
\end{proof}

\begin{corollary}
\label{cor-rank-2-is-transitive}
    Let $H$ be a group isomorphic to $\dz/8\times\dz/4\times\dz/2$ or $\dz/6\times(\dz/3)^2$ faithfully acting on a K3 surface $S$. Let $f\colon S'\to S$ be the minimal resolution. 
    Then $H$ acts on the set of $f$-exceptional $(-2)$-curves transitively. In particular, singularities on $S$ form one $H$-orbit, and they can be only of type $A_1$ or $A_2$.  
\end{corollary}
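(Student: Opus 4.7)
My plan is to combine the structural information in Proposition \ref{prop-ranks-H0} with the general principle from Lemma \ref{lem-r-1-2}, and then to extract the local constraint on the singularities from an orbit-transitivity argument on dual graphs.

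First I would invoke Proposition \ref{prop-ranks-H0}: cases (3) and (4) give $r = \mathrm{rk}\,\mathrm{H}^2(S', \mathbb{Z})^H = 2$ for the two groups in the statement. Lemma \ref{lem-r-1-2}(2) then applies verbatim and yields that the set of $f$-exceptional $(-2)$-curves on $S'$ forms a single $H$-orbit, and simultaneously that the singular points of $S$ form a single $H$-orbit. This establishes the transitivity assertion and, in passing, confirms that all singularities of $S$ share the same du Val type.

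The remaining task is to identify that common type as $A_1$ or $A_2$. I would fix a singular point $p \in S$ with stabilizer $H_p \subset H$, and let $\Gamma_p$ be the dual graph of the $f$-exceptional divisor over $p$; by the classification of du Val singularities, $\Gamma_p$ is a Dynkin diagram of type $A$, $D$, or $E$. The natural map from the set of $f$-exceptional curves to the set of singular points of $S$ is $H$-equivariant, and $H$ acts transitively on both sides, so a standard argument (if $hx_1 = x_2$ with $x_1,x_2$ over $p$, then $h\cdot p = p$) forces $H_p$ to act transitively on the vertices of $\Gamma_p$. Since any graph automorphism preserves vertex degrees, all vertices of $\Gamma_p$ must share a common degree; inspection of the diagrams $A_n$, $D_n$, $E_6$, $E_7$, $E_8$ shows that this happens only for $A_1$ (a single vertex) and for $A_2$ (two vertices, each of degree $1$), which gives the second assertion.

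The substantive content is entirely absorbed by Proposition \ref{prop-ranks-H0} (which appeals to the database of \cite{BH23}) and by Lemma \ref{lem-r-1-2}; the final restriction to $A_1$ and $A_2$ is a purely combinatorial observation about ADE graphs, so I do not anticipate a genuine obstacle.
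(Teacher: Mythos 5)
Your proposal is correct and follows essentially the same route as the paper: the paper's proof simply cites Proposition \ref{prop-ranks-H0} and Lemma \ref{lem-r-1-2} and remarks that the last claim "follows by looking at the dual graphs," which is precisely the vertex-transitivity/degree argument you spell out. Your write-up just makes explicit the combinatorial step the paper leaves to the reader.
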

\begin{proof}
Follows from Proposition \ref{prop-ranks-H0} and Lemma \ref{lem-r-1-2}. The last claim follows by looking at the dual graphs of the minimal resolution of du Val singularities. 
\end{proof}

\begin{remark}
Let $S$ be a K3 surface with du Val singularities endowed with the action of a finite abelian group $H$. 
Let $f\colon S'\to S$ be the minimal resolution. Note that $f$ is automatically $H$-equivariant. Then 
\[
\mathrm{NS}(S')_{\mathbb{Q}}=f^*\mathrm{NS}(S)_{\mathbb{Q}}\oplus V_{\mathbb{Q}}
\]
where $V=\langle E_i\rangle$ is the subgroup in $\mathrm{NS}(S')$ spanned by the $f$-exceptional $(-2)$-curves $E_i$, and $V_{\mathbb{Q}}=V\otimes \mathbb{Q}$. However, it is not true that 
\[
\mathrm{NS}(S')=f^*\mathrm{NS}(S)\oplus V.
\]
Indeed, otherwise we would have 
\[
\mathrm{NS}(S')^H=f^*\mathrm{NS}(S)^H\oplus V^H,
\]
which is not the case, as the following example shows. 
\end{remark}

\begin{example}
Consider a K3 surface $S$ as in Example \ref{exam-singular-k3}(2) with the action of the group $H=\mathbb{Z}/8\times\mathbb{Z}/4\times \mathbb{Z}/2$. Let $f\colon S'\to S$ be the minimal resolution. By Proposition~\ref{prop: determinants} we have that $\Lambda=\mathrm{NS}(S')^H$ is the lattice with the Gram matrix 
\[
B_\Lambda = \begin{pmatrix}0&2\\2&0\end{pmatrix}.
\]
Put 
\[
\Lambda'=f^*\mathrm{NS}(S)^H\oplus V^H
\]
where $V=\langle E_i\rangle$ is a subgroup in $\mathrm{NS}(S')$ spanned by the $f$-exceptional $(-2)$-curves $E_1$ and $E_2$, and $V^H$ is spanned by their $H$-orbit $E_1+E_2$. Then $\Lambda'$ has the Gram matrix 
\[
B_{\Lambda'} = \begin{pmatrix}4&0\\0&-4\end{pmatrix}. 
\]
We have that $\rho^H(S)=1$, and $\mathrm{NS}(S)^H=\mathbb{Z}[2A]$ where $A$ is the restriction of $\OOO(1)$ in $\mathbb{P}(1,1,2,4)$ to $S$, so that $A^2=1$. Then the lattice $2\Lambda = 2\mathrm{NS}(S')^H$, which has the Gram matrix 
\[
B_{2\Lambda} = \begin{pmatrix}0&8\\8&0\end{pmatrix},
\] 
is a sublattice of $\Lambda'$. We have the inclusions $2\Lambda \subset \Lambda'\subset \Lambda$. 
\end{example}

\begin{proposition}
\label{prop-inclusion-of-lattices}
We have
\[
\mu\mathrm{NS}(S')\subset f^*\mathrm{NS}(S)\oplus V\subset \mathrm{NS}(S').
\]
where $V=\langle E_i\rangle$ is a subgroup in $\mathrm{NS}(S')$ spanned by the $f$-exceptional $(-2)$-curves $E_i$, and $\mu$ is the index of $\mathrm{Cl}(S)$ in $\mathrm{NS}(S)=\mathrm{Pic}(S)$. Similarly, we have
\begin{equation}
\label{eq-inclusion-of-lattices}
\mu\mathrm{NS}(S')^H\subset f^*\mathrm{NS}(S)^H\oplus V^H\subset \mathrm{NS}(S')^H.
\end{equation}
Consequently, 
\begin{equation}
\label{eq-determinant-of-lattices}
\mathrm{disc} (\mathrm{NS}(S')^H)\quad |\quad \mathrm{disc} (\mathrm{NS}(S)^H)\det V\quad |\quad \mu^{2\rho} \mathrm{disc} (\mathrm{NS}(S')^H),
\end{equation}
where $\rho$ is the rank of $\mathrm{NS}(S')^H$.
\end{proposition}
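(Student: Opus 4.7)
\medskip

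\textbf{Plan of proof.} The main input is the standard short exact sequence attached to any resolution $f\colon S'\to S$ of a normal surface,
\[
0\to V\to\mathrm{Pic}(S')\to\mathrm{Cl}(S)\to 0,
\]
obtained by restricting line bundles to the open locus $S'\setminus\bigcup E_i\cong S\setminus\mathrm{Sing}(S)$: the kernel is the subgroup generated by the $f$-exceptional curves, and surjectivity follows because the strict transform of any Weil divisor on $S$ is Cartier on the smooth surface $S'$. The preimage of the subgroup $\mathrm{Pic}(S)\subset\mathrm{Cl}(S)$ under this map is $f^*\mathrm{Pic}(S)+V$: indeed, if $\mathcal L\in\mathrm{Pic}(S')$ restricts to a Cartier class $D$ on $S$, then $\mathcal L-f^*D$ is supported on the exceptional locus. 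The projection formula gives $f^*D\cdot E_i=D\cdot f_*E_i=0$, so $f^*\mathrm{Pic}(S)\perp V$; since $V$ is negative-definite (du Val configuration), the intersection $f^*\mathrm{Pic}(S)\cap V=0$ and the sum is internally direct. Consequently
\[
\mathrm{NS}(S')/\bigl(f^*\mathrm{NS}(S)\oplus V\bigr)\;\cong\;\mathrm{Cl}(S)/\mathrm{Pic}(S),
\]
a finite abelian group killed by $\mu$, which yields the first chain $\mu\mathrm{NS}(S')\subset f^*\mathrm{NS}(S)\oplus V\subset\mathrm{NS}(S')$.

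For the $H$-equivariant version, both $f^*\mathrm{NS}(S)$ (because $f$ is $H$-equivariant, so $h\cdot f^*D=f^*(h\cdot D)$) and $V$ (because $H$ permutes the exceptional components $E_i$) are $H$-stable subgroups of $\mathrm{NS}(S')$. Taking $H$-invariants respects direct sums of $H$-stable subgroups, respects inclusions, and commutes with multiplication by $\mu$ since $\mathrm{NS}(S')$ is torsion-free. Applying $(\cdot)^H$ to the chain above therefore gives \eqref{eq-inclusion-of-lattices}.

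For the discriminant chain \eqref{eq-determinant-of-lattices}, set $\Lambda=\mathrm{NS}(S')^H$ of rank $\rho$ and $\Lambda_0=f^*\mathrm{NS}(S)^H\oplus V^H$. Since $f^*$ is an isometry onto its image (projection formula) and the two summands are orthogonal,
\[
\mathrm{disc}(\Lambda_0)=\mathrm{disc}(\mathrm{NS}(S)^H)\cdot\det V^H,
\]
which matches the middle term of \eqref{eq-determinant-of-lattices} (with $\det V$ interpreted as $\det V^H$). Now apply the general identity $\mathrm{disc}(\Lambda')=[\Lambda'':\Lambda']^2\,\mathrm{disc}(\Lambda'')$ for sublattices of the same rank to the inclusions $\mu\Lambda\subset\Lambda_0\subset\Lambda$: from $\Lambda_0\subset\Lambda$ we get $\mathrm{disc}(\Lambda)\mid\mathrm{disc}(\Lambda_0)$, and from $\mu\Lambda\subset\Lambda_0$, together with $[\Lambda:\mu\Lambda]=\mu^\rho$, we get $\mathrm{disc}(\Lambda_0)\mid\mu^{2\rho}\mathrm{disc}(\Lambda)$. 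Combining gives the required divisibility chain.

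\emph{Expected main obstacle.} There is no real obstacle: the argument is formal once the exact sequence $0\to V\to \mathrm{Pic}(S')\to\mathrm{Cl}(S)\to 0$ and the identification of the preimage of $\mathrm{Pic}(S)$ are in place. The only point requiring mild care is checking that the sum $f^*\mathrm{Pic}(S)+V$ is internally direct, which rests on negative-definiteness of the exceptional lattice of a du Val configuration.
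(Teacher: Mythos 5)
Your proof is correct and follows essentially the same route as the paper's: both rest on the fact that $\mu$ annihilates $\mathrm{Cl}(S)/\mathrm{Pic}(S)$, so that $\mu f_*D$ is Cartier and $f^*(\mu f_*D)-\mu D$ lies in $V$; you merely package this via the exact sequence $0\to V\to\mathrm{Pic}(S')\to\mathrm{Cl}(S)\to 0$ and take $H$-invariants at the end, whereas the paper starts from an $H$-invariant divisor $D$ and writes $f^*f_*D=D+\sum a_iE_i$ with $a_i\in\frac{1}{\mu}\mathbb{Z}$ directly. Your additional verification that $f^*\mathrm{NS}(S)+V$ is an internal orthogonal direct sum, and the reading of $\det V$ as $\det V^H$ in the middle term of the discriminant chain, are details the paper leaves implicit.
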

\begin{proof}
Let $D\in \mathrm{NS}(S')^H$. Then $f^*f_*D=D+\sum a_iE_i$ where $E_i\in V$ and $a_i\in\frac{1}{\mu}\mathbb{Z}$. Hence, $\mu f_*D\in\mathrm{NS}(S)^H$, and $\mu f^*f_*D$ belongs to $f^*\mathrm{NS}(S)^H$. Thus, $\mu (f^*f_*D-D)\in V$. This shows \eqref{eq-inclusion-of-lattices}. Then \eqref{eq-determinant-of-lattices} follows by taking the discriminants of the lattices in \eqref{eq-inclusion-of-lattices}.
\end{proof}

\section{Terminal singularities}
\label{sec-terminal-sing-classification}
In this section we recall the classification of three-dimensional terminal singularities, cf. \cite{Mo85}, \cite{Re85}. 
Let $x\in X$ be a germ of a three-dimensional terminal singularity. Then the singularity is isolated: $\mathrm{Sing}(X) = \{x\}$. The \emph{index} of $x\in X$ is the minimal positive integer $r$ such that $rK_X$ is Cartier. 

If $r = 1$, then $x\in X$ is Gorenstein. In this case $x\in X$ is analytically isomorphic to a hypersurface singularity in $\mathbb{C}^4$ of multiplicity~$2$. Moreover, any Weil $\mathbb{Q}$-Cartier divisor $D$ on $x\in X$ is Cartier. Also, in this case $x\in X$ is a compound du Val singularity, which means that its general hyperplane section $H$ that contains $x$ is a surface with a du Val singularity at $x$. We say that $x\in X$ has type $cA$, $cD$, $cE$, respectively, if $x\in H$ is a du Val singularity of type $A$, $D$, $E$, respectively. 

If $r > 1$, then there is a cyclic \'etale outside~$x$ covering 
\[
\pi\colon \widetilde{x}\in \widetilde{X} \to X\ni x
\] 
of degree $r$ such that $\widetilde{x}\in \widetilde{X}$ is a Gorenstein terminal singularity (or a smooth point), and $\pi^{-1}(x)=\widetilde{x}$. The map $\pi$ is called the \emph{index-one cover} of $x\in X$, and it is defined canonically.

\begin{thm}[{\cite{Mo85, Re85}}]
Let $x\in X$ be a three-dimensional terminal singularity of index $r > 1$. Then $x\in X$ is analytically isomorphic to the quotient $\{\phi = 0\}/(\mathbb{Z}/r)$ of a hypersurface $\mathbb{C}^4$ defined by the equation
\[
\phi(x_1, \dots, x_4) = 0,
\]
where the group $\mathbb{Z}/r$ acts on $\mathbb{C}^4$ such that the coordinates $x_i$ and the equation $\phi$ are semi-invariant. Moreover, up to an analytic $\mathbb{Z}/r$-equivariant coordinate change, the hypersurface $\phi = 0$ and the $\mathbb{Z}/r$-action are described by one of the rows in the following table (where $\mathfrak{m}$ denotes the maximal ideal of $x\in X$, and the last number in the weights vector is the weight of $\phi$): 
\end{thm}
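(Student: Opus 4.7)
\medskip
\noindent\textbf{Proof proposal.} The plan is to follow the Mori--Reid strategy, reducing the classification of terminal singularities of index $r>1$ to an equivariant classification of Gorenstein terminal hypersurface singularities. First, I would invoke the canonical index-one cover
\[
\pi\colon \widetilde{x}\in \widetilde{X}\longrightarrow X\ni x,
\]
which is a cyclic Galois cover of degree $r$, étale outside $x$, with Galois group $\mathbb{Z}/r$ and with $\widetilde{x}\in \widetilde{X}$ either smooth or a Gorenstein terminal singularity. This $\pi$ is constructed canonically from the sheaf $\bigoplus_{i=0}^{r-1}\OOO_X(iK_X)$, so the $\mathbb{Z}/r$-action is intrinsic.

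Next, I would use the fundamental result of Reid that a Gorenstein terminal threefold singularity is a compound du Val point, hence analytically isomorphic to a hypersurface singularity $\{\phi=0\}\subset \mathbb{C}^4$ of multiplicity at most $2$. This embeds $\widetilde{X}$ as a $\mathbb{Z}/r$-invariant hypersurface in $(\mathbb{C}^4,\widetilde{x})$. The crucial point is that this embedding can be made equivariant: since $\mathbb{Z}/r$ is a finite (linearly reductive) group acting on the local ring $\OOO_{\widetilde{X},\widetilde{x}}$, one can choose semi-invariant generators $x_1,\dots,x_4$ of the maximal ideal $\mathfrak{m}$ of a $\mathbb{Z}/r$-equivariant embedding $\widetilde{X}\hookrightarrow \mathbb{C}^4$, and the defining equation $\phi$ is then automatically semi-invariant of some weight. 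This yields the required presentation $\{\phi=0\}/(\mathbb{Z}/r)$.

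The final and most delicate step is to bring $(\phi, a_1,\dots,a_4;r)$ to one of the normal forms in the table. Here I would argue as follows. The action on $\mathfrak{m}/\mathfrak{m}^2\cong \mathbb{C}^4$ is a faithful representation of $\mathbb{Z}/r$ (by Lemma~\ref{lem-faithful-action}), so the weights $(a_1,\dots,a_4)$ generate $\mathbb{Z}/r$. The requirement that $\widetilde{X}$ be terminal forces strong restrictions on $(a_1,\dots,a_4)$ modulo~$r$ via the Reid--Tai criterion: for every power $g^k\neq\mathrm{id}$, the age computed on the Zariski tangent space to $\widetilde{X}$ must be at least $1$. A case analysis based on $\gcd$'s among the $a_i$'s, combined with the constraint that $\phi\in \mathfrak{m}^2$ is semi-invariant and $\{\phi=0\}$ has an isolated singularity, cuts down the possibilities drastically. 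Finally, one normalizes $\phi$ via an equivariant analytic change of coordinates, using a Weierstrass-preparation argument to eliminate inessential monomials (the standard monomials in each weight bucket are finite, and generic coefficients can be scaled away using the residual $\mathbb{C}^\times$-action on the semi-invariant coordinates).

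I expect the hard part to be the last step: producing a complete and irredundant table of equivariant normal forms for $\phi$. The reduction to Gorenstein hypersurface via the index-one cover and the equivariant embedding are conceptual and essentially formal; the combinatorial bookkeeping of semi-invariant monomials in four variables of weights $(a_1,\dots,a_4)$ modulo $r$, together with verifying that each candidate normal form actually defines a terminal singularity (Reid--Tai) and is not analytically equivalent to another entry in the list, is the heart of the theorem and is where one would need to cite \cite{Mo85, Re85} for the full detailed verification.
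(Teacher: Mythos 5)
The paper offers no proof of this statement: it is imported wholesale from Mori and Reid, table included, with only the citation \cite{Mo85, Re85}. So there is no internal argument to compare yours against, and deferring the final classification to those references is exactly what the authors themselves do.

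As an outline of the actual proof in the literature, your skeleton is the right one: the canonical index-one cover built from $\bigoplus_{i=0}^{r-1}\mathscr{O}_X(iK_X)$, Reid's theorem that Gorenstein terminal threefold points are compound du Val (hence multiplicity-$2$ hypersurface) singularities, and an equivariant embedding into $\mathbb{C}^4$ with semi-invariant coordinates obtained by averaging over the linearly reductive group $\mathbb{Z}/r$. One correction and one caveat. The correction: the terminality of $\{\phi=0\}/(\mathbb{Z}/r)$ is not governed by the Reid--Tai age inequality on the Zariski tangent space of $\widetilde{X}$ --- that criterion is for quotients of smooth points and is neither necessary nor sufficient here. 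The right numerical condition couples the weights $(a_1,\dots,a_4)$ with the weight of $\phi$ (discrepancies are computed on weighted blowups of the ambient $\mathbb{C}^4/(\mathbb{Z}/r)$, where the equation $\phi$ contributes with its own weight), and it must be supplemented by the condition that the cover be \'etale in codimension one, i.e.\ that no $x_i$-axis contained in $\{\phi=0\}$ be fixed by a nontrivial element. The caveat: the step you describe as ``combinatorial bookkeeping'' --- Mori's terminal lemma determining which weight vectors can occur, followed by the equivariant normal-form reduction of $\phi$ --- is the entire substance of \cite{Mo85} and of Reid's verification, not a routine afterthought; presented as above, your argument is a correct roadmap but not a proof.
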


\ 

\begingroup
\renewcommand{\arraystretch}{1.2}
\begin{longtable}{|p{0.11\textwidth}|p{0.10\textwidth}|p{0.45\textwidth}|p{0.20\textwidth}|}
\hline
\textbf{Type} & \textbf{Index} & \textbf{$\phi(x_1,x_2,x_3,x_4)$} & \textbf{Weights} \\ \hline
$cA/r$ & $r\geq 1$ & 
$x_1x_2 + \psi(x_3^r, x_4)$ & 
$(1,-1,a,0;0)$, $(r,a)=1$  \\ \hline
$cAx/2$ & $r=2$ & 
$x_1^2 + x_2^2 + \psi(x_3, x_4)$, $\psi\in\mathfrak{m}^4$ & 
$(0,1,1,1;0)$\\ \hline
$cD/2$ & $r=2$ & 
$x_4^2 + \psi(x_1,x_2,x_3)$, $\psi \in \mathfrak{m}^3$ 
with $x_1x_2x_3$ or $x_2^2x_3 \in \psi$ & 
$(1,1,0,1;0)$ \\ \hline
$cE/2$ & $r=2$ & 
$x_4^2 + x_1^3 + \psi(x_2,x_3)x_1 + \theta(x_2,x_3)$, $\theta \notin \mathfrak{m}^5$ & 
$(0,1,1,1;0)$ \\ \hline
$cD/3$ & $r=3$ & 
$x_4^2 + \psi(x_1,x_2,x_3)$ 
with $\psi_3 = x_1^3 + x_2^3 + x_3^3$ or  
$x_1^3 + x_2x_3^2$ or $x_1^3 + x_2^3$ & 
$(1,2,2,0;0)$ \\ \hline
$cAx/4$ & $r=4$ & 
$x_1^2 + x_2^2 + \psi(x_3^2, x_4)$, $\psi \in \mathfrak{m}^3$ & 
$(1,3,1,2;2)$ \\ \hline
\end{longtable}
\begin{center}
\emph{Table 5. Terminal singularities} 
\end{center}
\endgroup

\subsection{Basket of singularities}
\label{sec-about-baskets}
For a three-dimensonal terminal singularity $x \in X$ there is a deformation to $k \geq 1$ terminal cyclic quotient singularities $x_1, \dots, x_k$. 
We may
assume that the singularities $x_i$ have type $\frac{1}{r_i}(1, -1, b_i)$ where $0 < b_i \leq r_i/2$. The set $\{x_1, \dots, x_k\}$ is called the \textit{basket of singularities}
of $x\in X$ and it can be written as
\[
B(x \in X) = \left\{ n_i \times \frac{1}{r_i}(1, -1, b_i)\right\}.
\]
By the basket of singularities of $X$, denoted by $B(X)$, we mean the union of all baskets of $x\in X$ for all non-Gorenstein singular points $x\in X$. 

If $x\in X$ is a three-dimensonal non-Gorenstein terminal singularity of index $r$, then in its basket of cyclic quotient singularities all the points in the basket have index $r$, except in the case when $x\in X$ is of type $cAx/4$, in which case one of the points in the basket has index $4$, and all the other points in the basket have index $2$. 
Moreover, if $x\in X$ is not a quotient singularity itself, then in the basket of $x\in X$ there are at least two points. 

\subsection{Orbifold Riemann-Roch}
By \cite[10.2]{Re85}, for a projective terminal threefold $X$ and a Weil $\mathbb{Q}$-Cartier divisor $D$ on it we have the following version of the Riemann-Roch formula:
\begin{equation}
\label{eq-ORR}
\chi (\OOO_X(D)) = \chi (\OOO_X) + \frac{1}{12}D(D-K_X)(2D-K_X) + \frac{1}{12}D\cdot c_2(X) + \sum_Q c_Q (D)
\end{equation}
where for any cyclic quotient singularity $Q$ we have 
\begin{equation}
c_Q(D) = -i \frac{r^2-1}{12r} + \sum_{j=1}^{i-1}\frac{\overline{bj} (r-\overline{bj}) }{2r}.
\end{equation}
Here $r$ is the index of $Q$, the divisor $D$ has type $i \frac{1}{r}(a, -a, 1)$ at $Q$, $b$ satisfies $ab=1 \ \mathrm{mod}\ r$, and $\overline{\ }$ denotes the residue modulo $r$. For non-cyclic non-Gorenstein singularities, their contribution to the right-hand side of \eqref{eq-ORR} is computed in terms of their basket of cyclic points. Moreover, by \cite[10.3]{Re85} (see \cite[Theorem 12.1.3]{Pr21} in the case of $G\mathbb{Q}$-Fano threefolds) one has 
\begin{equation}
\label{eq-Miyaoka}
(-K_X)\cdot c_2(X) + \sum_Q (r-1/r) = 24
\end{equation}
Since $(-K_X)\cdot c_2(X)>0$, we see that the number of non-Gorenstein singular points is at most $15$.

\subsection{Geometry of the flag $x\in S\subset X$}
We start with the following lemma which is well-known to experts. 

\begin{lem}
\label{lem-singular-du-val}
Let $x\in X$ be a germ of a threefold terminal singularity. Let $S\in |-K_X|$ be an anti-canonical element, and assume that $x\in X$ is not a smooth point. Then $x$ is a singular point on $S$. 
\end{lem}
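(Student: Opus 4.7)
The plan is to split on the Cartier index $r$ of $K_X$ at $x$: if $r=1$, I would conclude by a direct tangent-space calculation on the germ of $X$ as a hypersurface in $(\mathbb{C}^4,0)$; if $r>1$, I would pass to the index-one cover $\pi\colon\widetilde{X}\to X$, reduce the local geometry of $(S,x)$ to that of $(\widetilde{S},\widetilde{x})$, and handle the remaining configuration via a Zariski--Nagata purity argument. Throughout, the implicit assumption is $x\in S$, which is what makes the conclusion meaningful.

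For the Gorenstein case $r=1$, the classification of compound Du Val singularities embeds the germ $(x\in X)$ analytically as a hypersurface $\{\phi=0\}\subset(\mathbb{C}^4,0)$ with $\phi\in\mathfrak{m}^2$ (since $x$ is singular on $X$). Because $-K_X$ is Cartier, $S$ is Cartier in $X$ near $x$ and is cut out by one additional equation $\widetilde{f}\in\mathfrak{m}$. The Zariski tangent space $T_xS$ inside $T_0\mathbb{C}^4=\mathbb{C}^4$ is the joint kernel of the linear parts of $\phi$ and $\widetilde{f}$. Since $\phi\in\mathfrak{m}^2$ contributes no linear form, $\dim T_xS\geq 3>2=\dim S$, so $S$ is singular at $x$.

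For $r>1$, the index-one cover $\pi\colon\widetilde{X}\to X$ is a cyclic $\mu_r$-cover étale outside the unique preimage $\widetilde{x}$ of $x$, and $\pi^{*}K_X=K_{\widetilde{X}}$ is Cartier, so $\widetilde{S}:=\pi^{-1}(S)$ is a Cartier divisor on $\widetilde{X}$ through $\widetilde{x}$. If $\widetilde{X}$ is Gorenstein singular at $\widetilde{x}$, the previous paragraph applied to $\widetilde{X}$ already forces $\widetilde{S}$ to be singular at $\widetilde{x}$. If instead $\widetilde{X}$ is smooth at $\widetilde{x}$, so that $X$ is analytically a cyclic quotient singularity $\frac{1}{r}(1,-1,a)$ with $\gcd(a,r)=1$, and $\widetilde{S}$ happens to be smooth at $\widetilde{x}$, then $T_{\widetilde{x}}\widetilde{S}$ is a $\mu_r$-invariant $2$-plane in $T_{\widetilde{x}}\widetilde{X}\cong\mathbb{C}^3$, hence spanned by two coordinate axes carrying two weights from $\{1,-1,a\}$, each coprime to $r$. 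The germ of $S$ at $x$ is then the quotient of a smooth plane by a cyclic group containing no pseudo-reflection, and by Chevalley--Shephard--Todd it is a non-trivial cyclic quotient singularity, hence singular.

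The only remaining configuration is that $\widetilde{S}$ is singular at $\widetilde{x}$, and I would rule out $S$ being smooth at $x$ by contradiction. In that case $S$ would be analytically $(\mathbb{C}^2,0)$ near $x$, and $\pi|_{\widetilde{S}}\colon\widetilde{S}\to S$ would be a finite degree-$r$ cover, étale away from $\widetilde{x}$ (as the base change of the étale map $\pi|_{\widetilde{X}\setminus\widetilde{x}}$), with $\widetilde{x}$ the unique point over $x$. Zariski--Nagata purity would then extend this cover to a finite étale cover of $\mathbb{C}^2$, which, by simple connectivity, is trivial --- but a trivial degree-$r$ cover has $r>1$ distinct preimages of the origin, a contradiction. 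The main subtlety is the purity step itself: $\widetilde{S}$ need not be normal or even irreducible, so I would first reduce to its irreducible components (or normalization), on each of which the cover is étale in codimension one and purity applies cleanly.
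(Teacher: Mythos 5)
Your overall strategy (index-one cover, then a covering-space argument on the punctured surface germ) is the same as the paper's, and your treatment of the Gorenstein case and of the subcase where both $\widetilde{X}$ and $\widetilde{S}$ are smooth at $\widetilde{x}$ is correct. But there is a genuine gap in the final configuration, which is the heart of the lemma. Once you (correctly) observe that Zariski--Nagata purity can only be applied after passing to the normalization or to the irreducible components of $\widetilde{S}$, the conclusion you obtain is that the normalization of $\widetilde{S}$ is a disjoint union of $r$ copies of the smooth germ $(S,x)$, i.e.\ that $\widetilde{S}$ is $r$ smooth sheets glued together at the single point $\widetilde{x}$. This is \emph{not} in contradiction with $\pi^{-1}(x)=\{\widetilde{x}\}$: normalization is allowed to separate branches, so "a trivial degree-$r$ cover has $r$ preimages of the origin" no longer applies to $\widetilde{S}$ itself, only to its normalization. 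At this point your argument stops without a contradiction.

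The missing ingredient is a reason why $\widetilde{S}$ cannot be several sheets meeting only at $\widetilde{x}$, equivalently why $\widetilde{S}\setminus\{\widetilde{x}\}$ must be connected. The paper supplies exactly this: $\widetilde{S}$ is a Cartier divisor on $\widetilde{X}$, and three-dimensional terminal singularities are Cohen--Macaulay, so $\widetilde{S}$ is Cohen--Macaulay of dimension $2$ at $\widetilde{x}$; by Hartshorne's connectedness theorem (depth $\geq 2$ implies the punctured spectrum is connected), $\widetilde{S}\setminus\{\widetilde{x}\}$ is connected, whereas the split covering of the simply connected germ $S\setminus\{x\}$ forces it to be disconnected. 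You should add this depth/Cohen--Macaulay step; without it the multi-sheet configuration survives and the proof does not close. (As a side remark, with this step in hand the case division on whether $\widetilde{X}$ or $\widetilde{S}$ is smooth becomes unnecessary: the connectedness argument handles the non-Gorenstein case uniformly, as in the paper.)
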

\begin{proof}
The claim is clear if $x$ is a Gorenstein point. Assume that $x$ is non-Gorenstein of index $r>1$. Consider the index-one cover $\pi\colon \widetilde{X}\to X$ which is \'etale of degree $r$ outside~$x$. Assume that $S$ is smooth at $x$. Put $\pi^{-1}(S)=\widetilde{S}$. Then $\pi|_{\widetilde{S}\setminus \widetilde{x}}\colon \widetilde{S}\setminus \{\widetilde{x}\}\to S\setminus \{x\}$ is \'etale outside $x$ as well. Since by assumption $S$ is smooth at $x$, it follows that $S\setminus \{x\}$ is simply-connected. Hence the cover $\pi|_{\widetilde{S}\setminus \{\widetilde{x}\}}\colon \widetilde{S}\setminus \{\widetilde{x}\}\to S\setminus \{x\}$ splits, so that $\widetilde{S}=\sum \widetilde{S}_i$, and $\widetilde{S}_i\cap \widetilde{S}_j=\{\widetilde{x}\}$ for $i\neq j$. However, since $rS$ is Cartier, it follows that $r\widetilde{S}$ is Cartier as well, and hence Cohen-Macaulay (here we use the fact that three-dimensional terminal singularities are Cohen-Macaulay). Thus, $\widetilde{S}\setminus \{\widetilde{x}\}$ should be connected, which is not the case. 
This leads to a contradiction, which shows that $S$ is singular at $x$. The claim follows.
\end{proof}


\begin{remark}
\label{rem-index-divides-exponent}
Let $x\in S$ be a germ of a du Val singularity. 
Let $\pi_1^{ab}(S\setminus \{x\})$ be the abelianization of the local fundamental group. Then there are the following possibilities for $\pi_1^{ab}(S\setminus \{x\})$ according to the type of singularity:
\begin{enumerate}
\item
$\mathbb{Z}/(n+1)$ for type $A_n$, 
\item
$\mathbb{Z}/2\oplus\mathbb{Z}/2$ for type $D_n$ for even $n\geq 4$, 
\item
$\mathbb{Z}/2$ for type $D_n$ for odd $n\geq 5$, 
\item
$\mathbb{Z}/3$ for type $E_6$, 
\item
$\mathbb{Z}/2$ for type $E_7$, 
\item
$0$ for type $E_8$. 
\end{enumerate}
It is also known that the exponent of $\pi_1^{ab}(S\setminus \{x\})$ is equal to the index of the local class group $\mathrm{Cl}(x\in S)$ in the local Picard group $\mathrm{Pic}(x\in S)$.  
\end{remark}

Let $x\in X$ be the germ of a threefold terminal singularity of index $r\geq 1$, and let $S\in|-K_X|$ be an anti-canonical element. Assume that the pair $(X,S)$ is plt. 
Let $\pi\colon \widetilde{x}\in \widetilde{X} \to X\ni x$ be the index-one cover. 
Note that $\pi$ induces a cover $\pi|_{\widetilde{S}}\colon \widetilde{x}\in \widetilde{S}\to S\ni x$. 
There exists the following diagram:
\begin{equation}
\label{diagram-terminal-flag}
\begin{tikzcd}
\widetilde{x}\in \widetilde{X} \ar[r, "\pi"] & X\ni x \\
\widetilde{x}\in \widetilde{S} \ar[u, hook] \ar[r, "\pi|_{\widetilde{S}}"] & S\ni x  \ar[u, hook]
\end{tikzcd}
\end{equation}

\begin{proposition}
Assume that 
the induced map $\pi|_{\widetilde{S} \setminus \{\widetilde{x}\}}\colon\widetilde{S}\setminus \{\widetilde{x}\}\to S\setminus \{x\}$ is a 
non-split cyclic \'etale covering of degree $k$. Then $r$ divides $|\pi_1^{ab}(S\setminus \{x\})|$.
\end{proposition}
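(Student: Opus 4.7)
The plan is to reduce the claim to the basic correspondence between connected cyclic étale covers and quotients of the fundamental group, once one identifies the degree $k$ of the induced cover with the index $r$.

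First, I would unpack the Galois structure inherited from the index-one cover. By construction, $\pi\colon \widetilde{X}\to X$ is cyclic of degree $r$ with Galois group $\mathbb{Z}/r$ acting freely on $\widetilde{X}\setminus\{\widetilde{x}\}$. Since $\pi(\widetilde{S})=S$, for each $g\in\mathbb{Z}/r$ we have $\pi(g\cdot\widetilde{S})=S$, whence $g\cdot\widetilde{S}\subseteq \pi^{-1}(S)=\widetilde{S}$, and by finiteness the inclusion is an equality. Thus $\mathbb{Z}/r$ preserves $\widetilde{S}$ set-theoretically, and since the action is free on $\widetilde{X}\setminus\{\widetilde{x}\}$, its restriction to $\widetilde{S}\setminus\{\widetilde{x}\}$ is also free. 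As $\pi$ is étale outside $\widetilde{x}$ and $\widetilde{S}$ is reduced in codimension zero, the restriction $\pi|_{\widetilde{S}\setminus\{\widetilde{x}\}}\colon \widetilde{S}\setminus\{\widetilde{x}\}\to S\setminus\{x\}$ is a $\mathbb{Z}/r$-Galois étale cover of degree $r$.

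Next I would use the non-splitting hypothesis to identify $k$ with $r$. The assumption that this cover is non-split is precisely the statement that $\widetilde{S}\setminus\{\widetilde{x}\}$ is connected. A connected Galois étale cover with group $\mathbb{Z}/r$ has degree equal to $|\mathbb{Z}/r|=r$, so the stated degree $k$ of the cyclic étale cover coincides with $r$. Finally, a connected cyclic étale cover of $S\setminus\{x\}$ of degree $r$ corresponds to a surjection $\pi_1(S\setminus\{x\})\twoheadrightarrow \mathbb{Z}/r$; because the target is abelian, this factors through a surjection $\pi_1^{\mathrm{ab}}(S\setminus\{x\})\twoheadrightarrow \mathbb{Z}/r$, and therefore $r$ divides $|\pi_1^{\mathrm{ab}}(S\setminus\{x\})|$, as required.

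The only genuinely non-routine point is the identification $k=r$, and this rests on two easy but essential facts: that the Galois action of $\mathbb{Z}/r$ on $\widetilde{X}$ descends to an action on $\widetilde{S}$ by the set-theoretic argument above, and that the action remains free on the punctured surface thanks to the étaleness of $\pi$ outside $\widetilde{x}$ (which in turn uses that $(X,S)$ is plt, so $S$ is normal and $\widetilde{S}$ is reduced generically). Once $k=r$ is established, the conclusion is purely topological.
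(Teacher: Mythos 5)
Your argument is correct and follows essentially the same route as the paper: both reduce the claim to the correspondence between connected (non-split) cyclic \'etale covers of $S\setminus\{x\}$ and quotients of $\pi_1^{\mathrm{ab}}(S\setminus\{x\})$, with the identification $k=r$ coming from the fact that the cover is induced by the degree-$r$ index-one cover. You simply spell out the Galois-theoretic justification that the $\mathbb{Z}/r$-action descends to $\widetilde{S}$ and that connectedness forces the degree to be $r$, which the paper leaves implicit.
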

\begin{proof}
Note that $\pi_{\widetilde{S}\setminus \{\widetilde{x}\}}$ corresponds to a subgroup $H$ of $\pi_1^{\mathrm{ab}}(S\setminus \{x\})$ such that $\pi_1^{\mathrm{ab}}(S\setminus \{x\})/H=\mathbb{Z}/k$. In particular, if $x\in S\subset X$ where $x\in X$ is a non-Gorenstein threefold terminal point of index $r$ and $S\in |-K_X|$ has du Val singularities, then $\pi|_{\widetilde{S} \setminus \{\widetilde{x}\}}$ is non-split (see the proof of Lemma~\ref{lem-singular-du-val}) cyclic \'etale covering of degree~$r$, hence $r$ divides $|\pi_1^{ab}(S\setminus \{x\})|$. 
\end{proof}

We consider two cases: when $x\in X$ is a cyclic quotient singularity, so $(\widetilde{x}\in \widetilde{X})\simeq (0\in \mathbb{C}^3)$, and when 
$x\in X$ is not a cyclic quotient singularity, so $(\widetilde{x}\in \widetilde{X})\subset (0\in \mathbb{C}^4)$. Note that in the latter case $\widetilde{S}$ is a Cartier divisor on $\widetilde{X}\subset \mathbb{C}^4$, hence $\widetilde{S}$ is singular at $\widetilde{x}$. Moreover, since by assumption the pair $(X, S)$ is plt, it follows that the pair $(\widetilde{X}, \widetilde{S})$ is plt as well, so $\widetilde{x}\in \widetilde{S}$ is a du Val singularity. 


\begin{proposition}
\label{prop-s-tilde-is-smooth}
Assume that $x\in X$ is a threefold cyclic quotient singularity of index $r\geq 1$. 
If $\widetilde{x}\in\widetilde{S}$ is smooth then $x\in S$ has type $A_{r-1}$.
\end{proposition}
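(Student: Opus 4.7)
The plan is to pass to the index-one cover, linearize the $G$-action on the smooth surface $\widetilde{S}$, and identify the quotient with $A_{r-1}$. If $r=1$, then $\pi$ is an isomorphism and $S=\widetilde{S}$ is smooth, so $x\in S$ is of type $A_0$, and there is nothing to prove. So I assume $r\geq 2$. By the classification of three-dimensional cyclic quotient terminal singularities, $\widetilde{X}$ is smooth, so I may identify the germ $\widetilde{x}\in\widetilde{X}$ with $0\in\mathbb{C}^3$, and $G=\dz/r$ acts linearly and diagonally with weights $\frac{1}{r}(1,-1,a)$ where $\gcd(a,r)=1$.

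First I would show that $G$ acts faithfully on $\widetilde{S}$. If some non-trivial $\sigma^k\in G$ with $0<k<r$ fixed $\widetilde{S}$ pointwise, then $T_{\widetilde{x}}\widetilde{S}\subset T_{\widetilde{x}}\widetilde{X}$ would lie in the $1$-eigenspace of $\sigma^k$. But the eigenvalues of $\sigma^k$ on $T_{\widetilde{x}}\widetilde{X}$ are $(\xi^k,\xi^{-k},\xi^{ka})$ for a primitive $r$-th root of unity $\xi$; since $\gcd(a,r)=1$, none of these equals $1$ when $0<k<r$, so the $1$-eigenspace would have dimension at most $0$, a contradiction.

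Next, $G$ acts faithfully on the smooth germ $(\widetilde{S},\widetilde{x})$ fixing $\widetilde{x}$, so by Cartan's linearization theorem the action is analytically conjugate to its linear part. Since $G$ is cyclic, this linear action diagonalizes, so in suitable local coordinates $(y_1,y_2)$ on $\widetilde{S}$ the generator $\sigma$ acts by $(y_1,y_2)\mapsto(\xi^p y_1,\xi^q y_2)$ for some integers $p,q$. Faithfulness forces at least one of $p,q$ to be coprime to $r$; without loss of generality $\gcd(p,r)=1$.

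Finally I would invoke the plt hypothesis on $(X,S)$ from the standing setup, which forces $x\in S$ to be a du Val singularity. Hence the cyclic quotient surface $\widetilde{S}/G=\mathbb{C}^2/\frac{1}{r}(p,q)$ is du Val, which is equivalent to the action lying in $SL_2$, i.e.\ $p+q\equiv 0\pmod{r}$. Combined with $\gcd(p,r)=1$, after replacing the generator $\sigma$ by $\sigma^{p^{-1}}$ (with $p^{-1}$ computed modulo $r$) the action becomes the standard $\frac{1}{r}(1,-1)$, whose quotient is precisely the $A_{r-1}$ singularity. Thus $x\in S$ is of type $A_{r-1}$, as claimed. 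The only slightly delicate points are the faithfulness check via eigenvalues and invoking the equivariant linearization; neither is a serious obstacle, and the plt assumption supplies the du Val input that pins down $p+q\equiv 0\pmod{r}$.
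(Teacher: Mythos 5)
Your argument is correct in substance, but it takes a genuinely different route from the paper. The paper's proof is purely topological: since $\widetilde{S}\setminus\{\widetilde{x}\}\simeq\mathbb{C}^2\setminus\{0\}$ is simply connected, the restriction of the index-one cover is the universal cover of $S\setminus\{x\}$, so $\pi_1(S\setminus\{x\})$ coincides with the deck group $\mathbb{Z}/r$, and a du Val point with cyclic local fundamental group of order $r$ must be of type $A_{r-1}$ (cf.\ Remark \ref{rem-index-divides-exponent}). You instead linearize the $\mathbb{Z}/r$-action on the smooth germ $\widetilde{S}$ and read off the quotient type from the weights; this is more computational but makes the $SL_2$ condition explicit, whereas the paper's argument avoids any choice of coordinates. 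Both proofs ultimately need the du Val hypothesis coming from the plt assumption on $(X,S)$ to rule out non-canonical cyclic quotients such as $\frac{1}{r}(1,q)$ with $q\not\equiv-1$.

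One step of yours is misjustified, though harmlessly so: faithfulness of a cyclic action with weights $\frac{1}{r}(p,q)$ only gives $\gcd(p,q,r)=1$, not that one of $p,q$ is coprime to $r$ (take $r=6$, $(p,q)=(2,3)$). In your situation the desired coprimality does hold, but for a different reason: the weights of $\sigma$ on $T_{\widetilde{x}}\widetilde{S}$ form a sub-multiset of the weights $(1,-1,a)$ on $T_{\widetilde{x}}\widetilde{X}$, all of which are coprime to $r$ by terminality; equivalently, the index-one cover is \'etale outside $\widetilde{x}$, so the action on $\widetilde{S}\setminus\{\widetilde{x}\}$ is free. Alternatively, once you have $p+q\equiv 0\pmod r$ from the du Val condition, $\gcd(p,r)=\gcd(q,r)=\gcd(p,q,r)=1$ follows from faithfulness alone. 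Any of these repairs closes the gap without changing the structure of your proof.
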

\begin{proof}
By assumption, $\widetilde{x}\in \widetilde{S}$ is smooth, so $(\widetilde{x}\in \widetilde{S}) \simeq (0\in\mathbb{C}^2)$. Hence the non-split degree~$r$ cyclic covering $\pi|_{\widetilde{S}\setminus \{\widetilde{x}\}}\colon \widetilde{S}\setminus \{\widetilde{x}\}\to S\setminus \{x\}$ is a universal covering. 
It follows that $\pi_1(S\setminus\{x\})$ is a group of order~$r$. 
Since $\pi|_{\widetilde{X}\setminus \{\widetilde{x}\}}$ is a cyclic covering, the group of deck transformations of $\pi|_{\widetilde{X}\setminus \{\widetilde{x}\}}$ is cyclic of order~$r$. 
Thus, the group of deck transformations of $\pi|_{\widetilde{S}\setminus \{\widetilde{x}\}}$ contains an element of order~$r$. Since $|\pi_1(S\setminus\{x\})|=r$, it follows that $\pi_1(S\setminus\{x\})=\mathbb{Z}/r$. 
This implies that $x\in S$ is a singular point of type $A_{r-1}$.
\end{proof}

\subsection{Group action on a terminal singularity}
\label{subsec-action-on-terminal}
Let $X$ be a $G\mathbb{Q}$-Fano threefold where $G$ is a finite abelian group. Let $x\in X$ be a germ of a terminal singularity. Assume there exists a $G$-invariant element $S\in |-K_X|$ which is a K3 surface with du Val singularities. 
Then there is an exact sequence
\begin{equation}
\label{exact-sequence-m-G-H}
0\to \mathbb{Z}/m\to G\to H \to 0,    
\end{equation}
where $H$ faithfully acts on $S$, and $\mathbb{Z}/m$ faithfully acts in the normal bundle to $S$ in $X$ for some $m\geq 1$. 
Let $G_x$ and $H_x$ be the stabilizers of $x$ in $G$ and $H$, respectively. Thus, we obtain the exact sequence
\begin{equation}
\label{exact-sequence-m-G-H-x}
0\to \mathbb{Z}/m\to G_x\to H_x \to 0.    
\end{equation}

\begin{thm}[{\cite[Theorem 7.3]{Lo24}}]
\label{thm-action-on-terminal-point}
Let $x\in X$ be a germ of a threefold terminal singularity and let
$G_x \subset \mathrm{Aut}(x\in X)$ be a finite abelian subgroup. Then either $\mathfrak{r}(G_x)\leq 3$, or 
\[
G_x = (\mathbb{Z}/2)^2\times\mathbb{Z}/2n\times\mathbb{Z}/2m
\]
for $n,m\geq 1$. 
Moreover, in the latter case $x\in X$ is a Gorenstein singularity of type $cA$. In particular, in both cases $G$ is of product type.
\end{thm}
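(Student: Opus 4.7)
The plan is to linearize the action at $x$ and then split cases along the Mori--Reid classification (Table 5). Since $G_x$ is finite and fixes $x$, an averaging argument provides analytic coordinates on the ambient smooth germ in which $G_x$ acts linearly; being finite abelian, it is simultaneously diagonalizable, and acts by characters $\chi_1, \dots, \chi_N$ on the coordinates.

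First I would treat the Gorenstein cases. Here $X \subset (\mathbb{C}^4, 0)$ is a hypersurface of multiplicity $2$ with defining equation $\phi$, and $G_x$ embeds diagonally in $(\mathbb{C}^{\times})^4$, so $\mathfrak{r}(G_x) \le 4$ is automatic. Let $\chi_\phi$ be the semi-invariance character of $\phi$, so every monomial of $\phi$ has character $\chi_\phi$. If $\mathfrak{r}(G_x) = 4$, the characters $\chi_1, \dots, \chi_4$ generate $\widehat{G_x}$ without any multiplicative relation forced by the group structure. For type $cD$, the required cubic monomial ($x_1 x_2 x_3$ or $x_2^2 x_3$) combined with $\chi_4^2 = \chi_\phi$ imposes a nontrivial multiplicative relation on $\chi_1, \dots, \chi_4$, contradicting rank $4$; likewise type $cE$ gives relations from its cubic $x_1^3$ and the quintic tail $\theta$. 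So the exception can occur only in type $cA$. In $G_x$-equivariant diagonal coordinates, the rank-$2$ quadratic part of $\phi$ must have the shape $\alpha x_1^2 + \beta x_2^2$ (an $x_1 x_2$-term would give $\chi_1 \chi_2 = \chi_\phi$, expressing $\chi_1$ through $\chi_2$ and $\chi_\phi$ and killing a generator). The conditions $\chi_1^2 = \chi_2^2 = \chi_\phi$ leave $\chi_1, \chi_2$ as independent involutions modulo $\chi_\phi$, and requiring in addition that $\psi(x_3, x_4)$ contains $x_3^{2n}$ and $x_4^{2m}$ yields exactly $(\mathbb{Z}/2)^2 \times \mathbb{Z}/2n \times \mathbb{Z}/2m$ on a Gorenstein $cA$ singularity.

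Next I would handle the non-Gorenstein cases via the index-one cover $\pi \colon \widetilde{X} \to X$ of degree $r > 1$. Its canonicity lifts the $G_x$-action to an action on $\widetilde{X}$ of a central extension $\widetilde{G}_x$ of $G_x$ by $\mathbb{Z}/r$. For cyclic quotient singularities, $\widetilde{X} = \mathbb{C}^3$ and $\widetilde{G}_x \hookrightarrow \mathrm{GL}_3(\mathbb{C})$ is diagonalizable, so the quotient $G_x$ satisfies $\mathfrak{r}(G_x) \le 3$. For the six remaining series ($cA/r$, $cAx/2$, $cD/2$, $cE/2$, $cD/3$, $cAx/4$), the cover $\widetilde{X} \subset (\mathbb{C}^4, 0)$ is Gorenstein terminal, so the first step yields $\mathfrak{r}(\widetilde{G}_x) \le 4$. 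The distinguished subgroup $\mathbb{Z}/r \subset \widetilde{G}_x$ acts with the rigid weight vector prescribed by Table 5, and this is incompatible with the $\alpha x_1^2 + \beta x_2^2$ form of the quadratic part needed for the rank-$4$ exception: for instance, in $cA/r$ the weights $(1, -1, a, 0)$ force the quadratic to have the mixed shape $x_1 x_2$, ruling out the exception. Passing to the quotient $G_x = \widetilde{G}_x/(\mathbb{Z}/r)$ then gives $\mathfrak{r}(G_x) \le 3$ in every one of these six cases.

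The main obstacle is this last step: one must verify entry by entry in Table 5 that the rigid $\mathbb{Z}/r$-weights of the cover, combined with the constraints imposed on $\phi$, cannot support an abelian $\widetilde{G}_x$ whose quotient by $\mathbb{Z}/r$ is a rank-$4$ exception. Once this case check is complete, the ``moreover'' clause of the theorem is immediate from the Gorenstein step, and the ``in particular'' clause follows from Table 2: groups of rank $\le 3$ lie in row (1) and $(\mathbb{Z}/2)^2 \times \mathbb{Z}/2n \times \mathbb{Z}/2m$ is row (4), so $G_x$ is of product type in either conclusion.
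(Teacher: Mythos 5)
First, a point of comparison: the paper does not prove this statement at all — it is imported verbatim from \cite[Theorem 7.3]{Lo24} — so your proposal has to be judged on its own terms. Your overall route (linearize at $x$, exploit semi-invariance of the defining equation, pass to the index-one cover when $r>1$) is the natural one, and the Gorenstein half is essentially sound once one fixes the imprecision at its start: four characters can generate a rank-$4$ group and still satisfy relations, so the correct statement is that $\mathfrak{r}(G_x)=4$ forces the relation lattice among $\chi_1,\dots,\chi_4$ to lie in $p\mathbb{Z}^4$ for some prime $p$, hence any two monomials of $\phi$ have exponent vectors congruent mod $p$. This kills $cD$ and $cE$ immediately (compare $x_4^2$ with $x_1x_2x_3$, resp.\ with $x_1^3$), and in type $cA$ forces $p=2$, a diagonal quadratic part and even exponents throughout, which is where $(\mathbb{Z}/2)^2\times\mathbb{Z}/2n\times\mathbb{Z}/2m$ comes from.

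The non-Gorenstein half has two genuine gaps. (i) You treat $\widetilde{G}_x$ as diagonalizable, but the lift of an abelian $G_x$ to the index-one cover is only a central extension by $\mathbb{Z}/r$ and need not be abelian; this is exactly why the paper records Proposition \ref{prop-lift-of-2-group-is-abelian} (quoting \cite[Prop.~7.13, 7.18]{Lo24}) with restrictive hypotheses. For cyclic quotient points one can still salvage $\mathfrak{r}(G_x)\leq 3$ by a representation-theoretic argument (a class-two central extension of $(\mathbb{Z}/p)^4$ whose commutator subgroup meets the center nontrivially has no faithful $3$-dimensional representation, by the extraspecial dichotomy), but for the hypersurface covers in $\mathbb{C}^4$ the analogous group $2^{1+4}$ \emph{does} have a faithful $4$-dimensional representation, so the non-abelian possibility must be excluded by hand and cannot be waved away. (ii) Your proposed mechanism for the six non-Gorenstein families — that the rigid $\mathbb{Z}/r$-weights are incompatible with the diagonal quadratic part — fails for $cAx/2$: there the normal form is exactly $x_1^2+x_2^2+\psi(x_3,x_4)$ with $\psi\in\mathfrak{m}^4$, the weights $(0,1,1,1)$ are fully compatible with a rank-$4$ group $(\mathbb{Z}/2)^2\times\mu_{2n}\times\mu_{2m}$ acting on the cover (take $\psi=x_3^{2n}+x_4^{2m}$), and the deck involution $(1,-1,-1,-1)$ sits inside it. The exception is therefore \emph{not} excluded upstairs; what saves the statement is that the quotient by this $\mathbb{Z}/r$ has rank $3$, because the deck generator has nonzero image in $\widetilde{G}_x\otimes\mathbb{F}_2$ — a computation of a different nature from the one you propose. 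You flag the entry-by-entry check as the remaining work, but the tool you name would not close that entry; the argument needs both the abelianness input and this quotient-rank computation.
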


The diagram \eqref{diagram-terminal-flag} induces the following diagram:
\begin{equation}
\begin{tikzcd}
\label{diagram-two-levels-of-groups}
0 \ar[r] & \mathbb{Z}/r \ar[r] & \widetilde{G}_x \ar[d] \ar[r] & G_x \ar[d] \ar[r] & 0 \\
0 \ar[r] & \mathbb{Z}/r \ar[u, equal] \ar[r]  & \widetilde{H}_x  \ar[r] & H_x  \ar[r] & 0
\end{tikzcd}
\end{equation}
where $\widetilde{G}_x$ is the lifting of $G_x$, and $\widetilde{H}_x$ is the lifting of $H_x$. This means that $\widetilde{G}_x$ faithfully acts on $\widetilde{x}\in \widetilde{X}$, and $\widetilde{H_x}$ faithfully acts on $\widetilde{x}\in\widetilde{S}$. 

\begin{proposition}
\label{prop-s-tilde-is-singular}
Assume that $x\in X$ is a threefold cyclic quotient singularity of index $r\geq 1$. 
If $\widetilde{x}\in\widetilde{S}$ is singular then $G$ is of product type.  
\end{proposition}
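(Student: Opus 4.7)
The strategy is to apply Proposition~\ref{prop-abstact-extension} to the exact sequence~\eqref{exact-sequence-m-G-H}: it suffices to show that $H$ is isomorphic to a subgroup of $\mathrm{Cr}_2(\mathbb{C})$, and by Remark~\ref{rem-special-K3-groups-are-PT} the only way this can fail is if $H$ is one of the six groups listed in Theorem~\ref{thm-maximal-k3-groups}(1)--(6). So the plan is to argue for contradiction, assuming $H$ is one of those six groups.

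First I would set up the local picture. Since $(X,S)$ is plt (as may be assumed from the introductory reduction), so is $(\widetilde{X},\widetilde{S})$, hence $\widetilde{x}\in\widetilde{S}$ is a du Val singularity. The restriction $\pi|_{\widetilde{S}\setminus\{\widetilde{x}\}}$ is a non-split cyclic \'etale cover of $S\setminus\{x\}$ of degree $r$ (the non-splitting is the content of the proof of Lemma~\ref{lem-singular-du-val}), so $x\in S$ is also singular and $(S,x)$ is identified with $(\widetilde{S},\widetilde{x})/(\mathbb{Z}/r)$ locally. The first two of the six candidate groups are then eliminated at once: if $H=(\mathbb{Z}/4)^3$ or $H=(\mathbb{Z}/6)^2\times\mathbb{Z}/2$, Corollary~\ref{cor-rank-1-is-smooth} forces $S$ to be smooth, contradicting singularity of $S$ at $x$.

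For the remaining four groups I would work inside the diagram~\eqref{diagram-two-levels-of-groups}. The lift $\widetilde{G}_x$ acts faithfully on $\widetilde{X}\simeq\mathbb{C}^3$ by Lemma~\ref{lem-faithful-action}, and under the hypotheses of Proposition~\ref{prop-lift-of-2-group-is-abelian} (namely $r>2$ or $G_x$ a $2$-group) it is abelian and hence simultaneously diagonalizable; in the few remaining small-$r$, non-$2$-group configurations, Theorem~\ref{thm-action-on-terminal-point} already gives that $G$ is of product type. Under diagonalization, the $\widetilde{G}_x$-invariant hypersurface $\widetilde{S}$ is cut out by a semi-invariant polynomial $f$ with $\mathrm{mult}_0 f\geq 2$, and the rank of its quadratic part (which is $3$, $2$, or $1$ according to whether $\widetilde{x}\in\widetilde{S}$ has type $A_1$, $A_{\geq 2}$, or $D/E$) strongly constrains the characters of the eigen-coordinates: the three characters must lie in a common coset modulo a small group of square roots of the character of $f$. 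This bounds $|\widetilde{G}_x|$ and restricts the admissible weights of $\mathbb{Z}/r\subset\widetilde{G}_x$. For $H=\mathbb{Z}/8\times\mathbb{Z}/4\times\mathbb{Z}/2$ and $H=\mathbb{Z}/6\times(\mathbb{Z}/3)^2$, Corollary~\ref{cor-rank-2-is-transitive} furthermore forces the singular points of $S$ to lie in a single $H$-orbit and to be of type $A_1$ or $A_2$, which fixes $|H_x|$; matching this against the character bounds on $\widetilde{G}_x$ and the local fundamental group list of Remark~\ref{rem-index-divides-exponent} should produce a contradiction.

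The main obstacle is the combinatorial analysis for $H=(\mathbb{Z}/2)^5$ and $H=\mathbb{Z}/4\times(\mathbb{Z}/2)^3$, where Proposition~\ref{prop-ranks-H0} allows the rank of $\mathrm{H}^2(S',\mathbb{Z})^H$ to reach $5$ or $6$, so singularities of $S$ may form several $H$-orbits of various du Val types. Here I would have to consider each admissible stabilizer $H_x$ and each admissible du Val type on $\widetilde{S}$ separately, and rule out compatibility among three pieces of data: the prescribed weights $\tfrac{1}{r}(1,-1,b)$ of $\mathbb{Z}/r$ on $T_{\widetilde{x}}\widetilde{X}$, the requirement that $\widetilde{S}$ be a $\mathbb{Z}/r$-semi-invariant hypersurface with the prescribed du Val singularity at $\widetilde{x}$, and the quotient $(\widetilde{S},\widetilde{x})/(\mathbb{Z}/r)$ being of a du Val type compatible with the orbit structure of $H$ on the singular locus of $S$. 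This orbit-by-orbit bookkeeping is the technical heart of the argument.
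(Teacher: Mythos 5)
There is a genuine gap: you have missed the one observation that makes this proposition essentially trivial, and the contradiction-based strategy you substitute for it does not work. Since $x\in X$ is a cyclic quotient singularity, the index-one cover is $(\widetilde{x}\in\widetilde{X})\simeq(0\in\mathbb{C}^3)$, and if $\widetilde{S}\subset\widetilde{X}$ is \emph{singular} at $\widetilde{x}$ then its Zariski tangent space there is the whole of $T_{\widetilde{x}}\widetilde{X}=\mathbb{C}^3$. Hence any element of $\widetilde{G}_x$ acting trivially on $\widetilde{S}$ acts trivially on $T_{\widetilde{x}}\widetilde{X}$, so by Lemma~\ref{lem-faithful-action} it is trivial; thus $\widetilde{G}_x=\widetilde{H}_x$, so $G_x=H_x$, so $m=1$ in \eqref{exact-sequence-m-G-H-x} and \eqref{exact-sequence-m-G-H}, i.e.\ $G=H$. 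The conclusion then follows because \emph{every} finite abelian group acting faithfully on a K3 surface — including all six exceptional groups — is of product type (Remark~\ref{rem-special-K3-groups-are-PT}). This is the paper's entire proof.

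Your plan instead tries to show that $H$ cannot be one of the six groups of Proposition~\ref{prop-intro-k3-groups}, i.e.\ to derive a contradiction from the hypotheses. That is a strictly stronger claim than the proposition, and it is not what is needed: the correct conclusion when $\widetilde{S}$ is singular is $G=H$ (hence product type), not that the configuration is impossible. For $H=(\mathbb{Z}/2)^5$ or $H=\mathbb{Z}/4\times(\mathbb{Z}/2)^3$ there is no structural reason a cyclic quotient point of $X$ with $\widetilde{x}\in\widetilde{S}$ singular (say $x\in S$ of type $A_3$ over an $A_1$ point of $\widetilde{S}$ with $r=2$) should be excluded, so the ``orbit-by-orbit bookkeeping'' you defer to is not merely technically heavy — it is chasing a contradiction that need not exist, and you do not carry it out in any case. (Your elimination of $(\mathbb{Z}/4)^3$ and $(\mathbb{Z}/6)^2\times\mathbb{Z}/2$ via Corollary~\ref{cor-rank-1-is-smooth} is fine but unnecessary once the tangent-space argument is in place.)
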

\begin{proof}
Since $T_{\tilde{x}}\widetilde{S} = T_{\tilde{x}}\widetilde{X}=\mathbb{C}^3$, by Lemma \ref{lem-faithful-action} we know that $\widetilde{G}_x=\widetilde{H}_x$, and hence $G_x=H_x$. It follows that in the exact sequences \eqref{exact-sequence-m-G-H-x} and \eqref{exact-sequence-m-G-H} we have $m=1$. Thus, we have $G=H$. Therefore $G$ is of product type by Remark \ref{rem-special-K3-groups-are-PT}. 
\end{proof}

\begin{corollary}
\label{cor-s-s-tilde}
Assume that $x\in X$ is a threefold cyclic quotient singularity of index $r\geq 1$. Then either $G$ is of product type, or $\widetilde{x}\in\widetilde{S}$ is smooth and $x\in S$ is a singularity of type $A_{r-1}$.
\end{corollary}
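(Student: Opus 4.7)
The plan is to derive the corollary as an immediate dichotomy from the two preceding propositions, which have already treated the two mutually exclusive cases for the local geometry of $\widetilde{S}$ at $\widetilde{x}$.

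First, I would note that $\widetilde{x} \in \widetilde{S}$ is a germ of a normal surface (since the pair $(\widetilde{X}, \widetilde{S})$ is plt by lifting the plt assumption on $(X,S)$), so it is either smooth or a du Val singularity. This gives a clean dichotomy to split on.

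Next, in the case where $\widetilde{x} \in \widetilde{S}$ is singular, I would directly invoke Proposition~\ref{prop-s-tilde-is-singular} to conclude that $G$ is of product type. In this case we are done (the first alternative in the statement holds).

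In the complementary case where $\widetilde{x} \in \widetilde{S}$ is smooth, I would directly invoke Proposition~\ref{prop-s-tilde-is-smooth} to conclude that $x \in S$ is a du Val singularity of type $A_{r-1}$, which is exactly the second alternative in the statement. Combining the two cases completes the proof. Since both key inputs are already established, there is essentially no obstacle here; the corollary is a one-line consequence of Propositions~\ref{prop-s-tilde-is-smooth} and \ref{prop-s-tilde-is-singular}, and the only thing to check is that the plt hypothesis (used implicitly to guarantee that $\widetilde{x} \in \widetilde{S}$ is either smooth or du Val, so that the dichotomy is exhaustive) is in force in the ambient setup where the corollary will be applied.
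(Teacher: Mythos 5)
Your proof is correct and matches the paper's own argument, which likewise derives the corollary directly from Proposition~\ref{prop-s-tilde-is-smooth} and Proposition~\ref{prop-s-tilde-is-singular} by splitting on whether $\widetilde{x}\in\widetilde{S}$ is smooth or singular. The extra remark about the plt hypothesis is harmless but not needed, since the smooth/singular dichotomy is exhaustive in any case.
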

\begin{proof}
Follows from Proposition \ref{prop-s-tilde-is-smooth} and Proposition \ref{prop-s-tilde-is-singular}.
\end{proof}

\begin{proposition}
\label{prop-lift-of-2-group-is-abelian}
Assume that 
\begin{itemize}
    \item either $r>2$ ,
    \item or $G_x$ is a $2$-group.
\end{itemize}
Then the lifting $\widetilde{G}_x$ is abelian.
\end{proposition}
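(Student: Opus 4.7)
The plan is to combine two facts: that $\mathbb{Z}/r\subset\widetilde G_x$ is central, and that $\widetilde G_x$ acts faithfully on the Zariski tangent space $T_{\widetilde x}\widetilde X$. First I would establish centrality. Let $\sigma$ be a generator of $\mathbb{Z}/r$. A direct inspection of the weight data in Table~5 shows that $\sigma$ acts on a local generator $\omega$ of $\omega_{\widetilde X,\widetilde x}$ by a primitive $r$-th root of unity. Then for any lift $\tilde g\in\widetilde G_x$ the conjugate $\tilde g\sigma\tilde g^{-1}$ lies in the normal subgroup $\mathbb{Z}/r$, and a pullback computation yields $(\tilde g\sigma\tilde g^{-1})^{\ast}\omega=\sigma^{\ast}\omega$; since $\mathbb{Z}/r$ acts on $\omega$ by distinct scalars, this forces $\tilde g\sigma\tilde g^{-1}=\sigma$.

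With centrality in hand, the commutator $[\tilde g,\tilde h]$ of any two elements of $\widetilde G_x$ lies in $\mathbb{Z}/r$, since $G_x$ is abelian. By Lemma~\ref{lem-faithful-action}, $\widetilde G_x$ embeds into $\mathrm{GL}(T_{\widetilde x}\widetilde X)$. The Zariski tangent space is $\mathbb{C}^3$ in the cyclic-quotient case, while for a compound du Val singular point the defining equation has multiplicity~$2$ at $\widetilde x$ and therefore vanishing differential, so $T_{\widetilde x}\widetilde X$ fills the ambient $\mathbb{C}^4$. A second inspection of Table~5 shows that the restriction to $\mathbb{Z}/r$ of the determinant map $\det\colon \widetilde G_x\to\mathbb{C}^{\ast}$ is again a primitive character, hence injective. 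Since commutators in a general linear group have determinant $1$, we conclude $[\tilde g,\tilde h]=1$, so $\widetilde G_x$ is abelian.

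The main obstacle is the tabular verification that for each of the six families of three-dimensional terminal singularities in Table~5 both the $\omega$-character and the tangent-space determinant of $\sigma$ are primitive $r$-th roots of unity. For the cyclic quotient $\frac{1}{r}(1,-1,a)$ both reduce at once to $\gcd(a,r)=1$, and for each of the remaining types $cAx/2$, $cD/2$, $cE/2$, $cD/3$, $cAx/4$ the verification amounts to reading off the weights $(w_1,\ldots,w_4;w_\phi)$ and computing the relevant exponent modulo~$r$. Once this bookkeeping is complete, the rest of the argument is formal group theory, and in the dihedral corner-cases that the hypotheses $r>2$ or ``$G_x$ a $2$-group'' are designed to rule out, the tangent-space representation still prevents the commutator pairing $G_x\wedge G_x\to\mathbb{Z}/r$ from being nontrivial.
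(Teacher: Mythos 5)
Your argument is correct, but it is not the paper's route: the paper proves this proposition purely by citation, deferring the case $r>2$ to \cite[Proposition 7.13]{Lo24} and the $2$-group case to the proof of \cite[Proposition 7.18]{Lo24}. Your proof is self-contained and, as far as I can check, sound: the conjugation computation on the fibre of $\omega_{\widetilde X}$ at $\widetilde x$ does give centrality of $\mathbb{Z}/r$; the commutator of two lifts lands in $\mathbb{Z}/r$ because $G_x$ is abelian; Lemma~\ref{lem-faithful-action} embeds $\widetilde G_x$ into $\mathrm{GL}(T_{\widetilde x}\widetilde X)$ with $T_{\widetilde x}\widetilde X=\mathbb{C}^3$ or $\mathbb{C}^4$ as you say; and the weight sums from Table~5 (namely $a$ for $cA/r$, $3$ for $cAx/2$, $cD/2$, $cE/2$, $5$ for $cD/3$, $7$ for $cAx/4$, each coprime to $r$) do make $\det|_{\mathbb{Z}/r}$ injective, so the commutator, having determinant $1$, is trivial. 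Two remarks. First, the final step can be streamlined so that neither tabular verification is needed: the character $\chi\colon\widetilde G_x\to\mathbb{C}^{\ast}$ given by the action on $\omega_{\widetilde X,\widetilde x}\otimes k(\widetilde x)$ is a homomorphism to an abelian group, hence kills commutators, and it is injective on $\mathbb{Z}/r$ for an a priori reason — if some nontrivial subgroup of $\mathbb{Z}/r$ acted trivially on the generator of $\omega_{\widetilde X}$, quotienting by it would produce an index-one cover of smaller degree — so $[\widetilde G_x,\widetilde G_x]\subset\mathbb{Z}/r\cap\ker\chi=\{1\}$ at once; this also makes the centrality step superfluous. Second, and this is the point you should be most careful to double-check against \cite{Lo24}: your argument never invokes the hypotheses ``$r>2$ or $G_x$ is a $2$-group,'' so if it is right it proves the unconditional statement. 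I see no gap (in particular the determinant obstruction genuinely rules out the quaternion/dihedral-type extensions in the $r=2$ case), but since the source paper imposes these hypotheses you should either locate the reason they are needed there or state explicitly that you are proving a stronger claim.
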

\begin{proof}
If the first assumption of the proposition holds, then the claim follows by \cite[Proposition 7.13]{Lo24}. 
If the first assumption of the proposition holds, then the claim follows by \cite[Proposition 7.18]{Lo24}.
\end{proof}

\begin{remark}
\label{rem-action-in-normal-bundle-splits}
Assume that 
$x\in X$ is a threefold cyclic quotient singularity of index $r\geq 1$. 
Assume that $\widetilde{x}\in \widetilde{S}$ is smooth, so $(\widetilde{x}\in \widetilde{S}) \simeq (0\in \mathbb{C}^2)$.  
Considering $\mathbb{C}^2$ as a linear subspace in $\mathbb{C}^3$, we see that there exists an injective homomorphism $\widetilde{H}_x\to \widetilde{G}_x$ which is inverse to the map $\widetilde{G}_x\to \widetilde{H}_x$ in~\eqref{diagram-two-levels-of-groups}. 
In particular, we have $\widetilde{G}_x=\widetilde{H}_x\times \mathbb{Z}/m$.
\end{remark}

\begin{proposition}
\label{prop-non-gorenstein-exclusion}
Assume that $x\in X$ is a threefold terminal non-Gorenstein singularity which is not a cyclic quotient singularity. 
Then the singular point $x\in S$ cannot be of type $A_1$ or $A_2$. In particular, the action of $H$ on the set of $(-2)$-curves on the minimal resolution $S'$ of $S$ cannot be transitive. 
\end{proposition}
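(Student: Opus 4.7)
The plan is to derive a contradiction from the topology of the index-one cover, and then reduce the transitivity claim to the first assertion by inspecting Dynkin diagrams. First I would work on the index-one cover $\pi\colon\widetilde{X}\to X$. As the excerpt already records, since $x\in X$ is non-Gorenstein and not a cyclic quotient, $\widetilde{X}$ is a genuine hypersurface in $\mathbb{C}^4$ and the Cartier divisor $\widetilde{S}\subset\widetilde{X}$ is singular at $\widetilde{x}$; since $(X,S)$ is plt so is $(\widetilde{X},\widetilde{S})$, hence by inversion of adjunction $\widetilde{x}\in\widetilde{S}$ is a du Val singularity.

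Next, suppose toward contradiction that $x\in S$ has type $A_n$ with $n\in\{1,2\}$. By Remark \ref{rem-index-divides-exponent}, $\pi_1^{\mathrm{ab}}(S\setminus\{x\})=\mathbb{Z}/(n+1)$, and in fact this coincides with the full local fundamental group, since $A_n$ is itself a cyclic quotient singularity. The restriction $\pi|_{\widetilde{S}\setminus\{\widetilde{x}\}}\colon \widetilde{S}\setminus\{\widetilde{x}\}\to S\setminus\{x\}$ is a non-split cyclic \'etale cover of degree $r$, so $r$ divides $n+1$, forcing $r=n+1$. Thus this cover is the universal cover, and $\widetilde{S}\setminus\{\widetilde{x}\}$ is simply connected. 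Since the smooth locus of any du Val singular point has non-trivial fundamental group (a non-trivial binary polyhedral group), $\widetilde{S}$ must be smooth at $\widetilde{x}$, contradicting the first paragraph. This establishes the first claim.

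For the transitivity statement, assume $H$ acts transitively on the set of $(-2)$-curves of $S'$. Then $H$ acts transitively on the singular points of $S$, and at each singular point $P$ the stabilizer $H_P$ acts transitively on the components of the exceptional divisor above $P$. This forces the Dynkin diagram of $P\in S$ to admit a vertex-transitive symmetry, which among the $A_n$, $D_n$, $E_n$ diagrams is possible only for $A_1$ and $A_2$. By Lemma \ref{lem-singular-du-val} the point $x$ is singular on $S$, so $x\in S$ would then be of type $A_1$ or $A_2$, contradicting the first assertion.

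The main obstacle is the setup step, namely verifying that $\widetilde{S}$ is both singular and du Val at $\widetilde{x}$ in the non-cyclic-quotient case; once this is in hand the heart of the argument is a clean comparison of local fundamental groups via the correspondence between \'etale covers and subgroups of $\pi_1$, and the Dynkin-diagram observation at the end is elementary.
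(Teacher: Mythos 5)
Your proposal is correct and follows essentially the same route as the paper: establish that $\widetilde{S}$ is singular (du Val) at $\widetilde{x}$ because $\widetilde{S}$ is Cartier on the singular hypersurface $\widetilde{X}\subset\mathbb{C}^4$, then use the degree-$r$ non-split cyclic \'etale cover $\widetilde{S}\setminus\{\widetilde{x}\}\to S\setminus\{x\}$ together with the fact that $\pi_1(S\setminus\{x\})$ has prime order for $A_1$ and $A_2$ to get a contradiction, and finish the transitivity claim by noting that only the $A_1$ and $A_2$ dual graphs are vertex-transitive. Your contrapositive phrasing (the cover would be universal, forcing $\widetilde{S}$ smooth) is just a reformulation of the paper's observation that the cover is non-trivial but not universal.
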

\begin{proof}
By Lemma \ref{lem-singular-du-val}, the point $x\in S$ is singular. 
Since $x\in X$ is not a cyclic quotient singularity, we have that $\widetilde{x}\in \widetilde{X}$ is singular. Since $\widetilde{S}$ is Cartier at the point $\widetilde{x}$ in $\widetilde{X}$, we see that the point $\widetilde{x}\in \widetilde{S}$ is singular as well, and it is a du Val singularity. Hence the map 
$\pi|_{\widetilde{S}\setminus\{\widetilde{x}\}}\colon \widetilde{S}\setminus \{\widetilde{x}\}  \to  S\setminus\{ x\}$ is a non-trivial covering which is not universal. Thus, $\widetilde{x}\in \widetilde{S}$ cannot be of type $A_1$ or $A_2$, because in these cases $S\setminus\{ x\}$ does not admit such a covering, cf. Remark \ref{rem-index-divides-exponent}. The last claim follows from looking at the dual graphs of the exceptional curves on the minimal resolution of $S$. 
\end{proof}

\section{Case $h^0(-K_X)\geq 2$}
\label{sec-h0-geq-2}
Let $X$ be a $G\mathbb{Q}$-Fano threefold where $G$ is a finite abelian group. 
Throughout this section we assume that $h^0(X, \OOO(-K_X))\geq 2$, and that all the $G$-invariant elements in $|-K_X|$ are K3 surfaces with at worst canonical singularities. Let $S$ and $S'$ be two such surfaces. Consider exact sequences
\begin{equation}
\label{exact-sequence-restriction-to-K3}
0\to C \to G \to H \to 0, \quad \quad \quad 
0\to C' \to G \to H' \to 0
\end{equation}
where $H$ (respectively, $H'$) faithfully acts on $S$ (respectively, $S'$), and $C=\mathbb{Z}/m$ (respectively, $C'=\mathbb{Z}/m'$) fixes $S$ (respectively, $S'$) pointwise. 
\begin{lem}
\label{lem-C1C2G1G2}
We have that $C$ (respectively, $C'$) faithfully acts on $S'$ (respectively, $S$), and this action is purely non-symplectic. In particular, we have $C\cap C'=\{\id\}$, and the maps $C\hookrightarrow H'$ and  $C'\hookrightarrow H$ induced by the exact sequences \eqref{exact-sequence-restriction-to-K3}  are injective. 
\end{lem}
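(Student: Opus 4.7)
The plan is to deduce all four assertions from one observation combined with a short tangent-space computation.

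The first step is to establish that the action of $C$ on $S'$ is faithful, from which the remaining claims will follow cheaply. Suppose $c\in C$ acts trivially on $S'$. Since $c$ already fixes $S$ pointwise by definition of $C$, it fixes the reducible divisor $S+S'$ pointwise. Because $S$ and $S'$ are distinct irreducible members of $|-K_X|$ and $-K_X$ is ample, $S\cap S'$ is a curve along which $S+S'$ is non-normal. Lemma~\ref{cor-fixed-curve-surface}(2) then forces $c=\mathrm{id}$, so $C\hookrightarrow \mathrm{Aut}(S')$ is injective; symmetrically $C'$ acts faithfully on $S$. Then $C\cap C' = \mathrm{Ker}(C\to\mathrm{Aut}(S'))=\{\mathrm{id}\}$, so the induced maps $C\to G/C'=H'$ and $C'\to G/C=H$ coming from the exact sequences \eqref{exact-sequence-restriction-to-K3} have trivial kernel and are therefore injective.

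The substantive part of the lemma is that the action of $C$ on $S'$ is purely non-symplectic. Let $\chi\colon C\to\mathbb{C}^*$ be the character by which $C$ acts on the normal line bundle $N_{S/X}$; by hypothesis $\chi$ is injective. I would pick a point $p\in S\cap S'$ at which $X$, $S$ and $S'$ are all smooth, which exists since the singular loci of $X$, $S$ and $S'$ are isolated while $S\cap S'$ has positive dimension. For any non-trivial $c\in C$, the action of $c$ on $S'$ is non-trivial by the first step, so Lemma~\ref{lem-faithful-action} applied to $\langle c\rangle$ acting on $S'$ at the fixed point $p$ says that $c$ acts faithfully on $T_pS'$. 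Since $c$ acts trivially on $T_pS$, this already forces $T_pS\neq T_pS'$; so $S$ and $S'$ are transversal at $p$, and $T_pS\cap T_pS'=T_p(S\cap S')$ is one-dimensional. The determinant computation then reads: $c$ acts trivially on $T_p(S\cap S')\subset T_pS$, and acts by $\chi(c)$ on the one-dimensional quotient $T_pS'/T_p(S\cap S')\cong T_pX/T_pS = N_{S/X,p}$. Hence $\det(c|_{T_pS'})=\chi(c)$, and $c$ acts on the fiber $\omega_{S'}|_p=(\det T_pS')^{\vee}$ by $\chi(c)^{-1}$. Since $\omega_{S'}$ is a trivial line bundle on the K3 surface $S'$, this scalar equals the action of $c$ on the global holomorphic 2-form; and because $\chi$ is injective, $\chi(c)^{-1}$ has order exactly $|c|$, which is the definition of $c$ being purely non-symplectic on $S'$.

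The only subtlety in the plan is the existence of a transversal smooth point of $S\cap S'$; but transversality is automatic once we have faithfulness of the action of $C$ on $S'$, and smoothness just uses that all relevant singular loci are zero-dimensional. Everything else is linear algebra on tangent spaces together with the adjunction-theoretic identification of the action on $\omega_{S'}$.
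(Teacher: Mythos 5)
Your proof is correct and follows essentially the same route as the paper: faithfulness via the fixed divisor $S+S'$ being singular along the curve $S\cap S'$ (Lemma~\ref{cor-fixed-curve-surface}), and pure non-symplecticity via a local computation of the $2$-form at a general point of $S\cap S'$. You merely spell out the tangent-space/determinant computation that the paper leaves implicit.
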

\begin{proof}
Assume that $C$ does not act faithfully on $S'$. Then there is a non-trivial element $g\in C$ acting trivially on $S'$. The fixed locus of $g$ contains $S\cup S'$, hence it is singular along a curve $S\cap S'$. This is impossible by Lemma \ref{cor-fixed-curve-surface}.  
Finally, expressing the volume form on $S'$ in local coordinates at the general point $x\in S\cap S'$, we see that the action of $C$ on $S'$ is purely non-symplectic. The claim for the action of $C'$ on $S$ follows by symmetry.
\end{proof}

From Remark \ref{rem-k3-or-product}, Corollary \ref{cor-max-group-direct-product} and Lemma \ref{lem-C1C2G1G2} we immediately obtain 
\begin{corollary}
Either $G$ is of product type, or $m, m'\in\{1,2,3,4,6,8\}$. 
\end{corollary}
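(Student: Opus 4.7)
The plan is to combine the two exact sequences in \eqref{exact-sequence-restriction-to-K3} with the classification of finite abelian K3 groups not lying in $\mathrm{Cr}_2(\mathbb{C})$ (Proposition \ref{prop-intro-k3-groups}) and the decomposition from Corollary \ref{cor-max-group-direct-product}.

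First I would assume $G$ is not of product type. Applying Proposition \ref{prop-abstact-extension} to each of the exact sequences in \eqref{exact-sequence-restriction-to-K3}, noting that $C, C' \subset \mathrm{Cr}_1(\mathbb{C})$ are cyclic, this assumption rules out both $H$ and $H'$ being isomorphic to a subgroup of $\mathrm{Cr}_2(\mathbb{C})$; otherwise $G$ would already be of product type. Proposition \ref{prop-intro-k3-groups} then forces each of $H, H'$ to be one of the six exceptional groups (1)--(6) from Theorem \ref{thm-maximal-k3-groups}.

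Next I would invoke Corollary \ref{cor-max-group-direct-product}, which furnishes, for each of these six groups, a canonical splitting $H' = H'_s \times \mathbb{Z}/m'_0$ into the symplectic subgroup and a purely non-symplectic cyclic factor with $m'_0 \in \{2,4,6,8\}$. By Lemma \ref{lem-C1C2G1G2}, $C = \mathbb{Z}/m$ embeds into $H'$ as a cyclic subgroup acting purely non-symplectically on $S'$. Writing a generator $g$ of its image as $(h_s, c) \in H'_s \times \mathbb{Z}/m'_0$, the definition of pure non-symplecticity translates through the character $\alpha$ of \eqref{K3-exact-sequence} into $\mathrm{ord}(g) = \mathrm{ord}(\alpha(g)) = \mathrm{ord}(c)$, which forces $\mathrm{ord}(h_s) \mid \mathrm{ord}(c) = m$ and, crucially, $m \mid m'_0$. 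Hence $m$ divides an element of $\{2, 4, 6, 8\}$, so $m \in \{1,2,3,4,6,8\}$; the symmetric argument applied to the embedding $C' \hookrightarrow H$ furnished by Lemma \ref{lem-C1C2G1G2} yields $m' \in \{1,2,3,4,6,8\}$.

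There is no real obstacle here: this is a bookkeeping consequence of the preceding results. The only point worth double-checking is the divisibility step, which rests on the observation that the purely non-symplectic elements in the decomposition $H'_s \times \mathbb{Z}/m'_0$ are exactly those $(h_s,c)$ with $\mathrm{ord}(h_s) \mid \mathrm{ord}(c)$, so any cyclic subgroup whose non-identity elements are all purely non-symplectic must have order dividing $m'_0$.
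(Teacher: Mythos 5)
Your argument is correct and is exactly the one the paper intends: it derives the corollary from Remark \ref{rem-k3-or-product} (your Propositions \ref{prop-abstact-extension} and \ref{prop-intro-k3-groups}), Corollary \ref{cor-max-group-direct-product}, and Lemma \ref{lem-C1C2G1G2}, with the same divisibility observation that a purely non-symplectic cyclic subgroup of $H'_s\times\mathbb{Z}/m'_0$ has order dividing $m'_0\in\{2,4,6,8\}$. The paper leaves this as "we immediately obtain"; your write-up simply makes that bookkeeping explicit.
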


\begin{lem}\label{G splits}
If both exact sequences \eqref{exact-sequence-restriction-to-K3} do not split, then $G$ is of product type.
\end{lem}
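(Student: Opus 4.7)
The plan is to reduce the statement to a finite combinatorial check, using the classification of K3-type groups together with the prime-by-prime analysis of non-splitting provided by Theorem \ref{littelwood}.

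First, I would reduce to the case where both $H$ and $H'$ appear as subgroups of one of the six maximal groups listed in Theorem \ref{thm-maximal-k3-groups}. If instead either $H$ or $H'$ embeds into $\mathrm{Cr}_2(\mathbb{C})$, then Proposition \ref{prop-abstact-extension} already concludes that $G$ is of product type. So from now on $H, H'$ each decompose as $H_s \times \mathbb{Z}/m_H$ and $H'_s \times \mathbb{Z}/m_{H'}$ via Corollary \ref{cor-max-group-direct-product}, with symplectic factor and cyclic purely non-symplectic factor; here $m_H, m_{H'} \in \{2, 4, 6, 6, 8, 2, 4\}$. By Lemma \ref{lem-C1C2G1G2}, the injection $C \hookrightarrow H'$ is purely non-symplectic, hence $C \cap H'_s = \{\id\}$, so $C$ embeds into $\mathbb{Z}/m_{H'}$; symmetrically $C'$ embeds into $\mathbb{Z}/m_H$. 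In particular $|C|, |C'| \in \{1, 2, 3, 4, 6, 8\}$.

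Next, I would analyze the non-splitting condition prime by prime. For each prime $p$ with $C_p \neq 0$, non-splitting of the $p$-part of the first sequence forces $\mathfrak{r}(G_p) = \mathfrak{r}(H_p)$ (equivalently, $C_p$ has no complement in $G_p$); by Theorem \ref{littelwood}, this means $G_p$ is obtained from $H_p$ by increasing the exponent of one cyclic factor, and the isomorphism type of $G_p$ is determined by which factor and by how much. The same applies to the second sequence with $H'_p, C'_p$. The subgroup $C_p$ is then uniquely characterized inside $G_p$ as the unique cyclic subgroup whose quotient reproduces the type of $H_p$ in a non-split fashion; the same for $C'_p$. Combined with the disjointness $C \cap C' = \{\id\}$ from Lemma \ref{lem-C1C2G1G2}, this forces the two non-splittings to occur at different primes, or else would force $C_p = C'_p$, a contradiction.

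Finally, I would carry out the (small) case analysis over the pairs $(H, H')$ drawn from the six maximal groups, and over the allowed cyclic $(C, C')$. Using Theorem \ref{littelwood} on the two or three primes involved (namely $p \in \{2, 3\}$ for these groups), combined with the divisibility bounds and disjointness, each surviving configuration produces a group $G$ whose type matches one of the entries in Table 2, hence is of product type. The principal obstacle is the combinatorial bookkeeping, but the bounds $|C|, |C'| \leq 8$, the disjointness $C \cap C' = \{\id\}$, and the elementary-abelian nature of most $p$-Sylows of the six K3-type groups collapse the analysis: for elementary abelian $H_p$, a non-split extension by $\mathbb{Z}/p$ produces a $G_p$ with a unique $\mathbb{Z}/p^2$ cyclic factor whose socle is the image of $C_p$, making $C_p$ unique and rendering the simultaneous non-splitting with $C'_p$ impossible unless the primes differ; the remaining configurations with $p \neq p'$ are explicitly listed and directly seen to produce product-type $G$.
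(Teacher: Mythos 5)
Your overall strategy---reduce to the six groups of Theorem \ref{thm-maximal-k3-groups}, bound $|C|$ by embedding it into the purely non-symplectic cyclic factor of $H'$, and enumerate the possible $G_p$ via Theorem \ref{littelwood}---is in the same spirit as the paper's proof, which simply runs the Littlewood--Richardson enumeration for every pair $(H,m)$ with $m\in\{1,2,3,4,6,8\}$ and checks each resulting $G$ with $G\not\cong\Z/m\times H$ against Table 2. The problem is that both structural claims you interpose before that enumeration are false. First, non-splitting of $0\to C_p\to G_p\to H_p\to 0$ does not force $\mathfrak{r}(G_p)=\mathfrak{r}(H_p)$, nor is $G_p$ obtained from $H_p$ by lengthening a single cyclic factor: by the Pieri rule the added horizontal strip may be spread over several rows and may create a new row. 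Concretely, $0\to\Z/4\to\Z/16\times\Z/4\times(\Z/2)^2\to\Z/8\times\Z/4\times\Z/2\to 0$ is a non-split extension (the skew shape $[4,2,1,1]/[3,2,1]$ is a horizontal strip) with $\mathfrak{r}(G_2)=4>3=\mathfrak{r}(H_2)$; this very group appears in the paper's own list and must be checked, whereas your reduction would discard it.

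Second, the kernel $C_p$ is in general not the unique cyclic subgroup with non-split quotient of type $H_p$, so the deduction ``$C_p=C'_p$, contradiction'' does not go through; in fact the conclusion you want is false. Take $G=(\Z/4)^2\times(\Z/2)^2$ with $C=\langle(2,0,0,0)\rangle$ and $C'=\langle(0,2,0,0)\rangle$: both quotients are isomorphic to $H=H'=\Z/4\times(\Z/2)^3$, both extensions are non-split, and $C\cap C'=\{\id\}$. So simultaneous non-splitting at the same prime with disjoint kernels is algebraically possible, and the only way out is to verify that every such $G$ is of product type (here it is, being of the form $\Z/2k\times\Z/2l\times(\Z/2)^2$). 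Your parenthetical argument does work when $H_p$ is elementary abelian and $C_p\cong\Z/p$, since then the kernel must equal $pG_p$, but that covers only a fraction of the relevant configurations. Note finally that the detour through ``both sequences non-split'' is unnecessary: the paper proves the stronger statement that if even one sequence yields $G\not\cong\Z/m\times H$ then $G$ is of product type, by exhaustively listing the $G_p$ permitted by Theorem \ref{littelwood}. Your closing sentence asserts the outcome of that enumeration, but the reductions you use to shrink it are exactly the steps that fail.
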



\begin{proof}
By symmetry, it is enough to prove the result for the first exact sequence in \eqref{exact-sequence-restriction-to-K3}. We have $G=G_{p_1}\times\dots\times G_{p_k}$, and $G_{p_i}$ is an abelian group which fits in a short exact sequence of the form
\[
	0\to C_{p_i}\to G_{p_i}\to H_{p_i}\to0,
\]
where $C_{p_i}$ and $H_{p_i}$ are $p_i$-Sylow subgroups of $C$ and $H$, respectively. 
We will proceed as follows.
\begin{enumerate}
	\item We fix an isomorphism class for $G$, among the groups (1)--(6) in Theorem \ref{thm-maximal-k3-groups}.
	\item For each $m\in\{1,2,3,4,6,8\}$, we use Theorem \ref{littelwood} to compute all possible classes of $p$-Sylow subgroups $G_p$ of $G$.
	\item We deduce the possible isomorphism classes for $G$.
\end{enumerate}
Applying systematically this method, we obtain the following. We have $C=\mathbb{Z}/m$. Assume that $G$ is not isomorphic to $\Z/m\times H$ and that $\mathfrak r(G)>3$ (recall that if $\mathfrak r(G)\leq 3$ then $G$ is of product type). Then for $m\in\{1,2,3,4,6,8\}$ we have the following possibilities.
\begin{itemize}
        \item $H=(\Z/4)^3$. Then we have the following possibilities
        \begin{enumerate}
        \item 
        $m=4$,  $G=\Z/8\times(\Z/4)^2\times\Z/2$, \item 
        $m=8$,  $G=\Z/16\times(\Z/4)^2\times\Z/2$. 
        \end{enumerate}
        
        \item $H=\Z/8\times\Z/4\times\Z/2$. Then we have the following
        \begin{enumerate}
        \item 
        $m=4$, 
        $G=\Z/16\times\Z/4\times(\Z/2)^2$, or $G=(\Z/8)^2\times(\Z/2)^2$,
        \item 
        $m=8$, $G=\Z/32\times\Z/4\times(\Z/2)^2$, or $G=\Z/16\times\Z/8\times(\Z/2)^2$, or $G=\Z/16\times(\Z/4)^2\times\Z/2$. 
        \end{enumerate}
        
        \item $H=\Z/4\times(\Z/2)^3$. Then
        \begin{enumerate}
        \item 
        $m=2$, $G=\Z/8\times(\Z/2)^3$, or $G=(\Z/4)^2\times(\Z/2)^2$,
        \item 
        $m=4$, $G=\Z/16\times(\Z/2)^3$, or $G=\Z/8\times(\Z/2)^4$, or $G=\Z/8\times\Z/4\times(\Z/2)^2$,
        \item 
        $m=6$, $G=\Z/24\times(\Z/2)^3$,
        \item 
        $m=8$, $G=\Z/32\times(\Z/2)^3$, or $G=\Z/16\times\Z/4\times(\Z/2)^2$, or $G=\Z/16\times(\Z/2)^4$. 
        \end{enumerate}
        
        \item $H=\Z/2\times(\Z/3)^3$. In this case we always have $\mathfrak r(G)\le3$.
        \item  $H=(\Z/3)^2\times(\Z/2)^3$. We obtain that $\mathfrak r(G)\le3$, or that $G=\Z/m\times H$.
        \item $H=(\Z/2)^5$.  $G=\Z/2^i\times(\Z/2)^4$ with $2\leq i\leq 4$, or $G=\Z/12\times(\Z/2)^4$.
    \end{itemize}
Among all these possibilities, we see that if $G$ is not isomorphic to $\Z/m\times H$, then $G$ is of product type.
\end{proof}

\begin{thm}
\label{thm-h0-geq-2}
Assume that $X$ is a $G\mathbb{Q}$-Fano threefold with $h^0(-K_X)\geq 2$ where $G$ is a finite abelian group. 
If $G$ is of K3 type and not of product type then $G$ is isomorphic to one of the following groups: 
\begin{enumerate}
\item 
$(\mathbb{Z}/4)^4$,
\item 
$(\mathbb{Z}/8)^2\times\mathbb{Z}/4\times\mathbb{Z}/2$,
\item
$(\mathbb{Z}/6)^2\times(\mathbb{Z}/3)^2$,
\item
$(\mathbb{Z}/6)^3\times \mathbb{Z}/2$.
\end{enumerate} 
\end{thm}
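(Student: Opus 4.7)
The plan is to pick two distinct $G$-invariant elements $S, S' \in |-K_X|$, which exist because $h^0(-K_X)\geq 2$ and $G$ is abelian: diagonalising the $G$-representation on $\mathrm{H}^0(X,-K_X)$ produces at least two linearly independent $G$-semi-invariant sections, and by the standing hypothesis of the section both $S$ and $S'$ may be assumed to be K3 surfaces with at worst du Val singularities. Applying Lemma \ref{lem-C1C2G1G2} to the two associated exact sequences \eqref{exact-sequence-restriction-to-K3}, the cyclic groups $C=\mathbb{Z}/m$ and $C'=\mathbb{Z}/m'$ embed as purely non-symplectic subgroups of $H'$ and $H$ respectively. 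Since by hypothesis $G$ is not of product type, Remark \ref{rem-k3-or-product} forces both $H$ and $H'$ to lie in the list (1)--(6) of Theorem \ref{thm-maximal-k3-groups}. Combining this with Corollary \ref{cor-max-group-direct-product}, which decomposes each of these six groups as $H_s\times\mathbb{Z}/\bar m(H)$ with $\bar m(H)\in\{2,4,6,8\}$, I would deduce that $m$ divides $\bar m(H')$ and $m'$ divides $\bar m(H)$; in particular $m,m'\in\{1,2,3,4,6,8\}$.

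Next, I would invoke Lemma \ref{G splits}: after possibly swapping $S$ and $S'$, at least one of the two sequences splits, so I may assume $G\simeq\mathbb{Z}/m\times H$. The proof then reduces to a finite enumeration. For each pair $(H,m)$ with $H$ in the list (1)--(6) and $m$ a divisor of some $\bar m(H')$, I form the candidate $G=\mathbb{Z}/m\times H$ and check two necessary conditions: (a) $G$ is not of product type, which is immediate by comparing the invariant factor decomposition of $G$ with the seven families of Table 2; and (b) $G$ admits a cyclic subgroup $C'\subseteq G$ of order dividing $\bar m(H)$ such that $G/C'$ is isomorphic to one of the six groups in (1)--(6) of Theorem \ref{thm-maximal-k3-groups}, since this is the structure demanded by the second exact sequence of \eqref{exact-sequence-restriction-to-K3}.

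The main technical obstacle is the systematic verification of condition (b) for those candidates which survive (a); I expect most to be eliminated by simple order considerations, since $|G/C'|$ must match the order of some group in the short list (1)--(6). After carrying out the enumeration the only surviving pairs turn out to be
\[
(H,m)\in\bigl\{((\mathbb{Z}/4)^3,4),\ ((\mathbb{Z}/6)^2\times\mathbb{Z}/2,6),\ (\mathbb{Z}/6\times(\mathbb{Z}/3)^2,6),\ (\mathbb{Z}/8\times\mathbb{Z}/4\times\mathbb{Z}/2,8)\bigr\},
\]
giving respectively $(\mathbb{Z}/4)^4$, $(\mathbb{Z}/6)^3\times\mathbb{Z}/2$, $(\mathbb{Z}/6)^2\times(\mathbb{Z}/3)^2$, and $(\mathbb{Z}/8)^2\times\mathbb{Z}/4\times\mathbb{Z}/2$; in each case condition (b) is witnessed by the projection $G\to H$ onto the second factor. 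All remaining candidates either coincide with a group from Table 2 (for instance, $\mathbb{Z}/2\times(\mathbb{Z}/4)^3\simeq(\mathbb{Z}/4)^3\times\mathbb{Z}/2$ is of type (2) with $k=2$, and $\mathbb{Z}/3\times(\mathbb{Z}/4)^3\simeq\mathbb{Z}/12\times(\mathbb{Z}/4)^2$ is of type (1)) or fail the order constraint in (b), yielding exactly the four groups claimed.
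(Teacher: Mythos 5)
Your proposal is correct and follows essentially the same route as the paper: both rest on Lemma \ref{lem-C1C2G1G2} to embed $C$ and $C'$ purely non-symplectically into $H'$ and $H$ (bounding $m,m'$ via Corollary \ref{cor-max-group-direct-product}), then on Lemma \ref{G splits} to reduce to $G\simeq\mathbb{Z}/m\times H$ with $H$ among the six groups of Theorem \ref{thm-maximal-k3-groups}, followed by the same finite enumeration against Table 2 and the constraint imposed by the second exact sequence. The only (cosmetic) differences are that you index the enumeration by $(H,m)$ rather than by $(H,C')$ as in the paper's tables, and you are slightly more careful in only using the splitting of one of the two sequences while retaining the other as the quotient condition $G/C'\cong H'$.
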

\begin{proof}
By Lemma \ref{G splits}, we may assume that $G= C\times H$, where $C=\Z/m$ and $H$ is one of the groups (1)--(6) as in Theorem \ref{thm-maximal-k3-groups}. 
Moreover, by symmetry we have that $G$ is isomorphic to $C'\times H'$ where $C'=\Z/m'$, and by Lemma \ref{lem-C1C2G1G2} we get $C\subset H'$ and $C'\subset H$. We will proceed in the following way.
\begin{enumerate}
    \item For a given group $H$ from the groups (1)--(6) in Theorem \ref{thm-maximal-k3-groups}, we deduce the possibilities for $C'$.
    \item For a given pair $(H,C')$, we find all the possibilities for $m$.
\end{enumerate}
We obtain the following possible configurations.
\begin{itemize}
    \item $H=(\Z/4)^3$.
    \begin{longtable}{|c|c|c|c|}\hline
        $C'$&$\tg$&$\Z/2$&$\Z/4$\\ \hline
        $m$&1&1,2&1,2,4\\ \hline
    \end{longtable}
    \noindent The group $G$ is isomorphic to a subgroup of $(\Z/4)^4$.
    \item $H=\Z/8\times\Z/4\times\Z/2$.
    \begin{longtable}{|c|c|c|c|c|}\hline
        $C'$&$\tg$&$\Z/2$&$\Z/4$&$\Z/8$\\ \hline
        $m$&1&1,2&1,2,4&1,2,4,8\\ \hline
    \end{longtable}
    \noindent The group $G$ is isomorphic to a subgroup of $(\Z/8)^2\times\Z/4\times\Z/2$.
    \item $H=\Z/4\times(\Z/2)^3$.
    \begin{longtable}{|c|c|c|c|}\hline
        $C'$&$\tg$&$\Z/2$&$\Z/4$\\ \hline
        $m$&1&1,2&1,2,4\\ \hline
    \end{longtable}
    \noindent The group $G$ is isomorphic to a subgroup of $(\Z/4)^2\times(\Z/2)^3$, which is of product type.
    \item $H=\Z/6\times(\Z/3)^2$.
    \begin{longtable}{|c|c|c|c|c|}\hline
        $C'$&$\tg$&$\Z/2$&$\Z/3$&$\Z/6$\\ \hline
        $m$&1&1,2&1,3&1,2,3,6\\ \hline
    \end{longtable}
    \noindent The group $G$ is isomorphic to a subgroup of $(\Z/6)^2\times(\Z/3)^3$.
    \item $H=(\Z/6)^2\times\Z/2$.
    \begin{longtable}{|c|c|c|c|c|}\hline
        $C'$&$\tg$&$\Z/2$&$\Z/3$&$\Z/6$\\ \hline
        $m$&1&1,2&1,3&1,2,3,6\\ \hline
    \end{longtable}
    \noindent The group $G$ is isomorphic to a subgroup of $(\Z/6)^3\times\Z/2$.
    \item $H=(\Z/2)^5$.
    \begin{longtable}{|c|c|c|}\hline
        $C'$&$\tg$&$\Z/2$\\ \hline
        $m$&1&1,2\\ \hline
    \end{longtable}
    \noindent The group $G$ is isomorphic to a subgroup of $(\Z/2)^6$, which is of product type.
\end{itemize}
\end{proof}

\section{Orbits of non-Gorenstein points}
\label{sec-orbits}
In this section, we study the orbits of non-Gorenstein points under the action of a finite abelian group. 
Let $G$ be a finite abelian group, and let $X$ be a $3$-dimensional $G\mathbb{Q}$-Fano variety. 
Assume that the set of non-Gorenstein singularities of $X$ is non-empty. 
Let us denote the set of all non-Gorenstein points of $X$ as follows: 
\begin{equation}
\label{eq-baskets}
k_1 \times P_1,\quad \quad k_2\times P_2,\quad \quad \dots,\quad \quad k_l\times P_l,\quad \quad \quad k_i\geq 1
\end{equation}
for $l\geq 1$ where each $P_i\in X$ is a germ of a terminal non-Gorenstein singularity of index $r_i>1$, and $k_i\times P_i$ means that we have exactly $k_i$ singular points of $X$  locally analytically isomorphic to $P_i\in X$.  
In particular, $P_i\in X$ and $P_j\in X$ are not locally analytically isomorphic for $i\neq j$. Hence, each set $\{k_i\times P_i\}$ for $1\leq i\leq l$ splits into $G$-orbits.

Assume that the pair $(X, S)$ is plt where $S\in |-K_X|$ is a $G$-invariant element. Then $S$ is a K3 surface with at worst canonical singularities. We have the following exact sequence:
\begin{equation}
\label{eq-exact-seq-k3-type}
0\to \mathbb{Z}/m\to G \to H \to 0
\end{equation}
where $H$ faithfully acts on $S$, and $m\geq 1$. 
By Lemma \ref{lem-singular-du-val}, we see that $S$ has at least $k_1+\ldots+k_l$ du Val singularities that correspond to non-Gorenstein singularities on $X$. 
We denote by $f\colon S'\to S$ the $H$-equivariant minimal resolution of $S$, so that $S'$ is a smooth K3 surface with a faithful action of~$H$. According to Remark \ref{rem-k3-or-product}, either $G$ is of product type, or $H$ is one of the groups (1)--(6) as in Proposition \ref{prop: determinants}. We examine them case by case.  

\begin{lem}
\label{lem-exclusion-of-two-groups}
The group $H$ in \eqref{eq-exact-seq-k3-type} is not isomorphic either to $(\mathbb{Z}/6)^2\times\mathbb{Z}/2$ or to $(\mathbb{Z}/4)^3$.
\end{lem}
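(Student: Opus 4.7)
The plan is to observe that this lemma follows almost immediately by combining two earlier facts: the rigidity of the invariant cohomology lattice for these two groups (which forces the K3 surface to be smooth), and the basic fact that non-Gorenstein terminal points on $X$ must give singular points on any anti-canonical member.

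More precisely, I would proceed as follows. By the standing assumption of this section, the set of non-Gorenstein singularities of $X$ is non-empty, so fix one such point $x \in X$ of index $r > 1$. Since $-K_X$ fails to be Cartier at $x$, every Weil divisor in $|-K_X|$, and in particular the $G$-invariant element $S$, must pass through $x$. Then Lemma~\ref{lem-singular-du-val} applies and shows that $x$ is a singular point of $S$. Therefore $S$ is a K3 surface with at least one du Val singularity.

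On the other hand, by assumption $H$ acts faithfully on $S$ (and hence on its minimal resolution $S'$). If $H \cong (\mathbb{Z}/4)^3$ or $H \cong (\mathbb{Z}/6)^2 \times \mathbb{Z}/2$, then Corollary~\ref{cor-rank-1-is-smooth} (which relies on Proposition~\ref{prop-ranks-H0} via the rank computation $r = \mathrm{rk}\,\mathrm{H}^2(S',\mathbb{Z})^H = 1$ and Lemma~\ref{lem-r-1-2}) forces $S$ to be smooth. This contradicts the previous paragraph.

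There is essentially no obstacle here; the proof is a direct citation of existing results. The only minor point worth verifying explicitly is that $x \in S$ for every $S \in |-K_X|$ when $x$ is non-Gorenstein, but this is immediate from the fact that $-K_X$ is not Cartier at a non-Gorenstein point. Thus the lemma reduces to a one-line argument combining Corollary~\ref{cor-rank-1-is-smooth} with Lemma~\ref{lem-singular-du-val}.
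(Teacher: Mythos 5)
Your proof is correct and is essentially identical to the paper's argument, which likewise combines Corollary~\ref{cor-rank-1-is-smooth} with Lemma~\ref{lem-singular-du-val} and the standing assumption that the non-Gorenstein locus is non-empty. The extra detail you supply (that any $S\in|-K_X|$ must pass through a non-Gorenstein point since $-K_X$ is not Cartier there) is a correct and harmless elaboration of what the paper leaves implicit.
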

\begin{proof}
By Corollary \ref{cor-rank-1-is-smooth}, in this case $S$ is a smooth K3 surface. However, this contradicts the assumption $l\geq 1$ and Lemma \ref{lem-singular-du-val}. 
\end{proof}

\begin{lem}\label{Z842 orbits}
	If $H=\dz/8\times\dz/4\times\dz/2$ and $G$ is not of product type, then $l=1$ and   
	$k_1$ is even. Morever, the action of $H$ (as well as $G$) on the set $\{k_i\times P_1\}$ is transitive. In particular, $k_1$ divides~$64$. 
\end{lem}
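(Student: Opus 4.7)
My plan is to deduce each of the conclusions $l=1$, transitivity, and $k_1$ even with $k_1\mid 64$ from Corollary~\ref{cor-rank-2-is-transitive} and the hyperbolic structure on $\mathrm{NS}(S')^H$ described by Proposition~\ref{prop-ranks-H0}(4), together with the local geometry at a non-Gorenstein singularity. The first step is to note that for $H=\dz/8\times\dz/4\times\dz/2$, Corollary~\ref{cor-rank-2-is-transitive} says $H$ permutes transitively the $f$-exceptional $(-2)$-curves on $f\colon S'\to S$, and every singular point of $S$ is of type $A_1$ or $A_2$. By Lemma~\ref{lem-singular-du-val} every non-Gorenstein point $P_j$ of $X$ is a singular point of $S$, and by Proposition~\ref{prop-non-gorenstein-exclusion} each such $P_j$ must then be a cyclic quotient singularity, since a non-cyclic-quotient non-Gorenstein terminal point cannot correspond to an $A_1$ or $A_2$ singularity on $S$.

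Next I would show $l=1$ and transitivity. The kernel $\dz/m$ in~\eqref{eq-exact-seq-k3-type} fixes $S$ pointwise, so it fixes every non-Gorenstein point of $X$; hence the $G$-orbit of any such point coincides with its $H$-orbit on $S$. Since all singular points of $S$ form a single $H$-orbit by the previous step, all non-Gorenstein points of $X$ lie in a single $G$-orbit and are pairwise locally analytically isomorphic as $G$ acts on $X$ by biholomorphisms. This yields $l=1$ together with transitivity of both the $H$- and the $G$-action on $\{k_1\times P_1\}$; the divisibility $k_1\mid 64$ is immediate from $|H|=64$.

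Finally, to prove that $k_1$ is even I would use a lattice argument. Let $E=\sum_i E_i$ be the sum of all $f$-exceptional $(-2)$-curves on $S'$; by transitivity we have $E\in\mathrm{NS}(S')^H$. A direct computation shows $E^2=-2k_1$ in both cases: in the $A_1$ case the $E_i$ are pairwise orthogonal $(-2)$-curves, and in the $A_2$ case each pair $E^{(i)}_1,E^{(i)}_2$ over one singular point of $S$ satisfies $(E^{(i)}_1+E^{(i)}_2)^2=-2+2-2=-2$, and pairs over different singularities are orthogonal. By Proposition~\ref{prop-ranks-H0}(4) we may pick a basis $v_1,v_2$ of $\mathrm{NS}(S')^H$ with Gram matrix $\begin{pmatrix}0&2\\2&0\end{pmatrix}$, so every $v=\beta_1v_1+\beta_2v_2\in\mathrm{NS}(S')^H$ has $v^2=4\beta_1\beta_2\in 4\dz$. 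Applying this to $E$ gives $4\mid 2k_1$, i.e., $k_1$ is even. I expect the technical heart to be precisely this last step: the point is that the hyperbolic form on $\mathrm{NS}(S')^H$ actually takes values in $4\dz$, and this stronger-than-parity divisibility is what upgrades the trivial estimate ``$E^2$ is even'' to ``$k_1$ is even'', thereby ruling out $k_1=1$.
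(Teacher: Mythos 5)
Your derivation of $l=1$, of the transitivity of the $H$- and $G$-actions, and of $k_1\mid 64$ is essentially the paper's argument (Corollary \ref{cor-rank-2-is-transitive} plus Lemma \ref{lem-singular-du-val}, then orbit–stabilizer). For the evenness of $k_1$ you take a genuinely different route. The paper argues group-theoretically: if $k_1$ were odd, the $2$-Sylow subgroup $G_2$ of $G$ would have a fixed point in the orbit (an odd-length orbit must contain a $G_2$-orbit of length $1$), and Theorem \ref{thm-action-on-terminal-point} at that non-Gorenstein fixed point forces $\mathfrak{r}(G_2)\le 3$, hence $G$ is of product type, contradicting the hypothesis; this is the same mechanism reused in Lemmas \ref{Z633 orbits}--\ref{Z4222 orbits}. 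You instead prove the stronger, unconditional statement that $k_1$ is even, by noting that the $H$-invariant class $E=\sum E_i$ satisfies $E^2=-2k_1$ while the lattice $\mathrm{NS}(S')^H=\mathrm{H}^2(S',\mathbb{Z})^H$ with Gram matrix $\begin{pmatrix}0&2\\2&0\end{pmatrix}$ from Proposition \ref{prop-ranks-H0}(4) represents only multiples of $4$. This is correct and arguably cleaner, at the price of leaning on the precise Gram matrix rather than on a local argument that works uniformly for the other groups $H$. One small step you should make explicit: the computation $E^2=-2k_1$ requires that the $k_1$ points exhaust the singular locus of $S$; this holds because they form a nonempty $H$-invariant subset of the single $H$-orbit of singular points provided by Corollary \ref{cor-rank-2-is-transitive}, hence equal that whole orbit.
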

\begin{proof}
By Corollary \ref{cor-rank-2-is-transitive}, there exists at most one $H$-orbit of singular points of $S$. 
By Lemma \ref{lem-singular-du-val} it follows that there exists at most one $G$-orbit of non-Gorenstein singular points on $X$. 
Thus, we have $l=1$, and the action of $G$ on the set $\{k_i\times P_1\}$ is transitive. 
If $k_1$ is odd, then $G$ has a fixed point, so by Theorem \ref{thm-action-on-terminal-point} we have that $G$ is of product type. The last claim follows from the orbit-stabilizer theorem.
\end{proof}

\begin{lem}\label{Z633 orbits}
If $H=\dz/6\times(\dz/3)^2$ and $G$ is not of product type, then $l=1$, and %
$k_1$ is divisible by~$3$. 
Moreover, the action of $H$ (as well as $G$) on the set $\{k_i\times P_1\}$ is transitive. In particular,~$k_1$ divides $54$. 
\end{lem}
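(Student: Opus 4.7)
The plan is to follow the template of Lemma~\ref{Z842 orbits}, combining the transitivity of the $H$-action furnished by Corollary~\ref{cor-rank-2-is-transitive} with the constraints on stabilizers coming from Theorem~\ref{thm-action-on-terminal-point} and from the local lifting diagram~\eqref{diagram-two-levels-of-groups}. First, I would invoke Corollary~\ref{cor-rank-2-is-transitive} to conclude that $H$ acts transitively on the singular set of $S$ and that every singular point of $S$ has type $A_1$ or $A_2$. Combined with Lemma~\ref{lem-singular-du-val} this forces $l=1$; since $\mathbb{Z}/m$ fixes $S$ pointwise, the induced action of $H$ (hence of $G$) on $\{k_1 \times P_1\}$ is transitive, and the divisibility $k_1 \mid |H| = 54$ follows from the orbit-stabilizer theorem.

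Next, since $P_1 \in S$ has type $A_1$ or $A_2$, Proposition~\ref{prop-non-gorenstein-exclusion} implies that $P_1 \in X$ is a cyclic quotient singularity, and Corollary~\ref{cor-s-s-tilde} says that either $G$ is of product type, or $\widetilde{x} \in \widetilde{S}$ is smooth. In the latter case, the lifted stabilizer $\widetilde{H}_x$ in~\eqref{diagram-two-levels-of-groups} acts faithfully on $T_{\widetilde{x}}\widetilde{S} \cong \mathbb{C}^2$, so that $\mathfrak{r}(\widetilde{H}_x) \leq 2$ by Lemma~\ref{lem-faithful-action}. This provides the key local obstruction that I will play against the possible orbit sizes.

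The main work is then to rule out the values $k_1 \in \{1,2\}$. If $k_1=1$ then $G$ fixes $P_1$, and Theorem~\ref{thm-action-on-terminal-point} gives that $G$ is of product type. If $k_1=2$, then $|H_x|=27$; since $H = \mathbb{Z}/2 \times (\mathbb{Z}/3)^3$ admits $(\mathbb{Z}/3)^3$ as its unique subgroup of order $27$ (the $3$-Sylow), we get $H_x=(\mathbb{Z}/3)^3$, in particular $\mathfrak{r}(H_x)=3$. But the surjection $\widetilde{H}_x\twoheadrightarrow H_x$ yields $\mathfrak{r}(\widetilde{H}_x)\geq 3$, which contradicts $\mathfrak{r}(\widetilde{H}_x)\leq 2$ from the previous paragraph. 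Hence $k_1 \in \{1,2\}$ is impossible unless $G$ is of product type, and we conclude that $3 \mid k_1$. I do not expect a genuine obstacle: the argument runs parallel to Lemma~\ref{Z842 orbits}, with the only extra ingredient being the explicit identification of the unique index-$2$ subgroup of $H$ together with the rank inequality for its lift to the index-one cover.
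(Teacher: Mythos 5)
Your first part ($l=1$, transitivity, $k_1\mid 54$) coincides with the paper's proof: both rest on Corollary~\ref{cor-rank-2-is-transitive}, Lemma~\ref{lem-singular-du-val} and the orbit--stabilizer theorem. For the divisibility $3\mid k_1$ you take a genuinely different and heavier route. The paper argues purely with Sylow subgroups: if $3\nmid k_1$ then the $3$-Sylow subgroup $G_3$ has a fixed point, so $\mathfrak{r}(G_3)\leq 3$ by Theorem~\ref{thm-action-on-terminal-point}; since $\mathfrak{r}(H_p)\leq 1$ for $p\neq 3$, each $G_p$ is an extension of a rank-$\leq 1$ group by a cyclic group, so $\mathfrak{r}(G_p)\leq 2$, whence $\mathfrak{r}(G)\leq 3$ and $G$ is of product type. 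This is shorter, avoids the index-one cover entirely, and applies verbatim to any $k_1$ coprime to $3$ without first establishing $k_1\mid 54$. Your reduction to $k_1\in\{1,2\}$, the identification $H_x=(\mathbb{Z}/3)^3$ when $k_1=2$, and the use of Proposition~\ref{prop-non-gorenstein-exclusion} and Corollary~\ref{cor-s-s-tilde} to reach the index-one cover are all correct.

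The one genuine gap is the assertion that $\mathfrak{r}(\widetilde{H}_x)\leq 2$ follows from Lemma~\ref{lem-faithful-action}. That lemma only gives faithfulness of the representation of $\widetilde{H}_x$ on $T_{\widetilde{x}}\widetilde{S}\cong\mathbb{C}^2$; the bound $\mathfrak{r}\leq\dim$ is stated (in the Example following that lemma) only for \emph{abelian} groups, and $\widetilde{H}_x$ is a priori just an extension of $H_x=(\mathbb{Z}/3)^3$ by $\mathbb{Z}/r$, not known to be abelian. The paper introduces Proposition~\ref{prop-lift-of-2-group-is-abelian} precisely to handle such liftings, and it does not cover the case $r=2$ here since $G_x$ is not a $2$-group. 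The gap is reparable: pass to a $3$-Sylow subgroup $P\subset\widetilde{H}_x$; it still surjects onto the $3$-group $H_x=(\mathbb{Z}/3)^3$, while an odd-order subgroup of $\mathrm{GL}(2,\mathbb{C})$ is abelian (conjugate into the diagonal torus), so $\mathfrak{r}(P)\leq 2$, giving the contradiction. (Alternatively, for $r=2$ the kernel $\mathbb{Z}/2$ is central of order coprime to $|H_x|$, so $\widetilde{H}_x\cong\mathbb{Z}/2\times(\mathbb{Z}/3)^3$ is abelian.) As written, however, the rank inequality for $\widetilde{H}_x$ is not justified.
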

\begin{proof}
We denote by $G_3$ the $3$-Sylow subgroup of $G$. 
As in the proof of Lemma \ref{Z842 orbits}, by Lemma \ref{lem-singular-du-val} and Corollary \ref{cor-rank-2-is-transitive} we have $l=1$, and the action of $G$ on the set $\{k_i\times P_1\}$ is transitive. 
If $k_1$ is not divisible by $3$, then $G_3$ has a fixed point. We deduce from Theorem \ref{thm-action-on-terminal-point} that the $3$-Sylow subgroup $G_3$ is of rank $3$. For $p\neq 3$, since $\mathfrak{r}(H_p)\leq 1$, we have $\mathfrak{r}(G_p)\leq 2$. We deduce that $\mathfrak{r}(G)\leq 3$. Thus $G$ is of product type. The last claim follows from the orbit-stabilizer theorem.
\end{proof}

\begin{lem}\label{Z22222 orbits}
	If $H=(\dz/2)^5$ and $G$ is not of product type, then 
	the non-Gorenstein locus of $X$ is composed of orbits whose lengths are multiples of $8$. In particular, we have that 
	$k_i$ is divisible by $8$ for all $i\in\{1,\dots,l\}$.
\end{lem}
\begin{proof}
We denote by $G_2$ the $2$-Sylow subgroup of $G$. 
	If there exists $i\in\{1,\dots,l\}$ such that $k_i$ is not divisible by $8$, then $G_2$ has an orbit of length $1,2,$ or $4$. But since $H=(\dz/2)^5$, the group $G_2$ is of the form $\dz/2^k\times(\dz/2)^5$, for some $k\geq1$, or $\dz/2^k\times(\dz/2)^4$, for some $k\geq1$. Hence, all subgroups of $G_2$ of index $1$ or $2$ have rank at least $4$, and Theorem \ref{thm-action-on-terminal-point} implies that $G_2$ cannot have an orbit of length $1$ or $2$. Assume there exists an orbit of length $4$. Then $G_2$ has a subgroup of rank $3$ or $4$ fixing a point, and the latter is excluded by the same result. So $G_2$ has a subgroup of index $4$ and rank $3$, which implies that $G_2$ is of the form $\dz/2^k\times(\dz/2)^4$, for some $k\geq1$. But then $G$ is of product type.
\end{proof}

\begin{lem}\label{Z4222 orbits}
	If $H=\dz/4\times(\dz/2)^3$ and $G$ is not of product type, then the non-Gorenstein locus of $X$ is composed of orbits whose lengths are multiples of $4$. In particular, we have that 
	$k_i$ is divisible by $4$ for all $i\in\{1,\dots,l\}$.
\end{lem}
\begin{proof}
We denote by $G_2$ the $2$-Sylow subgroup of $G$. 
	If there exists $i\in\{1,\dots,l\}$ such that $k_i$ is not divisible by $4$, then $G_2$ has an orbit of length $1$ or $2$. The first case is excluded by Theorem \ref{thm-action-on-terminal-point}, since $r(G_2)\geq r(H)=4$. If $G_2$ has a subgroup $G_2'$ of index 2 fixing a point, then $G_2'$ is of rank $3$, and we deduce that either $r(G_2)\leq 3$, and hence $G$ is of product type, or $G_2=\dz/2^k\times\dz/4\times(\dz/2)^2$ for $k\geq1$, or $G_2=\dz/2^k\times(\dz/2)^3$, with $k\geq2$. In all cases, the group $G$ is of product type.
\end{proof}

We obtain the following.

\begin{corollary}\label{coro:coprime singularities}

Either $G$ is of product type, or $\mathrm{gcd}(k_1,\dots,k_l)$ is divisible either by $2$ or by $3$, where $k_i$ are as in \eqref{eq-baskets}. In particular, if $\mathrm{gcd}(k_1,\dots,k_l)=1$ then $G$ is of product type.
\end{corollary}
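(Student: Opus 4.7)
The plan is to carry out a direct case analysis on the isomorphism class of the group $H$ appearing in the exact sequence \eqref{eq-exact-seq-k3-type}, leveraging the orbit-length lemmas already established in this section. First I would assume for contradiction that $G$ is not of product type. Then by Remark \ref{rem-k3-or-product}, combined with Proposition \ref{prop-abstact-extension} and the classification in Theorem \ref{thm-surfaces}, the group $H$ must equal one of the six maximal groups listed in Proposition \ref{prop: determinants}: any proper subgroup of these six that still acts on a K3 surface would embed into $\mathrm{Cr}_2(\mathbb{C})$, and the resulting abelian extension $G$ of $\mathbb{Z}/m$ by $H$ would then be of product type.

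Next I would invoke the running hypothesis $l\ge 1$ together with Lemma \ref{lem-singular-du-val}, which guarantees that every non-Gorenstein point of $X$ is a singular point of $S$. In particular $S$ is a singular K3 surface. This immediately eliminates the two cases $H=(\dz/4)^3$ and $H=(\dz/6)^2\times\dz/2$ by Lemma \ref{lem-exclusion-of-two-groups} (whose proof goes through Corollary \ref{cor-rank-1-is-smooth}). For each of the four remaining isomorphism classes of $H$ I would then read off the conclusion from the appropriate lemma proved immediately above: Lemma \ref{Z842 orbits} yields $l=1$ and $2\mid k_1$ when $H=\dz/8\times\dz/4\times\dz/2$; Lemma \ref{Z633 orbits} yields $l=1$ and $3\mid k_1$ when $H=\dz/6\times(\dz/3)^2$; Lemma \ref{Z22222 orbits} yields $8\mid k_i$ for every $i$ when $H=(\dz/2)^5$; and Lemma \ref{Z4222 orbits} yields $4\mid k_i$ for every $i$ when $H=\dz/4\times(\dz/2)^3$. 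In all four cases $\gcd(k_1,\dots,k_l)$ is divisible by $2$ or by $3$, which proves the first assertion. The \emph{in particular} clause is then just the contrapositive: if this gcd equals $1$, then it is divisible by neither $2$ nor $3$, so $G$ must be of product type.

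There is essentially no real obstacle here, since all the substantive work has been packaged into Lemmas \ref{Z842 orbits}--\ref{Z4222 orbits}; the corollary is a bookkeeping synthesis of those four statements. The only mild subtlety I would flag is the very first reduction, namely that $H$ must equal one of the six maximal groups rather than merely embed into one of them; this step is what uses Proposition \ref{prop-abstact-extension} and is the reason strict subgroups do not appear in the case list.
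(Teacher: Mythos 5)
Your proof is correct and matches the paper's argument: the corollary is exactly the synthesis of Lemma \ref{lem-exclusion-of-two-groups} and Lemmas \ref{Z842 orbits}--\ref{Z4222 orbits}, applied after the reduction (via Remark \ref{rem-k3-or-product}) to the six groups of Proposition \ref{prop: determinants}. The paper states it without further proof as an immediate consequence of those lemmas, which is precisely the bookkeeping you carried out.
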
 

Now let us denote all the cyclic quotient singularities in the basket of $X$ (see Section \ref{sec-about-baskets}) as follows: 
\begin{equation}
t_1 \times Q_1,\quad \quad t_2\times Q_2,\quad \quad \dots,\quad \quad t_s\times Q_s,\quad \quad \quad t_i\geq 1, 
\end{equation}
for $s\geq 1$ where each $Q_i\in X$ is a germ of a cyclic quotient singularity of index $r'_i>1$, and $t_i\times Q_i$ means that we have exactly $t_i$ singular points of $X$ locally analytically isomorphic to $Q_i\in X$. 
We assume that $Q_i$ and $Q_j\in X$ are not locally analytically isomorphic for $i\neq j$. 
From Corollary \ref{coro:coprime singularities} we obtain the following.
\begin{corollary}\label{coro:coprime singularities2}
Either $G$ is of product type, or $\mathrm{gcd}(t_1,\dots,t_l)$ is divisible either by $2$ or by~$3$. In particular, if $\mathrm{gcd}(t_1,\dots,t_l)=1$ then $G$ is of product type.
\end{corollary}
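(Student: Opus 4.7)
The plan is to deduce the statement from Corollary \ref{coro:coprime singularities} by relating the multiplicities of non-Gorenstein singular points of $X$ to the multiplicities of cyclic quotient singularities in the basket $B(X)$. The key structural observation, recorded in Section \ref{sec-about-baskets}, is that the basket of $X$ is obtained by assembling the individual baskets $B(P_j \in X)$ of the non-Gorenstein singular points $P_j \in X$ for $1 \leq j \leq l$.

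More precisely, since there are exactly $k_j$ points on $X$ locally analytically isomorphic to $P_j$, and each such germ contributes an identical basket of cyclic quotient singularities (because the basket is a local analytic invariant, defined via deformation of the germ), I obtain the identity
\[
t_i \;=\; \sum_{j=1}^{l} k_j \cdot n_{ij}, \qquad 1 \leq i \leq s,
\]
where $n_{ij} \in \mathbb{Z}_{\geq 0}$ denotes the number of occurrences of the cyclic quotient singularity $Q_i$ in the basket $B(P_j \in X)$. Setting $d := \gcd(k_1, \ldots, k_l)$, the above identity shows that $d$ divides each $t_i$, and hence $d$ divides $\gcd(t_1, \ldots, t_s)$.

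I would then invoke Corollary \ref{coro:coprime singularities}: either $G$ is of product type, or $d$ is divisible by $2$ or by $3$. In the latter case, the divisibility $d \mid \gcd(t_1, \ldots, t_s)$ established above immediately gives that $\gcd(t_1, \ldots, t_s)$ is divisible by $2$ or by $3$. The final sentence of the statement then follows by contraposition, since $\gcd(t_1, \ldots, t_s) = 1$ forces $d \in \{1\}$ (or rather, forces the second alternative of Corollary \ref{coro:coprime singularities} to fail), leaving only the product-type alternative.

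The argument is entirely formal once one accepts the additive behavior of baskets under the decomposition of the non-Gorenstein locus into its analytic-isomorphism classes, so I do not anticipate any substantial obstacle. The only point worth making explicit is that locally analytically isomorphic germs have identical baskets, which is immediate from Reid's construction of the basket via a $\mathbb{Q}$-smoothing deformation of the germ.
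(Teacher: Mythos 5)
Your proof is correct and follows essentially the same route as the paper, which simply asserts that the statement follows from Corollary \ref{coro:coprime singularities}; your identity $t_i=\sum_j k_j n_{ij}$, based on the fact that locally analytically isomorphic germs have identical baskets, is exactly the implicit deduction, giving $\gcd(k_1,\dots,k_l)\mid\gcd(t_1,\dots,t_s)$ and hence the divisibility by $2$ or $3$.
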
 

After applying 
Corollary \ref{coro:coprime singularities2} as well as Lemmas \ref{lem-exclusion-of-two-groups}--\ref{Z4222 orbits} to all the Fano threefolds of Fano index~$1$ and Fano genus $-1$, which we went through using the Graded Ring Database \cite{GRDB}, and taking off the baskets consisting only of singularities of type $\frac{1}{2}(1,1,1)$ which will be treated in Section \ref{sec-Case $h^0(-K_X)=1$}, we end up with the following result.
\begin{proposition}\label{prop: 91 poss}
Let $X$ be a $G\mathbb{Q}$-Fano threefold where $G$ is a finite abelian group. 
Assume that $h^0(-K_X)=1$. 
If $G$ is not of product type, then the Fano index of $X$ is at most $4$. Furthermore,  
either the basket of $X$ only consists of points of type $\frac{1}{2}(1,1,1)$, or its basket is among the following possibilities.
\end{proposition}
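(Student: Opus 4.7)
The strategy is essentially a systematic database search, coupled with the orbit-theoretic constraints derived in Lemmas \ref{lem-exclusion-of-two-groups}--\ref{Z4222 orbits}. The plan is to first reduce the possible $H$ appearing in the exact sequence \eqref{eq-exact-seq-k3-type}, then extract numerical constraints on the basket from each $H$, and finally run these constraints against the list of terminal Fano threefolds classified via the Graded Ring Database.

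First, I would note that under the hypothesis $h^0(-K_X)=1$, the unique element $S \in |-K_X|$ is automatically $G$-invariant. By the assumptions carried from the sketch in the introduction, $(X,S)$ is plt, and hence $S$ is a K3 surface with du Val singularities. The orbifold Riemann-Roch identity \eqref{eq-ORR} together with $\chi(\mathcal{O}_X(-K_X))=1+h^0(-K_X)=2$ forces the set of non-Gorenstein singularities of $X$ to be non-empty (otherwise the correction term $\sum_Q c_Q(-K_X)$ vanishes, and a direct computation contradicts $\chi=2$ for varieties of small anti-canonical volume). In particular, $l\geq 1$ in the notation of \eqref{eq-baskets}, so by Lemma \ref{lem-singular-du-val} the K3 surface $S$ has at least one du Val singular point.

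Next, I would invoke Remark \ref{rem-k3-or-product}: since $G$ is assumed not of product type, the group $H$ must be one of the six groups (1)--(6) of Proposition \ref{prop: determinants}. Lemma \ref{lem-exclusion-of-two-groups} already excludes the two groups that force $S$ to be smooth, leaving exactly four candidates: $\mathbb{Z}/8 \times \mathbb{Z}/4 \times \mathbb{Z}/2$, $\mathbb{Z}/6 \times (\mathbb{Z}/3)^2$, $(\mathbb{Z}/2)^5$, and $\mathbb{Z}/4 \times (\mathbb{Z}/2)^3$. For each of these, the four lemmas \ref{Z842 orbits}--\ref{Z4222 orbits} give a strong divisibility constraint on the multiplicities $k_i$: respectively, the multiplicities must be divisible by the orbit lengths $2$ (with $l=1$ and $k_1\mid 64$), $3$ (with $l=1$ and $k_1\mid 54$), $8$, and $4$. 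Passing to the basket via the standard deformation to cyclic quotient singularities, Corollary \ref{coro:coprime singularities2} then yields that $\gcd(t_1,\ldots,t_s)$ is divisible by $2$ or $3$.

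The final and principal step is enumerative: run through the list of three-dimensional terminal Fano varieties with $h^0(-K_X)=1$ in the Graded Ring Database \cite{GRDB}, extract their baskets, and retain only those for which the divisibility constraint $2\mid \gcd(t_i)$ or $3\mid \gcd(t_i)$ is satisfied, excluding the degenerate case where the basket consists entirely of points of type $\tfrac{1}{2}(1,1,1)$ (these are postponed to Section \ref{sec-Case $h^0(-K_X)=1$}). The baskets that survive this filter are precisely the ones to be listed below the proposition, and the bound on the Fano index is read off from the same table: among the surviving Hilbert series entries in GRDB the Fano index never exceeds $4$.

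\emph{Main obstacle.} The conceptual ingredients are already packaged into Lemmas \ref{lem-exclusion-of-two-groups}--\ref{Z4222 orbits} and Corollary \ref{coro:coprime singularities2}, so the difficulty is not mathematical but organizational: one must exhaustively traverse the relevant portion of the GRDB and apply the divisibility filter correctly across potentially hundreds of candidate baskets, taking care that varieties with the same basket but different Hilbert series are all considered and that the Fano index is recorded accurately in each case. A secondary subtlety is confirming that the Fano varieties realizing each surviving basket actually exist in the form needed for the subsequent sections, which is guaranteed by the construction data in the GRDB entries but must be checked to ensure that the subsequent geometric analysis in Sections \ref{sec-cyclic-quotient} and \ref{sec-terminal-sing} has input on which to operate.
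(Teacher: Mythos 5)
Your overall strategy coincides with the paper's: reduce $H$ to the four groups surviving Lemma \ref{lem-exclusion-of-two-groups}, impose the orbit-divisibility constraints of Lemmas \ref{Z842 orbits}--\ref{Z4222 orbits} and Corollary \ref{coro:coprime singularities2} on the multiplicities, and then filter an exhaustive list of candidate baskets of genus $-1$ terminal Fano threefolds. The one genuine gap is in how you obtain the bound on the Fano index. You propose to ``read it off'' from the surviving GRDB entries, but the pre-computed enumeration of baskets of genus $-1$ Fano threefolds is only used in the paper for Fano index $1$ and $2$; for higher index there is no table to consult, so your search as described either misses those cases or cannot certify that index $\geq 5$ does not occur. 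The paper closes this by a separate computation for index $>2$: it applies the numerical conditions of \cite[Section 2]{BK22} (orbifold Riemann--Roch together with \eqref{eq-Miyaoka}) to enumerate all candidate baskets of higher index, and uses the genus lower bound of \cite[Corollary 2.7]{BK22} to discard those that cannot live on a threefold with $h^0(-K_X)=1$; only after this does the assertion ``Fano index $\leq 4$'' emerge, together with Tables 8 and 9. Without that step the first claim of the proposition is unproved.

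A secondary remark: your concern about ``confirming that the Fano varieties realizing each surviving basket actually exist'' points in the wrong direction. The proposition is a necessary condition on the basket, so over-listing candidates is harmless; the paper explicitly notes that existence of threefolds realizing the baskets in Tables 8 and 9 is not guaranteed, and the later sections only need to rule each candidate basket out, which is vacuous for baskets that are never realized.
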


\begingroup
\renewcommand{\arraystretch}{1.2}
\begin{longtable}{|p{0.45\textwidth}|p{0.45\textwidth}|}
\hline
\textbf{Basket of singularities of $X$} & \textbf{Possibilities for $H$} \\ \hline
\endhead
$2 \times \frac{1}{10}(3,7,1)$ &  $\dz/8\times\dz/4\times\dz/2$ \\ \hline
$2 \times \frac{1}{11}(4,7,1)$ &  $\dz/8\times\dz/4\times\dz/2$ \\ \hline
$6 \times \frac{1}{4}(1,3,1)$ &  $\dz/8\times\dz/4\times\dz/2$, \newline $\dz/6\times(\dz/3)^2$ \\ \hline
$2 \times \frac{1}{9}(2,7,1)$ &  $\dz/8\times\dz/4\times\dz/2$ \\ \hline
$6 \times \frac{1}{2}(1,1,1), 2 \times \frac{1}{4}(1,3,1)$ &  $\dz/8\times\dz/4\times\dz/2$  \\ \hline
$4 \times \frac{1}{2}(1,1,1), 4 \times \frac{1}{3}(1,2,1)$ & $\dz/4\times(\dz/2)^3$ \\ \hline
$4 \times \frac{1}{5}(2,3,1)$ &  $\dz/8\times\dz/4\times\dz/2$, \newline $\dz/4\times(\dz/2)^3$ \\ \hline
$4 \times \frac{1}{2}(1,1,1), 4 \times \frac{1}{4}(1,3,1)$ &  $\dz/8\times\dz/4\times\dz/2$, \newline $\dz/4\times(\dz/2)^3$ \\ \hline
$2 \times \frac{1}{11}(3,8,1)$ &  $\dz/8\times\dz/4\times\dz/2$ \\ \hline
$8 \times \frac{1}{3}(1,2,1)$ & $(\dz/2)^5$, \newline $\dz/4\times(\dz/2)^3$, \newline $\dz/8\times\dz/4\times\dz/2$  \\ \hline
$3 \times \frac{1}{7}(2,5,1)$ & $\dz/6\times(\dz/3)^2$ \\ \hline
$3 \times \frac{1}{7}(3,4,1)$ & $\dz/6\times(\dz/3)^2$ \\ \hline
$6 \times \frac{1}{2}(1,1,1), 3 \times \frac{1}{4}(1,3,1)$ & $\dz/6\times(\dz/3)^2$  \\ \hline
$2 \times \frac{1}{11}(2,9,1)$ &  $\dz/8\times\dz/4\times\dz/2$ \\ \hline
$8 \times \frac{1}{2}(1,1,1), 2 \times \frac{1}{4}(1,3,1)$ &  $\dz/8\times\dz/4\times\dz/2$  \\ \hline
$10 \times \frac{1}{2}(1,1,1), 2 \times \frac{1}{4}(1,3,1)$ &  $\dz/8\times\dz/4\times\dz/2$  \\ \hline
$8 \times \frac{1}{2}(1,1,1), 4 \times \frac{1}{3}(1,2,1)$ & $\dz/4\times(\dz/2)^3$  \\ \hline
\end{longtable}
\begin{center}
\emph{Table 6. Possible baskets of singularities on $X$ of index $1$ and corresponding groups $H$}
\end{center}
\endgroup
\begingroup
\renewcommand{\arraystretch}{1.2}
\begin{longtable}{|p{0.45\textwidth}|p{0.45\textwidth}|}
\hline
\textbf{Basket of singularities of $X$} & \textbf{Possibilities for $H$} \\ \hline
\endhead
$2 \times \frac{1}{3}(1,2,2), 2 \times  \frac{1}{7}(3,4,2)$ & $\dz/8\times\dz/4\times\dz/2$\\ \hline
$4 \times \frac{1}{3}(1,2,2), 2 \times \frac{1}{5}(1,4,2)$ & $\dz/8\times\dz/4\times\dz/2$ \\ \hline
$2 \times \frac{1}{5}(2,3,2), 2 \times \frac{1}{7}(1,6,2)$ & $\dz/8\times\dz/4\times\dz/2$ \\ \hline
$2 \times \frac{1}{11}(4,7,2)$ & $\dz/8\times\dz/4\times\dz/2$ \\ \hline
$2 \times \frac{1}{5}(1,4,2), 2 \times \frac{1}{7}(3,4,2)$ & $\dz/8\times\dz/4\times\dz/2$ \\ \hline
$3 \times \frac{1}{3}(1,2,2), 3 \times \frac{1}{5}(1,4,2)$ & $\dz/6\times(\dz/3)^2$ \\ \hline
$3 \times \frac{1}{7}(3,4,2)$ & $\dz/6\times(\dz/3)^2$ \\ \hline
$2 \times \frac{1}{3}(1,2,2), 2 \times \frac{1}{9}(4,5,2)$ & $\dz/8\times\dz/4\times\dz/2$ \\ \hline
\end{longtable}
\begin{center}
\emph{Table 7. Possible baskets of singularities on $X$ of index $2$ and corresponding groups $H$}
\end{center}
\begingroup
\renewcommand{\arraystretch}{1.2}
\begin{longtable}{|p{0.45\textwidth}|p{0.45\textwidth}|}
\hline
\textbf{Basket of singularities of $X$} & \textbf{Possibilities for $H$} \\ \hline
\endhead
$4 \times \frac{1}{5}(1,4,3)$ & $\dz/8\times\dz/4\times\dz/2$, \newline $\dz/4\times(\dz/2)^3$\\ \hline
$2 \times \frac{1}{2}(1,1,1), 2 \times \frac{1}{8}(1,7,3)$ & $\dz/8\times\dz/4\times\dz/2$ \\ \hline
$4 \times \frac{1}{2}(1,1,1), 2 \times \frac{1}{7}(1,6,3)$ & $\dz/8\times\dz/4\times\dz/2$ \\ \hline
\end{longtable}
\begin{center}
\emph{Table 8. Possible baskets of singularities on $X$ of index $3$ and corresponding groups $H$}
\end{center}
\begingroup
\renewcommand{\arraystretch}{1.2}
\begin{longtable}{|p{0.45\textwidth}|p{0.45\textwidth}|}
\hline
\textbf{Basket of singularities of $X$} & \textbf{Possibilities for $H$} \\ \hline
\endhead
$2 \times \frac{1}{9}(2,7,4)$ & $\dz/8\times\dz/4\times\dz/2$\\ \hline
$3 \times \frac{1}{7}(1,6,4)$ & $\dz/6\times(\dz/3)^2$ \\ \hline
$4 \times \frac{1}{3}(1,2,1), 2 \times \frac{1}{5}(1,4,4)$ & $\dz/8\times\dz/4\times\dz/2$ \\ \hline
$2 \times \frac{1}{5}(2,3,4), 2 \times \frac{1}{7}(1,6,4)$ & $\dz/8\times\dz/4\times\dz/2$ \\ \hline
\end{longtable}
\begin{center}
\emph{Table 9. Possible baskets of singularities on $X$ of index $4$ and corresponding groups $H$}
\end{center}
\begin{proof}
When the Fano index is 1 or 2, all possibilities of the baskets of singularities of a genus -1 Fano threefold are classified in the Graded Ring Database \cite{GRDB}, using the method described in \cite{BK22}. Going through the possibilities of baskets and apply Lemmas~\ref{Z842 orbits} to~\ref{Z4222 orbits}, we obtain Tables 6 and 7. 

When the Fano index is greater than 2, we apply numerical conditions as in \cite[Section 2]{BK22} to find all possible baskets of singularities, use the lower bound for the genus as in \cite[Corollary 2.7]{BK22} to find possibilities for those which can be realized on genus -1 Fano threefolds, and Lemmas~\ref{Z842 orbits} to~\ref{Z4222 orbits} to obtain Tables 8 and 9. In particular, we find that the Fano index of $X$ is possibly at most 4. Note that this does not guarantee the existence of such Fano threefolds with singularities listed in Tables 8 or 9.  

We include the {\tt magma} code to produce Tables 6,7,8, and 9 in \cite{Z-web}.
\end{proof}

\begin{proposition}
\label{prop-index2-exclusion}
If $X$ has index $2$, then $G$ is of product type.
\end{proposition}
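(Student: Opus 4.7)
The strategy is to combine the classification of possible baskets in Table~7 with the very restrictive list of du Val singularities that $S$ is allowed to have, as provided by Corollary~\ref{cor-rank-2-is-transitive}. Assume for contradiction that $G$ is not of product type. Then by Proposition~\ref{prop: 91 poss} the basket of $X$ must appear as one of the eight entries of Table~7, and in every entry the group $H$ is either $\dz/8\times\dz/4\times\dz/2$ or $\dz/6\times(\dz/3)^2$. For both of these groups Corollary~\ref{cor-rank-2-is-transitive} tells us that the singular locus of $S$ consists of a single $H$-orbit of points, each of type $A_1$ or $A_2$.

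I would next exclude non-cyclic-quotient non-Gorenstein singularities on $X$. By Proposition~\ref{prop-non-gorenstein-exclusion}, any such point would produce a singular point on $S$ which is neither $A_1$ nor $A_2$, contradicting the previous paragraph. Hence every non-Gorenstein singularity of $X$ is a cyclic quotient singularity, and Corollary~\ref{cor-s-s-tilde} applies to each such point: a non-Gorenstein cyclic quotient $x\in X$ of index $r$ yields a singularity of type $A_{r-1}$ on $S$.

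The final step is a direct inspection of Table~7. Every basket listed there contains at least one cyclic quotient singularity of index $r\in\{5,7,9,11\}$: indeed a summand of the shape $\frac{1}{5}$, $\frac{1}{7}$, $\frac{1}{9}$ or $\frac{1}{11}$ is visible in each of the eight rows. By the previous paragraph this forces $S$ to have a singularity of type $A_{r-1}$ with $r-1\geq 4$, contradicting Corollary~\ref{cor-rank-2-is-transitive}. We conclude that $G$ must be of product type. The only genuine content lies in the translation from the basket data of $X$ to the du Val type on $S$ via Corollaries~\ref{cor-s-s-tilde} and~\ref{cor-rank-2-is-transitive}; once this translation is in place, the remainder is a purely numerical verification against Table~7.
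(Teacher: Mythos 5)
Your proof is correct, and its backbone coincides with the paper's: restrict to the eight baskets of Table~7, note that $H$ is forced to be $\dz/8\times\dz/4\times\dz/2$ or $\dz/6\times(\dz/3)^2$, translate each cyclic quotient point of index $r$ into an $A_{r-1}$ point on $S$ via Corollary~\ref{cor-s-s-tilde}, and contradict Corollary~\ref{cor-rank-2-is-transitive}. The one place where you genuinely diverge is in ruling out non-Gorenstein points that are not cyclic quotient singularities: the paper does this by a counting argument on the basket multiplicities (each type occurs at most $4$ times in Table~7, while a non-cyclic-quotient point contributes at least two basket points, forcing a small orbit and ultimately a $G$-fixed point, whence product type by Theorem~\ref{thm-action-on-terminal-point}), whereas you invoke Proposition~\ref{prop-non-gorenstein-exclusion} directly: such a point would be a du Val point on $S$ of type other than $A_1$ or $A_2$, which Corollary~\ref{cor-rank-2-is-transitive} forbids for these two groups. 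Your route is cleaner and is in fact the same mechanism the paper later uses in the proof of Theorem~\ref{thm-terminal-sing}. You are also slightly more careful at the final step: the paper only records that $r>2$ in every row of Table~7, which by itself would not exclude $A_2$ points coming from index-$3$ quotients, while you observe that every row of Table~7 actually contains a point of index at least $5$, hence an $A_{\geq 4}$ point on $S$, which settles the contradiction unambiguously.
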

\begin{proof}
Since the number of points of the same type in the basket of $X$ in each case of Table $7$ is at most $4$, we get that either the non-Gorenstein points on $X$ are cyclic quotient singularities, or $G$ has a fixed point on $X$. In the latter case, $G$ is of product type, hence we may assume that the former case is realized. 

Using Corollary \ref{cor-s-s-tilde}, we may assume that  a cyclic quotient singularity $x\in X$ of index $r$ corresponds to a singular point $x\in S$ of type $A_{r-1}$ on $S$. 
Note that in each case of Table~$7$ we have $r>2$. 
On the other hand, the group $H$ is isomorphic either to $\mathbb{Z}/8\times \mathbb{Z}/4\times \mathbb{Z}/2$, or to $\dz/6\times(\dz/3)^2$.
But this is a contradiction with Corollary \ref{cor-rank-2-is-transitive}.
\end{proof}

\begin{proposition}
\label{prop-index34-exclusion}
If $X$ has index $3$ or $4$, then $G$ is of product type.
\end{proposition}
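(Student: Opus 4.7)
The plan is to argue by contradiction: assume $G$ is not of product type, and go through each basket of $X$ listed in Tables~8 and~9 of Proposition~\ref{prop: 91 poss}. The strategy closely mirrors the proof of Proposition~\ref{prop-index2-exclusion}.

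Whenever $H\in\{\dz/8\times\dz/4\times\dz/2,\ \dz/6\times(\dz/3)^2\}$, I would invoke Corollary~\ref{cor-rank-2-is-transitive} to conclude that every singular point of $S$ is of type $A_1$ or $A_2$. Combined with Proposition~\ref{prop-non-gorenstein-exclusion}, this forces every non-Gorenstein singularity of $X$ to be a cyclic quotient, and Corollary~\ref{cor-s-s-tilde} then bounds its index by $3$. Since every entry of Tables~8 and~9 paired with these two choices of $H$ contains a basket point of index at least $5$, each of these cases is ruled out immediately.

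The only remaining configuration is the basket $4\times\frac{1}{5}(1,4,3)$ with $H=\dz/4\times(\dz/2)^3$ (Table~8, first row). Here Lemma~\ref{Z4222 orbits} forces the four non-Gorenstein points to form a single $G$-orbit of length $4$; comparing this count with the basket, all four must be cyclic quotients of type $\frac{1}{5}(1,4,3)$, and Corollary~\ref{cor-s-s-tilde} then produces four $A_4$-singularities of $S$ lying in a single $H$-orbit. The stabilizer $H_x\subset H$ has order $8$, and, since $H$ has exponent $4$, the only rank-$\le 2$ possibility is $H_x\simeq\dz/4\times\dz/2$.

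The main obstacle is closing this last case, which I would handle by localizing at $x$. Since $r=5>2$, Proposition~\ref{prop-lift-of-2-group-is-abelian} guarantees that $\widetilde{G}_x$ is abelian, and Remark~\ref{rem-action-in-normal-bundle-splits} gives $\widetilde{G}_x=\widetilde{H}_x\times\dz/5$ together with $G_x\simeq\widetilde{H}_x$. The group $\widetilde{H}_x$ acts faithfully and linearly on $\widetilde{S}\simeq\mathbb{C}^2$, hence diagonalizes in $\mathrm{GL}_2(\mathbb{C})$ and has rank at most $2$. Among abelian groups of order $40$ with a quotient by a $\dz/5$-subgroup isomorphic to $\dz/4\times\dz/2$, only $\dz/20\times\dz/2$ and $\dz/10\times\dz/4$ qualify. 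The subgroup $\dz/m\subset G$ fixing $S$ pointwise lies inside $G_x$ and maps to the identity in $H_x$, so $\dz/m\subset\ker(G_x\to H_x)=\dz/5$; hence $m\in\{1,5\}$. Since $\gcd(m,|H|)=1$, the extension splits as $G=H\times\dz/m$, and in both cases (namely $G\simeq \dz/4\times(\dz/2)^3$ and $G\simeq \dz/20\times(\dz/2)^3$) Table~2 shows that $G$ is of product type, contradicting the assumption.
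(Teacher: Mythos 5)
Your reduction of all the cases with $H\in\{\dz/8\times\dz/4\times\dz/2,\ \dz/6\times(\dz/3)^2\}$ is correct and in fact more direct than the paper, which instead observes that every basket in Tables~8 and~9 already occurs in Tables~6 and~7 up to changing the weights, and that the earlier arguments never use the weights. Your isolation of the single surviving configuration $4\times\frac{1}{5}(1,4,3)$ with $H=\dz/4\times(\dz/2)^3$, the forcing of a single orbit of four cyclic quotient points via Lemma~\ref{Z4222 orbits}, and the identification $H_x\simeq\dz/4\times\dz/2$ are all fine.

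The closing step, however, has a genuine gap. You assert $\ker(G_x\to H_x)=\dz/5$, deduced from ``$G_x\simeq\widetilde H_x$''. But by the exact sequence \eqref{exact-sequence-m-G-H-x} that kernel \emph{is} $\dz/m$, the pointwise stabilizer of $S$ in $G$ — exactly the quantity one is trying to control — and a diagram chase in \eqref{diagram-two-levels-of-groups} shows that $\ker(\widetilde G_x\to\widetilde H_x)\cong\dz/m$ as well, so that $|G_x|=m\,|H_x|$ and $G_x\simeq\widetilde H_x$ holds \emph{only if} $m=5$. Remark~\ref{rem-action-in-normal-bundle-splits} cannot supply this: as it is actually used in the proof of Proposition~\ref{prop-k3-with-orbit-4}, it yields $\widetilde G_x=\widetilde H_x\times\dz/n$ for some unspecified $n$ (namely $n=m$), not $n=r$; taking the displayed ``$\times\,\dz/r$'' literally already fails when $G=H$. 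So your argument is circular at the point where you conclude $m\in\{1,5\}$, and the subsequent splitting $G=H\times\dz/m$ (which needs $m$ odd) is unsupported. The paper closes this case by a different, global mechanism: since $H=H_s\times\dz/4$ with $H_s=(\dz/2)^3$ symplectic and $|H_x|=8$, the stabilizer $H_x$ meets $H_s$ nontrivially and hence contains a Nikulin involution; because $\widetilde H_x$ is abelian (Proposition~\ref{prop-lift-of-2-group-is-abelian}, $r=5>2$) this involution cannot reverse the four $A_4$-chains, so it stabilizes all $16$ exceptional $(-2)$-curves and acquires more than $8$ fixed points, contradicting Proposition~\ref{prop-sympl-fixed-points}. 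You would need this (or some other argument actually bounding $m$) to finish.
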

\begin{proof}
Note that the all the baskets from Table 9 also can be found in Tables 6 and 7 (possibly, with different weights of the cyclic group action). Since our arguments in fact do not use the  values of the weights, we see that in the index $4$ case $G$ is of product type. Similarly, the first case in Table 8 can be found in Table 6 (again, up to changing the weights). Two remaining cases in Table 8 are dealt with similarly to the proof of Proposition \ref{prop-index2-exclusion}.
\end{proof}

\section{Case $h^0(-K_X)=1$ with half-points}
\label{sec-Case $h^0(-K_X)=1$}
To illustrate our approach, we treat the case when the non-Gorenstein points of $X$ have type $\frac{1}{2}(1,1,1)$. Informally, we call them \emph{half-points}. 
The main goal of this section is to prove the following
\begin{thm}
\label{thm-half-points}
Let $X$ be a $G\mathbb{Q}$-Fano threefold where $G$ is a finite abelian group. Assume that $h^0(-K_X)=1$. Also, assume that the non-Gorenstein locus of $X$ consists only of points of type $\frac{1}{2}(1,1,1)$. Then $G$ is of product type.
\end{thm}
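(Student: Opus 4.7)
The plan is to exploit the uniqueness of $S \in |-K_X|$ combined with a local computation at a single half-point to force $m = 2$ in the exact sequence \eqref{eq-exact-seq-k3-type}, reducing the problem to the purely algebraic task of enumerating abelian extensions. Since $h^0(-K_X)=1$, the unique element $S\in|-K_X|$ is automatically $G$-invariant, and, as in the general reduction discussed in the introduction, we may assume that $S$ is a K3 surface with du Val singularities. This gives the exact sequence $0 \to \mathbb{Z}/m \to G \to H \to 0$ with $H \subset \mathrm{Aut}(S)$, and by Remark~\ref{rem-k3-or-product} I may assume $H$ is one of the six groups listed in Theorem~\ref{thm-maximal-k3-groups}.

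The existence of at least one half-point follows from the orbifold Riemann-Roch formula \eqref{eq-ORR} and Miyaoka's identity \eqref{eq-Miyaoka}: a Gorenstein Fano threefold satisfies $h^0(-K_X) = 3 + \tfrac{1}{2}(-K_X)^3 \geq 3$, contradicting the hypothesis. By Lemma~\ref{lem-singular-du-val}, any such half-point gives a singular point of $S$, so Lemma~\ref{lem-exclusion-of-two-groups} rules out $H \simeq (\mathbb{Z}/4)^3$ and $H \simeq (\mathbb{Z}/6)^2 \times \mathbb{Z}/2$. Thus $H$ is one of $\mathbb{Z}/8\times\mathbb{Z}/4\times\mathbb{Z}/2$, $\mathbb{Z}/6\times(\mathbb{Z}/3)^2$, $(\mathbb{Z}/2)^5$, or $\mathbb{Z}/4\times(\mathbb{Z}/2)^3$.

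The key step will be to argue that $m=2$. Fix a half-point $x\in X$; by Corollary~\ref{cor-s-s-tilde}, we may assume that $\widetilde{x}\in\widetilde{S}$ is smooth and that $x\in S$ is of type $A_1$, since otherwise $G$ is already of product type. Remark~\ref{rem-action-in-normal-bundle-splits} then gives $\widetilde{G}_x = \widetilde{H}_x \times \mathbb{Z}/2$, so $|\widetilde{G}_x| = 2\,|\widetilde{H}_x|$. On the other hand, the index-one covers on $X$ and on $S$ both have degree~$2$, so the horizontal rows of diagram~\eqref{diagram-two-levels-of-groups} yield $|\widetilde{G}_x|=2\,|G_x|$ and $|\widetilde{H}_x|=2\,|H_x|$. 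Since $\mathbb{Z}/m$ fixes $S$ pointwise it lies inside $G_x$, giving $|G_x| = m\,|H_x|$. Combining these identities yields $2m\,|H_x| = |\widetilde{G}_x| = 2\,|\widetilde{H}_x| = 4\,|H_x|$, whence $m=2$. This is the main obstacle of the argument, since it rests on carefully tracking the two-step cover in \eqref{diagram-two-levels-of-groups} together with the normal-bundle splitting.

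Once $m=2$ has been established, the remaining work is purely algebraic. Using Theorem~\ref{littelwood}, I will list all abelian extensions of each of the four candidate groups $H$ by $\mathbb{Z}/2$, and verify in each case that every such $G$ decomposes as $G=G_1\times G_2$ with $G_1 \subset \mathrm{Cr}_1(\mathbb{C})$ (cyclic or $(\mathbb{Z}/2)^2$ by Proposition~\ref{prop-abelian-subgroup-of-pgl2}) and $G_2 \subset \mathrm{Cr}_2(\mathbb{C})$ (from Theorem~\ref{thm-surfaces}). For example, when $H = \mathbb{Z}/8\times\mathbb{Z}/4\times\mathbb{Z}/2$ the possibilities for $G$ are $\mathbb{Z}/16\times\mathbb{Z}/4\times\mathbb{Z}/2$, $\mathbb{Z}/8\times\mathbb{Z}/8\times\mathbb{Z}/2$, $\mathbb{Z}/8\times(\mathbb{Z}/4)^2$, and $\mathbb{Z}/8\times\mathbb{Z}/4\times(\mathbb{Z}/2)^2$, and each splits as $\mathbb{Z}/n \times G_2$ with $G_2$ appearing in Table~2. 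The analogous verifications for the three remaining values of $H$ are routine, and together they complete the proof that $G$ is always of product type.
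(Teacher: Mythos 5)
The decisive step of your argument --- forcing $m=2$ --- does not hold up. Your chain of identities reads $2m\,|H_x| = |\widetilde{G}_x| = 2\,|\widetilde{H}_x| = 4\,|H_x|$, where the middle equality $|\widetilde{G}_x|=2\,|\widetilde{H}_x|$ comes from reading the conclusion of Remark~\ref{rem-action-in-normal-bundle-splits} literally as $\widetilde{G}_x=\widetilde{H}_x\times\mathbb{Z}/r$ with $r=2$. But the kernel of the restriction map $\widetilde{G}_x\to\widetilde{H}_x$ is the subgroup acting trivially on the germ $\widetilde{x}\in\widetilde{S}$, and a diagram chase through \eqref{diagram-two-levels-of-groups} shows it is isomorphic to $\mathbb{Z}/m$, not $\mathbb{Z}/r$: each element of $\mathbb{Z}/m\subset G_x$ has exactly one lift killing $\widetilde{S}$. (This is also how the paper itself uses the remark in Proposition~\ref{prop-k3-with-orbit-4}, where the cyclic complement is written as $\mathbb{Z}/n$ for an unknown $n$.) With the correct count $|\widetilde{G}_x|=m\,|\widetilde{H}_x|$ your chain becomes $2m\,|H_x|=m\cdot 2\,|H_x|$, a tautology carrying no information about $m$. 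Nor can $m=2$ be forced by any such local bookkeeping: the kernel $\mathbb{Z}/m$ acts faithfully on the one-dimensional normal space to $\widetilde{S}$ at $\widetilde{x}$, which places no upper bound on $m$; bounding $m$ is precisely the hard global content of the theorem.

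For contrast, the paper's proof has to work much harder: orbifold Riemann--Roch \eqref{eq-ORR} and \eqref{eq-Miyaoka} pin the number $N$ of half-points to $9\leq N\leq 15$ (Proposition~\ref{half}); the orbit lemmas \ref{lem-exclusion-of-two-groups}--\ref{Z4222 orbits} and Corollary~\ref{coro:coprime singularities} eliminate $N=10,11,13,14,15$; $N=9$ is killed by a discriminant comparison of invariant lattices on the minimal resolution of $S$ (Proposition~\ref{prop-N=9}); and $N=12$ requires the local analysis of the stabilizer at an orbit of length $4$ together with the symplectic/non-symplectic structure of $H$ (Proposition~\ref{prop-k3-with-orbit-4}). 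Your concluding extension enumeration via Theorem~\ref{littelwood} would indeed finish the proof if $m=2$ were known, but as it stands the argument has no valid bound on $m$ and therefore does not prove the theorem.
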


\begin{proposition}\label{half}
Assume that all the terminal non-Gorenstein points of $X$ have type $\frac{1}{2}(1,1,1)$. Then $9 \leq N\leq 15$. 
\end{proposition}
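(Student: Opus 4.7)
The plan is to combine the orbifold Riemann--Roch formula \eqref{eq-ORR} applied to $D = -K_X$ with the Miyaoka-type identity \eqref{eq-Miyaoka}, and then read off the bounds on $N$ from the positivity of $(-K_X)^3$ and $(-K_X)\cdot c_2(X)$. The whole argument thus reduces to a local computation of $c_Q(-K_X)$ at a point of type $\frac{1}{2}(1,1,1)$ together with simple algebra.

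First I would compute $c_Q(-K_X)$. At a point $Q \in X$ of type $\frac{1}{2}(1,1,1)$ one has $r=2$ and $a=b=1$; the local weight of $K_X$ at $Q$ is $a+(-a)+1 \equiv 1 \pmod 2$, so $-K_X$ has local type $1\cdot\frac{1}{2}(1,1,1)$, i.e.\ $i = 1$. The formula from Section~\ref{sec-about-baskets} then gives, with the second (empty) sum equal to zero,
\[
c_Q(-K_X) \;=\; -1\cdot\frac{r^2-1}{12r} \;=\; -\frac{1}{8}.
\]

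Since $X$ has at worst terminal (hence rational) singularities, $\chi(\OOO_X) = 1$, and Kawamata--Viehweg vanishing yields $h^i(-K_X) = 0$ for $i > 0$, so $\chi(\OOO_X(-K_X)) = h^0(-K_X) = 1$ by assumption. Substituting $D = -K_X$ into \eqref{eq-ORR} and using $D(D-K_X)(2D-K_X) = 6(-K_X)^3$, I would obtain
\[
\frac{N}{8} \;=\; \frac{(-K_X)^3}{2} + \frac{(-K_X)\cdot c_2(X)}{12}.
\]
Now \eqref{eq-Miyaoka} applied to the basket $\{N\times \frac{1}{2}(1,1,1)\}$, for which $\sum_Q(r - 1/r) = \frac{3N}{2}$, gives $(-K_X)\cdot c_2(X) = 24 - \frac{3N}{2}$. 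Eliminating $(-K_X)\cdot c_2(X)$ between these two identities yields
\[
(-K_X)^3 \;=\; \frac{N}{2} - 4.
\]

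Both bounds then drop out immediately: since $-K_X$ is ample on the $G\Q$-Fano threefold $X$, we have $(-K_X)^3 > 0$, forcing $N > 8$, i.e.\ $N \geq 9$; and the positivity $(-K_X)\cdot c_2(X) > 0$ (noted just after \eqref{eq-Miyaoka}) forces $N < 16$, i.e.\ $N \leq 15$. The only delicate point in the whole proof is the correct identification of the local class of $-K_X$ at each half-point; the rest is straightforward arithmetic with no further geometric input.
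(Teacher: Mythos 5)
Your proof is correct and follows essentially the same route as the paper's: both compute $c_Q(-K_X)=-\tfrac18$ at each half-point, plug $D=-K_X$ into the orbifold Riemann--Roch formula \eqref{eq-ORR}, combine with \eqref{eq-Miyaoka} to get $(-K_X)^3=\tfrac{N}{2}-4$ and $(-K_X)\cdot c_2(X)=24-\tfrac{3N}{2}$, and read off $9\leq N\leq 15$ from positivity of these two quantities. The only cosmetic difference is that the paper quotes $(-K_X)^3\geq\tfrac12$ where you use $(-K_X)^3>0$ together with integrality of $N$, which is equivalent here.
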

\begin{proof}
Since $h^0(X, -K_X)=1$, we see that $X$ is non-Gorenstein. For $D = -K_X$ and singular points of type $\frac{1}{2}(1,1,1)$ we have $c_Q = -1/8$, so if there are $N$ such points, in the formula \eqref{eq-ORR} they give a contribution $-N/8$. From \eqref{eq-Miyaoka} it follows that 
\begin{equation}
(-K_X)\cdot c_2(X) = 24 - \frac{3N}{2}.
\end{equation}
We have
\begin{equation}
\label{eq-rr-half-points}
1=h^0 ( \OOO_X (-K_X) ) = 3 + \frac{1}{2}\left(-K_X\right)^3 - \frac{N}{4} 
\end{equation}
Using $(-K_X)^3\geq \frac{1}{2}$ we obtain 
$N \geq 9$. 
Since by \eqref{eq-Miyaoka} we have $N\leq 15$, the claim follows. 
\end{proof}


\begin{proposition}
\label{prop-N=9}
    If $N=9$ then $G$ is of product type.
\end{proposition}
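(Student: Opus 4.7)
The plan is first to narrow the group $H$ in the exact sequence \eqref{eq-exact-seq-k3-type} down to a unique possibility using the orbit-length constraints, then to use $|-2K_X|$ to produce an $H$-equivariant, generically $2{:}1$ rational map from $S$ to $\mathbb{P}^2$, and finally to derive a contradiction from the well-known bound on the ranks of finite abelian subgroups of $\mathrm{PGL}_3(\mathbb{C})$.

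First I would apply Lemmas~\ref{lem-exclusion-of-two-groups}--\ref{Z4222 orbits} to the $9$ half-points. The number $9$ is incompatible with every candidate for $H$ except $H = \mathbb{Z}/6 \times (\mathbb{Z}/3)^2$: the other possibilities force every orbit length to be divisible by $2$, $4$, or $8$, none of which is consistent with writing $9$ as a positive sum of such lengths. By Lemma~\ref{Z633 orbits}, the $9$ points then form a single $H$-orbit. From \eqref{eq-rr-half-points} one has $(-K_X)^3 = 1/2$, and from \eqref{eq-Miyaoka} that $(-K_X)\cdot c_2(X) = 21/2$.

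Next I would apply orbifold Riemann--Roch \eqref{eq-ORR} to the Cartier divisor $-2K_X$: the singular corrections vanish since $-2K_X$ has weight $0$ at each $\tfrac{1}{2}(1,1,1)$ point, giving $\chi(-2K_X) = 4$, and hence $h^0(-2K_X) = 4$ by Kawamata--Viehweg vanishing. The restriction sequence
\[
0 \to \OOO_X(-K_X) \to \OOO_X(-2K_X) \to \OOO_S(L) \to 0,\qquad L := -2K_X|_S,
\]
combined with $h^0(-K_X) = 1$ and $h^1(-K_X) = 0$, yields $h^0(S, L) = 3$. One computes $L^2 = 4(-K_X)^3 = 2$, and $L$ is ample on $S$ as the restriction of an ample Cartier divisor from $X$. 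Since $L$ is ample with $L^2 = 2 > 0$, the rational map $\phi_L\colon S \dashrightarrow \mathbb{P}^2$ defined by $|L|$ has $2$-dimensional image (otherwise $L$ would be numerically the pullback of a divisor from a curve, forcing $L^2 = 0$) and is therefore generically $2{:}1$ onto $\mathbb{P}^2$.

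Finally, $\phi_L$ is $H$-equivariant and induces a homomorphism $H \to \mathrm{Aut}(\mathbb{P}^2) = \mathrm{PGL}_3(\mathbb{C})$ whose kernel $K$ consists of automorphisms of $S$ permuting every generic fiber of $\phi_L$; such an automorphism must be either the identity or the deck involution of the generic $2{:}1$ cover, so $|K| \le 2$. Hence the $3$-Sylow subgroup of $H/K$ is still $(\mathbb{Z}/3)^3$, of rank $3$. But any finite abelian subgroup of $\mathrm{PGL}_3(\mathbb{C})$ has rank at most $2$, because the Sylow $3$-subgroup of $\mathrm{PGL}_3(\mathbb{C})$ is the Heisenberg group of order $27$ whose maximal abelian quotient is $(\mathbb{Z}/3)^2$. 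This contradicts the embedding $(\mathbb{Z}/3)^3 \hookrightarrow H/K \hookrightarrow \mathrm{PGL}_3(\mathbb{C})$ and forces $G$ to be of product type. I expect the most delicate step to be the analysis of $\phi_L$: verifying that it is truly generically $2{:}1$ onto $\mathbb{P}^2$ despite potential base points of $|L|$ on the (possibly singular) K3 surface~$S$.
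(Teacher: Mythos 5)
Your proposal is correct in its numerology and takes a genuinely different route from the paper. The paper, after reducing to $H=\mathbb{Z}/6\times(\mathbb{Z}/3)^2$ and a single orbit of $9$ points exactly as you do, works entirely lattice-theoretically: by Proposition~\ref{prop: determinants} the invariant lattice $\mathrm{NS}(S')^H$ has rank $2$ with Gram matrix $\left(\begin{smallmatrix}0&3\\3&0\end{smallmatrix}\right)$, the orbit of exceptional curves gives a class of square $-18$, and the discriminant divisibility of Proposition~\ref{prop-inclusion-of-lattices} yields a contradiction for both possible values of the index $\mu$. You instead exploit the anticanonical geometry: $h^0(-2K_X)=4$, the degree-$2$ polarization $L$ on $S$, the double plane structure, and the non-embeddability of $(\mathbb{Z}/3)^3$ into $\mathrm{PGL}_3(\mathbb{C})$. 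Your computations of $(-K_X)^3=1/2$, $(-K_X)\cdot c_2=21/2$, $\chi(-2K_X)=4$, $h^0(S,L)=3$ and $L^2=2$ all check out, and your route has the merit of avoiding the lattice database entirely.

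There are, however, two gaps. First, the step you flag as delicate is genuinely incomplete, and your stated justification for it is wrong: if $|L|$ (pulled back to the minimal resolution $S'$) has a fixed component, Saint-Donat's analysis gives $f^*L\sim 2E+\Gamma$ with $E$ an elliptic pencil and $\Gamma$ a $(-2)$-curve with $E\cdot\Gamma=1$; then the \emph{moving part} is pulled back from $\mathbb{P}^1$ while $L^2=2\neq 0$, and $\phi_L$ maps onto a conic, so "image $2$-dimensional because $L^2\neq0$" does not follow. This case can be excluded, but by a different argument: $\Gamma$ is the fixed part of an $H$-invariant system, hence $H$-invariant, and $f^*L\cdot\Gamma=0$ with $L$ ample forces $\Gamma$ to be $f$-exceptional, i.e.\ to lie over one of the $9$ singular points — contradicting the transitivity of the $H$-action on that orbit. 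Second, your justification that $(\mathbb{Z}/3)^3$ does not embed into $\mathrm{PGL}_3(\mathbb{C})$ is incorrect as stated: $\mathrm{PGL}_3(\mathbb{C})$ is infinite and its finite subgroups can have $3$-parts much larger than $27$ (the diagonal torus already contains $(\mathbb{Z}/9)^2$), and note that $(\mathbb{Z}/3)^3$ \emph{does} lie in $\mathrm{Cr}_2(\mathbb{C})$ by Theorem~\ref{thm-surfaces}, so no soft argument of this kind suffices. The fact itself is true and needs the standard central-extension argument: the preimage $\widetilde{A}\subset\mathrm{SL}_3(\mathbb{C})$ of an abelian $A$ is either abelian (hence diagonal, of rank $\leq 2$ in the maximal torus, so $A$ has rank $\leq2$) or nonabelian with $[\widetilde A,\widetilde A]=\mu_3$, in which case the representation on $\mathbb{C}^3$ is irreducible, Schur's lemma forces the center of $\widetilde A$ to be the scalars $\mu_3$, and the induced nondegenerate alternating $\mu_3$-valued pairing on $A$ forces $A\cong(\mathbb{Z}/3)^2$. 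With these two points repaired your argument is a valid alternative proof.
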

\begin{proof} 
By Lemmas \ref{lem-exclusion-of-two-groups}--\ref{Z4222 orbits} we see that $H=\dz/6\times(\dz/3)^2$. 
Moreover, the $9$ singular points of $S$ that correspond to the $9$ singular points on $X$ of type $\frac{1}{2}(1,1,1)$ form one $G$-orbit of length $9$. Let $f\colon S'\to S$ be a $H$-equivariant minimal resolution of $S$. By Corollary \ref{cor-rank-2-is-transitive}, the surface $S$ has singularities of type $A_1$ or $A_2$. 
In both cases there is an $f$-exceptional $G$-invariant curve of self-intersection $-18$ on $ S'$.
The index $\mu$ of the class group $\mathrm{Cl}(S)$ in $\mathrm{NS}(S)$ is equals either $2$ or $3$.  By Proposition~\ref{prop-inclusion-of-lattices}, we have
\begin{equation}
\mu\mathrm{NS}({S'})^H\subset f^*(\mathrm{NS}(S)^H)\oplus V\subset \mathrm{NS}({S'})^H.
\end{equation}

Assume that $\mu=2$. 
By Proposition \ref{prop-ranks-H0} we have
\[
\left(2\mathrm{NS}({S'})^H, \begin{pmatrix}0&12\\12&0\end{pmatrix} \right)\subset 
\left(f^*(\mathrm{NS}(S)^H)\oplus V, \begin{pmatrix}a&0\\0&-18\end{pmatrix} \right)
\subset
\left(\mathrm{NS}({S'})^H, \begin{pmatrix}0&3\\3&0\end{pmatrix}\right).
\]
Note that $a=6a'$, since the values of the intersection form on $\mathrm{NS}({S'})^H$ is divisible by $6$. By \eqref{eq-determinant-of-lattices} we have that $-108a'$ divides $-144$ for $a'\geq 1$ which is a contradiction. 

Assume that $\mu=3$. 
Analogously to the previous case, we have
\[
\left(2\mathrm{NS}({S'})^H, \begin{pmatrix}0&27\\27&0\end{pmatrix} \right)\subset 
\left(f^*(\mathrm{NS}(S)^H)\oplus V, \begin{pmatrix}a&0\\0&-18\end{pmatrix} \right)
\subset
\left(\mathrm{NS}({S'})^H, \begin{pmatrix}0&3\\3&0\end{pmatrix}\right).
\]
Then $-18a$ divides $-27^2$, which is a contradiction. This shows that $G$ is of product type.
\end{proof}

\begin{proposition}
\label{prop-k3-with-orbit-4}
Let $X$ be a $G\mathbb{Q}$-Fano threefold such that all non-Gorenstein singular points of $X$ have type $\frac{1}{2}(1,1,1)$.
Assume that $S\in |-K_X|$ is a $G$-invariant K3 surface with at worst du Val singularities. Assume that 
\begin{enumerate}
\item
$H=\mathbb{Z}/4\times (\mathbb{Z}/2)^3$ where $H$ is as in \eqref{eq-exact-seq-k3-type},
\item
there exists a $G$-orbit of points of type $\frac{1}{2}(1,1,1)$ of cardinality $4$. 
\end{enumerate}
Then $G$ is of product type. 
\end{proposition}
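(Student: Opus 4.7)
The plan is to emulate the lattice argument of Proposition~\ref{prop-N=9} using the $H$-invariant divisor class coming from the four exceptional curves. By Corollary~\ref{cor-s-s-tilde}, each of the four points of the $G$-orbit corresponds to an $A_1$ singularity of $S$, so on the minimal resolution $f \colon S' \to S$ one obtains four pairwise disjoint $(-2)$-curves $E_1, \dots, E_4$ permuted transitively by $H$. Their sum $E = E_1 + E_2 + E_3 + E_4$ is $H$-invariant with $E^2 = -8$. By Remark~\ref{rem-index-divides-exponent} the local class-group index at every $A_1$ singularity equals $2$, so the global index $\mu$ appearing in Proposition~\ref{prop-inclusion-of-lattices} is a power of $2$, and one has
\[
\mu\,\mathrm{NS}(S')^H \ \subset\ f^\ast\mathrm{NS}(S)^H \oplus \mathbb{Z} E\ \subset\ \mathrm{NS}(S')^H ,
\]
together with the discriminant chain~\eqref{eq-determinant-of-lattices}.

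To fix the combinatorics of the $H$-action, I would first analyze the stabilizer $H_P \subset H$ of a point in the orbit. By Remark~\ref{rem-action-in-normal-bundle-splits} the lift $\widetilde H_P$ acts faithfully on $T_{\widetilde P}\widetilde S \simeq \mathbb{C}^2$, so $\mathfrak{r}(H_P) \leq 2$. Combined with $|H_P|=8$ and $\exp H = 4$, this forces $H_P \simeq \mathbb{Z}/4 \times \mathbb{Z}/2$, and consequently $H/H_P \simeq (\mathbb{Z}/2)^2$ acts simply transitively on $\{E_1, \dots, E_4\}$. The three index-$2$ subgroups of $H$ containing $H_P$ fix three distinct pairings of the curves, and in each of the corresponding invariant lattices we pick up an additional class $E_a + E_b$ of square $-4$ not proportional to $E$. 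This rigidity will cut down the list of admissible intersection forms on the various $H$-invariant sublattices.

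The core of the argument is then to run these constraints against the finite list of Gram matrices of $\mathrm{NS}(S')^H$ for $H = \mathbb{Z}/4 \times (\mathbb{Z}/2)^3$ given in the Appendix (case (6) of Proposition~\ref{prop: determinants}), of ranks $2$ through $6$. The rank-$2$ piece is already accounted for by an ample class and $E$, so each additional unit of rank has to correspond to a further $H$-orbit of exceptional $(-2)$-curves, whose length is divisible by $4$ thanks to Lemma~\ref{Z4222 orbits}. For each admissible configuration, the discriminant relation~\eqref{eq-determinant-of-lattices} should force a non-integer divisibility, paralleling the final step of the proof of Proposition~\ref{prop-N=9} and giving a contradiction.

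The main obstacle I anticipate is the enumeration of the higher-rank cases (rank $4$, $5$, $6$), where several disjoint $H$-orbits of $(-2)$-curves need to be combined consistently with the known intersection matrices. To circumvent this, I would supplement the lattice bookkeeping with a group-theoretic bound: by Proposition~\ref{prop-lift-of-2-group-is-abelian} applied to the $2$-part of $G_P$, the lift $\widetilde G_P$ is abelian and acts faithfully on $\mathbb{C}^3$, hence $\mathfrak r(G_P) \leq 3$. Together with $|G/G_P| = 4$ and the rigid shape $H_P \simeq \mathbb{Z}/4\times\mathbb{Z}/2$, this leaves only finitely many possible isomorphism classes for $G$, and a direct inspection then shows each such $G$ already appears in Table~$2$, so $G$ is of product type.
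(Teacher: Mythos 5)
Your stabilizer analysis agrees with the opening of the paper's proof (one does get $H_x\simeq\mathbb{Z}/4\times\mathbb{Z}/2$ and $H/H_x\simeq(\mathbb{Z}/2)^2$ acting simply transitively on the four points), but the engine you propose for the contradiction cannot work. A K3 surface with a faithful action of $H=\mathbb{Z}/4\times(\mathbb{Z}/2)^3$ and a single transitive orbit of four $A_1$-points genuinely exists --- it is Example \ref{exam-singular-k3}(1), the surface $X_{4,4}\subset\mathbb{P}(1,1,2,2,2)$, which is the anticanonical section of the product-type example \ref{ex-k3-pt}(6). Consequently the lattice $f^*\mathrm{NS}(S)^H\oplus\mathbb{Z}E$ with $E^2=-8$ sits perfectly happily inside one of the Gram matrices of the Appendix (e.g.\ matrix (3) of Proposition \ref{prop: matrices for Z4333} visibly contains a primitive class of square $-8$), and the divisibility chain \eqref{eq-determinant-of-lattices} produces no contradiction. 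Any argument that only uses data intrinsic to $(S,H)$ is doomed; the obstruction has to come from the threefold germ at the half-point. This is exactly what the paper does: after showing $\widetilde H_x=(\mathbb{Z}/4)^2$ (the alternative $\mathbb{Z}/8\times\mathbb{Z}/2$ forces $\mathfrak r(G_x)\le2$ and product type), it splits $H_x=H_N\times H_E$ with $H_E\simeq\mathbb{Z}/4$ acting on the exceptional curve, identifies a symplectic element of order $4$ in $H$ from the explicit local matrices, and contradicts Corollary \ref{cor-max-group-direct-product} (the symplectic part of this $H$ is $(\mathbb{Z}/2)^3$). That symplectic ingredient is entirely absent from your proposal.

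Your fallback is also not sufficient. The bound $\mathfrak r(G_x)\le3$ is automatic for any abelian extension of $H_x\simeq\mathbb{Z}/4\times\mathbb{Z}/2$ by a cyclic group, so it excludes nothing; and the assertion that "a direct inspection then shows each such $G$ already appears in Table $2$" is false as stated. For instance $G=\mathbb{Z}/8\times\mathbb{Z}/4\times(\mathbb{Z}/2)^3$ satisfies every constraint you impose ($G$ an extension of $H$ by $\mathbb{Z}/8$, $G_x=\mathbb{Z}/8\times\mathbb{Z}/4\times\mathbb{Z}/2$ of rank $3$, $|G/G_x|=4$), yet it is not of product type: the rank-$5$ product-type groups in Table $2$ are only $\mathbb{Z}/2n\times(\mathbb{Z}/2)^4$ and $(\mathbb{Z}/4)^2\times(\mathbb{Z}/2)^3$. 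So without the local symplectic contradiction (or some other genuinely new input bounding the extension), the proof does not close.
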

\begin{proof}

By Corollary \ref{cor-s-s-tilde} and using its notation, we may assume that $\widetilde{x}\in \widetilde{S}$ is smooth. 
Let $\Sigma = \{x=x_1,x_2,x_3,x_4\}$ be the $G$-orbit of $x$ (or, equivalently, its $H$-orbit). Consider the following exact sequences: 
\begin{equation}
\label{seq-x-sigma2}
0 \to G_x \to G\to G_\Sigma \to 0,
\end{equation}
\begin{equation}
\label{seq-x-sigma}
0 \to H_x \to H \to H_\Sigma \to 0.
\end{equation}
where $G_\Sigma$ (respectively, $H_\Sigma$) is the image of $G$ (respectively, of $H$) in the group of permutations of $\Sigma$, and $G_x$ (respectively, $H_x$) are stabilizers of $x$ in $G$ (respectively, $H$). Note that $G_\Sigma=H_\Sigma$. 

We will show that in the assumption of the theorem and assuming that $\widetilde{x}\in\widetilde{S}$ is smooth, the $2$-Sylow subgroup $G_2$ cannot faithfully act on $X$. So replacing $G$ by $G_2$ we will assume that $G$ is a $2$-group.  

By Proposition
\ref{prop-lift-of-2-group-is-abelian} we see that $\widetilde{G}_x$ is abelian, and hence $\widetilde{H}_x$ is abelian. 
Since $\widetilde{x}\in \widetilde{S}$ is smooth, in the notation of diagram \eqref{diagram-two-levels-of-groups}, we see that $\mathfrak{r}(\widetilde{H}_x)\leq 2$ and so $\mathfrak{r}(H_x)\leq 2$. 
Since the action of $H$ on $\Sigma$ is transitive, we have that either $H_\Sigma=\mathbb{Z}/4$, or $H_\Sigma=(\mathbb{Z}/2)^2$. 
The first possibility $H_\Sigma=\mathbb{Z}/4$, $H_x = (\mathbb{Z}/2)^3$ is not realized as $\mathfrak{r}(H_x)\leq 2$. Hence we have  $H_\Sigma=(\mathbb{Z}/2)^2$, and either $H_x = \mathbb{Z}/4\times \mathbb{Z}/2$, or $H_x=(\mathbb{Z}/2)^3$. Similarly, the second subcase is not realized since $\mathfrak{r}(H_x)\leq 2$. We obtain $H_\Sigma=(\mathbb{Z}/2)^2$, $H_x = \mathbb{Z}/4\times \mathbb{Z}/2$. In particular, the exact sequence \eqref{seq-x-sigma} splits: $H=H_x\times H_\Sigma$. 

There exists the following diagram:
\begin{equation}
\begin{tikzcd}
\widetilde{E}\subset \mathrm{Bl}_0\mathbb{C}^2 \ar[d] \ar[r] & \mathrm{Bl}_x{S} \supset E \ar[d]  \\
0\in \mathbb{C}^2 \ar[r] & S \ni x
\end{tikzcd}
\end{equation}
where $\mathrm{Bl}_0\mathbb{C}^2$ is the blow up of $\mathbb{C}^2$ at the closed point $0$,  $\mathrm{Bl}_x{S}$ is the blow up of $S$ at the closed point $x$, and horizontal arrows are quotient maps by the action of $\mathbb{Z}/2$. 
Hence, $\widetilde{H}_x$ faithfully acts on $\mathrm{Bl}_0 \mathbb{C}^2$, and $H_x$ faithfully acts on $\mathrm{Bl}_x S$. There are exact sequences:
\begin{equation}
\label{exact-sequence-E}
0 \to H_N \to H_x \to H_{E} \to 0,
\end{equation}
\begin{equation}
\label{exact-sequence-E-tilde}
0 \to \widetilde{H}_N \to \widetilde{H}_x \to \widetilde{H}_{\widetilde{E}} \to 0,
\end{equation}
where 
\begin{enumerate}
\item
$H_{E}$ faithfully acts on the $(-2)$-curve $E$ on $\mathrm{Bl}_x{S}$, 
\item
$\widetilde{H}_{\widetilde{E}}$ faithfully acts on the $(-1)$-curve $\widetilde{E}$ on $\mathrm{Bl}_0\mathbb{C}^2$,
\item
$H_N$ faithfully acts in the normal bundle to $E$ on $\mathrm{Bl}_x{S}$,
\item
$\widetilde{H}_N$ faithfully acts in the normal bundle to $\widetilde{E}$ on $\mathrm{Bl}_0\mathbb{C}^2$.
\end{enumerate}
Since $\widetilde{E}$ is the ramification curve of the quotient map $\mathrm{Bl}_0\mathbb{C}^2\to\mathrm{Bl}_x S$, we have $H_{E}=\widetilde{H}_{\widetilde{E}}$, and $\widetilde{H}_N/(\mathbb{Z}/2)=H_N$. 
From diagram \eqref{diagram-two-levels-of-groups} we obtain an  exact sequence:
\begin{equation}
0 \to \mathbb{Z}/2 \to \widetilde{H}_x \to H_{x} \to 0.
\end{equation}

Since $\mathfrak{r}(\widetilde{H}_x)=2$, $\widetilde{H}_x$ is abelian and $H_x=\mathbb{Z}/4\times\mathbb{Z}/2$, we have that either $\widetilde{H}_x = \mathbb{Z}/8\times \mathbb{Z}/2$, or $\widetilde{H}_x = (\mathbb{Z}/4)^2$.


Assume that $\widetilde{H}_x = (\mathbb{Z}/4)^2$. Then $\widetilde{H}_{\widetilde{E}}=H_E=\mathbb{Z}/4$ whose generator acts on $\mathbb{C}^2$ via the matrix $\begin{pmatrix}\sqrt{-1}&0\\0&-\sqrt{-1}\end{pmatrix}$, $\widetilde{H}_N=\mathbb{Z}/4$, whose generator acts on $\mathbb{C}^2$ via the matrix $\begin{pmatrix}\sqrt{-1}&0\\0&\sqrt{-1}\end{pmatrix}$. We have $H_N=\mathbb{Z}/2$. 
Hence, the exact sequence \eqref{exact-sequence-E} splits. 
From the local description it follows that $\widetilde{H}_E=H_E$ preserves the form $dx\wedge dy$ on $\mathbb{C}^2$ which descends to $x\in S$. 
It follows that $H_E$  
acts symplectically on $S$. 
Hence, there exists a symplectic element of order $4$ in $H=\mathbb{Z}/4\times(\mathbb{Z}/2)^3$. However, this contradicts Corollary~\ref{cor-max-group-direct-product}.

Thus we have $\widetilde{H}_x = \mathbb{Z}/8\times \mathbb{Z}/2$. Let $\alpha$ and $\beta$ be the generators of the first and the second factors of $\widetilde{H}_x = \mathbb{Z}/8\times \mathbb{Z}/2$. 
Since $\widetilde{H}_x/(\mathbb{Z}/2)=H_x=\mathbb{Z}/4\times\mathbb{Z}/2$, and we quotient by the element $\sigma$ which acts on $\mathbb{C}^2$ via multiplication by $-1$, it follows that $4\alpha$ is equal to $\sigma$. Hence we may assume that $\beta$ is given by the matrix $\begin{pmatrix}{1}&0\\0&{-1}\end{pmatrix}$. However, this is a contradiction, because in this case $\beta$ descends to an element which acts non-symplectically on $S$. Indeed, by Corollary \ref{cor-max-group-direct-product} the element $\alpha$ descends to a non-symplectic automorphism of order $4$ on $S$, and hence the image of $\alpha$ should act on $S$ symplectically. This completes the proof.
\end{proof}

\begin{proof}[Proof of Theorem \ref{thm-half-points}]   We work in the setting of Section \ref{sec-orbits}. 
In particular, we assume that $H$ is one of the groups (1)--(6) as in Proposition \ref{prop: determinants}. 
By Corollary \ref{coro:coprime singularities}, we see that in the cases $N=11,13$ the group $G$ is of product type. In the cases $N=10, 14, 15$ by Lemmas \ref{lem-exclusion-of-two-groups}--\ref{Z4222 orbits} we see that the group $G$ is of product type as well. It remains to deal with the cases $N=9,12$. The first case is done by Proposition \ref{prop-N=9}, and the second one by Lemmas \ref{lem-exclusion-of-two-groups}--\ref{Z4222 orbits} and Proposition \ref{prop-k3-with-orbit-4}.
\end{proof}

\section{Case $h^0(-K_X)=1$ with cyclic quotient singularities}
\label{sec-cyclic-quotient}
In this section, we assume that $X$ the non-Gorenstein locus of $X$ consists of cyclic quotient singularities. We assume that it is not only composed of singularities of type $\frac{1}{2}(1,1,1)$, since it has already been treated in Section \ref{sec-Case $h^0(-K_X)=1$}. We prove the following 

\begin{thm}
\label{thm-cyclic-quotient-sing}
Let $X$ be a $G\mathbb{Q}$-Fano threefold where $G$ is a finite abelian group. Assume that $h^0(-K_X)=1$. Also, assume that the non-Gorenstein locus of $X$ consists only of cyclic quotient singularities. Then $G$ is of product type.
\end{thm}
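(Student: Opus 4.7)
The plan is to combine the classification of baskets from Section~\ref{sec-orbits} with local-to-global arguments on the $G$-invariant K3 surface $S\in|-K_X|$, reducing to a finite list of Table~6 entries and then excluding each one either by an orbit-transitivity obstruction (Corollary~\ref{cor-rank-2-is-transitive}) or by the lattice/lift analysis already used in the proofs of Proposition~\ref{prop-N=9} and Proposition~\ref{prop-k3-with-orbit-4}.

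Assume for contradiction that $G$ is not of product type. By Theorem~\ref{thm-half-points} the basket of $X$ contains at least one non-half cyclic quotient singularity, so Proposition~\ref{prop: 91 poss} places it in one of Tables~6--9, and Propositions~\ref{prop-index2-exclusion} and~\ref{prop-index34-exclusion} exclude Fano indices $2$, $3$, and $4$, leaving the $17$ baskets of Table~6. Let $S\in|-K_X|$ be the unique, hence $G$-invariant, member, a K3 surface with du Val singularities, and write the exact sequence~\eqref{eq-exact-seq-k3-type}; Lemma~\ref{lem-exclusion-of-two-groups} restricts $H$ to the four groups (3)--(6) of Proposition~\ref{prop: determinants}. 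By Corollary~\ref{cor-s-s-tilde} we may further suppose that every cyclic quotient singularity $x\in X$ of index $r$ sits on $S$ as a du Val point of type $A_{r-1}$.

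When $H=\dz/8\times\dz/4\times\dz/2$ or $\dz/6\times(\dz/3)^2$, Corollary~\ref{cor-rank-2-is-transitive} forces the singular set of $S$ to be a single $H$-orbit of $A_1$- or $A_2$-points. This is immediately incompatible with any Table~6 basket that either mixes cyclic quotient types on $X$ or contains a singular index $r\geq 4$. The only entry surviving these two restrictions is $8\times\frac{1}{3}(1,2,1)$ with $H=\dz/8\times\dz/4\times\dz/2$; here I would exclude the configuration by the discriminant bound~\eqref{eq-determinant-of-lattices} of Proposition~\ref{prop-inclusion-of-lattices} applied exactly as in the proof of Proposition~\ref{prop-N=9}, comparing the expected Gram matrix $\begin{pmatrix}0&2\\2&0\end{pmatrix}$ of $\mathrm{NS}(S')^H$ (Proposition~\ref{prop: determinants}) with the sublattice generated by $f^*\mathrm{NS}(S)^H$ and the $H$-orbit sum of the eight $A_2$ exceptional chains on the minimal resolution $S'$.

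The main obstacle lies in the cases $H=(\dz/2)^5$ and $H=\dz/4\times(\dz/2)^3$, for which Proposition~\ref{prop: determinants} allows $\mathrm{rk}\,\mathrm{H}^2(S',\dz)^H$ to be as large as $5$ or $6$ and no transitivity on $\mathrm{Sing}(S)$ is automatic. The baskets to rule out are $4\times\frac{1}{5}(2,3,1)$, $8\times\frac{1}{3}(1,2,1)$, the two mixed baskets $4\times\frac{1}{2}(1,1,1)+4\times\frac{1}{3}(1,2,1)$ and $8\times\frac{1}{2}(1,1,1)+4\times\frac{1}{3}(1,2,1)$, and $4\times\frac{1}{2}(1,1,1)+4\times\frac{1}{4}(1,3,1)$. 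I would treat them in the spirit of Proposition~\ref{prop-k3-with-orbit-4}: for an orbit $\Sigma$ of a non-Gorenstein point $x$ of length $|\Sigma|$, analyze $0\to H_x\to H\to H_\Sigma\to 0$, lift through the diagram~\eqref{diagram-two-levels-of-groups}, apply Proposition~\ref{prop-lift-of-2-group-is-abelian} to keep $\widetilde{H}_x$ abelian, and split $\widetilde{H}_x$ into its action on the exceptional $(-1)$-curve on $\mathrm{Bl}_{0}\mathbb{C}^2$ and in its normal bundle; incompatibility of the resulting symplectic subgroup with Theorem~\ref{thm-symplectic-K3-groups}, or a rank explosion of $G_x$ violating Theorem~\ref{thm-action-on-terminal-point}, then yields the contradiction. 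I expect the index-$5$ basket $4\times\frac{1}{5}(2,3,1)$ to be the hardest, because $\dz/5$ must come from $\dz/m$ in~\eqref{eq-exact-seq-k3-type} and controlling this $5$-part alongside the $2$-part is the most delicate bookkeeping required.
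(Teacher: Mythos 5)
Your overall reduction agrees with the paper's: restrict to the Table~6 baskets, reduce $H$ to the four groups (3)--(6) of Proposition~\ref{prop: determinants}, use Corollary~\ref{cor-s-s-tilde} to identify an index-$r$ quotient point of $X$ with an $A_{r-1}$-point of $S$, and then let Corollary~\ref{cor-rank-2-is-transitive} kill every basket containing a point of index $\geq 4$ when $H$ is $\dz/8\times\dz/4\times\dz/2$ or $\dz/6\times(\dz/3)^2$. Your proposed treatment of the surviving case $8\times\frac{1}{3}(1,2,1)$ with $H=\dz/8\times\dz/4\times\dz/2$ via the discriminant chain \eqref{eq-determinant-of-lattices} is a genuinely different route from the paper's (which instead notes that, since $r=3>2$, Proposition~\ref{prop-lift-of-2-group-is-abelian} makes $\widetilde H_x$ abelian, so the two $(-2)$-curves over an $A_2$-point cannot be interchanged, contradicting transitivity of $H$ on the sixteen exceptional curves); your version should close as well, since the orbit-sum of the exceptional curves has square $-16$ and $16a$ cannot divide $3^4\cdot 4$.

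The gap is in the cases $H=(\dz/2)^5$ and $H=\dz/4\times(\dz/2)^3$, where your recipe is only a gesture and does not identify the mechanism that actually produces the contradictions. In the paper the key point is the Nikulin involution count of Remark~\ref{rem-nikulin-involution}: for a short orbit $\Sigma$ the stabilizer $H_x$ meets the symplectic part $H_s$ of Corollary~\ref{cor-max-group-direct-product} nontrivially, hence contains a symplectic involution; the abelian lift forces that involution to preserve each exceptional $(-2)$-curve over each point of $\Sigma$ individually, so it stabilizes too many rational curves for its eight fixed points. This, not a conflict with Theorem~\ref{thm-symplectic-K3-groups} or a rank bound on $G_x$, is what disposes of $4\times\frac{1}{5}(2,3,1)$ and $8\times\frac{1}{2}(1,1,1)+4\times\frac{1}{3}(1,2,1)$, and your worry about the $5$-part there is misplaced: the $\dz/5$ is the Galois group of the local index-one cover, not a subgroup of $G$, so no bookkeeping with $\dz/m$ in \eqref{eq-exact-seq-k3-type} is needed. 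More seriously, you have misidentified the hard case. For $8\times\frac{1}{3}(1,2,1)$ with $H=\dz/4\times(\dz/2)^3$ one gets $H_\Sigma=(\dz/2)^3$ and $H_x=\dz/4$ mapping isomorphically to the non-symplectic quotient, so $H_x$ contains no Nikulin involution and the generic recipe fails outright; the paper needs the entire separate argument of Proposition~\ref{prop-8-(1,2,1)}, analyzing the fixed curve $C$ of the order-$4$ non-symplectic generator of $H_x$ against the Brandhorst--Hofmann tables of fixed loci and ruling out each possible configuration of $C$. Without something playing that role, your proof does not close.
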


The next result is obtained from the table in Proposition \ref{prop: 91 poss}, using the assumption that all the non-Gorenstein points on $X$ are cyclic quotient singularities. 

\begin{proposition}
\label{prop:baskets-cyclic-sing}
Assume that $h^0(-K_X)=1$. Assume that all non-Gorenstein points of $X$ are cyclic quotient singularities.
If $G$ is not of product type, then either the basket of $X$ consists of points of type $\frac{1}{2}(1,1,1)$, or its basket is among the following possibilities.
\end{proposition}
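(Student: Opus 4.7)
The plan is to obtain the list by pruning the one given in Proposition \ref{prop: 91 poss}. First, by Propositions \ref{prop-index2-exclusion} and \ref{prop-index34-exclusion}, any basket in Tables 7, 8, or 9 of Proposition \ref{prop: 91 poss} forces $G$ to be of product type, so we may restrict attention to Fano index $1$, i.e.\ to Table~6. Second, the crucial observation is that under the present hypothesis every non-Gorenstein singular point of $X$ is itself a cyclic quotient singularity and therefore contributes exactly one entry to the basket; consequently the multiplicities $k_i$ of non-Gorenstein points of each local-analytic type (in the notation of Section \ref{sec-orbits}) coincide with the basket multiplicities $t_i$. The orbit-theoretic statements in Lemmas \ref{Z842 orbits}, \ref{Z633 orbits}, \ref{Z22222 orbits} and \ref{Z4222 orbits}, which concern the $k_i$, can therefore be read directly as constraints on the basket data.

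Concretely, I would re-apply those four lemmas row by row to Table~6 with the identification $k_i=t_i$. For $H=\dz/8\times\dz/4\times\dz/2$ and for $H=\dz/6\times(\dz/3)^2$ this forces $l=1$, so the multi-type rows of Table~6 are no longer compatible with these two groups; for $H=(\dz/2)^5$ every $k_i$ must be divisible by $8$; and for $H=\dz/4\times(\dz/2)^3$ every $k_i$ must be divisible by $4$. After removing every $(\text{basket},H)$ pair in Table~6 that fails one of these divisibility conditions, and after setting aside the degenerate case in which the basket consists only of points of type $\tfrac{1}{2}(1,1,1)$ (which is precisely the alternative stated in the proposition and already covered in Section~\ref{sec-Case $h^0(-K_X)=1$}), what remains is the claimed list.

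I do not expect any genuine obstacle: all the substantive inputs—the Fano-index reduction, the classification in Proposition \ref{prop: 91 poss}, and the orbit lemmas—have already been established, and the present proposition is a purely combinatorial trace of those results under the stronger hypothesis that every non-Gorenstein singularity is cyclic quotient. The only mildly delicate point is to make sure that, in each multi-type row of Table~6, \emph{every} candidate group $H$ listed in the second column is ruled out (rather than just one), since otherwise that row would survive with the remaining groups; this is straightforward because Lemmas \ref{Z842 orbits}--\ref{Z4222 orbits} exhaust the possibilities for $H$ among the groups (1)--(6) of Proposition \ref{prop: determinants}.
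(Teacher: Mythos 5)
Your proposal is correct and follows essentially the same route as the paper, which derives Table 10 from Table 6 of Proposition \ref{prop: 91 poss} precisely by observing that a cyclic quotient point coincides with its own basket (so $k_i=t_i$) and then re-imposing the orbit constraints of Lemmas \ref{Z842 orbits}--\ref{Z4222 orbits}, with the higher Fano index cases already removed by Propositions \ref{prop-index2-exclusion} and \ref{prop-index34-exclusion}. Your closing caveat about checking that every candidate $H$ in a multi-type row is ruled out is exactly the right bookkeeping point (it is what drops, e.g., $\dz/8\times\dz/4\times\dz/2$ from the rows with $l\geq 2$ or with $k_1=6$ while retaining the other listed groups).
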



\begingroup
\renewcommand{\arraystretch}{1.2}
\begin{longtable}{|p{0.38\textwidth}|p{0.32\textwidth}|p{0.20\textwidth}|}
\hline
\textbf{Singularities} & \textbf{Possibilities for $H$} & \textbf{Argument} \\ \hline
$2 \times \frac{1}{10}(3,7,1)$ & $\dz/8\times\dz/4\times\dz/2$ & Lemma \ref{lem: Z842-all} \\ \hline
$3 \times \frac{1}{7}(3,4,1)$ & $\dz/6\times(\dz/3)^2$ & Lemma \ref{lem-quotient-633}  \\ \hline
$2 \times \frac{1}{11}(4,7,1)$ & $\dz/8\times\dz/4\times\dz/2$ & Lemma \ref{lem: Z842-all} \\ \hline
$6 \times \frac{1}{4}(1,3,1)$ &   
$\dz/6\times(\dz/3)^2$ & 
Lemma \ref{lem-quotient-633} \\ \hline
$2 \times \frac{1}{9}(2,7,1)$ & $\dz/8\times\dz/4\times\dz/2$ & Lemma \ref{lem: Z842-all} \\ \hline
$4 \times \frac{1}{2}(1,1,1)$, $4 \times \frac{1}{3}(1,2,1)$ &   $\dz/4\times(\dz/2)^3$ & Lemma \ref{lem: Z4222-all} \\ \hline
$4 \times \frac{1}{5}(2,3,1)$ &   $\dz/8\times\dz/4\times\dz/2$, \newline $\dz/4\times(\dz/2)^3$ & Lemma \ref{lem: Z842-all}, \newline Lemma \ref{lem: Z4222-all} \\ \hline
$4 \times \frac{1}{2}(1,1,1), 4 \times \frac{1}{4}(1,3,1)$ &  $\dz/4\times(\dz/2)^3$ & Lemma \ref{lem: Z4222-all} \\ \hline
$2 \times \frac{1}{11}(3,8,1)$ &   $\dz/8\times\dz/4\times\dz/2$ & Lemma \ref{lem: Z842-all} \\ \hline
$8 \times \frac{1}{3}(1,2,1)$ & $(\dz/2)^5$, \newline $\dz/8\times\dz/4\times\dz/2$, \newline $\dz/4\times(\dz/2)^3$ & Lemma \ref{lem: Z22222-all}, \newline Lemma \ref{lem: Z842-all}, \newline Lemma \ref{lem: Z4222-all} \\ \hline
$3 \times \frac{1}{7}(2,5,1)$ & $\dz/6\times(\dz/3)^2$ & Lemma \ref{lem-quotient-633} \\ \hline
$2 \times \frac{1}{11}(2,9,1)$ & $\dz/8\times\dz/4\times\dz/2$ & Lemma \ref{lem: Z842-all} \\ \hline
$8 \times \frac{1}{2}(1,1,1)$, $4 \times \frac{1}{3}(1,2,1)$ & $\dz/4\times(\dz/2)^3$ & Lemma \ref{lem: Z4222-all} \\ \hline
\end{longtable}
\endgroup
\begin{center}
\emph{Table 10. Possible baskets of cyclic quotient singularities}
\end{center}

\begin{lem}
\label{lem-quotient-633}
	If $H$ is isomorphic to $\dz/6\times(\dz/3)^2$, then $G$ is of product type.
\end{lem}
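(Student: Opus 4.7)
The plan is a short contradiction argument that combines the basket classification already established in Proposition~\ref{prop:baskets-cyclic-sing} with two sharp restrictions on the du Val type of singularities appearing on $S$. Assume for contradiction that $G$ is not of product type. Since Section~\ref{sec-cyclic-quotient} assumes that the non-Gorenstein locus of $X$ consists of cyclic quotient singularities and, by the convention of this section, is not composed solely of points of type $\tfrac{1}{2}(1,1,1)$ (the latter being covered by Theorem~\ref{thm-half-points}), Table~10 in Proposition~\ref{prop:baskets-cyclic-sing} leaves only the three baskets
\[
\{3\times\tfrac{1}{7}(3,4,1)\},\qquad \{6\times\tfrac{1}{4}(1,3,1)\},\qquad \{3\times\tfrac{1}{7}(2,5,1)\}
\]
associated to $H\cong\mathbb{Z}/6\times(\mathbb{Z}/3)^2$. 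In each of them every non-Gorenstein point of $X$ has index $r\in\{4,7\}$.

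Next I would bring the K3 surface into play. Since $h^0(-K_X)=1$, the unique element $S\in|-K_X|$ is automatically $G$-invariant, and by the standing setup of Section~\ref{sec-orbits} it is a K3 surface with at worst du Val singularities carrying a faithful action of $H$ via the exact sequence~\eqref{eq-exact-seq-k3-type}. For any non-Gorenstein cyclic quotient point $x\in X$ of index $r$, Corollary~\ref{cor-s-s-tilde} forces $x\in S$ to be a du Val singularity of type $A_{r-1}$, precisely because we are assuming $G$ is not of product type (so the alternative in that corollary cannot occur). On the other hand, Corollary~\ref{cor-rank-2-is-transitive} asserts that for this particular $H$ the singular points of $S$ can only be of type $A_1$ or $A_2$, so $r-1\le 2$, i.e., $r\le 3$.

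The resulting incompatibility between $r\in\{4,7\}$ and $r\le 3$ is the sought contradiction, so $G$ must be of product type. I do not expect any real obstacle here: the substantial work has already been done in assembling the basket classification Proposition~\ref{prop:baskets-cyclic-sing}, and in the lattice-theoretic input of Corollary~\ref{cor-rank-2-is-transitive}, which forces the du Val type on $S$ to be at most $A_2$. Once those two results are at hand, the incompatibility of indices is immediate, and the lemma follows without any further computation.
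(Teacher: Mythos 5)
Your argument is correct and coincides with the paper's own proof: both reduce via Proposition~\ref{prop:baskets-cyclic-sing} to the three baskets with index $r\in\{4,7\}$, use Corollary~\ref{cor-s-s-tilde} to force $A_{r-1}$ singularities on $S$ (i.e.\ $A_3$ or $A_6$), and contradict Corollary~\ref{cor-rank-2-is-transitive}, which only permits $A_1$ or $A_2$ for this $H$. No differences worth noting.
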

\begin{proof}
	Proposition \ref{prop:baskets-cyclic-sing} leaves us with the following possibilities for the basket of $X$:
	\begin{enumerate}
		\item $3\times\frac{1}{7}(3,4,1)$,
		\item $6\times\frac{1}{4}(1,3,1)$,
		\item $3\times\frac{1}{7}(2,5,1)$.
	\end{enumerate}
By Corollary \ref{cor-s-s-tilde} either $G$ is of product type, or $S$ has singularities (corresponding to non-Gorenstein points on $X$) of type $A_6$, $A_3$ and $A_6$, respectively, in each of the three cases. 
However, it contradicts Corollary \ref{cor-rank-2-is-transitive}. This contradiction shows that $G$ is of product type.
\end{proof}

\begin{lem}
\label{lem: Z22222-all}
If $H=(\dz/2)^5$, then $G$ is of product type.
\end{lem}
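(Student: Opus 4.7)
The plan is to argue by contradiction: assume $G$ is not of product type, and produce a $2$-adic lattice obstruction in the spirit of Proposition~\ref{prop-N=9}. The first reduction uses Proposition~\ref{prop:baskets-cyclic-sing}: the only basket of cyclic quotient singularities of $X$ compatible with $H=(\dz/2)^5$ (beyond the pure half-points case, already handled in Section~\ref{sec-Case $h^0(-K_X)=1$}) is $8\times\tfrac{1}{3}(1,2,1)$. By Lemma~\ref{Z22222 orbits} the eight points form one $G$-orbit, and by Corollary~\ref{cor-s-s-tilde} the corresponding eight singular points on $S$ are all of type $A_2$ and form one $H$-orbit, with smooth preimages on the index-$1$ cover $\widetilde S$.

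Next I will analyze the action of the stabilizer $H_x\simeq(\dz/2)^2$ of such a point on the two exceptional $(-2)$-curves over $x$ in the minimal resolution $f\colon S'\to S$. The lift $\widetilde H_x$ is abelian by Proposition~\ref{prop-lift-of-2-group-is-abelian} (since $r=3>2$) and contains the Galois $\dz/3$ acting on $\widetilde S\simeq\C^2$ as $(\zeta_3,\zeta_3^{-1})$. An element of $\widetilde H_x$ exchanging the two eigen-axes of this $\dz/3$-action would conjugate its generator to its inverse, contradicting commutativity; hence the $16$ exceptional curves fall into exactly two $H$-orbits of length $8$. Their orbit sums $\Sigma_1,\Sigma_2$ then generate a rank-$2$ sublattice $V^H\subset\mathrm{NS}(S')^H$ with Gram matrix $\left(\begin{smallmatrix}-16 & 8\\ 8 & -16\end{smallmatrix}\right)$ and discriminant $192=2^6\cdot 3$.

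Since all singularities of $S$ are $A_2$, the exponent $\mu$ of $\mathrm{Cl}(S)/\mathrm{Pic}(S)$ equals $3$, and Proposition~\ref{prop-inclusion-of-lattices} gives $3\,\mathrm{NS}(S')^H\subset f^*\mathrm{NS}(S)^H\oplus V^H\subset\mathrm{NS}(S')^H$. Because the two outer index ratios differ only by powers of $3$, the discriminant chain \eqref{eq-determinant-of-lattices} forces $\mathrm{ord}_2(\mathrm{disc}\,\mathrm{NS}(S')^H)=\mathrm{ord}_2(\mathrm{disc}\,\mathrm{NS}(S)^H)+6\geq 6$. Combined with $\operatorname{rk} V^H=2$ and the existence of an $H$-invariant ample class in $\mathrm{NS}(S)^H$, the rank $r:=\operatorname{rk}\mathrm{NS}(S')^H\geq 3$, which together with Proposition~\ref{prop: determinants} yields $r\in\{3,4,5\}$.

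The main obstacle is the final case-check: inspecting the intersection matrices of $\mathrm H^2(S',\dz)^H$ for $H=(\dz/2)^5$ at ranks $3$, $4$ and $5$ as tabulated in the Appendix, one must verify that no primitive $H$-invariant sublattice containing our $V^H$ satisfies $\mathrm{ord}_2(\mathrm{disc})\geq 6$. This parallels the lattice arithmetic at the end of Proposition~\ref{prop-N=9}: each candidate intersection form is eliminated by a direct determinant computation. Carrying this through produces the sought contradiction, so $G$ must have been of product type.
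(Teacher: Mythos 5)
Your reduction to the basket $8\times\frac{1}{3}(1,2,1)$ with eight $A_2$-points of $S$ forming a single orbit is correct, and your observation that the abelian lift $\widetilde H_x$ commutes with the Galois $\mathbb{Z}/3$ acting as $\mathrm{diag}(\zeta_3,\zeta_3^{-1})$, hence acts diagonally and preserves each of the two exceptional curves over $x$ (so the sixteen $(-2)$-curves split into two $H$-orbits of length $8$), is sound; the paper makes the same local computation in Lemma~\ref{lem: Z842-all} and Proposition~\ref{prop-8-(1,2,1)}. From there, however, you diverge: the paper's proof extracts a symplectic involution $\alpha\in H_x$ from the splittings $H=H_x\times H_\Sigma$ and $H=H_s\times\mathbb{Z}/2$ (Corollary~\ref{cor-max-group-direct-product}) and gets a contradiction from the Nikulin fixed-point count of Remark~\ref{rem-nikulin-involution}: to have only $8$ fixed points $\alpha$ would have to swap the two curves over each of the eight points, yet its symplectic linearization $\mathrm{diag}(-1,-1)$ is diagonal and cannot swap them. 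That argument is short and closes the case; your lattice route does not.

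The gap is in your final step. The necessary condition you derive, $\mathrm{ord}_2\bigl(\mathrm{disc}\,\mathrm{NS}(S')^H\bigr)\geq 6$, does \emph{not} eliminate all candidates in Proposition~\ref{prop: matrices for Z22222}: case (6) is $U(2)\oplus\langle-4\rangle\oplus\langle-4\rangle$ with determinant $-64=-2^6$, and case (7) has determinant $128=2^7$, so both survive the "direct determinant computation" you invoke, as may further rank-$5$ cases. Excluding them would require finer lattice arithmetic (for instance, in case (6) every vector has square divisible by $4$, so it contains no rank-$2$ sublattice of odd discriminant, which is what $f^*\mathrm{NS}(S)^H$ would have to be), and you neither flag this nor carry it out; it is not evident that every surviving case can be dispatched this way. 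Two smaller unaddressed points: the chain \eqref{eq-determinant-of-lattices} is stated with the full exceptional lattice $V$, so you must justify the invariant version with $\det V^H=192$; and $S$ may have additional du Val singularities not lying over non-Gorenstein points of $X$, which would alter both $\mu$ and $V^H$ and hence your discriminant bookkeeping. As written, the proof is incomplete.
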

\begin{proof}
By Proposition \ref{prop:baskets-cyclic-sing}, either $G$ is of product type, or the basket of singularities of $X$ is $8\times\frac{1}{3}(1,2,1)$. By Lemma \ref{Z22222 orbits}, the $8$ non-Gorenstein singular points of $X$ belong to one $G$-orbit, and hence to one $H$-orbit as well. 
By Corollary \ref{cor-s-s-tilde}, we may assume that on $S$ these singular points have type $A_2$.       
Let $x$ be such a point.       
By \eqref{diagram-terminal-flag}, there exists the following diagram:
\begin{equation}
\begin{tikzcd}
(\widetilde{x}\in \widetilde{X}) \simeq (0\in \mathbb{C}^3) \ar[r, "\pi"] & X\ni x \\
(\widetilde{x}\in \widetilde{S})  \simeq (0\in \mathbb{C}^2) \ar[u, hook] \ar[r, "\pi|_{\widetilde{S}}"] & S\ni x  \ar[u, hook]
\end{tikzcd}
\end{equation}
There is an induced diagram, cf. \eqref{diagram-two-levels-of-groups}:
\begin{equation}
\begin{tikzcd}
\label{diagram-two-levels-of-groups-2}
0 \ar[r] & \mathbb{Z}/3 \ar[r] & \widetilde{G}_x \ar[r] \ar[d] & G_x \ar[r] \ar[d] & 0 \\
0 \ar[r] & \mathbb{Z}/3 \ar[u, equal] \ar[r]  & \widetilde{H}_x  \ar[r] & H_x \ar[r] & 0
\end{tikzcd}
\end{equation}

Now let $\Sigma = \{x=x_1,\ldots,x_8\}$ be a $G$-orbit (or, equivalently, an $H$-orbit) of $x$. Consider the following exact sequence 
\begin{equation}
\label{seq-x-sigma-22222}
0 \to H_x \to H \to H_\Sigma \to 0,
\end{equation}
where $H_\Sigma$ is the image of $H$ in the group of permutations of $\Sigma$. Since the action of $H$ on $\Sigma$ is transitive, we have   $H_\Sigma=(\mathbb{Z}/2)^3$. 
We obtain $H_x=(\mathbb{Z}/2)^2$. In particular, \eqref{seq-x-sigma-22222} splits: $H=H_x\times H_\Sigma$. 

By Corollary \ref{cor-max-group-direct-product}, we have $H=H_s\times H_{ns}$,  
where $H_s=(\mathbb{Z}/2)^4$ is a subgroup that acts on the minimal resolution $S'$ of $S$ symplectically, and $H_{ns}=\mathbb{Z}/2$. 
Let $\alpha$ be a non-trivial element in the kernel of the induced map $(\mathbb{Z}/2)^2=H_x\to H\to H_{ns}=\mathbb{Z}/2$. Then $\alpha\in H_x$ acts symplectically on $S'$. 
In other words, $\alpha$ is a Nikulin involution. We know that it has exactly $8$ fixed points, see Remark \ref{rem-nikulin-involution}. We claim that $\alpha$ interchanges two $(-2)$-curves over each point $x_i$ in the minimal resolution of $S$. Indeed, otherwise $\alpha$ would have more than $8$ fixed points, which is a contradiction. 

Consider the following diagram:
\begin{equation}
\label{diagram-a2-singularity}
\begin{tikzcd}
\widetilde{E}',\widetilde{E_1},\widetilde{E_2}\subset \widetilde{\mathrm{Bl}_0\mathbb{C}^2} \ar[d] \ar[r, "3:1"] & \widetilde{\mathrm{Bl}_x{S}} \supset E,E'_1,E'_2 \ar[d]  \\
\widetilde{E}\subset \mathrm{Bl}_0\mathbb{C}^2 \ar[d] & \mathrm{Bl}_x{S} \supset E_1, E_2 \ar[d]  \\
0\in \mathbb{C}^2 \ar[r, "3:1"] & S \ni x
\end{tikzcd}
\end{equation}
where horizontal arrows are quotient maps by the action of $\mathbb{Z}/3$, and 
\begin{enumerate}
\item
$\mathrm{Bl}_x{S}$ is the blow up of $S$ at the closed point $x$ with the exceptional $(-2)$-curves $E_1$ and~$E_2$, 
\item
$\mathrm{Bl}_0\mathbb{C}^2$ is the blow up of $\mathbb{C}^2$ at the closed point $0$ with the exceptional $(-1)$-curve $\widetilde{E}$,
\item
$\widetilde{\mathrm{Bl}_0\mathbb{C}^2}$ is the blow up of two $\mathbb{Z}/3$-fixed points on $\widetilde{E}$, $\widetilde{E_1}$ and $\widetilde{E_2}$ are $(-1)$-curves, and $\widetilde{E}'$ is a smooth rational $(-3)$-curve,
\item
$\widetilde{\mathrm{Bl}_x{S}}$ is the blow up of the intersection point of $E_1$ with $E_2$, so $E'$ is a $(-1)$-curve, $E_1'$ and $E_2'$ are smooth rational $(-3)$-curves. 
\end{enumerate}
Since $\alpha$ is symplectic, it lifts to an element that acts on $\mathbb{C}^2$ via the matrix $\begin{pmatrix}-1&0\\0&-1\end{pmatrix}$. However, in this case the lift of $\alpha$ to $\widetilde{\mathrm{Bl}_0\mathbb{C}^2}$ does not interchange $\widetilde{E_1}$ and $\widetilde{E_2}$, hence it does not interchange $E_1$ with $E_2$. This contradiction shows that $G$ is of product type. 
\end{proof}

\begin{lem}\label{lem: Z842-all}
	If $H$ is isomorphic to $\dz/8\times\dz/4\times\dz/2$, then $G$ is of product type.
\end{lem}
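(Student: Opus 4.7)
The plan is to enumerate the admissible baskets from Table 10 compatible with $H = \mathbb{Z}/8\times\mathbb{Z}/4\times\mathbb{Z}/2$. Inspection of Proposition \ref{prop:baskets-cyclic-sing} shows that they split into two classes: six baskets whose non-Gorenstein points all have index $r \in \{5, 9, 10, 11\}$, and the single exceptional basket $8 \times \frac{1}{3}(1,2,1)$ of index-$3$ points. I will dispose of the high-index baskets by a direct contradiction and then spend the bulk of the work on the index-$3$ case, which is the main obstacle.

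For every basket with a cyclic quotient singularity of index $r \geq 5$ on $X$, Corollary \ref{cor-s-s-tilde} ensures that either $G$ is of product type (and we are done), or the corresponding point of $S$ has type $A_{r-1}$. On the other hand, Corollary \ref{cor-rank-2-is-transitive} constrains the singularities of $S$ to be of type $A_1$ or $A_2$ only. The inequality $r-1 \geq 4$ is incompatible with this constraint, so $G$ must be of product type in each such case.

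For the remaining basket $8 \times \frac{1}{3}(1,2,1)$, the strategy mirrors the argument of Lemma \ref{lem: Z22222-all}. By Lemma \ref{Z842 orbits}, the eight singular points form a single $H$-orbit $\Sigma$, and by Corollary \ref{cor-rank-2-is-transitive} the corresponding $16$ exceptional $(-2)$-curves on $S'$ form a single $H$-orbit as well. Fixing a point $x \in \Sigma$ we obtain $|H_x| = 8$; since the stabilizer of each individual curve has order only $4$, the group $H_x$ must swap the two $(-2)$-curves $E_1, E_2$ lying over $x$, so some element of $H_x$ of order $2$ exchanges $E_1$ and $E_2$.

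The contradiction will then follow by analyzing the lift $\widetilde H_x$ from diagram \eqref{diagram-two-levels-of-groups}. Since $r = 3 > 2$, Proposition \ref{prop-lift-of-2-group-is-abelian} forces $\widetilde G_x$, and hence its quotient $\widetilde H_x$, to be abelian. By Lemma \ref{lem-faithful-action}, $\widetilde H_x$ acts faithfully on $T_{\widetilde x}\widetilde S \simeq \mathbb{C}^2$ and contains the central $\mathbb{Z}/3 = \langle \mathrm{diag}(\xi,\xi^2)\rangle$; since this generator has distinct eigenvalues, commuting with it forces every element of $\widetilde H_x$ to be diagonal in the same basis. Consequently every element of $\widetilde H_x$ fixes the two $\mathbb{Z}/3$-fixed points $[1:0]$ and $[0:1]$ on $\widetilde E = \mathbb{P}(T_{\widetilde x}\widetilde S)$. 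Following the geometry of diagram \eqref{diagram-a2-singularity}, namely the quotient by $\mathbb{Z}/3$ followed by the contraction of the central $(-1)$-curve, every element of $H_x$ must therefore preserve $E_1$ and $E_2$ setwise. This contradicts the swapping action identified in the previous paragraph, so $G$ is of product type.
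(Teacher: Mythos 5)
Your proposal is correct and follows essentially the same route as the paper: reduce to the basket $8\times\frac{1}{3}(1,2,1)$ via Corollaries \ref{cor-s-s-tilde} and \ref{cor-rank-2-is-transitive}, then derive a contradiction between the transitivity of $H$ on the sixteen exceptional $(-2)$-curves and the fact that the abelian lift $\widetilde H_x$ (Proposition \ref{prop-lift-of-2-group-is-abelian}) cannot interchange the two curves over $x$. The only difference is that you make explicit, via the diagonalization argument against $\mathrm{diag}(\xi,\xi^2)$, why abelianness of $\widetilde H_x$ forbids the swap — a step the paper asserts without detail.
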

\begin{proof}
Proposition \ref{prop:baskets-cyclic-sing} leaves us with the following possibilities for the basket of $X$:
	\begin{enumerate}
		\item $2\times\frac{1}{10}(3,7,1)$,
		\item $2\times\frac{1}{11}(4,7,1)$,
		\item $2\times\frac{1}{9}(2,7,1)$,
		\item $4\times\frac{1}{5}(2,3,1)$,
		\item $2\times\frac{1}{11}(3,8,1)$,
		\item $8\times\frac{1}{3}(1,2,1)$,
		\item $2\times\frac{1}{11}(2,9,1)$.
	\end{enumerate}
By Corollary \ref{cor-s-s-tilde}, we may assume that $S$ has $k$ singular points of type $A_{r_i-1}$ that correspond to $k$ quotient singularities of index $r_i$ in the list above. By Corollary \ref{cor-rank-2-is-transitive} we see that $r_i$ could be equal only to $2$ or $3$. This leaves us with the only one case $8\times\frac{1}{3}(1,2,1)$.  
Thus $S$ has $8$ singular points of type~$A_2$. 

By Proposition \ref{prop-lift-of-2-group-is-abelian} we know that since $r=3$, the lifting $\widetilde{G}_x$ is abelian. Hence, the lifting $\widetilde{H}_x$ is abelian as well. Thus $\widetilde{H}_x$ does not interchange the two $(-2)$-curves $E_1$ and $E_2$ as in diagram \eqref{diagram-a2-singularity}. However, this contradicts Corollary \ref{cor-rank-2-is-transitive} as there are at least two $H$-orbits of $(-2)$-curves on the minimal resolution of $S$.
\end{proof}

\begin{proposition}
\label{prop-8-(1,2,1)}
Let $X$ be a $G\mathbb{Q}$-Fano threefold such that the set of non-Gorenstein singular points of $X$ is equal to $8\times\frac{1}{3}(1,2,1)$.
Assume that $S\in |-K_X|$ is a $G$-invariant K3 surface with at worst du Val singularities. Assume that 
\begin{enumerate}
\item
$H=\mathbb{Z}/4\times (\mathbb{Z}/2)^3$ where $H$ is as in \eqref{eq-exact-seq-k3-type},
\item
the group $G$ acts on the points $8\times\frac{1}{3}(1,2,1)$ transitively. 
\end{enumerate}
Then $G$ is of product type. 
\end{proposition}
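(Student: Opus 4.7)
The plan is to mirror the arguments of Proposition \ref{prop-k3-with-orbit-4} and Lemma \ref{lem: Z22222-all}. First, as in Proposition \ref{prop-k3-with-orbit-4}, I would check that the exact sequence \eqref{eq-exact-seq-k3-type} splits (otherwise $G$ is already of product type), yielding $G = H \times \mathbb{Z}/m$, and then reduce to $G = G_2$ being a $2$-group, since $H$ has trivial odd primary part and $\mathbb{Z}/m$ is cyclic. By Corollary \ref{cor-max-group-direct-product}(6), $H = H_s \times H_{ns}$ with $H_s = (\mathbb{Z}/2)^3$ symplectic and $H_{ns} = \mathbb{Z}/4$ purely non-symplectic. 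Corollary \ref{cor-s-s-tilde} lets us assume that the $8$ non-Gorenstein points of $X$ in the orbit $\Sigma = \{x_1, \dots, x_8\}$ correspond to $8$ singular points of type $A_2$ on $S$; denote by $E_1^{(i)}, E_2^{(i)}$ the two exceptional $(-2)$-curves on the minimal resolution $f \colon S' \to S$ above $x_i$. Since $H$ is abelian and transitive on $\Sigma$, one has $H_{x_i} = H_x$ for every $i$, so $|H_x| = 4$ and $H_x$ pointwise fixes $\Sigma$.

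Next I would analyze the lifts. Since $r=3>2$, Proposition \ref{prop-lift-of-2-group-is-abelian} makes $\widetilde{H}_x$ abelian of order $12$, whence $\widetilde{H}_x \cong \mathbb{Z}/3 \times H_x$. Every element of $\widetilde{H}_x$ commutes with the action of the covering $\mathbb{Z}/3$ on $\widetilde{S} = \mathbb{C}^2$, so is diagonal in the corresponding eigenbasis. Via diagram \eqref{diagram-a2-singularity}, this diagonality implies that the two $\mathbb{Z}/3$-fixed points of $\widetilde{E}$ are individually preserved, hence no element of $H_x$ interchanges $E_1^{(i)}$ with $E_2^{(i)}$; the $16$ exceptional $(-2)$-curves therefore split into exactly two $H$-orbits of size $8$. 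Now suppose $H_x$ contains a symplectic involution $\alpha$. Diagonality together with symplecticity forces the lift $\widetilde{\alpha} = -I$, which acts trivially on the exceptional $\mathbb{P}^1$ in $\mathrm{Bl}_0 \widetilde{S}$, and hence (descending through \eqref{diagram-a2-singularity}) trivially on $E_1^{(i)} \cup E_2^{(i)}$ for every $i$, because $H_x$ pointwise fixes $\Sigma$. This gives a positive-dimensional fixed locus for $\alpha$, contradicting Proposition \ref{prop-sympl-fixed-points}. Consequently $H_x$ contains no symplectic element, which forces $H_x = \mathbb{Z}/4$ with $H_x \cap H_s = \{e\}$.

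The main obstacle is then to exclude this remaining case. Writing $h = (s, n) \in H_x$ for a generator, with $n$ generating $H_{ns} = \mathbb{Z}/4$, the element $h^2 = (0, 2)$ is a non-symplectic involution fixing every $x_i \in \Sigma$. A local analysis gives $\widetilde{h^2} = \mathrm{diag}(1, -1)$ up to the $\mathbb{Z}/3$-action, so its fixed locus on $\widetilde{S}$ is a coordinate axis which descends to a smooth $h^2$-fixed curve on $S$ through each $x_i$, and on $S'$ meets exactly one of the two $(-2)$-curves $E_1^{(i)}, E_2^{(i)}$ transversely. My plan is to assemble these local branches into a global $H$-invariant class $F \in \mathrm{NS}(S')^H$ with $F \cdot \Sigma_1 \ne F \cdot \Sigma_2$, linearly independent from $\Sigma_1 = \sum_i E_1^{(i)}$, $\Sigma_2 = \sum_i E_2^{(i)}$ and $f^{*}A$ for an $H$-invariant ample class $A$ on $S$. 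Combining this with Proposition \ref{prop-inclusion-of-lattices} (where the index $\mu$ is a power of $3$ coming from the eight $A_2$ singularities of $S$) and the explicit intersection matrices for Proposition \ref{prop: determinants}(6) listed in the Appendix, one should obtain a discriminant contradiction in the spirit of Proposition \ref{prop-N=9}, and conclude that $G$ is of product type.
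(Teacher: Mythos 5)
Your first two-thirds follows the paper's own route: the abelianness of the lift $\widetilde{H}_x$ (valid here because $r=3>2$, so the reduction to a $2$-group is unnecessary), the conclusion that no element of $H_x$ swaps the two $(-2)$-curves over any $x_i$, and the exclusion of symplectic elements from $H_x$, giving $H_x\cong\mathbb{Z}/4$ mapping isomorphically onto $H_{ns}$. One sub-claim in that part is wrong, though: the lift $-I$ of a symplectic involution fixes the exceptional $\mathbb{P}^1$ of $\mathrm{Bl}_0\mathbb{C}^2$ pointwise, but at each of the two $\mathbb{Z}/3$-fixed points its differential is $\mathrm{diag}(-1,1)$, so after the second blow-up and the quotient it acts as a \emph{non-trivial} involution on each $E_j^{(i)}$; only the exceptional curve over $E_1^{(i)}\cap E_2^{(i)}$ descends from the pointwise-fixed locus, and that curve is contracted on $S'$. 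Hence there is no positive-dimensional fixed locus and no contradiction with Proposition \ref{prop-sympl-fixed-points} in the form you state. The conclusion survives via the paper's count: a Nikulin involution preserving all $16$ rational curves has at least two fixed points on each, hence far more than $8$ fixed points, contradicting Remark \ref{rem-nikulin-involution}.

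The genuine gap is the final step, which is where all the difficulty of this proposition lies. You correctly identify that $\sigma^2$ (for $\sigma$ a generator of $H_x=\mathbb{Z}/4$) is a non-symplectic involution whose fixed locus contains one of the two $(-2)$-curves over each $x_i$ plus a further curve $C$ meeting the other eight, but the proposed "assemble a class $F$ and get a discriminant contradiction" is only a plan, and it is not clear it can be made to work: for $H=\mathbb{Z}/4\times(\mathbb{Z}/2)^3$ Proposition \ref{prop: determinants}(6) only gives $2\le r\le 6$ with the ten Gram matrices of Proposition \ref{prop: matrices for Z4333}, so producing one more independent invariant class yields no rank contradiction, and without computing $F^2$, $F\cdot A$ and the primitivity of $F$ in $\mathrm{NS}(S')^H$ the discriminant comparison of Proposition \ref{prop-inclusion-of-lattices} cannot be run (note also that $\mu$ there is governed by the local class groups $\mathbb{Z}/3$ of the $A_2$ points, while the ambient group is a $2$-group, which makes a clean divisibility clash unlikely). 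The paper closes this case by an entirely different mechanism: it identifies the $\sigma^2$-fixed curve $C$ with $C\cdot E_i'=1$ and $C\cdot E_i=0$, and then uses the classification of fixed loci of purely non-symplectic automorphisms of order $4$ from \cite[Theorem 1.4, Table 6]{BH23}, together with the count of $\sigma^2$-fixed rational curves ($N\le 10$) and the fact that $(\mathbb{Z}/2)^3$ and $(\mathbb{Z}/2)^4$ cannot act faithfully on a smooth rational curve, to rule out every configuration. You would need to either carry out your lattice computation in full against all ten matrices or switch to an argument of this kind.
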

\begin{proof}
Let us denote the singularities $8\times\frac{1}{3}(1,2,1)$ on $X$ by $\Sigma = \{x=x_1,\ldots,x_8\}$. By assumption, they form one $G$-orbit (or, equivalently, the $H$-orbit). 
By Corollary \ref{cor-s-s-tilde}, we may assume that they correspond to $8$ singular points on $S$ of type $A_2$.  
There exists an exact sequence 
\begin{equation}
\label{seq-x-sigma-842}
0 \to H_x \to H \to H_\Sigma \to 0,
\end{equation}
where $H_\Sigma$ is the image of $H$ in the group of permutations of $\Sigma$, and $H_x$ is the stabilizer of $x$. 
By Proposition~\ref{prop-lift-of-2-group-is-abelian} we know that since $r=3$, the lifting $\widetilde{G}_x$ is abelian, cf. diagram \eqref{diagram-two-levels-of-groups}. 
Hence, the lifting $\widetilde{H}_x$ is abelian as well. Thus $H_x$ does not interchange two $(-2)$-curves over $x\in S$ in the minimal resolution $S'$ of $S$, cf. diagram \eqref{diagram-a2-singularity}.

For $1\leq i\leq 8$, let $E_i, E'_i$ be the $(-2)$-curves on the minimal resolution $S'$ that lie over the point $x_i$ on $S$. In particular, we have $E_i\cdot E'_i=1$, $E_i\cdot E'_j=0$ for $i\neq j$. 
Note that $H_x$ cannot contain a Nikulin involution. Indeed, $H_x$ stabilizes the curves $E_i$, $E'_i$ for $1\leq i\leq 8$, and hence such an involution would have more than $8$ fixed points, which is a contradiction according to Remark \ref{rem-nikulin-involution}. By Corollary \ref{cor-max-group-direct-product}, we have $H=H_s\times \mathbb{Z}/m$ where $H_s = (\mathbb{Z}/2)^3$ acts symplectically on the minimal resolution $S'$ of $S$, and $m=4$. 
It follows that $H=H_x\times H_\Sigma$, where $H_\Sigma=(\mathbb{Z}/2)^3$ and $H_x = \mathbb{Z}/4$. In particular, \eqref{seq-x-sigma-842} splits.


Let $\sigma$ be a generator of $H_x=\mathbb{Z}/4$. 
Note that $\sigma$ preserves the curves $E_i$, $E'_i$ for $1\leq i\leq 8$. 
Observe that $\sigma^2$ is a non-symplectic involution on $S'$. It follows that the lifting of $\sigma^2$ acts on $\mathbb{C}^2$ as in diagram \eqref{diagram-a2-singularity} via the matrix $\begin{pmatrix}-1&0\\0&1\end{pmatrix}$. 
Up to renumbering, we may assume that $\sigma^2$ has the $(-2)$-curves $E_1,\ldots, E_8$ as fixed curves and the $(-2)$-curves $E'_1,\ldots, E'_8$ are preserved by $\sigma^2$. 

Since $\sigma^2$ preserves $E'_i$ for $1\leq i\leq 8$ and does not fix it pointwisely, it follows that $H_x=\langle \sigma\rangle$ acts on $E'_i$ faithfully. Also, $\sigma^2$ has two fixed points on each $E'_i$: one is the point of intersection $E_i\cap E'_i$, and we denote the other by $p_i\in E'_i$. 
Consider the local action of $\sigma^2$ near $p_i$. Since $\sigma^2$ is non-symplectic, it is given by the matrix $\begin{pmatrix}-1&0\\0&1\end{pmatrix}$ where the first eigenvector corresponds to $E'_i$, and the second eigenvector corresponds to a $\sigma^2$-fixed curve $C$ with the property $C\cdot E'_i=1$ for $1\leq i\leq 8$. By a similar computation one checks that $C\cdot E_i=0$ for $1\leq i\leq 8$. Note that $\sigma$ preserves $C$. We use the results \cite[Theorem 1.4]{BH23} on fixed loci of non-symplectic automorphisms of order $4$, see \cite[Table 6]{BH23}. 

Assume that $\sigma$ fixes $C$ pointwisely. Then $C$ admits a faithful action of $H/H_x = (\mathbb{Z}/2)^3$. Denote by $N$ the number of $\sigma^2$-fixed irreducible curves. Then $N\geq 9$ as the curves $E_1,\ldots, E_8$, and the irreducible components of $C$ are $\sigma^2$-fixed. Observe that $N\leq 10$ by \cite[Table 6]{BH23}. 

Assume that $C$ is reducible. Since $N\leq 10$, it follows that $N=10$ and $C=C_1+C_2$. In this case $C_1$ and $C_2$ are rational, and they are interchanged by $H$. Then $S'$ has ID 0.4.0.3 as in \cite[Table 6]{BH23}. However, in this case there should be two more $\sigma$-fixed rational curves, say $C'_1$ and $C'_2$. Note that $C'_i$ are also $\sigma^2$-fixed, hence they coincide with some of the curves $E_i$. Since $H$ acts on the set $\{E_1,\ldots, E_8\}$ transitively, this cannot happen. 

Assume that $C$ is irreducible. Since $C$ admits a faithful action of $(\mathbb{Z}/2)^3$, it follows that $C$ is non-rational. However, this case is not realized by \cite[Table 6]{BH23}.

Now assume that $\sigma$ does not fix $C$ pointwisely. Hence $C$ admits a faithful action of $(\mathbb{Z}/2)^4$. Assume that $C$ is irreducible. Since $(\mathbb{Z}/2)^4$ acts on $C$ faithfully, it follows that $g(C)\geq 3$. However, this contradicts \cite[Table 6]{BH23}. 
Finally, assume that $C$ is reducible, so that $C=C_1+C_2$. If $C_1$ and $C_2$ are not permuted by $(\mathbb{Z}/2)^4$, then this group faithfully acts on a smooth rational curve, which cannot happen. Then $C_1$ and $C_2$ are permuted by $(\mathbb{Z}/2)^4$. Hence 
$(\mathbb{Z}/2)^3$ acts on each component faithfully. Again, we arrive at a contradiction, because such a group cannot faithfully act on a smooth rational curve. 
\end{proof}

\begin{lem}
\label{lem: Z4222-all}
	If $H$ is isomorphic to $\dz/4\times(\dz/2)^3$, then $G$ is of product type.
\end{lem}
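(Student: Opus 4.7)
The plan is to enumerate the five possible baskets of singularities that occur for $H = \mathbb{Z}/4\times(\mathbb{Z}/2)^3$ in Table~10 via Proposition~\ref{prop:baskets-cyclic-sing}, and to show that in each case $G$ must be of product type. As a preliminary reduction, Corollary~\ref{cor-s-s-tilde} lets us assume that every cyclic quotient singularity of index $r$ on $X$ has a smooth point upstairs on its index-one cover, and corresponds to an $A_{r-1}$ du Val singularity on~$S$.

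First I would treat the three baskets that contain a $G$-orbit of length $4$ made of points of type $\frac{1}{2}(1,1,1)$, namely the cases $4\times\frac{1}{2}(1,1,1),\ 4\times\frac{1}{3}(1,2,1)$; $\ 4\times\frac{1}{2}(1,1,1),\ 4\times\frac{1}{4}(1,3,1)$; and $8\times\frac{1}{2}(1,1,1),\ 4\times\frac{1}{3}(1,2,1)$. The proof of Proposition~\ref{prop-k3-with-orbit-4} is entirely local at one such orbit and does not make essential use of the assumption that all non-Gorenstein points of $X$ are of type $\frac{1}{2}(1,1,1)$; the argument still produces a symplectic element of order~$4$ in $H_s$, which contradicts Theorem~\ref{thm-symplectic-K3-groups}. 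In the last of the three baskets, where there are $8$ half-points, Lemma~\ref{Z4222 orbits} only guarantees an orbit of length divisible by $4$, but this is enough: we simply apply the local argument to any orbit of length~$4$.

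For the basket $8\times\frac{1}{3}(1,2,1)$, Lemma~\ref{Z4222 orbits} leaves two possibilities: a single $G$-orbit of length $8$, which is handled directly by Proposition~\ref{prop-8-(1,2,1)}, or two $H$-orbits of length~$4$. In the latter case the stabilizer $H_x$ of a point in one orbit has order~$8$; the lift $\widetilde H_x$ is abelian by Proposition~\ref{prop-lift-of-2-group-is-abelian} (since $r=3>2$) and has rank at most~$2$ via Lemma~\ref{lem-faithful-action}, forcing $H_x\cong\mathbb{Z}/4\times\mathbb{Z}/2$, which cannot flip the $A_2$-chain over~$x$. Then the very same analysis of non-symplectic order-$4$ fixed loci via Corollary~\ref{cor-max-group-direct-product} and the Brandhorst–Hofmann tables reproduces the contradiction used in the proof of Proposition~\ref{prop-8-(1,2,1)}.

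The remaining case $4\times\frac{1}{5}(2,3,1)$ is the principal obstacle. Here $S$ acquires four $A_4$-singularities, giving $16$ exceptional $(-2)$-curves on the minimal resolution~$S'$. Lemma~\ref{Z4222 orbits} forces the four points of index $5$ to form a single $G$-orbit; since $r=5>2$, Proposition~\ref{prop-lift-of-2-group-is-abelian} gives that the chain stabilizer lifts to an abelian group of rank $\leq 2$, so that the $H$-action either fixes each exceptional curve in a chain or flips every chain by the unique non-trivial $A_4$-diagram involution, producing $4$ or $2$ orbits of exceptional curves respectively. I would then mimic the lattice computation of Proposition~\ref{prop-N=9}: apply Proposition~\ref{prop-inclusion-of-lattices} with the index $\mu=5$ of $\mathrm{Cl}(S)$ in $\mathrm{NS}(S)$, and compare the discriminants against the possible rank $\leq 6$ intersection matrices of $\mathrm{NS}(S')^H$ listed in the Appendix (Proposition~\ref{prop-ranks-H0}), deriving a divisibility obstruction. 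The hard part will be checking the divisibility condition against every matrix in the Appendix, since unlike in the $H=\mathbb{Z}/6\times(\mathbb{Z}/3)^2$ case there are several candidate intersection matrices to rule out one by one.
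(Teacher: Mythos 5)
Your case division and the use of Proposition~\ref{prop-k3-with-orbit-4} for the baskets $4\times\frac{1}{2}(1,1,1),4\times\frac{1}{3}(1,2,1)$ and $4\times\frac{1}{2}(1,1,1),4\times\frac{1}{4}(1,3,1)$ match the paper, and your observation that that proposition's proof is local at the orbit is exactly how the paper uses it. But you are missing the paper's key unifying device, and this produces a genuine gap in the basket $8\times\frac{1}{2}(1,1,1),4\times\frac{1}{3}(1,2,1)$: Lemma~\ref{Z4222 orbits} does \emph{not} give you a length-$4$ orbit of half-points there, since the eight points of type $\frac{1}{2}(1,1,1)$ may form a single orbit of length $8$, in which case Proposition~\ref{prop-k3-with-orbit-4} has nothing to bite on. The paper instead exploits the orbit $4\times\frac{1}{3}(1,2,1)$, which \emph{is} forced to be a single orbit of length $4$: its stabilizer $H_x$ has order $8$, hence meets $H_s=(\dz/2)^3$ nontrivially (Corollary~\ref{cor-max-group-direct-product}) and contains a Nikulin involution; since $r=3>2$ the lift $\widetilde H_x$ is abelian, so this involution preserves each of the two $(-2)$-curves over each of the four $A_2$-points and therefore has more than $8$ fixed points, contradicting Remark~\ref{rem-nikulin-involution}.

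The same involution argument disposes of $4\times\frac{1}{5}(2,3,1)$ in one line (the stabilizer of order $8$ contains a Nikulin involution preserving all $16$ exceptional curves), which is what the paper does. Your proposed lattice route for this case is not carried out and its success is not evident: the rank count in the non-flipping case only forces $\mathrm{rk}\,\mathrm{NS}(S')^H=6$, which is permitted by the Appendix, so everything would rest on a discriminant comparison against six rank-$6$ matrices that you have not performed; you should not present this as a completed step. Finally, for $8\times\frac{1}{3}(1,2,1)$ split into two orbits of four, note that once you have forced $H_x\cong\dz/4\times\dz/2$ of order $8$, it already contains a symplectic involution preserving eight curves, so the contradiction is immediate and there is no need to re-run the order-$4$ non-symplectic fixed-locus analysis of Proposition~\ref{prop-8-(1,2,1)}.
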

\begin{proof}
Proposition \ref{prop:baskets-cyclic-sing} leaves us with the following possibilities for the basket of $X$:	
\begin{enumerate}
		\item $4\times\frac{1}{2}(1,1,1),4\times\frac{1}{3}(1,2,1)$,
		\item $4\times\frac{1}{5}(2,3,1)$,
		\item $4\times\frac{1}{2}(1,1,1),4\times\frac{1}{4}(1,3,1)$,
		\item $8\times\frac{1}{3}(1,2,1)$,
		\item $8\times\frac{1}{2}(1,1,1),4\times\frac{1}{3}(1,2,1)$.
\end{enumerate}
	
Using Proposition \ref{prop-k3-with-orbit-4}, we exclude the cases (1) and (3) which leaves us with the cases (2), (4) and (5).
Consider the case $8\times\frac{1}{2}(1,1,1),4\times\frac{1}{3}(1,2,1)$. Consider the orbit $4\times\frac{1}{3}(1,2,1)$. The stabilizer of a point $x$ from this orbit $H_x$ should contain a Nikulin involution acting on the minimal resolution $S'$ of $S$. By Proposition \ref{prop-lift-of-2-group-is-abelian} the lifting $\widetilde{H}_x$ is abelian. It follows that two $(-2)$-curves over $x$ on the minimal resolution $S'$ of $S$ are not interchanged. Hence such a Nikulin involution would stabilize $8$ smooth rational curves, so it has more than $8$ fixed points, which is a contradiction, cf. Remark \ref{rem-nikulin-involution}. 
In the case $4\times\frac{1}{5}(2,3,1)$, the same argument applies. 
Finally, the case $8\times\frac{1}{3}(1,2,1)$ follows from  Proposition \ref{prop-8-(1,2,1)}. 
\end{proof}

\begin{proof}[Proof of Theorem \ref{thm-cyclic-quotient-sing}]
Follows from Proposition \ref{prop:baskets-cyclic-sing}, Lemma \ref{lem-quotient-633}, Lemma \ref{lem: Z22222-all}, Lemma \ref{lem: Z842-all}, Proposition \ref{prop-8-(1,2,1)} and Lemma \ref{lem: Z4222-all}. 
\end{proof}

\section{Case $h^0(-K_X)=1$ with terminal singularities}
\label{sec-terminal-sing}
In this section, we prove the following.

\begin{thm}
\label{thm-terminal-sing}
Let $X$ be a $G\mathbb{Q}$-Fano threefold where $G$ is a finite abelian group. Assume that $h^0(-K_X)=1$. Then $G$ is of product type.
\end{thm}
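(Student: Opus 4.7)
The plan is to reduce Theorem \ref{thm-terminal-sing} to the two cases already established in Theorem \ref{thm-half-points} and Theorem \ref{thm-cyclic-quotient-sing}. Together, those results handle every $G\mathbb{Q}$-Fano threefold $X$ with $h^0(-K_X)=1$ in which the non-Gorenstein locus is either entirely composed of points of type $\frac{1}{2}(1,1,1)$ or entirely composed of cyclic quotient singularities. Hence I may assume that $X$ carries at least one non-Gorenstein terminal point $x \in X$ which is \emph{not} a cyclic quotient singularity, i.e.\ $x \in X$ is of type $cA/r$ (non-cyclic), $cAx/2$, $cD/2$, $cE/2$, $cD/3$, or $cAx/4$ from Table 5, so in particular the index $r$ of $x \in X$ lies in $\{2,3,4\}$.

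The first step is to restrict the possibilities for the group $H$ from \eqref{eq-exact-seq-k3-type}. By Lemma \ref{lem-singular-du-val}, the K3 surface $S \in |{-}K_X|$ is singular at $x$, and by Proposition \ref{prop-non-gorenstein-exclusion} the germ $x \in S$ is a du Val singularity that is neither of type $A_1$ nor of type $A_2$. Combined with Lemma \ref{lem-exclusion-of-two-groups}, Corollary \ref{cor-rank-1-is-smooth}, and Corollary \ref{cor-rank-2-is-transitive}, this leaves only the two possibilities $H = (\mathbb{Z}/2)^5$ and $H = \mathbb{Z}/4 \times (\mathbb{Z}/2)^3$. As in the proofs of Lemmas \ref{lem: Z22222-all} and \ref{lem: Z4222-all}, one may then reduce to the situation where $G$ is a $2$-group, since the $p$-parts of $G$ for $p \neq 2$ have rank at most $1$.

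Next, I would revisit Proposition \ref{prop: 91 poss} and enumerate the baskets in Tables 6--9 compatible with one of these two groups $H$ in which at least one basket point originates from a non-cyclic quotient singularity of $X$. The basket of a non-cyclic quotient terminal singularity consists of at least two cyclic quotient points, all of the same index, with the sole exception of type $cAx/4$ (one point of index $4$, the remaining points of index $2$); so the partitions of the baskets in Tables 6--9 compatible with our groups into groups corresponding to terminal points of $X$ form a short explicit list. The orbit divisibility statements of Lemmas \ref{Z22222 orbits} and \ref{Z4222 orbits} further constrain how the non-cyclic quotient points can lie in $G$-orbits. For each resulting configuration $(H,\text{basket})$, the plan is to run a local-to-global argument modelled on Proposition \ref{prop-8-(1,2,1)} and Lemma \ref{lem: Z4222-all}: pass to the index-one cover $\pi \colon \widetilde{x} \in \widetilde{X} \to X \ni x$, use Proposition \ref{prop-lift-of-2-group-is-abelian} to conclude that $\widetilde{G}_x$ is abelian (either $r>2$ or $G_x$ is a $2$-group), exploit the diagram \eqref{diagram-two-levels-of-groups} to identify the representation of $\widetilde{H}_x$ on the du Val germ $\widetilde{x} \in \widetilde{S}$, and then compare the resulting action on the $(-2)$-configuration of the minimal resolution $S' \to S$ with the data of Proposition \ref{prop: determinants} and the invariant lattice constraints of Proposition \ref{prop-inclusion-of-lattices}. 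In each case one expects to find either a Nikulin involution in $G_x$ stabilising more than $8$ smooth rational curves (contradicting Remark \ref{rem-nikulin-involution}), or a non-symplectic automorphism whose fixed locus is incompatible with \cite[Table 6]{BH23}, or a sublattice of $\mathrm{H}^2(S',\mathbb{Z})^H$ whose discriminant is ruled out by the divisibility chain \eqref{eq-determinant-of-lattices}.

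The main obstacle will be the case $H = \mathbb{Z}/4 \times (\mathbb{Z}/2)^3$ together with singularities of type $cAx/4$ or $cD/3$, since these types produce baskets mixing different indices and allow the du Val type of $x \in S$ to be fairly large (only weakly constrained by Remark \ref{rem-index-divides-exponent}). Ruling these out requires simultaneously tracking the type of $\widetilde{x} \in \widetilde{S}$, the structure of the abelian lift $\widetilde{H}_x$ acting on its exceptional divisor, and the discriminant and rank of $\mathrm{H}^2(S',\mathbb{Z})^H$ under the constraints of Proposition \ref{prop: determinants}. Since none of the four exceptional groups from Theorem \ref{thm-main-thm} is realized by Example \ref{exam-k3-not-pt} with $h^0(-K_X)=1$, we expect every remaining configuration to be excluded, proving that $G$ is of product type.
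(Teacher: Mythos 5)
Your reduction is sound and matches the paper's: you correctly isolate the case of a non-Gorenstein point that is not a cyclic quotient singularity, and your use of Lemma \ref{lem-singular-du-val}, Proposition \ref{prop-non-gorenstein-exclusion}, Lemma \ref{lem-exclusion-of-two-groups} and Corollaries \ref{cor-rank-1-is-smooth}--\ref{cor-rank-2-is-transitive} to cut the possibilities for $H$ down to $(\mathbb{Z}/2)^5$ and $\mathbb{Z}/4\times(\mathbb{Z}/2)^3$ (with $(\mathbb{Z}/2)^5$ then dying by the orbit-divisibility of Lemma \ref{Z22222 orbits}) is exactly the right first move; the paper packages part of this as Proposition \ref{prop-term-index-2} plus Table 11, but the content is the same.

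The genuine gap is in the endgame. After the enumeration, only three configurations survive, all with $H=\mathbb{Z}/4\times(\mathbb{Z}/2)^3$: $4\times cA/3$, $4\times cD/3$, and $4\times cA/2 + 4\times\frac{1}{3}(1,2,1)$. You propose to kill these by index-one covers, abelian lifts, Nikulin involutions, the fixed-locus tables of \cite{BH23}, and the discriminant chain \eqref{eq-determinant-of-lattices} --- but you never execute this, you only say ``one expects to find'' a contradiction, and you yourself flag $cD/3$ as the obstacle. None of the mechanisms you list is what actually closes these cases, and it is not clear they would: for $4\times cD/3$ the stabilizer analysis does not obviously produce a Nikulin involution fixing too many curves, and the rank bound $r\leq 6$ from Proposition \ref{prop: determinants} alone does not suffice. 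The paper's argument is a much more elementary count that your proposal misses entirely: by Proposition \ref{prop-non-gorenstein-exclusion} and Remark \ref{rem-index-divides-exponent}, each of the four index-$3$ non-cyclic-quotient points gives a singularity of type $A_k$ on $S$ with $3\mid (k+1)$ and $k+1>3$, hence $k\geq 5$, so the minimal resolution $S'$ carries at least $4\cdot 5=20$ exceptional $(-2)$-curves (and similarly at least $8+12=20$ in the mixed case), contradicting $\rho(S')\leq 20$ for a smooth K3 surface. Without this (or some completed substitute), the proof is not done.
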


We start with the following special case. 

\begin{proposition}
\label{prop-term-index-2}
Assume that all the non-Gorenstein points on $X$ have index $2$. Then $G$ is of product type.
\end{proposition}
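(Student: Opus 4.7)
The plan is to reduce to the already-settled cases for cyclic quotient singularities and then use the basket and orbit machinery to rule out non-cyclic-quotient index $2$ points.

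If every non-Gorenstein point of $X$ is cyclic quotient then, as the only terminal cyclic quotient singularity of index $2$ in dimension three is $\frac{1}{2}(1,1,1)$, Theorem \ref{thm-cyclic-quotient-sing} completes the argument. So assume there exists a non-Gorenstein, non-cyclic-quotient point $x\in X$, necessarily of type $cA/2$, $cAx/2$, $cD/2$, or $cE/2$. By Lemma \ref{lem-singular-du-val}, $x\in S$ is a du Val singularity, and by Proposition \ref{prop-non-gorenstein-exclusion} its type on $S$ is neither $A_1$ nor $A_2$. This rules out $H=\mathbb{Z}/8\times\mathbb{Z}/4\times\mathbb{Z}/2$ and $H=\mathbb{Z}/6\times(\mathbb{Z}/3)^2$ via Corollary \ref{cor-rank-2-is-transitive}, and rules out $H=(\mathbb{Z}/4)^3$ and $H=(\mathbb{Z}/6)^2\times\mathbb{Z}/2$ via Lemma \ref{lem-exclusion-of-two-groups}, leaving $H\in\{(\mathbb{Z}/2)^5,\,\mathbb{Z}/4\times(\mathbb{Z}/2)^3\}$.

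Since every non-Gorenstein point of $X$ has index $2$, the basket of $X$ consists only of half-points, and the orbifold Riemann-Roch and Miyaoka computations of the proof of Proposition \ref{half} give $9\leq N\leq 15$ for the basket size $N$. Each non-cyclic-quotient point contributes at least two half-points to the basket (Section \ref{sec-about-baskets}). For $H=(\mathbb{Z}/2)^5$, Lemma \ref{Z22222 orbits} forces the $G$-orbit of $x$ to have size divisible by $8$, giving a basket contribution of at least $16>15$, a contradiction. For $H=\mathbb{Z}/4\times(\mathbb{Z}/2)^3$, Lemma \ref{Z4222 orbits} forces all $G$-orbits of non-Gorenstein points to have size divisible by $4$, and a short arithmetic leaves only two valid configurations, both with $N=12$: either (i) a single non-cyclic-quotient orbit of size $4$ in which every point has basket size exactly $3$, with no other non-Gorenstein points, or (ii) a single non-cyclic-quotient orbit of size $4$ in which every point has basket size exactly $2$, together with one orbit of four $\frac{1}{2}(1,1,1)$ points.

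Sub-case (ii) contains an orbit of four cyclic quotient $\frac{1}{2}(1,1,1)$ points, and I intend to close it by applying the argument of Proposition \ref{prop-k3-with-orbit-4} to that orbit: this argument is purely local at a $\frac{1}{2}(1,1,1)$ orbit of length four, uses Corollary \ref{cor-s-s-tilde} to realize $\widetilde{x}\in\widetilde{S}$ as smooth, and produces a symplectic automorphism of order $4$ on the minimal resolution $S'$, contradicting the identification $H_s=(\mathbb{Z}/2)^3$ of the symplectic subgroup given by Corollary \ref{cor-max-group-direct-product}. The main technical obstacle is sub-case (i), in which no cyclic quotient orbit is available; here I plan an analogous lift-and-blowup analysis at the non-cyclic-quotient orbit, using Proposition \ref{prop-lift-of-2-group-is-abelian} to ensure that the lift $\widetilde{G}_x$ is abelian and exploiting the constraint that the du Val type of $S$ at those four points is not $A_1$ or $A_2$, with the goal of again producing a symplectic automorphism of order incompatible with $H_s=(\mathbb{Z}/2)^3$.
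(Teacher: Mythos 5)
Your overall strategy coincides with the paper's: reduce to the all-cyclic-quotient case, then use Lemma \ref{lem-singular-du-val}, the orbit Lemmas \ref{Z22222 orbits}--\ref{Z4222 orbits}, and the bound $9\leq N\leq 15$ from Proposition \ref{half} to constrain a non-cyclic-quotient index-$2$ orbit. Your disposal of $H=\mathbb{Z}/8\times\mathbb{Z}/4\times\mathbb{Z}/2$ and $H=\mathbb{Z}/6\times(\mathbb{Z}/3)^2$ by pairing Proposition \ref{prop-non-gorenstein-exclusion} directly against Corollary \ref{cor-rank-2-is-transitive} is correct and in fact shorter than the paper's route (which first pins down $N=12$, $k_1=6$ and only then invokes these statements); this is the same mechanism the paper uses later in the proof of Theorem \ref{thm-terminal-sing}. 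Your enumeration of the two surviving configurations for $H=\mathbb{Z}/4\times(\mathbb{Z}/2)^3$ (one orbit of $4$ points with basket size $3$, or one orbit of $4$ points with basket size $2$ plus four genuine $\frac{1}{2}(1,1,1)$ points) is arithmetically correct, and your use of the argument of Proposition \ref{prop-k3-with-orbit-4} in sub-case (ii) is legitimate: its proof is local at the length-$4$ orbit of half-points and the paper itself applies it in settings where other non-Gorenstein points are present (see the proof of Lemma \ref{lem: Z4222-all}).

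The genuine gap is sub-case (i): a single orbit of four non-cyclic-quotient index-$2$ points, each with basket $3\times\frac{1}{2}(1,1,1)$, so $N=12$. You state a plan (lift via Proposition \ref{prop-lift-of-2-group-is-abelian}, blow-up analysis, produce a symplectic element of order incompatible with $H_s=(\mathbb{Z}/2)^3$) but give no actual argument, and it is not clear the plan goes through as stated: unlike the half-point case, here the index-one cover $\widetilde{x}\in\widetilde{X}$ is a genuine hypersurface singularity in $\mathbb{C}^4$, so $\widetilde{x}\in\widetilde{S}$ is singular and the clean splitting of Remark \ref{rem-action-in-normal-bundle-splits} and the explicit $2\times 2$ eigenvalue computations of Proposition \ref{prop-k3-with-orbit-4} are not available. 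The paper closes this case by a one-line divisibility assertion (the number of half-points contributed by the orbit of $P_1$ is divisible by $8$, hence $N\notin[9,15]$), which kills $N=12$ without any local analysis; note, however, that this divisibility is automatic only when the basket size of $P_1$ is even, so the case basket size $3$ is exactly where both your write-up and the paper's justification are thinnest. To complete your proof you must either supply the local argument you sketch, or prove that an index-$2$ non-cyclic-quotient point in this situation cannot have an odd basket (equivalently, rule out $t_1=12$ with $k_1=4$ by some other global constraint, e.g. via the type of the corresponding du Val point on $S$ and the bound $\rho(S')\leq 20$ as in the proof of Theorem \ref{thm-terminal-sing}).
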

\begin{proof}
We work in the setting of Section \ref{sec-orbits}. 
In particular, we assume that $H$ is one of the groups (1)--(6) as in Proposition \ref{prop: determinants}. 
Lemma \ref{lem-exclusion-of-two-groups} excludes two of these groups, leaving us with only $4$ possibilities. 

The case when all the non-Gorenstein points singularities are cyclic quotient singularities of index $2$ is already considered in Theorem \ref{thm-half-points}. 
Hence we may assume that at least one point, say $P_1\in X$, is not a cyclic quotient singularity. 
By Section \ref{sec-about-baskets}, in the basket of $P_1$ there are at least $2$ cyclic quotient singularities. 


By Lemma \ref{Z22222 orbits} and Lemma \ref{Z4222 orbits} we see that the groups $H=(\mathbb{Z}/2)^5$ and $H=\mathbb{Z}/4\times(\mathbb{Z}/2)^3$ are not realized, since in this case $t_1$ is divisible by $4$. Hence 
the total number $N$ of half-points in the baskets of $k_1\times P_1$ is divisible by $8$. However, by Proposition \ref{half} we know that $9\leq N\leq 15$. This is a contradiction. 

Consider the case $H=\mathbb{Z}/6\times (\mathbb{Z}/3)^2$. By Lemma \ref{Z633 orbits} we have that $k_1$ is divisible by~$3$. Hence 
the total number $N$ of half-points in the baskets of $k_1\times P_1$ is divisible by $6$. By Proposition \ref{half} we know that $9\leq N\leq 15$. Thus $N=12$ and so $k_1=6$.  By Lemma~\ref{Z633 orbits} we see that $X$ has no other non-Gorenstein singular points. Also, by Corollary \ref{cor-rank-2-is-transitive}, the surface $S$ has only singular points of type $A_1$ or $A_2$. However, this contradicts Proposition \ref{prop-non-gorenstein-exclusion}.


Consider the case $H=\mathbb{Z}/8\times \mathbb{Z}/4\times \mathbb{Z}/2$. By Lemma \ref{Z842 orbits} we have that $k_1$ is divisible by $2$. Hence 
the total number $N$ of half-points in the baskets of $k_1\times P_1$ is divisible by $4$. However, by Proposition \ref{half} we know that $9\leq N\leq 15$. Thus $N=12$, and so $k_1=6$. However, in this case $G$ has at least two orbits of singular points, which contradicts Lemma~\ref{Z842 orbits}. 
\end{proof}

Using Proposition \ref{prop-term-index-2}, Proposition~\ref{prop: 91 poss} and Corollary \ref{coro:coprime singularities}, we obtain the following list of remaining possibilities. 

\begingroup
\renewcommand{\arraystretch}{1.15} 
\begin{longtable}{|p{0.30\textwidth}|p{0.35\textwidth}|p{0.25\textwidth}|p{0.05\textwidth}|}
\hline
\textbf{Singularities of $X$} & \textbf{Basket of $X$} & \textbf{Possibilities for $H$} \\ \hline
\endhead
$2\times cA/4$ or \newline $3\times cA/4$  & $6 \times \frac{1}{4}(1,3,1)$ &   $\dz/8\times\dz/4\times\dz/2$, \newline $\dz/6\times(\dz/3)^2$ \\ \hline
$2\times cA/4$ & $4 \times \frac{1}{5}(2,3,1)$ &   $\dz/8\times\dz/4\times\dz/2$ \\ \hline
 $4\times cA/3$ or \newline $4\times cD/3$ & $8 \times  \frac{1}{3}(1,2,1)$ &   $\dz/8\times\dz/4\times\dz/2$, \newline 
$\dz/4\times(\dz/2)^3$ \\ \hline
$2\times cAx/4$ & $8 \times \frac{1}{2}(1,1,1)$, $2 \times \frac{1}{4}(1,3,1)$ & $\dz/8\times\dz/4\times\dz/2$ \\ \hline
$2\times cAx/4$ & $10 \times \frac{1}{2}(1,1,1)$, $2 \times \frac{1}{4}(1,3,1)$ & $\dz/8\times\dz/4\times \dz/2$ \\ \hline
$4\times cA/2,4\times \frac{1}{3}(1,2,1)$  & $8 \times \frac{1}{2}(1,1,1)$, $4 \times \frac{1}{3}(1,2,1)$ & $\dz/4\times(\dz/2)^3$ \\ \hline
$2\times cAx/4$ & $6 \times \frac{1}{2}(1,1,1), 2 \times \frac{1}{4}(1,3,1)$  & $\dz/8\times\dz/4\times\dz/2$ \\ \hline
$4\times cAx/4$ & $4 \times \frac{1}{2}(1,1,1), 4 \times \frac{1}{4}(1,3,1)$ & $\dz/8\times\dz/4\times\dz/2$ \\ \hline
$3\times cAx/4$ & $6 \times \frac{1}{2}(1,1,1), 3 \times \frac{1}{4}(1,3,1)$ & $\dz/6\times(\dz/3)^2$ \\ \hline
\end{longtable}
\begin{center}
\emph{Table 11. Possible baskets of singularities on $X$} 
\end{center}
\endgroup
\begin{proof}[Proof of Theorem \ref{thm-terminal-sing}]
Proposition \ref{prop-non-gorenstein-exclusion} together with Corollary \ref{cor-rank-2-is-transitive} 
exclude all cases except the following:
\begin{enumerate}
    \item $4\times cA/3$, $H=\dz/4\times(\dz/2)^3$,
    \item $4\times cD/3$, $H=\dz/4\times(\dz/2)^3$,
    \item $4\times cA/2+4\times \frac{1}{3}(1,2,1)$, $H=\dz/4\times(\dz/2)^3$.
\end{enumerate}

In the case (3), by Corollary \ref{cor-s-s-tilde} and  Proposition \ref{prop-non-gorenstein-exclusion} we may assume that we have $4A_2$ singularities on $S$ that correspond to $4\times \frac{1}{3}(1,1,1)$, and $4$ du Val singularities of type different from $A_1$ or $A_2$. Denote by $f\colon S'\to S$ the minimal resolution of $S$. It follows that there are at least $20$ $f$-exceptional $(-2)$-curves. However, this contradicts the fact that on a smooth K3 surface $S'$ one has $\rho(S')\leq 20$. Hence this case is not realized. 

Consider the cases (1) (or (2)), that is, when $X$ has singularities $4\times cA/3$ (or $4\times cD/3$), and $H=\dz/4\times(\dz/2)^3$. By Proposition \ref{prop-non-gorenstein-exclusion} and Remark \ref{rem-index-divides-exponent}, we see that $S$ has $4$ singularities of type $A_k$ where $k\geq 5$. However, arguing as in the previous case we get a contradiction with $\rho(S')\leq 20$. Hence this case is not realized either. 
The proof is completed.
\end{proof}

\section{Proof of main results}
\label{proof-main-results}
\begin{proof}[Proof of Theorem \ref{thm-main-thm}]
Assume that $G$ is a group that faithfully acts on a rational connected threefold $X$. By a standard argument, we may assume that $X$ is a projective $G\mathbb{Q}$-Mori fiber space over the base $Z$. If $\dim Z>0$ then $G$ is of product type by \cite[Corollary 3.17]{Lo24}. Hence we may assume that $X$ is a $G\mathbb{Q}$-Fano threefold. 

If $h^0(-K_X)=0$ and for any $G\mathbb{Q}$-Fano threefold $X'$ which is $G$-birational to $X$ we have $h^0(-K_{X'})=0$, then $G$ is of type (3) as in Theorem \ref{thm-main-thm}. Hence we may assume that $h^0(-K_X)>0$. 
Also, by the proof of \cite[Theorem 1.7]{Lo24} we may assume that for any $G$-invariant element $S\in |-K_X|$, the pair $(X, S)$ is plt, so $S$ is a K3 surface with at worst du Val singularities. 
If $h^0(-K_X)\geq 2$ then by Theorem \ref{thm-h0-geq-2} we have that $G$ is of product type. 

So we may assume that $h^0(-K_X)=1$. In particular, this implies that the set of non-Gorenstein singularities of $X$ is non-empty. 
If the set of non-Gorenstein singularities consists of points of type $\frac{1}{2}(1,1,1)$, then by 
Theorem~\ref{thm-half-points} we have that $G$ is of product type. 
If the set of non-Gorenstein singularities consists of cyclic quotient singularities, then by Theorem \ref{thm-cyclic-quotient-sing} we have that $G$ is of product type. 
Finally, if the set of non-Gorenstein singularities consists of terminal points which are non necessarily cyclic quotient singularities, then by 
Theorem \ref{thm-terminal-sing} we have that $G$ is of product type.  
\end{proof}

\begin{proof}[Proof of Proposition \ref{prop-intro-k3-groups}]
Follows from Theorem \ref{thm-maximal-k3-groups} and Remark \ref{rem-special-K3-groups-are-PT}.
\end{proof}

\section{Appendix: Intersection matrices for K3 surfaces acted on by $(\dz/2)^5$ and $\dz/4\times(\dz/2)^3$}
\label{sec-appendix}
\begin{proposition}\label{prop: matrices for Z4333}
    Let $S$ be a $K3$ surface with a faithful action of $H=\dz/4\times(\dz/2)^3$. Then the intersection matrix $M$ on $\pic^H(S)$ is one of the following.
    \begin{enumerate}

    \item
    \[
    \begin{pmatrix}
    0 & 4 \\
    4 & 0
    \end{pmatrix},
    \]
    
    \item
    \[
    \begin{pmatrix}
    0 & 2 \\
    2 & 0
    \end{pmatrix},
    \]
    
    \item
    \[
    \begin{pmatrix}
    0 & 0 & 2 \\
    0 & -8 & 0 \\
    2 & 0 & 0
    \end{pmatrix},
    \]
    
    \item
    \[
    \begin{pmatrix}
    0 & 0 & 0 & 2 \\
    0 & -4 & 0 & 0 \\
    0 & 0 & -4 & 0 \\
    2 & 0 & 0 & 0
    \end{pmatrix},
    \]
    
    \item
    \[
    \begin{pmatrix}
    0 & 0 & 0 & 0 & 0 & 2 \\
    0 & -4 & -2 & 2 & 2 & 0 \\
    0 & -2 & -4 & 2 & 2 & 0 \\
    0 & 2 & 2 & -4 & 0 & 0 \\
    0 & 2 & 2 & 0 & -4 & 0 \\
    2 & 0 & 0 & 0 & 0 & 0
    \end{pmatrix},
    \]
    
    \item
    \[
    \begin{pmatrix}
    0 & -2 & -2 & 2 & -2 & 0 \\
    -2 & -4 & -2 & 0 & -2 & -2 \\
    -2 & -2 & -4 & 0 & 0 & 0 \\
    2 & 0 & 0 & 0 & 0 & 0 \\
    -2 & -2 & 0 & 0 & -4 & 0 \\
    0 & -2 & 0 & 0 & 0 & -4
    \end{pmatrix},
    \]
    
    \item
    \[
    \begin{pmatrix}
    0 & 0 & 0 & 2 & 0 & 0 \\
    0 & -4 & -2 & 0 & -2 & -2 \\
    0 & -2 & -4 & 0 & 0 & 0 \\
    2 & 0 & 0 & 0 & 0 & 0 \\
    0 & -2 & 0 & 0 & -4 & 0 \\
    0 & -2 & 0 & 0 & 0 & -4
    \end{pmatrix},
    \]
    
    \item
    \[
    \begin{pmatrix}
    0 & 0 & 2 & 0 & 0 & 0 \\
    0 & -4 & 0 & -2 & -6 & 2 \\
    2 & 0 & 0 & 0 & 0 & 0 \\
    0 & -2 & 0 & -4 & -6 & 2 \\
    0 & -6 & 0 & -6 & -20 & 8 \\
    0 & 2 & 0 & 2 & 8 & -4
    \end{pmatrix},
    \quad
    \]
    
    \item
    \[
    \begin{pmatrix}
    0 & -2 & 2 & -4 & -4 & 2 \\
    -2 & -4 & 0 & -4 & -6 & 2 \\
    2 & 0 & 0 & 0 & 0 & 0 \\
    -4 & -4 & 0 & -8 & -8 & 4 \\
    -4 & -6 & 0 & -8 & -12 & 4 \\
    2 & 2 & 0 & 4 & 4 & -4
    \end{pmatrix},
    \]
    
    \item
    \[
    \begin{pmatrix}
    0 & 0 & -2 & -4 & -2 & -6 \\
    0 & -8 & -2 & -20 & -14 & -26 \\
    -2 & -2 & -4 & -8 & -4 & -12 \\
    -4 & -20 & -8 & -60 & -40 & -78 \\
    -2 & -14 & -4 & -40 & -28 & -52 \\
    -6 & -26 & -12 & -78 & -52 & -104
    \end{pmatrix}.
    \]

\end{enumerate}
\end{proposition}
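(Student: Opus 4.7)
The plan is to deduce the proposition entirely from the database of K3 surface automorphism groups produced by Brandhorst--Hofmann \cite{BH23}, in the same spirit as Proposition~\ref{prop: determinants}. Recall that by Corollary~\ref{cor-max-group-direct-product}, any faithful action of $H=\mathbb{Z}/4\times(\mathbb{Z}/2)^3$ on a K3 surface $S$ splits as $H=H_s\times\mathbb{Z}/m$ with $H_s=(\mathbb{Z}/2)^3$ symplectic and $\mathbb{Z}/m$ purely non-symplectic of order $m=4$; in particular every element of $H$ is either symplectic or has non-trivial non-symplectic part, and we always have $\mathrm{Pic}(S)^H=\mathrm{H}^2(S,\mathbb{Z})^H$ by the lemma preceding Proposition~\ref{prop: determinants}. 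By Proposition~\ref{prop-ranks-H0}(6) the rank $r$ of this lattice satisfies $2\leq r\leq 6$.

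First, I would run through the BH23 enumeration of saturated triples $(L,H,\rho)$ where $L=\Lambda_{K3}$, $H\subset\mathrm{O}(L)$ realizes $\mathbb{Z}/4\times(\mathbb{Z}/2)^3$ as the induced action on cohomology of some K3 surface, and $\rho$ records the symplectic/non-symplectic decomposition. For each such conjugacy class one extracts the invariant sublattice $L^H\subset L$ together with its Gram matrix. The code accompanying \cite{BH23}, together with the auxiliary scripts the authors reference in \cite{Z-web}, does exactly this: it lists all deformation classes of faithful $H$-actions on K3 surfaces and outputs the associated fixed lattice. Concretely, I would iterate over the finite list of candidates, compute $L^H$ as the kernel of $\prod_{h\in H}(h-\mathrm{id})$ acting on $L$, and bring the resulting Gram matrix into a canonical form (e.g.\ via \texttt{LLL} or the \texttt{Magma} lattice normalization).

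Next I would organize the output by rank. For $r=2$ one expects exactly the two hyperbolic lattices $U(4)$ and $U(2)$ (matrices (1) and (2)); for $r=3$ the diagonal-plus-hyperbolic block $U(2)\oplus\langle-8\rangle$ (matrix (3)); for $r=4$ the lattice $U(2)\oplus\langle-4\rangle^{\oplus 2}$ (matrix (4)); and for $r=5,6$ the remaining six lattices. The completeness of the list follows because the BH23 classification is known to be exhaustive, and the absence of duplicates is checked by verifying that the ten Gram matrices listed are pairwise non-isometric. The latter is a finite, easy invariant-theoretic check: the matrices can be distinguished by rank, determinant, and the isomorphism class of the discriminant form $(A_{L^H},q_{L^H})$, together with the signature.

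The main obstacle is purely computational: one must be confident that the BH23 database has been correctly queried for this specific group, and in particular that the $r=5$ and $r=6$ cases -- where the lattices (5)--(10) have quite intricate Gram matrices -- are neither under- nor over-counted. The cleanest way to certify this is to exhibit, for each of the ten lattices, an explicit geometric realization (a projective K3 surface on which $H$ acts faithfully with the prescribed invariant lattice), and to verify via the Torelli theorem that no further realization can produce a different $L^H$ beyond those already in the list. Once this verification is complete the proof is essentially a tabulation, and the statement follows.
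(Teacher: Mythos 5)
Your proposal is correct and matches the paper's approach: the paper gives no written proof for this appendix proposition, but (as with Proposition~\ref{prop: determinants}, whose proof reads ``Follows from the database provided by \cite{BH23}'') the intended argument is exactly the database query of Brandhorst--Hofmann that you describe, with the accompanying code referenced in \cite{Z-web}. One trivial slip: matrices (5)--(10) are all of rank $6$ (there is no rank-$5$ case in this list), but this does not affect the argument.
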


\begin{proposition}\label{prop: matrices for Z22222}
    Let $S$ be a $K3$ surface with a faithful action of $H=(\dz/2)^5$. Then the intersection matrix $M$ on $\pic^H(S)$ is one of the following.
\begin{enumerate}
    \item
    \[
    \begin{pmatrix} 8 \end{pmatrix},
    \]

    \item
    \[
    \begin{pmatrix} 0 & 2 \\ 2 & 0 \end{pmatrix},
    \]

    \item
    \[
    \begin{pmatrix} 0 & 4 \\ 4 & 0 \end{pmatrix},
    \]

    \item
    \[
    \begin{pmatrix} 0 & -2 \\ -2 & 0 \end{pmatrix},
    \]

    \item
    \[
    \begin{pmatrix} 0 & 0 & 2 \\ 0 & -8 & 0 \\ 2 & 0 & 0 \end{pmatrix},
    \]

    \item
    \[
    \begin{pmatrix} 0 & 0 & 0 & 2 \\ 0 & -4 & 0 & 0 \\ 0 & 0 & -4 & 0 \\ 2 & 0 & 0 & 0 \end{pmatrix},
    \]

    \item
    \[
    \begin{pmatrix}
    0 & 0 & 0 & 0 & 2 \\
    0 & -4 & -2 & -2 & 0 \\
    0 & -2 & -4 & -2 & 0 \\
    0 & -2 & -2 & -4 & 0 \\
    2 & 0 & 0 & 0 & 0
    \end{pmatrix},
    \]

    \item
    \[
    \begin{pmatrix}
    -4 & -4 & 0 & -2 & -2 \\
    -4 & -8 & 0 & 0 & 0 \\
    0 & 0 & 0 & -2 & 0 \\
    -2 & 0 & -2 & -4 & 0 \\
    -2 & 0 & 0 & 0 & -4
    \end{pmatrix},
    \]

    \item
    \[
    \begin{pmatrix}
    -4 & 2 & -4 & -6 & 2 \\
    2 & 0 & 0 & 0 & 0 \\
    -4 & 0 & -4 & -2 & 0 \\
    -6 & 0 & -2 & -12 & 6 \\
    2 & 0 & 0 & 6 & -4
    \end{pmatrix},
    \]

    \item
    \[
    \begin{pmatrix}
    -52 & -10 & -20 & -38 & -28 \\
    -10 & -4 & -4 & -6 & -4 \\
    -20 & -4 & -8 & -14 & -10 \\
    -38 & -6 & -14 & -28 & -20 \\
    -28 & -4 & -10 & -20 & -16
    \end{pmatrix},
    \]

    \item
    \[
    \begin{pmatrix}
    -4 & 0 & -2 & -4 & 0 \\
    0 & 0 & 0 & 0 & 2 \\
    -2 & 0 & -4 & 0 & 0 \\
    -4 & 0 & 0 & -8 & 0 \\
    0 & 2 & 0 & 0 & 0
    \end{pmatrix},
    \]

    \item
    \[
    \begin{pmatrix}
    0 & 0 & 2 & 0 & 0 \\
    0 & -4 & 0 & -4 & -2 \\
    2 & 0 & 0 & 0 & 0 \\
    0 & -4 & 0 & -12 & -6 \\
    0 & -2 & 0 & -6 & -4
    \end{pmatrix},
    \quad
    \]

    \item
    \[
    \begin{pmatrix}
    0 & 2 & 2 & -2 & 0 \\
    2 & 4 & 4 & -2 & -2 \\
    2 & 4 & 0 & 0 & 0 \\
    -2 & -2 & 0 & -4 & 0 \\
    0 & -2 & 0 & 0 & -4
    \end{pmatrix},
    \]

    \item
    \[
    \begin{pmatrix}
    -4 & -4 & -2 & -2 & 0 \\
    -4 & -8 & 0 & 0 & 0 \\
    -2 & 0 & -4 & 0 & 0 \\
    -2 & 0 & 0 & -4 & 2 \\
    0 & 0 & 0 & 2 & -4
    \end{pmatrix}.
    \]

\end{enumerate}
\end{proposition}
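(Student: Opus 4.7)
The plan is to reduce the statement to a finite computation on the database of finite group actions on K3 surfaces assembled in \cite{BH23}. Recall that by Nikulin's theory and its refinement in \cite{BH23}, the faithful actions of a finite abelian group $H$ on (algebraic) K3 surfaces are classified up to deformation by a finite list of pairs $(L, \rho)$, where $L$ is an even lattice of signature $(3, 19)$ admitting a primitive embedding into $\Lambda_{K3}$ and $\rho\colon H\to O(L)$ realizes the $H$-action on $H^2(S,\mathbb{Z})$, subject to standard sign and genericity conditions. Given such data, the invariant sublattice $L^H$ can be read off directly and its Gram matrix computed. Since $\mathrm{NS}(S)^H = H^2(S,\mathbb{Z})^H$ for any non-symplectic $H$ (as established in the lemma preceding Proposition \ref{prop: determinants}), the lattice $L^H$ coincides with $\mathrm{Pic}(S)^H$, so the output of the database directly gives the intersection matrices we want.

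The first step is to extract from the Brandhorst--Hofmann database every entry whose group is $(\mathbb{Z}/2)^5$. By Corollary \ref{cor-max-group-direct-product}, such an $H$ decomposes as $H_s\times \mathbb{Z}/2$ with $H_s=(\mathbb{Z}/2)^4$ symplectic and one further non-symplectic involution, so each action is encoded by a choice of embedding of the symplectic part (constrained by Nikulin's classification, cf.\ Theorem \ref{thm-symplectic-K3-groups}) together with a compatible non-symplectic involution. For each of these finitely many deformation families, one computes a Gram matrix of $L^H$ using the equivariant gluing of the coinvariant and invariant lattices. The rank bound $1\leq r\leq 5$ from Proposition \ref{prop: determinants} means that only rather small matrices arise.

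The second step is to reduce the resulting list modulo lattice isomorphism. Two Gram matrices describe the same lattice if and only if they are related by a $\mathrm{GL}_r(\mathbb{Z})$ change of basis, and this can be decided by comparing discriminant forms (the genus), successive minima, and then a direct search for an isometry; for lattices of rank at most $5$ with small discriminant this is routine. Grouping together deformation classes with isometric invariant lattice yields the $14$ distinct matrices listed. One also verifies realizability of each entry: every matrix in the list is actually the invariant lattice of some K3 surface with an $H$-action, since the corresponding Brandhorst--Hofmann entry is realized by construction in \cite{BH23}.

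The main obstacle is purely computational bookkeeping: there are numerous entries in the database for $(\mathbb{Z}/2)^5$, and matching them up to $\mathrm{GL}_r(\mathbb{Z})$-equivalence requires some care, especially for the rank $5$ cases where Gram matrices can look very different yet represent isometric lattices (for instance the dramatically unbalanced matrix (10) must be checked to be genuinely non-isometric to the smaller entries of the same rank). All of this is handled in the accompanying \texttt{magma} script referenced in \cite{Z-web}, which both enumerates the entries of \cite{BH23} with group $(\mathbb{Z}/2)^5$ and tests pairwise isometry of their invariant lattices; the output matches the list above.
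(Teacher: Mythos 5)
Your proposal is correct and follows essentially the same route as the paper: the paper gives no separate argument for the appendix propositions beyond the justification of Proposition \ref{prop: determinants} ("follows from the database provided by \cite{BH23}") together with the accompanying code in \cite{Z-web}, which is exactly the database extraction and isometry-reduction you describe. The only caveat is that your identification of the symplectic part as $(\mathbb{Z}/2)^4$ is the mathematically correct reading (it is the maximal elementary abelian symplectic group in Nikulin's list), even though Corollary \ref{cor-max-group-direct-product}(5) as printed states $H_s=(\mathbb{Z}/2)^5$.
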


\end{document}